\DeclareFontFamily{U}{rsfs}{%
\skewchar\font127}
\DeclareFontShape{U}{rsfs}{m}{n}{%
<-6>rsfs5<6-8.5>rsfs7<8.5->rsfs10}{}
\DeclareSymbolFont{rsfs}{U}{rsfs}{m}{n}
\DeclareRobustCommand*\rsfs{%
\@fontswitch\relax\mathrsfs}
\theoremstyle{plain}
\newtheorem{thm}{Theorem}[section]
\newtheorem{prop}[thm]{Proposition}
\newtheorem{lem}[thm]{Lemma}
\newtheorem{defi}[thm]{Definition}
\newtheorem{rmk}[thm]{Remark}
\newtheorem{note}[thm]{Note}
\newtheorem{nota}[thm]{Notation}
\newtheorem{mukaispecseq}[thm]{Mukai Spectral Sequence}
\newtheorem{dualspecseq}[thm]{``Duality'' Spectral Sequence}
\newtheorem{specseq}[thm]{Spectral Sequence}
\newtheorem{prop-defi}[thm]{Proposition-Definition}
\newtheorem{thm-defi}[thm]{Theorem-Definition}
\newtheorem{lem-defi}[thm]{Lemma-Definition}
\newtheorem{conj}[thm]{Conjecture}
\newtheorem{exam}[thm]{Example}
\newdimen\argwidth
\def\db[#1\db]{
 \setbox0=\hbox{$#1$}\argwidth=\wd0
 \setbox0=\hbox{$\left[\box0\right]$}
  \advance\argwidth by -\wd0
 \left[\
 n.3\argwidth\box0 \kern.3\argwidth\right]}
\newcommand{\Del}{\overline{\Delta}}
\newcommand{\SX}{\scriptscriptstyle  X}
\newcommand{\SY}{\scriptscriptstyle  Y}
\newcommand{\SZ}{\scriptscriptstyle  Z}
\newcommand{\HX}{\widehat{X}}
\newcommand{\hx}{\widehat{x}}
\newcommand{\hy}{\widehat{y}}
\newcommand{\HY}{\widehat{Y}}
\newcommand{\SHX}{\scriptscriptstyle  \widehat{X} }
\newcommand{\SHY}{\scriptscriptstyle  \widehat{Y}}
\newcommand{\HL}{\widehat{L}}
\newcommand{\lx}{\ell_{\scriptscriptstyle{X}} }
\newcommand{\ly}{\ell_{\scriptscriptstyle{Y}}}
\newcommand{\lz}{\ell_{\scriptscriptstyle{Z}}}
\newcommand{\lhx}{\ell_{\scriptscriptstyle  \widehat{X}} }
\newcommand{\lhy}{\ell_{\scriptscriptstyle  \widehat{Y}}}
\newcommand{\SH}{\operatorname{\scriptscriptstyle{H}}}
\newcommand{\calExt}{\operatorname{\mathcal{E}\textit{xt}}}
\newcommand{\calEnd}{\operatorname{\mathcal{E}\textit{nd}}}
\newcommand{\Adiag}{\operatorname{Adiag}}
\newcommand{\alg}{\operatorname{\scriptstyle{alg}}}
\newcommand{\calHom}{\operatorname{\mathcal{H}\textit{om}}}
\newcommand{\BL}{\operatorname{\mathbf{L}}}
\newcommand{\Ga}{\Gamma}
\newcommand{\HGa}{\widehat{\Gamma}}
\newcommand{\HPsi}{\widehat{\Psi}}
\newcommand{\HPhi}{\widehat{\Phi}}
\newcommand{\TPsi}{\widetilde{\Psi}}
\newcommand{\coker}{\operatorname{coker}}
\newcommand{\aA}{\mathcal{A}}
\newcommand{\bB}{\mathcal{B}}
\newcommand{\cC}{\mathcal{C}}
\newcommand{\CC}{\mathbb{C}}
\newcommand{\dD}{\mathcal{D}}
\newcommand{\DD}{\mathbb{D}}
\newcommand{\eE}{\mathcal{E}}
\newcommand{\fF}{\mathcal{F}}
\newcommand{\gG}{\mathcal{G}}
\newcommand{\hH}{\mathcal{H}}
\newcommand{\HH}{\mathbb{H}}
\newcommand{\iI}{\mathcal{I}}
\newcommand{\mM}{\mathcal{M}}
\newcommand{\oO}{\mathcal{O}}
\newcommand{\pP}{\mathcal{P}}
\newcommand{\QQ}{\mathbb{Q}}
\newcommand{\RR}{\mathbb{R}}
\newcommand{\RRR}{\mathbf{R}}
\newcommand{\sS}{\mathcal{S}}
\newcommand{\tT}{\mathcal{T}}
\newcommand{\ZZ}{\mathbb{Z}}
\newcommand{\Supp}{\mathop{\rm Supp}\nolimits}
\newcommand{\Hom}{\mathop{\rm Hom}\nolimits}
\newcommand{\NS}{\mathop{\rm NS}\nolimits}
\newcommand{\Pic}{\mathop{\rm Pic}\nolimits}
\newcommand{\id}{\textrm{id}}
\newcommand{\ch}{\mathop{\rm ch}\nolimits}
\newcommand{\rk}{\mathop{\rm rk}\nolimits}
\newcommand{\Ext}{\mathop{\rm Ext}\nolimits}
\newcommand{\Coh}{\mathop{\rm Coh}\nolimits}
\newcommand{\cneq}{\mathrel{\raise.095ex\hbox{:}\mkern-4.2mu=}}
\newcommand{\eqcn}{\mathrel{=\mkern-4.5mu\raise.095ex\hbox{:}}}
\newcommand{\TOR}{\mathop{{\tT}or}\nolimits}
\newcommand{\End}{\mathop{\rm End}\nolimits}
\newcommand{\Imm}{\operatorname{Im}}
\newcommand{\Ree}{\operatorname{Re}}
\newcommand{\Sing}{\mathop{\rm Sing}\nolimits}
\newcommand{\HN}{\operatorname{HN}}
\begin{document}

\title[Fourier-Mukai Transforms and Stability Conditions]{Stability Conditions under the Fourier-Mukai Transforms on Abelian Threefolds}

\date{\today}

\author{Dulip Piyaratne}

%\address{Kavli Institute for the Physics and Mathematics of the Universe (WPI)\\ The University of Tokyo Institutes for Advanced Study \\ The University of Tokyo \\ Kashiwa \\ Chiba 277-8583 \\ Japan.}

\address{Department of Mathematics \\ The University of Arizona \\ 617 N. Santa Rita Ave. \\ Tucson  \\ AZ 85721 \\ USA.}
\email{piyaratne@math.arizona.edu}
\address{\rm {\itshape URL}:  \url{https://sites.google.com/site/dulippiya/}}

\subjclass[2010]{Primary 14F05; Secondary 14J30, 14J32, 14J60, 14K99, 	18E10, 18E30,  18E40}

\keywords{Fourier-Mukai transforms, Abelian varieties,  Threefolds, Derived category, Bridgeland stability conditions, Bogomolov-Gieseker inequality, Polarization}

\begin{abstract}

We realize explicit symmetries of Bridgeland stability conditions on any abelian threefold given by  Fourier-Mukai transforms.
 In particular, we extend the previous joint work   with  Maciocia 
% on principally polarized abelian threefolds with Picard rank one   
to study the slope and tilt stabilities of sheaves and complexes under the Fourier-Mukai transforms, and 
then
to show that certain Fourier-Mukai transforms  give equivalences of the stability condition hearts of bounded t-structures
 which are double tilts of coherent sheaves.  
Consequently,  we show that
the conjectural construction proposed by Bayer,  Macr\`i and Toda gives rise to Bridgeland stability conditions on any abelian threefold by proving that tilt stable objects satisfy the  Bogomolov-Gieseker type inequality.
Our proof of the Bogomolov-Gieseker type inequality conjecture for any abelian threefold is a generalization of the 
 previous joint work   with  Maciocia 
for a principally polarized abelian threefold with Picard rank one case, and also
this  gives an alternative proof of the same result in full generality due to 
Bayer,  Macr\`i and Stellari.
Moreover, we realize the induced cohomological Fourier-Mukai  transform explicitly in anti-diagonal form, and 
consequently, we describe a polarization on the derived equivalent abelian variety by using Fourier-Mukai theory. 

%Furthermore, we explicitly formulate a conjecture for higher dimensional abelian varieties which says certain Fourier-Mukai transforms between derived categories give equivalences of some hearts of Bridgeland stability conditions.

\end{abstract}

\maketitle
\tableofcontents
%%%%%%%%%%%%%%%%%%%%%%%%%%%%%%%%%%
%%%%%%%%%%%%%%%%%%%%%%%%%%%%%%%%%%
\section{Introduction}
\label{sec:intro}
\subsection{Bridgeland stability conditions on threefolds}
\label{sec:intro-stability}
Motivated by Douglas's work on $\Pi$-stability for D-branes on Calabi-Yau threefolds (see \cite{Dou}), Bridgeland introduced the notion of stability conditions on triangulated categories (see \cite{BriStab}).
Bridgeland's approach can be interpreted essentially as an abstraction of the usual slope stability for sheaves. 
From the original motivation,  construction of Bridgeland stability conditions on the bounded derived category of a given projective threefold is an important problem. However, unlike for a projective surface, there is no known construction which gives stability conditions for all projective threefolds. See \cite{HuyStabNotes, MS} for further details.   

The category of coherent sheaves does not arise as a heart of a Bridgeland stability condition for higher dimensional smooth projective varieties (see \cite[Lemma 2.7]{TodLimit}). So more work is needed to construct the hearts for stability conditions on projective varieties of dimension above one. In general, when $\Omega$ is a complexified ample class on a projective variety $X$ (that is $\Omega = B + i \sqrt{3}\alpha H$ for some $B, H \in \NS_{\RR}(X)$ with  ample class $H$, and $\alpha \in \RR_{>0}$), it is expected that 
\begin{equation}
\label{eqn:centralcharge}
Z_{\Omega}(-) = -\int_X e^{-\Omega}\ch(-)
\end{equation}
defines 
a central charge function of some stability condition on $X$ (see \cite[Conjecture 2.1.2]{BMT}). 
In  \cite{BMT}, the authors conjecturally construct a heart for this central charge function by double tilting 
coherent sheaves on $X$. 
The first tilt of $\Coh(X)$ associated to the Harder-Narasimhan
filtration with respect to the slope   stability, is denoted by 
$$
\bB_{\Omega} = \langle \fF_{\Omega}[1], \tT_{\Omega}  \rangle.
$$
They proved that abelian category $\bB_{\Omega}$  of two term complexes is Noetherian, and  
furthermore, they introduced the notion of tilt slope   stability for
objects in $\bB_{\Omega}$.
 The conjectural stability condition heart  
 $$
 \aA_{\Omega} = \langle\fF_{\Omega}'[1], \tT_{\Omega}' \rangle
 $$
  is the
 tilt of $\bB_{\Omega}$ associated to the
Harder-Narasimhan filtration with respect to the tilt slope
  stability.
It was shown in \cite{BMT} that 
the pair $(Z_{\Omega}, \aA_{\Omega})$ defines a Bridgeland stability condition on $X$ if and only if
any $E \in\bB_{\Omega}$ tilt slope   stable object with zero tilt slope satisfies  
$ \Ree Z_{\Omega} (E[1]) < 0$. 
Moreover, they  proposed the following strong  inequality for tilt stable objects with zero tilt slopes, and this is now commonly known as the   \textit{Conjectural Bogomolov-Gieseker Type Inequality}:
\begin{equation*}
\label{eqn:BGineq}
 \ch_3^B(E) - \frac{1}{6} \alpha^2 H^2 \ch_1^B(E) \le 0.
\end{equation*}
Here $\ch^B(E) = e^{-B}\ch(E)$ is the twisted Chern character.

This conjecture has been shown to hold for 
 all Fano threefolds with Picard rank one (see \cite{BMT, MacP3, SchQuadric, LiFano3}), abelian threefolds (see \cite{MP1, MP2, PiyThesis,   BMS}), \'etale quotients of abelian threefolds (see \cite{BMS}), some toric threefolds (see \cite[Theorem 5.1]{BMSZ}) and threefolds which are  products of projective spaces and abelian varieties (see \cite{Koseki}).  
Recently, Schmidt found a counterexample to the original Bogomolov-Gieseker type inequality conjecture when $X$ is the blowup at a point of $\mathbb{P}^3$ (see \cite{SchCounterExample}). 
Therefore, this inequality needs some modifications in general setting and this was discussed in \cite{PiyFano3, BMSZ}.

\subsection{Bridgeland Stability under Fourier-Mukai transforms}
\label{sec:intro-satb-under-FMT}
%%%%%%%%%%%%%%%%%%%%%%%%%%%%%%%%%%
%%%%%%%%%%%%%%%%%%%%%%%%%%%%%%%%%%
The notion of Fourier-Mukai transform (FM transform for short)  was introduced by
Mukai in early 1980s (see \cite{MukFMT}). In particular, he showed that the Poincar\'e bundle induces a
non-trivial equivalence between the derived categories of an abelian variety and its
dual variety. 
Furthermore, he studied certain type of vector bundles on abelian varieties called 
semihomogeneous bundles, and moduli of them (see \cite{MukSemihomo}).
In particular, the moduli space parametrizing simple semihomogeneous bundles on an abelian variety $Y$ with a fixed Chern character is also an abelian variety, denoted by $X$. Moreover, the associated universal bundle $\eE$ on $X \times Y$ induces a derived equivalence $\Phi_{\eE}^{\SX \to \SY}$ from $X$ to $Y$, which is now commonly known as the Fourier-Mukai transform. 

Action of the Fourier-Mukai transform $\Phi_{\eE}^{\SX \to \SY}$  induces stability conditions on $D^b(Y)$ from the ones on $D^b(X)$.
This can be defined via the induced  map on $\Hom(K(Y), \CC)$ from $\Hom(K(X), \CC)$ by the transform.
More precisely,
if $ (Z, \aA)$ is a stability condition on $D^b(X)$ then 
$$
\Phi_{\eE}^{\SX \to \SY} \cdot (Z, \ \aA) : = \left( \Phi_{\eE}^{\SX \to \SY} \cdot Z , \  \Phi_{\eE}^{\SX \to \SY}\left(\aA\right) \right) 
$$
defines a stability condition on $D^b(Y)$, where 
$\Phi_{\eE}^{\SX \to \SY} \cdot Z (-) = Z \left( \left( \Phi_{\eE}^{\SX \to \SY}\right)^{-1}\left(-\right) \right)$. 
For abelian varieties we view this as  
\begin{equation}
\label{eqn:FMT-action-central-charge}
\Phi_{\eE}^{\SX \to \SY} \cdot Z_{\Omega}  = \zeta \, Z_{\Omega'} 
\end{equation}
for some $\zeta \in \CC \setminus \{0\}$, where $\Omega, \Omega'$ are complexified ample classes on $X, Y$ respectively.  When $\zeta$ is real, one can expect that the Fourier-Mukai transform $\Phi_{\eE}^{\SX \to \SY}$   gives an equivalence of some hearts of particular stability conditions on $X$ and $Y$, whose $\Omega$ and $\Omega'$ are determined by $\Imm \zeta  = 0$. 

In particular, we prove the following for abelian threefolds:

\begin{thm}
\label{prop:intro-main-stab-symmetries}
The Fourier-Mukai transform $  \Phi_{\eE}^{\SX \to \SY}: D^b(X) \to D^b(Y)$ between the abelian threefolds gives the following symmetries of Bridgeland stability conditions:
$$
\Phi_{\eE}^{\SX \to \SY} [1]  \cdot \left(  Z_{\Omega} , \ \aA_{\Omega}  \right) =  \left(  \zeta Z_{\Omega'}, \ \aA_{\Omega'} \right)
$$
for some   $\zeta  \in \RR_{>0}$, and complexified ample classes $\Omega, \Omega'$ on $X, Y$ respectively.
Here $\aA_{\Omega} , \aA_{\Omega'}$ are the double tilted stability condition hearts as 
in  the construction of \cite{BMT}, and $ Z_{\Omega},  Z_{\Omega'}$ are the central charge functions as defined in \eqref{eqn:centralcharge}.
\end{thm}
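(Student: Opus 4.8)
The plan is to establish the theorem in three layers, each reducing a statement about stability-condition hearts to a more concrete statement about sheaves and then about cohomology classes. The starting point is that on an abelian variety the Fourier-Mukai transform $\Phi_{\eE}^{\SX \to \SY}$ is a composition of a ``shifted'' tensor-twist and duality operations, and its action on numerical Chern characters can be written in \emph{anti-diagonal form}: it interchanges $\ch_0 \leftrightarrow \pm\ch_3$ and $\ch_1 \leftrightarrow \pm\ch_2$ up to the exponential of a divisor class. So the first step is to compute $\Phi_{\eE}^{\SX \to \SY}\cdot Z_{\Omega}$ directly from \eqref{eqn:centralcharge} and the Mukai pairing, thereby verifying \eqref{eqn:FMT-action-central-charge} and pinning down exactly which $\Omega'$ on $Y$ makes $\zeta$ real and positive; this is the computation that determines the correspondence $\Omega \mapsto \Omega'$ and, in fact, describes the polarization on $Y$ promised in the abstract. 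I would set this up so that the sign $[1]$ in the statement is forced precisely by the anti-diagonal shape (it compensates a cohomological shift), and then $\zeta \in \RR_{>0}$ follows.

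The second, and main, step is the heart equivalence $\Phi_{\eE}^{\SX \to \SY}[1](\aA_{\Omega}) = \aA_{\Omega'}$. Here I would follow the strategy announced in the abstract: first handle the slope-stability tilt, showing that $\Phi_{\eE}^{\SX \to \SY}$ (up to shift) takes $\bB_{\Omega}$ to $\bB_{\Omega'}$ by tracking how slope-semistable sheaves and their Harder–Narasimhan factors transform — this is where WIT$_i$-type vanishing for semihomogeneous-twisted semistable sheaves on abelian threefolds enters, generalizing the Picard-rank-one principally polarized case of the earlier Maciocia–Piyaratne work. Then, \emph{repeating the argument one level up}, show the transform sends tilt-semistable objects of $\bB_{\Omega}$ to tilt-semistable objects of $\bB_{\Omega'}$ with the slopes matching under \eqref{eqn:FMT-action-central-charge}, which yields $\aA_{\Omega} \to \aA_{\Omega'}$. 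The key technical input is a cohomology-amplitude bound: one must show each tilt-stable object $E \in \aA_{\Omega}$ satisfies $\Phi_{\eE}^{\SX \to \SY}[1](E) \in \langle \aA_{\Omega'}[-1], \aA_{\Omega'}\rangle$ say, and then rule out the extra pieces using the central-charge phase, exactly as one controls $t$-structure amplitude for the classical Poincaré Fourier–Mukai transform on abelian varieties.

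Once the heart equivalence is in hand, the final step — the Bogomolov–Gieseker inequality and hence the fact that $(Z_{\Omega'}, \aA_{\Omega'})$ is genuinely a Bridgeland stability condition — is almost formal: it is enough (by the \cite{BMT} criterion recalled in the excerpt) to verify the inequality $\ch_3^B(E) - \tfrac16\alpha^2H^2\ch_1^B(E) \le 0$ for one complexified ample class, and the transform moves this inequality around the ample cone; combined with a limiting/degeneration argument in the ample parameter $\alpha$ (to reach a chamber where the inequality is easy or where an object becomes a shifted sheaf), one concludes it everywhere. Concretely I would argue by contradiction: a tilt-stable $E$ with zero tilt-slope violating the inequality would, after applying $\Phi_{\eE}^{\SX \to \SY}$ with a well-chosen $\eE$, become an object in $\aA_{\Omega'}$ whose central charge lies in a forbidden region, contradicting that $\aA_{\Omega'}$ is a heart.

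The hard part will be Step 2, and within it the amplitude control: showing that $\Phi_{\eE}^{\SX \to \SY}$ does not spread a tilt-stable object across too many cohomological degrees of the target heart. On abelian \emph{surfaces} this is classical, but in the threefold case $\aA_{\Omega}$ is a \emph{double} tilt, so one has to chase the transform through two successive tilts while keeping track of torsion pairs, and the Noetherian property of $\bB_{\Omega}$ from \cite{BMT} is essential to make the induction on Harder–Narasimhan factors terminate. I expect the bulk of the real work to be a careful case analysis of where the classical cohomology sheaves $\mathcal{H}^i(\Phi_{\eE}^{\SX \to \SY}(E))$ sit relative to $\tT_{\Omega'}$, $\fF_{\Omega'}$, $\tT_{\Omega'}'$, $\fF_{\Omega'}'$, driven throughout by the inequality \eqref{eqn:FMT-action-central-charge} on phases.
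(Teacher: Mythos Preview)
Your three-layer architecture (central-charge computation, heart equivalence, Bogomolov--Gieseker inequality) matches the paper, and Step~1 is correct. But Step~2 contains a structural error: you claim that $\Phi_{\eE}^{\SX \to \SY}$ up to shift takes $\bB_{\Omega}$ to $\bB_{\Omega'}$. This is false on a threefold --- no shift of the transform gives an equivalence of the first-tilt hearts. What the paper proves instead (Theorem~\ref{prop:image-B-under-FMT}) is only the three-term amplitude bound $\Psi(\bB_{\Omega}) \subset \langle \bB_{\Omega'}, \bB_{\Omega'}[-1], \bB_{\Omega'}[-2]\rangle$, and likewise for $\HPsi[1]$. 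The pair $(\bB_\Omega, \bB_{\Omega'})$ under $(\Psi, \HPsi[1])$ behaves like coherent sheaves on an abelian \emph{surface} under a Fourier--Mukai transform, not like an invariant heart; so your plan ``$\bB$-equivalence first, then repeat one level up'' cannot start. The correct mechanism is to accept the three-term $\bB$-amplitude, set up a Mukai-type spectral sequence $E_2^{p,q} = \HGa^p_{\bB}\Ga^q_{\bB}(E) \Rightarrow H^{p+q-2}_{\bB_\Omega}(E)$ at the $\bB$-level (Spectral Sequence~\ref{Spec-Seq-B}), and run the \emph{surface} argument there with $(\bB,\nu)$ in the role of $(\Coh,\mu)$: one shows that the $\bB_{\Omega'}$-cohomologies $\Ga^i_{\bB}(E)$ of objects in each piece $\tT'_\Omega, \fF'_\Omega$ of the second torsion pair land in the matching pieces $\tT'_{\Omega'}, \fF'_{\Omega'}$ on $Y$ (Lemma~\ref{prop:B-cohomo-vanishing-FMT-0-2}, Propositions~\ref{prop:tilt-bounds-FMT-B-cat-0-2} and~\ref{prop:tilt-bounds-FMT-B-1}). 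Only then does the $\aA$-equivalence drop out. Your final paragraph (case analysis of where $\hH^i(\Psi(E))$ sits relative to the four torsion classes) is exactly the content of Sections~\ref{sec:FMT-sheaves-abelian-threefolds}--\ref{sec:further-FMT-sheaves-abelian-threefolds}, but that analysis produces the three-term bound, not a $\bB$-equivalence.

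Your Step~3 also has a circularity risk: arguing that a counterexample $E$ maps to an object of $\aA_{\Omega'}$ with central charge in a ``forbidden region'' presupposes that $(Z_{\Omega'},\aA_{\Omega'})$ is already a stability condition, which is what you are proving. The paper avoids this (Theorem~\ref{prop:BG-ineq-abelian-threefolds}): it reduces via Lemma~\ref{prop:reduction-BG-ineq-class} to objects $E$ with $E[1]$ minimal in $\aA_\Omega$, uses the heart equivalence to see that $\Psi[2](E)$ is minimal in $\aA_{\Omega'}$ and hence that $\Psi[1](E) \in \mM_{\ly,B',\alpha'}$, and then invokes only the elementary slope bound $\ly^2\ch_1^{B'-\alpha'\ly}(\Psi[1](E)) \ge 0$ of Proposition~\ref{prop:slope-bounds}--(4), which holds for any tilt-stable object of zero tilt slope and does not require $Z_{\Omega'}$ to be a stability function. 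Pushed back through the anti-diagonal formula of Theorem~\ref{prop:antidiagonal-rep-cohom-FMT}, this single inequality \emph{is} the Bogomolov--Gieseker inequality for $E$; no degeneration in $\alpha$ is used.
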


The  analogous result of the above theorem for abelian surfaces holds due to Huybrechts and Yoshioka  and see \cite{HuyK3Equivalence, YoshiokaFMT} for further details.

 \subsection{Main ingredients}
 \label{sec:intro-main-ideas}
\subsubsection{Fourier-Mukai theory and polarizations}
The  Fourier-Mukai transform $\Phi_{\eE}^{\SX \to \SY}: D^b(X) \to D^b(Y)$ between the abelian varieties  induces a linear isomorphism $\Phi_{\eE}^{\SH}$ from
$H^{2*}_{\alg}(X,\QQ)$ to $H^{2*}_{\alg}(Y,\QQ)$, called the cohomological Fourier-Mukai transform.
In this article, we realize this linear isomorphism in anti-diagonal form with respect to some twisted Chern characters (see  Theorem \ref{prop:antidiagonal-rep-cohom-FMT}). Furthermore,  we  prove the following.

\begin{thm}[{= \ref{prop:derived-induce-polarization}}]
\label{prop:intro-derived-polarization}
If the ample line bundle $L$ defines a polarization on $X$,  then
the line bundle $\det (\Xi(L))^{-1}$ is ample and so it defines a polarization on $Y$. Here 
 $\Xi $ is the Fourier-Mukai functor from $D^b(X)$ to $D^b(Y)$ defined by 
$$
\Xi = \eE_{\{a\}\times Y}^* \circ \Phi_\eE^{\SX  \to \SY} \circ \eE_{X \times \{b\}}^*,
$$
where $a, b$ are any two points on $X, Y$ respectively; and $\eE_{\{a\}\times Y}^*$ denotes the functor $\eE_{\{a\}\times Y}^* \otimes (-)$ and similar for $ \eE_{X \times \{b\}}^*$. 
\end{thm}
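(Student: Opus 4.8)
The plan is to reduce the statement to a numerical assertion about the class $c_1(\det\Xi(L))\in\NS(Y)$ and then to evaluate that class by means of the anti-diagonal form of the cohomological Fourier--Mukai transform provided by Theorem~\ref{prop:antidiagonal-rep-cohom-FMT}. Since $Y$ is smooth and $\Xi$ is a Fourier--Mukai functor, $\Xi(L)$ is a perfect complex, so $\det(\Xi(L))$ is a genuine line bundle with $c_1(\det\Xi(L))=\ch_1(\Xi(L))$, regardless of whether $\Xi(L)$ is concentrated in a single degree. On an abelian variety the ampleness of a line bundle depends only on its first Chern class, so it suffices to prove that $-\ch_1(\Xi(L))$ is the class of an ample $\RR$-divisor on $Y$ and, in fact, a positive definite class, which then also yields that $\det(\Xi(L))^{-1}$ defines a polarization. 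I would also observe, as a convenient aside, that because every line bundle on an abelian variety is a simple semihomogeneous bundle and $\eE_{X\times\{b\}}$ is semihomogeneous, $\Xi(L)$ is—up to shift—a semihomogeneous sheaf $G$ on $Y$, although the determinant computation itself does not need this.

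Next I would compute $\ch_1(\Xi(L))$ from Theorem~\ref{prop:antidiagonal-rep-cohom-FMT}: with respect to the relevant twisted Chern characters the cohomological transform attached to $\Xi$ is anti-diagonal, so $\ch_1(\Xi(L))$ is, up to a nonzero real constant and an explicit untwisting correction that is controlled by the ampleness of $L$, the Fourier image of the degree-$4$ component $\tfrac12 c_1(L)^2$ of $\ch(L)=e^{c_1(L)}$. The crux is then to show that this Fourier image is the negative of an ample class. For this I would imitate Mukai's computation of the Fourier transform of an ample line bundle for the Poincar\'e bundle: there is an isogeny $\varphi$ onto $Y$ (built from the semihomogeneous structure of the pair $(X,Y,\eE)$, composed if necessary with $\phi_L$) for which $\varphi^{*}\det\Xi(L)$ becomes, via the analogue of the identity $\varphi^{*}\det\mathcal{F}_{\pP}(L)\cong L^{-\chi(L)}$, a negative power of an ample line bundle—concretely the exponent is $-\chi(L)=-\tfrac16 c_1(L)^{3}<0$, since $L$ is ample on the threefold $X$. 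Because pullback along an isogeny preserves and reflects ampleness, $\det\Xi(L)$ is anti-ample, i.e. $\det(\Xi(L))^{-1}$ is ample.

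The main obstacle is this last step together with the twist-and-shift bookkeeping preceding it: one has to pin down exactly which twisted Chern characters occur in the anti-diagonal form and the sign contributed by the cohomological shift inherent in $\Phi_{\eE}^{\SX\to\SY}$, and—more delicately—identify the degree-$(4\to 2)$ component of the cohomological correspondence of $\Xi$ with the pullback of a polarization along the structural isogeny between $X$ and $Y$, so that the classical Fourier-transform-of-an-ample-line-bundle identity can legitimately be transported to the present twisted, higher-rank ($\rk\eE>1$) situation. Once that identification is in place, the positivity conclusion and hence the assertion about $\det(\Xi(L))^{-1}$ follow.
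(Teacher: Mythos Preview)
Your approach differs substantially from the paper's and, as you yourself flag, leaves the decisive step incomplete. The paper does not attempt to transport Mukai's isogeny identity $\phi_L^*\widehat{L}\cong L^{-\chi(L)}\otimes H^0(L)$ to the twisted, higher-rank setting. Instead it factors $\Xi=\Phi_{\pP^\vee}^{\SHY\to\SY}\circ\Gamma$ through the Poincar\'e transform, observes that the kernel of $\Gamma$ is a sheaf in $\Coh_g(X\times\HY)$ so that $\Gamma$ and $\widehat{\Gamma}$ preserve support dimension, and then verifies the Nakai--Moishezon criterion for the intermediate class $\lhy$ directly: for any $j$-dimensional subscheme $\HY^{(j)}\subset\HY$, the intersection number $\int_{\HY}\lhy^{g-j}\cdot[\HY^{(j)}]$ is computed via the Mukai-pairing isometry \eqref{eqn:Mukai-pairing-isometry} and reduced to a positive intersection on $X$ against the known ample class $\lx$. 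The classical result of \cite{BL-polarization} for the Poincar\'e bundle then carries ampleness from $\HY$ to $Y$. No isogeny needs to be constructed, and the argument works uniformly in any dimension $g$.

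Your route has two problems. First, there is a circularity: Theorem~\ref{prop:antidiagonal-rep-cohom-FMT} is stated and proved relative to the class $\ly$ produced by Theorem~\ref{prop:general-cohomo-FMT}, whose ampleness is exactly the content of Theorem~\ref{prop:derived-induce-polarization}; invoking the anti-diagonal form to compute $\ch_1(\Xi(L))$ therefore presupposes what you are trying to prove, or at best only reproduces the tautology $\ch_1(\Xi(L))=-(\text{positive constant})\cdot\ly$ without telling you anything about $\ly$. Second, and more seriously, the isogeny step you describe as ``the main obstacle'' is not actually carried out: you need to exhibit an explicit isogeny $\varphi$ onto $Y$ and establish an identity of the form $\varphi^*\det\Xi(L)\cong(\text{ample})^{-N}$, but the ``structural isogeny'' you allude to is not specified, and transporting Mukai's computation to kernels of rank $r>1$ requires real work (essentially Orlov's description of equivalences between abelian varieties). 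The paper's Nakai--Moishezon argument sidesteps all of this.
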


This theorem generalizes  similar results for abelian surfaces (see \cite[Section 1.3]{YoshiokaFMT}) and for all abelian varieties with respect to the classical Fourier-Mukai transform with kernel the Poincar\'e bundle (see \cite{BL-polarization}).
\subsubsection{Stability under Fourier-Mukai transforms}
The main goal of this paper is to prove  Theorem \ref{prop:intro-main-stab-symmetries}, and for that we need to
establish the corresponding equivalence of the double tilt stability condition hearts on the abelian threefolds.
This is a generalization of the main results in \cite{MP1, MP2, PiyThesis}.
More specifically, we extend many techniques in  \cite{MP1, MP2, PiyThesis} on a principally polarized abelian threefold with Picard rank one  to a general abelian threefold.

In Section \ref{sec:FMT-abelian-varieties}, we study the behavior of slope stability of sheaves under the 
Fourier-Mukai transform $\Phi_{\eE}^{\SX \to \SY}$ on any abelian varieties.
In Section \ref{sec:equivalence-stab-hearts-surface} we establish the analogous result of Theorem \ref{prop:intro-main-stab-symmetries} for abelian surfaces, and our main aim is to get some familiarization with Fourier-Mukai techniques to prove our main theorem.
% study the slope stability of sheaves under the 
%Fourier-Mukai transforms.
Here we closely follow the proof of Yoshioka in \cite{YoshiokaFMT}.

Understanding the homological Fourier-Mukai transform for abelian threefolds is central to
this paper.
In  Sections \ref{sec:FMT-sheaves-abelian-threefolds} and \ref{sec:further-FMT-sheaves-abelian-threefolds}, we study the slope stability of sheaves
under the Fourier-Mukai transforms. In particular, at the end of Section \ref{sec:further-FMT-sheaves-abelian-threefolds}, we prove that 
\begin{equation*}
\label{dddd}
 \left.\begin{aligned}
         & \Phi_{\eE}^{\SX \to \SY} \left(\tT_{\Omega} \right) \subset \langle \bB_{\Omega'}, \bB_{\Omega'}[-1], \bB_{\Omega'}[-2] \rangle \\
         & \Phi_{\eE}^{\SX \to \SY} \left(\fF_{\Omega} \right) \subset \langle \bB_{\Omega'}[-1], \bB_{\Omega'}[-2], \bB_{\Omega'}[-3] \rangle
       \end{aligned}
  \ \right\}.
\end{equation*}
From the definition of the first tilt, we have that the images 
under the  Fourier-Mukai  transform $ \Phi_{\eE}^{\SX \to \SY}$   of 
the objects in the abelian category $\bB_{\Omega}$  
have non-zero
cohomologies with respect to $\bB_{\Omega'}$ only in
positions $0$, $1$ and $2$.
We prove a similar result for the Fourier-Mukai transform $\Phi_{\eE^\vee}^{\SY \to \SX}[1]: D^b(Y) \to D^b(X)$.
That is 
\begin{equation*}
\label{ddddds}
 \left.\begin{aligned}
         & \Phi_{\eE}^{\SX \to \SY} \left( \bB_{\Omega} \right)  
         \subset \langle \bB_{\Omega'} ,  \bB_{\Omega'}[-1],  \bB_{\Omega'}[-2]  \rangle \rangle \\
         & \Phi_{\eE^\vee}^{\SY \to \SX}[1] \left(\bB_{\Omega'} \right)  
         \subset \langle \bB_{\Omega} ,  \bB_{\Omega}[-1],  \bB_{\Omega}[-2]  \rangle 
       \end{aligned}
  \ \right\}.
\end{equation*}
Since we have the isomorphisms
$\Phi_{\eE^\vee}^{\SY \to \SX}[1] \circ \Phi_{\eE}^{\SX \to \SY} \cong [-2]$ and $\Phi_{\eE}^{\SX \to \SY} \circ \Phi_{\eE^\vee}^{\SY \to \SX}[1] \cong [-2]$,
the abelian categories  $\bB_{\Omega} $ and $\bB_{\Omega'}$ behave
 somewhat similarly to the category of coherent sheaves on an abelian surface under the Fourier-Mukai transforms.
Finally, in Section \ref{sec:FMT-tilt-stability}, we study the behavior of tilt stability under the Fourier-Mukai transforms.  
In particular,  we prove that 
\begin{equation*}
\label{sdssdsdsd}
 \left.\begin{aligned}
         & \Phi_{\eE}^{\SX \to \SY} \left(\tT_{\Omega}' \right) \subset \langle \fF_{\Omega'}', \tT_{\Omega'}' [-1] \rangle \\
         & \Phi_{\eE}^{\SX \to \SY}  \left(\fF_{\Omega}' \right) \subset \langle \fF_{\Omega'}'[-1], \tT_{\Omega'}' [-2] \rangle 
       \end{aligned}
  \ \right\},
\end{equation*}
and similar results for $ \Phi_{\eE^\vee}^{\SY \to \SX}[1]$.
From the definition of the second tilt, we have the following:
\begin{thm}
\label{prop:intro-equivalence-threefolds}
The derived equivalences $\Phi_{\eE}^{\SX \to \SY} $ and $\Phi_{\eE^\vee}^{\SY \to \SX}$ give the equivalences of the double tilted hearts 
$$
\Phi_{\eE}^{\SX \to \SY}[1] \left(\aA_{\Omega} \right) \cong \aA_{\Omega'}, \
\text{ and } \ \Phi_{\eE^\vee}^{\SY \to \SX}[2] \left(\aA_{\Omega'} \right) \cong \aA_{\Omega}.
$$
\end{thm}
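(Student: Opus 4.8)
The plan is to deduce the statement formally from the two families of inclusions established at the end of Section~\ref{sec:FMT-tilt-stability}, together with the composition isomorphisms $\Phi_{\eE^\vee}^{\SY \to \SX}[1] \circ \Phi_{\eE}^{\SX \to \SY} \cong [-2]$ and $\Phi_{\eE}^{\SX \to \SY} \circ \Phi_{\eE^\vee}^{\SY \to \SX}[1] \cong [-2]$. Set $\Phi \cneq \Phi_{\eE}^{\SX \to \SY}[1]$ and $\Psi \cneq \Phi_{\eE^\vee}^{\SY \to \SX}[2]$. Since shifts commute with exact functors of triangulated categories, $\Psi \circ \Phi \cong \bigl(\Phi_{\eE^\vee}^{\SY \to \SX} \circ \Phi_{\eE}^{\SX \to \SY}\bigr)[3] \cong [-3][3] = \idd_{D^b(X)}$ and likewise $\Phi \circ \Psi \cong \idd_{D^b(Y)}$, so $\Phi$ and $\Psi$ are mutually quasi-inverse equivalences of triangulated categories; it remains only to see that each restricts to the respective double tilted heart.

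First I would check $\Phi(\aA_\Omega) \subseteq \aA_{\Omega'}$. By the definition of the second tilt, $\aA_\Omega = \langle \fF_\Omega'[1], \tT_\Omega' \rangle$, so every $E \in \aA_\Omega$ sits in a short exact sequence $0 \to F'[1] \to E \to T' \to 0$ in $\aA_\Omega$ with $F' \in \fF_\Omega'$ and $T' \in \tT_\Omega'$; in $D^b(X)$ this is a distinguished triangle. Applying $\Phi$ and shifting the inclusions $\Phi_{\eE}^{\SX \to \SY}(\tT_\Omega') \subset \langle \fF_{\Omega'}', \tT_{\Omega'}'[-1] \rangle$ and $\Phi_{\eE}^{\SX \to \SY}(\fF_\Omega') \subset \langle \fF_{\Omega'}'[-1], \tT_{\Omega'}'[-2] \rangle$ by $[1]$ and $[2]$ respectively, one gets $\Phi(T') = \Phi_{\eE}^{\SX \to \SY}(T')[1] \in \langle \fF_{\Omega'}'[1], \tT_{\Omega'}' \rangle = \aA_{\Omega'}$ and $\Phi(F'[1]) = \Phi_{\eE}^{\SX \to \SY}(F')[2] \in \langle \fF_{\Omega'}'[1], \tT_{\Omega'}' \rangle = \aA_{\Omega'}$. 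As $\aA_{\Omega'}$ is the heart of a bounded t-structure on $D^b(Y)$, hence closed under extensions, the triangle $\Phi(F'[1]) \to \Phi(E) \to \Phi(T') \to \Phi(F'[1])[1]$ forces $\Phi(E) \in \aA_{\Omega'}$. Running the identical argument with the ``similar results'' for $\Phi_{\eE^\vee}^{\SY \to \SX}[1]$ (shifting them by $[1]$ and $[2]$ into $\aA_\Omega = \langle \fF_\Omega'[1], \tT_\Omega' \rangle$) gives $\Psi(\aA_{\Omega'}) \subseteq \aA_\Omega$.

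To conclude I would invoke the standard sandwich argument: applying $\Psi$ to $\Phi(\aA_\Omega) \subseteq \aA_{\Omega'}$ and using $\Psi \circ \Phi \cong \idd$ yields $\aA_\Omega \cong \Psi\Phi(\aA_\Omega) \subseteq \Psi(\aA_{\Omega'}) \subseteq \aA_\Omega$, whence $\Psi(\aA_{\Omega'}) = \aA_\Omega$; symmetrically $\Phi(\aA_\Omega) = \aA_{\Omega'}$. Therefore $\Phi$ restricts to an exact equivalence $\aA_\Omega \xrightarrow{\ \sim\ } \aA_{\Omega'}$ with quasi-inverse $\Psi$, which is precisely the pair of displayed isomorphisms $\Phi_{\eE}^{\SX \to \SY}[1]\bigl(\aA_\Omega\bigr) \cong \aA_{\Omega'}$ and $\Phi_{\eE^\vee}^{\SY \to \SX}[2]\bigl(\aA_{\Omega'}\bigr) \cong \aA_\Omega$.

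The genuine difficulty is not in this formal argument but in the inputs that Section~\ref{sec:FMT-tilt-stability} must supply: the four inclusions of $\tT_\Omega', \fF_\Omega'$ (and their $Y$-counterparts) under $\Phi_{\eE}^{\SX \to \SY}$ and $\Phi_{\eE^\vee}^{\SY \to \SX}[1]$, whose proof requires controlling the $\bB_{\Omega'}$-cohomology of Fourier--Mukai images of tilt-(semi)stable objects and the behaviour of the tilt slope under the cohomological Fourier--Mukai transform in its anti-diagonal form. Within the proof of the theorem itself, the only point demanding care is the bookkeeping of shifts, so that each shifted extension-closure $\langle \fF_{\Omega'}'[-1], \tT_{\Omega'}'[-2] \rangle[2]$, $\langle \fF_{\Omega'}', \tT_{\Omega'}'[-1] \rangle[1]$, etc.\ collapses \emph{exactly} onto $\aA_{\Omega'}$ (respectively $\aA_\Omega$), and the observation that $\langle \fF_{\Omega'}'[1], \tT_{\Omega'}' \rangle$ is by definition the extension closure, i.e.\ literally the heart $\aA_{\Omega'}$.
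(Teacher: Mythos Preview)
Your proposal is correct and is essentially the same formal argument the paper gives just before and in Theorem~\ref{prop:equivalence-stab-hearts-threefolds}: once the four inclusions of $\tT_\Omega', \fF_\Omega'$ (and their $Y$-analogues) under the Fourier--Mukai transforms are established in Section~\ref{sec:FMT-tilt-stability}, one shifts to land in $\aA_{\Omega'}$ (resp.\ $\aA_\Omega$), uses extension-closure of the heart, and then concludes equality from the mutual quasi-inverse isomorphisms $\HGa[1]\circ\Ga[1]\cong\id$ and $\Ga[1]\circ\HGa[1]\cong\id$. Your shift bookkeeping and sandwich argument match the paper's exactly, and you rightly identify that the substantive content lies in the preceding propositions (Lemma~\ref{prop:B-cohomo-vanishing-FMT-0-2}, Propositions~\ref{prop:tilt-bounds-FMT-B-cat-0-2} and~\ref{prop:tilt-bounds-FMT-B-1}), not in this final deduction.
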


\subsubsection{Bogomolov-Gieseker type inequality for abelian threefolds}
For a given smooth projective threefold $X$, let $\mM_{\Omega}$ be
the class of
tilt  stable   objects  $E$ with zero tilt slope
and $\Ext^1_{\SX}(\oO_x, E) = 0$ for all $x \in X$.
In Lemma \ref{prop:minimal-objects-threefold-hearts}, we see 
that the objects in $\mM_{\Omega}[1]$ are minimal objects (also called
simple objects in the literature)  in $\aA_{\Omega}$.
Moreover,  due to Lemma \ref{prop:reduction-BG-ineq-class}, we only need to check the
Bogomolov-Gieseker type inequalities for tilt stable objects  in $\mM_{\Omega}$.

Minimal objects of the abelian subcategories $\aA_{\Omega}$ are sent to minimal objects of $\aA_{\Omega'}$  under the Fourier-Mukai  transform $\Phi_{\eE}^{\SX \to \SY}[1]$.
This enables us to obtain an inequality involving the top
part of the Chern character of minimal objects in these abelian
categories.
This is exactly the   Bogomolov-Gieseker type
inequality for tilt stable objects  in $\mM_{\Omega}$.
Therefore, we have the following:

\begin{thm}[=\ref{prop:BG-ineq-abelian-threefolds}]
\label{prop:intro-BGineq-abelian-threefolds}
Any tilt stable object with zero tilt slope satisfies the
strong Bogomolov-Gieseker type inequality for any abelian threefold.
\end{thm}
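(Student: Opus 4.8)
The plan is to read off the inequality from the structural results already assembled: the equivalence of the double-tilted hearts (Theorem~\ref{prop:intro-equivalence-threefolds}), the anti-diagonal form of the cohomological Fourier--Mukai transform (Theorem~\ref{prop:antidiagonal-rep-cohom-FMT}) and the comparison of polarizations (Theorem~\ref{prop:intro-derived-polarization}), combined with the one ingredient that is \emph{not} Fourier--Mukai theoretic, namely the classical ``first'' (Bogomolov type) discriminant inequality satisfied by every tilt stable object of $\bB_{\Omega'}$, available from \cite{BMT}. Throughout write $\Omega = B + i\sqrt{3}\alpha H$ and $\Omega' = B' + i\sqrt{3}\alpha' H'$.

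First I would reduce. By Lemma~\ref{prop:reduction-BG-ineq-class} it suffices to prove the strong Bogomolov--Gieseker type inequality for $E \in \mM_{\Omega}$, that is, for tilt stable $E \in \bB_{\Omega}$ of zero tilt slope with $\Ext^1_{\SX}(\oO_x, E) = 0$ for all $x \in X$; for such an $E$, Lemma~\ref{prop:minimal-objects-threefold-hearts} gives that $E[1]$ is a minimal object of $\aA_{\Omega}$. Next I would transport this to $Y$: by Theorem~\ref{prop:intro-equivalence-threefolds} the functor $\Phi_{\eE}^{\SX \to \SY}[1]$ restricts to an exact equivalence $\aA_{\Omega} \simto \aA_{\Omega'}$, and an equivalence of abelian categories sends minimal objects to minimal objects, so $\Phi_{\eE}^{\SX \to \SY}[1](E[1]) = \Phi_{\eE}^{\SX \to \SY}(E)[2]$ is a minimal object of $\aA_{\Omega'}$.

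I would then split into cases according to the shape of this minimal object: it is either a skyscraper sheaf $\oO_y$ ($y \in Y$) or of the form $G[1]$ with $G$ in the class $\mM_{\Omega'}$ on $Y$. In the first case the anti-diagonal form of Theorem~\ref{prop:antidiagonal-rep-cohom-FMT} forces the twisted Chern character $\ch^{B}(E)$ to be that of a (twisted) semihomogeneous sheaf, and the desired inequality then follows from the zero-tilt-slope relation for $E$ by a short direct computation using the Hodge index theorem. In the main case $\Phi_{\eE}^{\SX \to \SY}(E)[2] \cong G[1]$ with $G \in \mM_{\Omega'}$, so $G$ is a tilt stable object of $\bB_{\Omega'}$ of zero tilt slope; hence $G$ satisfies the ``first'' discriminant inequality of \cite{BMT}, and its vanishing tilt slope expresses $H' \!\cdot\! \ch_2^{B'}(G)$ through $\ch_0(G)$. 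Theorem~\ref{prop:antidiagonal-rep-cohom-FMT}, together with the matching of polarizations supplied by Theorem~\ref{prop:intro-derived-polarization}, expresses the numerical components of $\ch^{B'}(G)$ relative to $H'$ as the reversal of those of $\ch^{B}(E)$ relative to $H$, up to explicit positive scalars (here the reality of $\zeta$ in Theorem~\ref{prop:intro-main-stab-symmetries}, which is exactly what pins $\Omega'$ to $\Omega$, keeps the two sets of numerical constraints compatible). Substituting this dictionary and the two zero-tilt-slope relations into the discriminant inequality for $G$ should collapse it precisely to $\ch_3^{B}(E) - \frac{1}{6}\alpha^2 H^2 \ch_1^{B}(E) \le 0$, the strong Bogomolov--Gieseker type inequality for $E$; combined with the first reduction this proves the theorem.

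The hard part will be this last step, and to a lesser extent the case analysis just before it. Making the ``reversal dictionary'' effective requires carrying the twists built into the kernel $\eE$, the cohomological shifts, and the $\det$-polarization of Theorem~\ref{prop:intro-derived-polarization} through $\Phi_{\eE}^{\SH}$, and then verifying that the \emph{first} inequality on $Y$ lands exactly on the \emph{strong second} inequality on $X$, with the sharp constant $\frac{1}{6}$; it is precisely at this point that the special structure of abelian threefolds is used in an essential way, and where the argument has no counterpart for an arbitrary projective threefold.
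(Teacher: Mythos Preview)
Your overall strategy matches the paper's: reduce via Lemma~\ref{prop:reduction-BG-ineq-class} to $E\in\mM_{\Omega}$, use the equivalence $\aA_{\Omega}\simeq\aA_{\Omega'}$ to send the minimal object $E[1]$ to a minimal object on $Y$, then read off a numerical inequality and transport it back through the anti-diagonal cohomological transform.

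There is, however, a genuine gap in the final step. Write $w_i=v_i^{-D_{\SX},\lx}(E)$ and $G=\Psi[1](E)$. Under the anti-diagonal of Theorem~\ref{prop:antidiagonal-rep-cohom-FMT} (taking the shift into account) one has $v_0^{D_{\SY},\ly}(G)=-c\,w_3$, $v_1^{D_{\SY},\ly}(G)=c\,w_2$, $v_2^{D_{\SY},\ly}(G)=-c\,w_1$ with $c>0$, and a direct computation shows that the ``first'' discriminant inequality $\overline{\Delta}_{H',B'}(G)\ge 0$ is exactly $w_2^2-w_1w_3\ge 0$. Substituting the zero-tilt-slope relation $w_2=\lambda w_1$ yields $w_1(\lambda^2 w_1-w_3)\ge 0$, not $\lambda^2 w_1-w_3\ge 0$. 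So the discriminant on $Y$ does \emph{not} collapse to the strong inequality on $X$; it gives it only when $w_1>0$ and says nothing when $w_1=0$, a case that is not a priori excluded for $E\in\mM_{\Omega}$.

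The paper uses a different, simpler input on the $Y$ side: Proposition~\ref{prop:slope-bounds}--(4) gives the \emph{linear} bound $H'^2\ch_1^{B'-\alpha' H'}(G)\ge 0$ for any tilt stable object of zero tilt slope. In the $D_{\SY}$-twisted coordinates this reads $v_1^{D_{\SY},\ly}(G)+\tfrac{1}{\lambda}v_0^{D_{\SY},\ly}(G)\ge 0$, i.e.\ $w_2-\tfrac{1}{\lambda}w_3\ge 0$, which with $w_2=\lambda w_1$ gives $w_3\le\lambda^2 w_1$ directly --- precisely the strong Bogomolov--Gieseker inequality, with no sign hypothesis on $w_1$. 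You should replace the discriminant by this slope bound.

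Two smaller points. First, your case split presumes that every minimal object of $\aA_{\Omega'}$ is a skyscraper or of the form $G[1]$ with $G\in\mM_{\Omega'}$; Lemma~\ref{prop:minimal-objects-threefold-hearts} only asserts the converse. The paper avoids this: since $\Psi[1](E[1])$ is minimal it lies in $\fF'_{\Omega'}[1]$ or in $\tT'_{\Omega'}$, and because $\Imm Z_{\Omega'}$ vanishes on it, Proposition~\ref{prop:first-tilt-behaves-like-sheaves-surfaces} forces the $\tT'_{\Omega'}$ case into $\Coh_0(Y)$ (whence $E$ would be built from objects of $\mM^o$, a contradiction); in the remaining case one checks $\Ext^1_{\SY}(\oO_y,\Psi[1](E))=0$ by adjunction. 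Second, neither the Hodge index theorem nor Theorem~\ref{prop:intro-derived-polarization} is actually needed; the only numerical inputs are Theorem~\ref{prop:antidiagonal-rep-cohom-FMT}, the zero-tilt-slope relation (Proposition~\ref{prop:imgainary-part-central-charge}), and the slope bound of Proposition~\ref{prop:slope-bounds}--(4).
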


Theorems \ref{prop:intro-equivalence-threefolds} and \ref{prop:intro-BGineq-abelian-threefolds} together with the double tilting construction in \cite{BMT} proves Theorem \ref{prop:intro-main-stab-symmetries}.

\subsection{Higher dimensional abelian varieties}
\label{sec:intro-higher-dim-abelian-varieties}
In Section \ref{sec:cojectural-any-abelian}, for any abelian variety we conjecturally construct a heart for the central charge function \eqref{eqn:centralcharge}, by using the  notion of
 very weak stability condition (see Conjecture \ref{prop:conjecture-stab-cond}).  This essentially generalizes  the  single tilting construction due to Bridgeland and Arcara-Bertram for surfaces (\cite{BriK3, AB}), and the conjectural double tilting construction due to Bayer-Macr\`i-Toda for threefolds (\cite{BMT}).  

%\begin{conj} 
%\label{prop:intro-conjectural-stability}
%\label{conjstab}
%For each $1 \le k < g$, the pair $\sigma_k = (Z^{(k)}_{B + i \omega}, \aA^{(k)}_{B + i \omega})$ gives a very weak stability condition on $D^b(X)$, where the hearts $\aA^{(k)}_{B + i \omega}$, $1\le k \le g$  are defined by 
%\begin{equation*}
% \left.\begin{aligned}
%         & \aA^{(1)}_{B + i \omega} = \Coh(X) \\
%         &\aA^{(k+1)}_{B + i \omega} = \pP_{\sigma_k}((1/2,\,3/2])
%       \end{aligned}
%  \ \right\}.
%\end{equation*}
%Moreover, the pair $\sigma_g = (Z_{B + i \omega}, \aA^{(g)}_{B + i \omega})$ is a Bridgeland stability condition on $D^b(X)$. 
%\end{conj}

 By considering the complexified ample classes $\Omega$ and $\Omega'$  determined by $\Imm \zeta  = 0$ in \eqref{eqn:FMT-action-central-charge},    we formulate the following for the Fourier-Mukai transform $\Phi_{\eE}^{\SX \to \SY}$.

\begin{conj}[=\ref{prop:conjecture-equivalence-stab-hearts}]
\label{prop:intro-conjectural-equivalence}
The Fourier-Mukai transform $\Phi_{\eE}^{\SX \to \SY}: D^b(X) \to D^b(Y)$ gives the equivalence of stability condition hearts conjecturally constructed in Conjecture \ref{prop:conjecture-stab-cond}:
$$
\Phi_{\eE}^{\SX \to \SY} [k]  \left(  \aA^{\SX}_{\Omega}  \right) = \aA^{\SY}_{\Omega'}.
$$
Here  $\Omega = -D_{\SX} + \lambda e^{i k\pi/g }\, \lx $ and $\Omega' = D_{\SY}  - (1/\lambda) e^{-i k \pi/g} \, \ly$
are complexified ample classes on $X$ and $Y$ respectively, for any   $k \in \{1, 2, \ldots, (g-1)\}$ and any $ \lambda \in \RR_{>0}$. 
\end{conj}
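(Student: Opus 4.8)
The plan is to imitate, in the iterated-tilting setting of Conjecture~\ref{prop:conjecture-stab-cond}, the three-step scheme behind Theorem~\ref{prop:intro-equivalence-threefolds}, organised as an induction on the tower level for a fixed dimension $g$, with the cases $g \le 3$ (abelian surfaces and threefolds) serving as the already-established models. Write $\bB^{(0)}_{\Omega} = \Coh(X)$ and, for $0 \le j \le g-2$, let $\bB^{(j+1)}_{\Omega} = \langle \fF^{(j)}_{\Omega}[1], \tT^{(j)}_{\Omega}\rangle$ be the tilt of $\bB^{(j)}_{\Omega}$ along the torsion pair $(\tT^{(j)}_{\Omega}, \fF^{(j)}_{\Omega})$ cut out by the very weak stability at level $j$, so that $\bB^{(g-1)}_{\Omega} = \aA^{\SX}_{\Omega}$, and set up the parallel tower $\bB^{(j)}_{\Omega'}$ on $Y$; the correspondence $\Omega \mapsto \Omega'$ is the one forced by $\Imm\zeta = 0$ in \eqref{eqn:FMT-action-central-charge}, after which $\Phi_{\eE}^{\SX\to\SY}[k]$ carries $Z_{\Omega}$ to a positive multiple of $Z_{\Omega'}$. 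The reduction I would aim for is a statement of the form $\Phi_{\eE}^{\SX\to\SY}[k](\bB^{(j)}_{\Omega}) \subseteq \langle \bB^{(j)}_{\Omega'}, \bB^{(j)}_{\Omega'}[-1], \dots, \bB^{(j)}_{\Omega'}[-N_j]\rangle$ for a suitable shift $[k]$, with $N_j$ strictly decreasing in $j$ and $N_{g-1} = 0$, together with its mirror for $\Phi_{\eE^\vee}^{\SY\to\SX}$. The base case is the behaviour of slope stability of sheaves under $\Phi_{\eE}^{\SX\to\SY}$ established in Section~\ref{sec:FMT-abelian-varieties} for all abelian varieties, which --- read through the anti-diagonal description of $\Phi_{\eE}^{\SH}$ from Theorem~\ref{prop:antidiagonal-rep-cohom-FMT}, the device that tracks how the phases of the twisted Chern characters rotate --- places the image of each $\mu_{\Omega}$-semistable sheaf, and hence of $\bB^{(1)}_{\Omega}$, in such a window for $\bB^{(1)}_{\Omega'}$, respecting the level-$0$ torsion/torsion-free dichotomy up to the shift.

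For the inductive step, passing from level $j$ to level $j+1$, I would follow Sections~\ref{sec:further-FMT-sheaves-abelian-threefolds} and~\ref{sec:FMT-tilt-stability}: take an object of $\bB^{(j+1)}_{\Omega} = \langle \fF^{(j)}_{\Omega}[1], \tT^{(j)}_{\Omega}\rangle$, resolve it through the level-$j$ torsion pair, apply the inductive hypothesis to the two pieces, and then invoke the level-$(j+1)$ very weak stability on $Y$ --- whose central charge is a positive multiple of $Z_{\Omega'}$ --- together with the numerical inequalities coming from the anti-diagonal cohomological transform to relocate each $\bB^{(j+1)}_{\Omega'}$-cohomology into its correct shift, which produces the window at level $j+1$ with strictly smaller width. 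At the top, $j = g-1$, the window is a single position, i.e.\ $\Phi_{\eE}^{\SX\to\SY}[k](\aA^{\SX}_{\Omega}) \subseteq \aA^{\SY}_{\Omega'}$; the reverse inclusion is the analogous statement for $\Phi_{\eE^\vee}^{\SY\to\SX}$ with the complementary shift $[g-k]$, and composing the two via the Mukai-type inversion identities $\Phi_{\eE^\vee}^{\SY\to\SX}\circ\Phi_{\eE}^{\SX\to\SY}\cong[-g]$ and its mirror shows that the two hearts are interchanged, so each inclusion is an equality.

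The principal obstacle is that Conjecture~\ref{prop:conjecture-stab-cond} is itself open for $g \ge 4$, so the argument is necessarily conditional; worse, the very ingredients the inductive step feeds on --- existence of Harder--Narasimhan filtrations and the appropriate Noetherian and support properties for the intermediate very weak stability conditions, and the sharp Bogomolov--Gieseker-type inequality governing the final tilt --- are exactly what is unavailable in higher dimension. Even granting Conjecture~\ref{prop:conjecture-stab-cond}, the delicate point, precisely as for threefolds, is the single-degree vanishing at the top of the tower: ruling out leakage of $\bB^{(g-1)}_{\Omega'}$-cohomology into a neighbouring shift forces the inequality extracted from the anti-diagonal form of $\Phi_{\eE}^{\SH}$ to be sharp, which is the higher-dimensional shadow of the Bogomolov--Gieseker inequality proved here only for $g \le 3$. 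The cleanest route would be to isolate, level by level, exactly which numerical inequality is used, so that the equivalence becomes an unconditional consequence of that single input --- just as it did in the threefold case.
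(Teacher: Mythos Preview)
The statement you are asked to prove is stated in the paper as a \emph{conjecture}, not a theorem: the paper does not give a proof of it in general. What the paper actually establishes is the case $g=2$ (Section~\ref{sec:equivalence-stab-hearts-surface}, following Huybrechts and Yoshioka) and the case $g=3$ (Theorems~\ref{prop:equivalence-hearts-abelian-threefolds} and~\ref{prop:equivalence-stab-hearts-threefolds}), and then formulates the general statement as Conjecture~\ref{prop:conjecture-equivalence-stab-hearts}. The Note immediately following that conjecture says exactly this.

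Your proposal is therefore not to be compared against a proof in the paper --- there is none --- but it is a fair and accurate summary of the strategy the paper carries out for $g\le 3$, recast as an inductive scheme on the tilting level. You have also correctly identified the genuine obstructions: Conjecture~\ref{prop:conjecture-stab-cond} is open for $g\ge 4$, so the intermediate very weak stability conditions and their Harder--Narasimhan filtrations are not known to exist; and even granting those, the sharp numerical control at each level (the analogue of the Bogomolov--Gieseker type inequality) is precisely what is missing. Your final remark, that one should isolate level by level the exact inequality needed, is in the spirit of how the paper reduces the threefold case to the single inequality of Conjecture~\ref{prop:BG-ineq-conjecture}. So your write-up is an honest outline of a conditional programme, not a proof, and matches the paper's own stance on the statement.
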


%%%%%%%%%%
%%%%%%%%%%

\subsection{Relation to the existing works}
\label{sec:intro-current-work}
 
 \subsubsection{Relation to \cite{MP1, MP2, PiyThesis}}
As mentioned before, this paper generalizes previous work \cite{MP1, MP2, PiyThesis} on a principally polarized abelian threefold with Picard rank one to any abelian threefold.
Moreover, many proofs in this paper are adopted from that of the similar results in those works. 
Also for the completeness and for the convenience of the reader, we  give almost all the proofs relevant to general abelian threefolds.
In particular, we extend the proof 
of the Bogomolov-Gieseker type inequality conjecture in \cite{MP1,MP2, PiyThesis}  for any abelian threefold by using the Fourier-Mukai theory. 

Let us highlight the connections of the notations in this paper with the notations in \cite{MP1, MP2, PiyThesis}.
Suppose $X$ is a principally polarized abelian threefold with Picard rank one. Let $\lx \in \NS(X)$ be the corresponding principal polarization, and so   $\lx^3/6 =1$. The twisted Chern character of any $E \in D^b(X)$ is of the form  $\ch^B(E) = (a_0, a_1 \lx, a_2 \lx^2/2 , a_3\lx^3/6)$ for some $a_i \in \QQ$ when $B$ is a rational class, and   in \cite{MP1,MP2, PiyThesis} the authors simply denote such Chern characters in  vector form
\begin{equation}
\label{eqn:chern-ppa3}
(a_0, a_1, a_2, a_3) \in \QQ^{4}.
\end{equation}
 They consider the twisted slope function on $\Coh(X)$ defined by 
$a_1/a_0$, 
and study the slope stability of sheaves under the Fourier-Mukai transforms on $X$. 
Moreover, they consider  the tilt slope defined in terms of $a_0$, $a_1$ and $a_2$,
and study the tilt stability of complexes in the first tilted hearts under the Fourier-Mukai transforms. 
In this paper we are interested in the twisted slope functions and also tilt slope functions defined with respect to the numerology in the vector 
$$
v^{B, \lx} (E)= (\lx^3 \ch_0^B(E) , \lx^2 \ch_1^B(E), 2\lx \ch_2^B(E) , 6 \ch_3^B(E)).
$$
Here $\lx$ is any ample class in $\NS_{\QQ}(X)$. 
Now one can see that for the principally polarized abelian threefold with Picard rank one case, 
$$
v^{B, \lx} (E) = 6 (a_0, a_1, a_2, a_3),
$$
that is a fixed scalar multiple of the vector in \eqref{eqn:chern-ppa3}.

 \subsubsection{Relation to other works}

The main results in this paper were summarized in the author's article \cite{PiyKinosaki} for the
Proceedings of Kinosaki Symposium on Algebraic Geometry  2015.

In \cite{BMS}, the authors  establish the Bogomolov-Gieseker type inequality conjecture  for any abelian threefold  by extensive use of the multiplication map $x \mapsto mx$ on abelian threefolds.

In \cite{YoshiokaFMT}, Yoshioka studied the behavior of slope stability under the 
Fourier-Mukai transform  on  abelian surfaces.
Moreover, he established the claim in Conjecture \ref{prop:intro-conjectural-equivalence} for abelian surfaces using Fourier-Mukai theory, however, this is  firstly known due to Huybrechts (\cite{HuyK3Equivalence}).

In a forthcoming article we use the main result of this paper (Theorem \ref{prop:intro-main-stab-symmetries}) to prove the full support property and to  study the  stability manifold of any abelian threefold.

\subsection{Notation}
%Let us collect some of the important notations that we use in this paper as follows:
%%%%%%%%%%%%%%%%%%%%%%%%%%%%%%%%%%
%%%%%%%%%%%%%%%%%%%%%%%%%%%%%%%%%%

\begin{itemize}[leftmargin=*]

\item When  $\aA$ is the heart of a bounded t-structure  on 
a triangulated category $\dD$,  by 
$H_{\aA}^i(-)$ we denote the corresponding  $i$-th cohomology functor. 

\item 
For a set of objects $\sS \subset \dD$ in a triangulated category $\dD$,  by 
$\langle \sS \rangle \subset \dD$ we denote
its extension closure, that is the smallest extension closed subcategory 
of $\dD$ which contains $\sS$.

\item Unless otherwise stated,  throughout this paper, all the varieties are smooth projective and defined over 
$\mathbb{C}$.  For a variety $X$, by $\Coh(X)$ we denote  the category of 
coherent sheaves on $X$, and by $D^b(X)$ we denote the bounded derived category of $\Coh(X)$. That is 
$D^b(X) = D^b( \Coh(X))$.

\item For  $D^b(X)$ we simply write  $\hH^i(-)$ for $H_{\Coh(X)}^i(-)$.

\item For a variety $X$, 
by $\omega_X$ we denote its canonical line bundle, and let $K_X = c_1(\omega_X) $.

\item For  $ M = \QQ, \RR, \text{ or } \CC$  we write $\NS_M(X) = \NS(X) \otimes_{\ZZ} M$.

%\item For $E, F \in D^b(X)$, denote $\hom_{\SX}(E,F) = \dim \Hom_{\SX}(E,F)$, and when $E$ is a sheaf, 
%$h^i(E) = \dim H^i(E, X)$.  

\item For $0 \le i \le \dim X$, $\Coh_{\le i}(X)  = \{E \in \Coh(X): \dim \Supp(E)  \le i  \}$, $\Coh_{\ge i}(X) = \{E \in \Coh(X): \text{for } 0 \ne F \subset E,   \ \dim \Supp(F)  \ge i  \}$ and $\Coh_{i}(X) = \Coh_{\le i}(X) \cap \Coh_{\ge i}(X)$. 

\item For $E \in D^b(X)$, $E^\vee = \RRR \calHom(E, \oO_X)$. When $E$ is a sheaf we write its dual sheaf $\hH^0(E^\vee)$ by $E^*$.

\item The structure sheaf of a closed subscheme $Z \subset X$ as an object in $\Coh(X)$ is denoted by $\oO_Z$, and when $Z = \{x\}$ for a closed point $x\in X$, it is simply denoted by $\oO_x$.

\item $\ch_{\le k} = (\ch_0, \ch_1, \ldots, \ch_k, 0, \ldots, 0)$, and 
$\ch_{\ge k} = (0, \ldots, 0, \ch_k, \ch_{k+1}, \ldots, \ch_n)$.

\item For  $B \in \NS_{\RR}(X)$, the twisted Chern character $\ch^B(-) = e^{-B} \cdot \ch(-)$. 
For ample $H \in \NS_{\RR}(X)$, we define $v^{B,H}(E) = (H^3 \ch_0^B(E), H^2 \ch_1^B(E), 2H \ch_2^B(E), 6 \ch_3^B(E))$.

\item  The twisted slope on $\Coh(X)$ is defined by $\mu_{H,B}(E) = \dfrac{H^2 \ch_1^B(E)}{H^3 \ch_0(E)} = \dfrac{v_1^{B,H}(E)}{v_0^{B,H}(E)}$.

\item  Tilt slope on $\bB_{H,B}$ is defined by 
$$
\nu_{H,B, \alpha} (E) = \dfrac{H \ch_2^{B}(E) - (\alpha^2/2) H^3\ch_0(E)}{H^2 \ch^B_1(E)}
= \dfrac{v_1^{B,H}(E) - \alpha^2 v_0^{B,H}(E)}{2 v_1^{B,H}(E)}.
$$ 

\item $\HN^{\mu}_{H, B}(I) = \langle E \in \Coh(X) : E \text{ is } \mu_{H , B}\text{-semistable with }
\mu_{H , B}(E) \in I \rangle$. Similarly, we define  $\HN^{\nu}_{H, B}(I) \subset \bB_{H,B}$.

\item We denote the upper half plane $\{z \in \mathbb{C} : \Imm z >0\}$ by $\mathbb{H}$. 

\item We will denote a  $g \times g$ anti-diagonal matrix with entries $a_k$, $k=1, \ldots,  g$ by
$$
\Adiag(a_1, \ldots, a_g)_{ij} : = \begin{cases}
a_k & \text{if } i=k, j=g+1-k \\
0 & \text{otherwise}.
\end{cases}
$$

\end{itemize}
%%%%%%%%%%%%%%%%%%%%%%%%%%%%%%%%%%
%%%%%%%%%%%%%%%%%%%%%%%%%%%%%%%%%%
\subsection{Acknowledgements}
The author would like to specially thank  Antony Maciocia for his guidance given to his doctoral studies.
The author is grateful to Yukinobu Toda for very helpful discussions relating to this work, and also to Arend Bayer and Tom Bridgeland for very useful comments and suggestions given 
during the  author's PhD defense. 
This work is supported by the World Premier International Research Center Initiative (WPI Initiative), MEXT, Japan.
%%%%%%%%%%%%%%%%%%%%%%%%%%%%%%%%%%
%%%%%%%%%%%%%%%%%%%%%%%%%%%%%%%%%%
\section{Preliminaries}
\label{sec:preliminaries}
%%%%%%%%%%%%%%%%%%%%%%%%%%%%%%%%%%
%%%%%%%%%%%%%%%%%%%%%%%%%%%%%%%%%%
\subsection{Some homological algebra}
\label{sec:some-homological-algebra}
%%%%%%%%%%%%%%%%%%%%%%%%%%%%%%%%%%%%%%%%%%%%%%%%%%%%%%%%%%%%%%%%%%%%%%%%%%%%%%%%%%%%%%%%%%%%%%%%
%%%%%%%%%%%%%%%%%%%%%%%%%%%%%%%%%%%%%%%%%%%%%%%%%%%%%%%%%%%%%%%%%%%%%%%%%%%%%%%%%%%%%%%%%%%%%%%%

A {\it triangulated category} $\dD$ is an additive category equipped with a shift functor, and a class of triangles, called distinguished triangles satisfying certain axioms. We denote the shift
 functor by $[1]: \dD \to \dD$, and  write a distinguished triangle as $A \to B \to C \to A[1]$. 
The bounded derived categories of coherent sheaves on smooth projective varieties are the most important examples of triangulated categories in this paper.

\begin{defi}
\rm
A {\it t-structure} on $\dD$ is a pair of strictly full subcategories $(\dD^{\le 0}, \dD^{\ge 0} )$ such that,
if we let $\dD^{\le n} =\dD^{\le 0}[-n]$ and  $\dD^{\ge n} =\dD^{\ge 0}[-n]$, for $n \in \ZZ$, then we have
\begin{enumerate}[label=(\roman*)]
\item  $\dD^{\le 0} \subset \dD^{\le 1}$, $\dD^{\ge 0} \supset  \dD^{\ge 1}$,
\item $\Hom_{\dD}(E, F ) = 0$  for $E \in \dD^{\le 0}$ and $F \in \dD^{\ge 1}$,
\item for any $G \in \dD$ there exists a distinguished triangle $E \to G \to F \to E[1]$ such that $E \in \dD^{\le 0}$ and $F \in \dD^{\ge 1}$.
\end{enumerate}
The {\it heart} $\cC $ of this t-structure is $\cC = \dD^{\le 0} \cap  \dD^{\ge 0}$.
The t-structure is called {\it bounded} if
$$
\bigcup_{n \in \ZZ} \dD^{\le n} = \dD = \bigcup_{n \in \ZZ} \dD^{\ge n}.
$$
\end{defi}
It is known that the heart $\cC$ is an abelian category, and also
a bounded t-structure is determined by its heart (see \cite[Lemma 3.1]{BriK3}).
So we denote the $i$-th cohomology of $E \in \dD$ with respect to the t-structure $(\dD^{\le 0}, \dD^{\ge 0} )$ by
$H^i_{\cC}(E)$.

If $A \to B \to C \to A[1]$ is a distinguished triangle in $\dD$, then we have the exact sequence
$$
\cdots \to  H^{i-1}_{\cC}(C) \to  H^i_{\cC}(A) \to  H^i_{\cC}(B) \to  H^i_{\cC}(C) \to  H^{i+1}_{\cC}(A) \to  \cdots
$$
 of cohomologies from $\cC$.

Let $D^b(\aA)$ be the bounded derived category of an abelian category $\aA$. Then the pair of subcategories
\begin{equation*}
 \left.\begin{aligned}
         &D^b(\aA)^{\le 0} = \{E \in D^b(\aA) : H^i_{\aA}(E)= 0 \text{ for } i>0   \} \\
         &D^b(\aA)^{\ge 0} = \{E \in D^b(\aA) : H^i_{\aA}(E)= 0 \text{ for } i<0   \}
       \end{aligned}
  \ \right\}
\end{equation*}
define a bounded t-structure on $D^b(\aA)$ and the corresponding heart is $\aA$. This is called the {\it standard t-structure} on $D^b(\aA)$.

Let us discuss about the torsion theory of an abelian category.
It provides a useful method, called tilting, to construct interesting t-structures from the known ones.
This was first introduced by Happel,  Reiten and Smal{\o}  in \cite{HRS}.

\begin{defi}
\label{dsdsdsdsd}
\rm
A {\it torsion pair} on an abelian category $\aA$ is a pair of subcategories $(\tT, \fF)$ of $\aA$ such that
\begin{enumerate}[label=(\roman*)]
\item $\Hom_{\aA}(T,F) = 0$ for every $T \in \tT$, $F \in \fF$, and
\item every $E \in \aA$ fits into a short exact sequence
$0 \to T \to E \to F \to 0$
in $\aA$ for some  $T \in \tT$, $F \in \fF$.
\end{enumerate}
\end{defi}

\begin{lem}[{\cite[Proposition 2.1]{HRS}}]
\label{sdsdsdsd}
Let $\aA$ be the heart of a bounded t-structure on a triangulated category $\dD$ and let $(\tT,\fF)$ be a torsion pair on $\aA$. Then
the full subcategory defined by
$$
\bB = \{E \in \dD : H^{i}_{\aA}(E) = 0 \text{ for } i \ne -1,0, \ H^{-1}_{\aA}(E) \in \fF, \ H^{0}_{\aA}(E) \in \tT \}
$$
is the heart of bounded t-structure given by the pair of subcategories
\begin{equation*}
 \left.\begin{aligned}
         &\dD^{\le 0} = \{X \in \dD : H^{i}_{\aA}(E) = 0 \text{ for } i > 0, \  H^{0}_{\aA}(E) \in \tT \} \\
         &\dD^{\ge 0} = \{X \in \dD : H^{i}_{\aA}(E) = 0 \text{ for } i <-1, \  H^{-1}_{\aA}(E) \in \fF \}
       \end{aligned}
  \ \right\}.
\end{equation*}
\end{lem}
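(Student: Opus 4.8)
The plan is to verify directly that the pair $(\dD^{\le 0}, \dD^{\ge 0})$ given in the statement satisfies the three t-structure axioms and that its heart is exactly $\bB$, and then to check boundedness. Throughout I write $\tau^{\le n}_{\aA}$, $\tau^{\ge n}_{\aA}$ for the truncation functors of the standard t-structure on $\dD$ determined by $\aA$, so that every $G \in \dD$ sits in a functorial triangle $\tau^{\le n}_{\aA} G \to G \to \tau^{\ge n+1}_{\aA} G \to \tau^{\le n}_{\aA} G[1]$, and $H^i_{\aA}$ is computed from these truncations in the usual way. Axiom (i) is immediate: unwinding $\dD^{\le 1} = \dD^{\le 0}[-1]$ and $\dD^{\ge 1} = \dD^{\ge 0}[-1]$, the inclusions $\dD^{\le 0} \subset \dD^{\le 1}$ and $\dD^{\ge 1} \subset \dD^{\ge 0}$ reduce to the trivial observation that the zero object lies in both $\tT$ and $\fF$.

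For axiom (ii), take $E \in \dD^{\le 0}$ and $F \in \dD^{\ge 1}$. Then $E$ is concentrated in standard cohomological degrees $\le 0$ with $H^0_{\aA}(E) \in \tT$, while $F$ is concentrated in standard degrees $\ge 0$ with $H^0_{\aA}(F) \in \fF$. Applying $\Hom_{\dD}(-,F)$ to the triangle $\tau^{\le -1}_{\aA}E \to E \to H^0_{\aA}(E) \to$ and using the Hom-vanishing of the standard t-structure gives $\Hom_{\dD}(E,F) \cong \Hom_{\dD}(H^0_{\aA}(E), F)$; applying $\Hom_{\dD}(H^0_{\aA}(E), -)$ to the triangle $H^0_{\aA}(F) \to F \to \tau^{\ge 1}_{\aA}F \to$ similarly gives $\Hom_{\dD}(H^0_{\aA}(E), F) \cong \Hom_{\dD}(H^0_{\aA}(E), H^0_{\aA}(F)) = \Hom_{\aA}(H^0_{\aA}(E), H^0_{\aA}(F))$. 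This last group vanishes by property (i) of the torsion pair.

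The crux is axiom (iii). Given $G \in \dD$, apply the torsion pair to $H^0_{\aA}(G) \in \aA$ to get a short exact sequence $0 \to T \to H^0_{\aA}(G) \to Q \to 0$ with $T \in \tT$, $Q \in \fF$, and let $E$ be the fiber (in $\dD$) of the composite $\tau^{\le 0}_{\aA}G \to H^0_{\aA}(G) \to Q$. A cohomology long exact sequence chase shows $H^i_{\aA}(E) = H^i_{\aA}(G)$ for $i<0$, $H^0_{\aA}(E) = T \in \tT$, and $H^i_{\aA}(E) = 0$ for $i>0$, so $E \in \dD^{\le 0}$. Forming the octahedron on the composite $E \to \tau^{\le 0}_{\aA}G \to G$ exhibits $F := \Cone(E \to G)$ in a triangle $Q \to F \to \tau^{\ge 1}_{\aA}G \to Q[1]$, whence $H^i_{\aA}(F) = 0$ for $i<0$ and $H^0_{\aA}(F) = Q \in \fF$, i.e.\ $F \in \dD^{\ge 1}$. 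The triangle $E \to G \to F \to E[1]$ is the required decomposition. I expect the octahedral bookkeeping here, keeping track of which triangle realizes which cofiber, to be the one genuinely delicate point; the rest is formal long-exact-sequence manipulation.

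Finally, intersecting the defining conditions of $\dD^{\le 0}$ and $\dD^{\ge 0}$ yields precisely the conditions defining $\bB$, so $\bB = \dD^{\le 0} \cap \dD^{\ge 0}$ is the heart. Boundedness of the tilted t-structure follows from that of the standard one: for any $G \in \dD$ we have $H^i_{\aA}(G) = 0$ for $|i| \gg 0$, and then $G \in \dD^{\le n}$ and $G \in \dD^{\ge -n}$ for all sufficiently large $n$ (the torsion/torsion-free conditions on the extreme cohomology objects are automatic once those objects vanish). This gives $\bigcup_n \dD^{\le n} = \dD = \bigcup_n \dD^{\ge n}$, completing the proof.
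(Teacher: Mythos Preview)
Your proof is correct and complete. The paper itself does not prove this lemma at all: it is simply quoted from \cite[Proposition 2.1]{HRS} and stated without argument, so there is nothing to compare strategically. What you have written is essentially the standard Happel--Reiten--Smal{\o} verification: the only nontrivial axiom is (iii), and you handle it exactly as one does in the literature, by first truncating with respect to the original t-structure and then using the torsion decomposition of $H^0_{\aA}(G)$ together with the octahedral axiom to produce the required triangle. The cohomological bookkeeping in your axiom (iii) step is accurate (in particular the identification $H^0_{\aA}(E)=T$ and $H^0_{\aA}(F)=Q$), and the boundedness argument at the end is the correct observation that once the extreme $\aA$-cohomologies vanish the $\tT$/$\fF$ constraints become vacuous.
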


The abelian subcategory $\bB \subset \dD$ is usually called the {\it tilt} of $\aA$ with respect to the torsion pair $(\tT,\fF)$
and  we also write $\bB = \langle \fF[1], \tT \rangle$. The t-structures defined by the hearts $\aA$ and $\bB$ give two different views for the objects in the triangulated category $\dD$.

The {\it Grothendieck group} $K(\aA)$ of an abelian category $\aA$ is the quotient of the free abelian group generated by the classes $[A]$ of objects $A \in \aA$ modulo the relations given
by $[A]+[C] = [B]$ for every short exact sequences
$0 \to A \to B \to C \to 0$ in $\aA$.
Similarly,  the Grothendieck group $K(\dD)$ of a triangulated category $\dD$ is the free abelian group generated by the classes $[A]$ of $A\in \dD$ with the relations $[A]+[C] = [B]$ for
every distinguished triangles
$A \to B \to C \to A[1]$  in $\dD$.
If $\aA$ is the heart of a bounded t-structure on $\dD$ then 
$
K(\dD)=K(\aA)$.
Moreover, when $\aA = \Coh(X)$ for a variety $X$ we write 
$$
K(X) = K(\Coh(X)) = K(D^b(X)).
$$

%%%%%%%%%%%%%%%%%%%%%%%%%%%%%%%%%%
%%%%%%%%%%%%%%%%%%%%%%%%%%%%%%%%%%
\subsection{Bridgeland stability on varieties}
%%%%%%%%%%%%%%%%%%%%%%%%%%%%%%%%%%
%%%%%%%%%%%%%%%%%%%%%%%%%%%%%%%%%%
Let us introduce the notion of stability conditions as in \cite{BriStab}.
Let $\aA$ be an abelian category. 

A group homomorphism $Z : K(\mathcal{A}) \to \mathbb{C}$ is called a \textit{stability function (also known as central charge function)},
if for all $0 \ne E \in \mathcal{A}$, $
Z(E) \in  \HH \cup \RR_{<0}$.

The \textit{phase} of $0 \ne E \in \mathcal{A}$ is defined by $
\phi(E)  = \frac{1}{\pi}  \arg Z(E) \in (0,1]$.

An object $0 \ne E \in \mathcal{A}$ is called \textit{(semi)stable},
 if     for any $ 0 \ne A \varsubsetneq  E $ in $\aA$,  
 $\phi(A) < (\le) \, \phi(E/A)$.
 
A \textit{Harder-Narasimhan filtration} of $0 \ne E \in \mathcal{A}$ is a finite chain of subobjects
\begin{equation}
\label{eqn:HN-filtration-definition}
0=E_0 \subset E_1 \subset \cdots \subset E_{n-1} \subset E_n = E,
\end{equation}
where factors $F_k = E_k/E_{k-1}$, $k=1, \ldots, n$, are semistable in $\mathcal{A}$ with
$$
\phi(F_1) > \phi(F_2) >  \cdots > \phi(F_{n-1}) > \phi(F_n).
$$
The stability function $Z$ satisfies the \textit{Harder-Narasimhan property} for $\aA$, if such a  filtration exists  for any non-trivial object in $\aA$.

When the Harder-Narasimhan property holds for $\aA$ with respect to the stability function $Z$,
one can show that the filtration \eqref{eqn:HN-filtration-definition} is unique for a given $E \in \aA$.

\begin{defi}[{\cite[Proposition 5.3]{BriStab}}]
\label{defi:Bridgeland-stability}
A stability condition on a triangulated category  $\dD$
is  given by  a pair $(Z, \aA)$, where $\aA$ is the heart of a  bounded t-structure on $\dD$ and a $Z: K(\aA) \to \CC$ is  stability function, such that 
the Harder-Narasimhan property holds for $\aA$ with respect to the stability function $Z$.
\end{defi}

Let $X$ be a  smooth projective variety and let $D^b(X)$  be
the bounded derived category of coherent sheaves on $X$.
We are interested in stability conditions $\sigma = (Z, \aA)$ on 
$D^b(X)$, where  the stability function $Z: K(X) \to \CC$
factors through the Chern character map $\ch: K(X) \to H^{2*}_{\alg}(X,\QQ)$. 
Such stability conditions are usually called  \textit{numerical stability conditions}.

A stability condition $\sigma$ on $D^b(X)$ is called \textit{geometric} if
all the skyscraper sheaves $\oO_x$ of $x \in X$ are $\sigma$-stable of the same phase.
The following result gives some properties of  geometric stability
conditions on  varieties.

\begin{prop}
\label{prop:property-higher-dim-stability-hearts}
Let $X$ be a smooth projective variety of dimension $n$.
Let $\sigma= (Z, \aA)$ be a geometric stability condition on $D^b(X)$ with
all the skyscraper sheaves $\oO_x$ of $x \in X$ are $\sigma$-stable with phase one.
If $E \in  \aA$ then $\hH^i(E) = 0$ for $i \notin \{-n+1, -n+2, \ldots ,0\}$.
\end{prop}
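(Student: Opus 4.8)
The plan is to analyze the position of the coherent cohomology sheaves $\hH^i(E)$ of an object $E$ in a geometric heart $\aA$ by using two kinds of vanishing: a ``top'' vanishing coming from the $t$-structure axiom $\Hom(\aA, \aA[k]) = 0$ for $k<0$ applied against skyscraper sheaves, and a ``bottom'' vanishing coming from Serre duality together with the fact that $\aA$ is the heart of a bounded $t$-structure. First I would recall that since $\sigma$ is geometric with all $\oO_x$ of phase one, every $\oO_x$ lies in $\aA$, and so do all its shifts $\oO_x[j]$ for no $j\neq 0$; in particular $\Hom_{D^b(X)}(E, \oO_x[j]) = 0$ for all $j<0$ and all $x\in X$, because $E, \oO_x \in \aA$ and $\aA$ is the heart of a $t$-structure. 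The idea is that the leftmost (lowest-index) nonzero cohomology sheaf $\hH^{i_0}(E)$, where $i_0 = \min\{i : \hH^i(E)\neq 0\}$, produces, via the truncation triangle $\hH^{i_0}(E)[-i_0] \to E \to \tau_{>i_0}E \to$, a nonzero map $E \to \hH^{i_0}(E)[-i_0]$, and then a nonzero map $\hH^{i_0}(E)[-i_0] \to \oO_x[-i_0 + \text{(something)}]$ by choosing $x$ in the support of $\hH^{i_0}(E)$ and using that any nonzero coherent sheaf admits a nonzero map to some skyscraper (e.g.\ at a point of its support, via the fibre). This should force $-i_0 \le 0$, i.e.\ $i_0 \ge 0$; combined with the range having to contain $0$ when $E\neq 0$, this pins down the top of the range.

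For the lower bound $i \ge -n+1$, I would dualize: consider the rightmost nonzero cohomology sheaf $\hH^{i_1}(E)$, $i_1 = \max\{i:\hH^i(E)\neq 0\}$, which gives a nonzero map $\hH^{i_1}(E)[-i_1] \to E$, hence a nonzero element of $\Hom_{D^b(X)}(\hH^{i_1}(E), E[i_1])$. Now I would like to produce a nonzero map from some $\oO_x$ (or $\oO_x[\,\text{shift}\,]$) into $E$ sitting in a forbidden degree, again contradicting $\Hom(\aA,\aA[k])=0$ for $k<0$. The cleanest route is Serre duality on the smooth projective $n$-fold $X$: $\Hom_{D^b(X)}(\oO_x, E[j]) \cong \Hom_{D^b(X)}(E, \oO_x[n-j])^\vee$ (using $\omega_X$ trivialized near $x$), so the top vanishing just established, $\Hom(E,\oO_x[m]) = 0$ for $m<0$, translates into $\Hom(\oO_x, E[j]) = 0$ for $j>n$; but that is the wrong direction. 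Instead I would run the skyscraper argument on the ``other side'': applying $\RR\calHom(-,\oO_x)$, or equivalently noting that a skyscraper $\oO_x$ has a Koszul resolution of length $n$, one shows $\Ext^k_X(\hH^{i_1}(E), \oO_x) = 0$ for $k>n$ and uses a spectral sequence / truncation argument to bound how far right $E$'s cohomology can live relative to the constraint $\Hom(\oO_x, E[<0]) = 0$. Concretely: the truncation triangle $\tau_{<i_1}E \to E \to \hH^{i_1}(E)[-i_1] \to$ shows $\Hom(\oO_x, E[j])$ surjects onto $\Hom(\oO_x, \hH^{i_1}(E)[-i_1+j]) = \Ext^{j - i_1}_X(\oO_x, \hH^{i_1}(E))$ up to the contribution of $\tau_{<i_1}E$; choosing $x$ so that this $\Ext$ group is nonzero in degree $j - i_1 = n$ (possible for $x$ in the support, since $\Ext^n_X(\oO_x, \fF)\neq 0$ whenever $\fF_x \neq 0$, e.g.\ because $\oO_x$ has projective dimension exactly $n$), we get a nonzero class in $\Hom(\oO_x, E[i_1 + n])$ — wait, this needs $i_1 + n < 0$ to contradict, i.e.\ it bounds $i_1 \ge -n$. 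Refining by one (using that $\oO_x$ of phase one cannot receive a map from $\aA$ in degree $0$ either, or using that the extreme case $i_1 = -n$ would force $\hH^{-n}(E)$ to be a direct sum of skyscrapers, which a geometric $t$-structure argument rules out as a subobject in the wrong degree) should upgrade $i_1 \ge -n$ to $i_1 \ge -n+1$.

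The main obstacle I anticipate is getting the endpoints exactly right — the skyscraper/Koszul argument naturally yields the slightly weaker range $\{-n, \ldots, 0\}$, and squeezing out the extra unit to reach $\{-n+1,\ldots,0\}$ requires genuinely using that \emph{all} skyscrapers are stable of the \emph{same} phase (phase one), not merely that they lie in $\aA$. The likely mechanism: if $\hH^{-n}(E) \neq 0$, pick a point $x$ in its support; the length-$n$ Koszul resolution of $\oO_x$ combined with $E \in \aA^{\le 0}$ and $\oO_x \in \aA$ forces $\hH^{-n}(E)$ to have no higher $\calExt$ against $\oO_X$ locally at $x$, i.e.\ to be locally free there, and then $\oO_x$ being a stable quotient-or-sub in $\aA$ of the same phase as $E$'s relevant factor yields a contradiction with $\phi$ strictly decreasing along Harder–Narasimhan factors unless that cohomology vanishes. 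I would organize the writeup as: (1) skyscrapers lie in $\aA$; (2) top bound via leftmost cohomology and a map to a skyscraper; (3) bottom bound via rightmost cohomology, the Koszul resolution of $\oO_x$, and Serre duality; (4) the endpoint refinement using equality of phases. Steps (1)–(3) are essentially formal; step (4) is where the real work — and the real use of ``geometric'' — sits.
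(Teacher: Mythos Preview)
Your truncation maps go the wrong way, and as a consequence the roles of $i_0 = \min\{i : \hH^i(E)\neq 0\}$ and $i_1 = \max\{i : \hH^i(E)\neq 0\}$ are swapped throughout. From the triangle $\hH^{i_0}(E)[-i_0] \to E \to \tau_{>i_0}E$ one obtains a map \emph{into} $E$, not out of it; the natural map \emph{out} of $E$ is $E \to \tau_{\ge i_1}E = \hH^{i_1}(E)[-i_1]$. Composing the latter with a surjection $\hH^{i_1}(E) \to \oO_x$ yields a nonzero $E \to \oO_x[-i_1]$, and the vanishing $\Hom(E,\oO_x[<0]) = 0$ then forces $i_1 \le 0$ --- that is the \emph{upper} bound, not a bound on $i_0$. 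Your second paragraph, as written, produces $\Hom(\oO_x, E[i_1+n]) \neq 0$ and hence $i_1 \ge -n$, which is vacuous once $i_1 \le 0$ is known. Rerunning it with $i_0$ in place of $i_1$ requires the injectivity of $\Ext^n(\oO_x,\hH^{i_0}(E)) \to \Hom(\oO_x, E[i_0+n])$, and the relevant incoming spectral-sequence differentials from $\Ext^{n-r}(\oO_x, \hH^{i_0+r-1}(E))$ need not vanish, so a nonzero class is not guaranteed.

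The paper avoids all of this. It first uses the Harder--Narasimhan filtration (and $\Coh_0(X) \subset \pP(1)$) to reduce to the case $\Hom(\Coh_0(X), E) = 0$. For such $E$ one then has both $\Hom(E,\oO_x[i]) = 0$ for $i<0$ (from the slicing) and $\Hom(\oO_x, E[j]) = 0$ for all $j \le 0$ (the case $j<0$ from the $t$-structure axiom, the case $j=0$ from the reduction hypothesis). These are precisely the hypotheses of \cite[Proposition~5.4]{BM}, which directly gives that $E$ is quasi-isomorphic to a length-$n$ complex of locally free sheaves, and the cohomology bound follows. The endpoint refinement you isolate in your step~(4) is exactly what the Harder--Narasimhan reduction accomplishes up front: it upgrades $\Hom(\oO_x, E[<0]) = 0$ to $\Hom(\oO_x, E[\le 0]) = 0$, which is the input the Bridgeland--Maciocia result needs for the sharp range.
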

\begin{proof}
The following proof is adapted from \cite[Lemma 10.1]{BriK3}.
Let $\pP$ be the corresponding slicing of $\sigma$.
Since $\aA = \pP((0,1])$ and $\Coh_0(X) \subset \pP(1)$, from the Harder-Narasimhan property, we only need to consider $E \in \aA $ such that $\Hom_{\SX} (\Coh_0(X), E) =0$.
For any skyscraper sheaf $\oO_x$ of $x \in X$ we have $\oO_x[i] \in \pP(1+i)$ and  $E[i] \in \pP((i,1+i])$.
Therefore, for all $i<0$,
$\Hom_{\SX}(E, \oO_x[i]) = 0$, and $\Hom_{\SX}(\oO_x, E[1+i])$ $\cong$ $\Hom_{\SX}(E,\oO_x [n-1-i])^* =0$.
So by \cite[Proposition 5.4]{BM}, $E$ is quasi-isomorphic to a complex of locally free sheaves of length $n$.
This completes the proof as required. 
\end{proof}

When $X$ is a smooth projective curve, the central charge  function $Z$ defined by 
$Z(-) = -\deg(-) + i \rk (-)$ together with the  heart 
$\Coh(X)$ of the standard t-structure defines a geometric stability condition on $D^b(X)$.
However, for a smooth projective variety $X$ with $\dim  X  \ge 2$, there is no numerical
stability condition on $D^b(X)$ with $\Coh(X)$ as the heart of a stability condition 
(see \cite[Lemma 2.7]{TodLimit} for a proof). 
In fact, for a smooth projective surface $X$, when $\sigma = (Z, \aA)$ is a geometric Bridgeland stability condition, the heart $\aA$ is a tilt of $\Coh(X)$ with respect to a torsion pair coming from the usual slope stability on $\Coh(X)$ (see \cite{BriK3, AB}).  

%%%%%%%%%%%%%%%%%%%%%%%%%%%%%%%%%%
%%%%%%%%%%%%%%%%%%%%%%%%%%%%%%%%%%
\subsection{Double tilting stability construction on threefolds}
\label{sec:double-tilting-construction}
Let us briefly recall the conjectural construction of stability conditions on
a given smooth projective threefold $X$ as
introduced in \cite{BMT}.

Let $H, B \in \NS_{\RR}(X)$ such that $H$ an ample class. The twisted Chern
character with respect to $B$
 is defined by 
 \begin{equation*}
  \ch^B(-) = e^{-B} \ch (-).
 \end{equation*}
The twisted slope $\mu_{H , B}$ on  $\Coh(X)$ is defined by, for $E \in \Coh(X)$
$$
\mu_{H, B} (E) = \begin{cases}
+ \infty & \text{if } E \text{ is a torsion sheaf} \\
\frac{H^{2} \ch_1^B(E)}{H^3 \ch^B_0(E)} & \text{otherwise}.
\end{cases}
$$
So we have $\mu_{H,B + \beta H} = \mu_{H, B} - \beta$.

We say  $E \in \Coh(X)$ is $\mu_{H , B}$-(semi)stable, if for any $0 \ne
F \varsubsetneq E$, $\mu_{H , B}(F)< (\le) \mu_{H ,
  B}(E/F)$. 
  
\begin{defi}
\label{def:discriminant}
For $E \in D^b(X)$ we define
\begin{align*}
 & \Delta(E) = (\ch_1(E))^2- 2 \ch_0(E) \ch_2(E) \in H^4_{\alg}(X, \ZZ) ,\\
& \overline{\Delta}_{H, B}(E)  = (H^2 \ch_1^B(E))^2 - 2 H^3 \ch_0(E)  H \ch_2^B(E).
\end{align*}
\end{defi}

\begin{lem}[{Bogomolov-Gieseker Inequality, \cite{HLBook}}]
\label{prop:usual-BG-ineq}
Let $E$ be $\mu_{H,B}$ semistable torsion free sheaf. Then it satisfies 
$$
H \cdot \Delta(E) \ge 0, \ \text{ and }  \ \overline{\Delta}_{H, B}(E) \ge 0.
$$
\end{lem}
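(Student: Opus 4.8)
The statement to prove is the classical Bogomolov--Gieseker inequality, Lemma \ref{prop:usual-BG-ineq}: a $\mu_{H,B}$-semistable torsion-free sheaf $E$ satisfies $H\cdot\Delta(E)\ge 0$ and $\overline{\Delta}_{H,B}(E)\ge 0$.

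\textbf{Plan.} The natural approach is to reduce to the surface case, where the Bogomolov--Gieseker inequality is classical, via a Lefschetz-type hyperplane argument. First I would observe that $\Delta(E)$ is insensitive to twisting by a line bundle, so one can freely adjust $B$; similarly the notion of $\mu_{H,B}$-semistability, and hence I may assume $B$ is rational and, after twisting, arrange things conveniently. The key reduction is: for $H$ very ample (which one reaches by replacing $H$ with a large multiple, using that semistability with respect to $H$ and $mH$ coincide and that $H\cdot\Delta$, $\overline{\Delta}_{H,B}$ scale by positive powers of $m$), a general member $S\in|H|$ is a smooth projective surface, and the restriction $E|_S$ is still $\mu_{H|_S,B|_S}$-semistable. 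This restriction theorem is the standard Mehta--Ramanan/Flenner-type result (and is exactly where \cite{HLBook} is invoked). Then $H\cdot\Delta(E) = \Delta(E|_S)$ (cap product with the class of $S$), and on the surface $S$ the inequality $\Delta(E|_S)\ge 0$ is the classical Bogomolov inequality for semistable sheaves on surfaces, again from \cite{HLBook}.

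\textbf{Steps in order.} (1) Reduce to $H$ ample integral, then to $H$ very ample, checking that both quantities in the statement only improve (scale by positive constants) and that $\mu$-semistability is unchanged under $H\mapsto mH$. (2) Invoke the restriction theorem: for general $S\in|mH|$ with $m\gg 0$, $E|_S$ is torsion-free and $\mu$-semistable on $S$ with respect to the restricted polarization and twist. (3) Apply the surface Bogomolov--Gieseker inequality to $E|_S$ to get $\Delta(E|_S)\ge 0$ as a number, and identify this with $H\cdot\Delta(E)$ up to the positive scalar from step (1). (4) For the second inequality, expand $\overline{\Delta}_{H,B}(E)$: a direct computation shows
\[
\overline{\Delta}_{H,B}(E) = (H^2\ch_1(E) - (H^2\cdot B)H^3\ch_0(E)/\,\cdots)^2 + (\text{terms involving } H\cdot\Delta(E)),
\]
more precisely $\overline{\Delta}_{H,B}(E)$ differs from $H^3\,(H\cdot\Delta(E))$ by a manifestly nonnegative Hodge-index-type term; so $\overline{\Delta}_{H,B}(E)\ge 0$ follows from $H\cdot\Delta(E)\ge 0$ together with the Hodge index theorem applied on $X$ to the class $H^2\ch_1^B(E) - \lambda H^3 H^2$ for suitable $\lambda$. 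Since $\Delta$ is twist-invariant one may as well take $B=0$ here and the inequality $\overline{\Delta}_{H,0}(E)\ge 0$ expands as $H^3\,H\Delta(E) + \big((H^2\ch_1(E))^2 - H^3\,H^2(\ch_1 E)^2\big)$, whose second bracket is $\ge 0$ by Hodge index.

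\textbf{Main obstacle.} The genuinely substantive input is the restriction theorem in step (2) --- that $\mu$-semistability is preserved under restriction to a general high-degree hyperplane section --- which is not elementary; but it is exactly the content cited from \cite{HLBook}, so in this paper it is legitimately quoted rather than proved. The remaining bookkeeping (scaling behaviour of $\Delta$ and $\overline{\Delta}_{H,B}$ under $H\mapsto mH$ and under twists, and the Hodge-index manipulation deducing $\overline{\Delta}_{H,B}\ge0$ from $H\cdot\Delta\ge0$) is routine linear algebra on the algebraic cohomology ring. So I would structure the proof as: cite the surface case and the restriction theorem from \cite{HLBook} for $H\cdot\Delta(E)\ge0$, then give the short Hodge-index computation for $\overline{\Delta}_{H,B}(E)\ge0$.
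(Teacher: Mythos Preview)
The paper does not give a proof of this lemma at all --- it is simply cited from \cite{HLBook} as a standard result, with no argument provided. Your sketch is essentially the standard proof one finds in that reference: restrict to a general complete-intersection surface to reduce $H\cdot\Delta(E)\ge 0$ to the classical surface Bogomolov inequality, and then deduce $\overline{\Delta}_{H,B}(E)\ge 0$ from the identity
\[
\overline{\Delta}_{H,B}(E) = H^3\cdot\bigl(H\cdot\Delta(E)\bigr) + \Bigl[(H^2\ch_1^B(E))^2 - H^3\cdot H\cdot(\ch_1^B(E))^2\Bigr],
\]
whose bracketed term is nonnegative by the Hodge index theorem applied to the divisor class $\ch_1^B(E)$. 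One small slip: you say ``since $\Delta$ is twist-invariant one may as well take $B=0$,'' but $\overline{\Delta}_{H,B}$ is \emph{not} independent of $B$, so reducing to $B=0$ is not automatic. This doesn't matter, however, because the Hodge-index step works verbatim for any divisor class, so you should just run the displayed identity with $\ch_1^B(E)$ in place of $\ch_1(E)$ rather than specializing $B$.
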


  The Harder-Narasimhan property holds for $\mu_{H , B}$ stability on
$\Coh(X)$. 
This enables us to define the following slopes:
\begin{equation*}
\label{eqn:highest-lowest-HN-slopes}
 \left.\begin{aligned}
         &\mu_{H , B}^{+}(E)  = \max_{0 \ne G \subseteq E} \ \mu_{H , B}(G) \\
         & \mu_{H , B}^{-}(E)  = \min_{G \subsetneq E} \ \mu_{H , B}(E/G)
       \end{aligned}
  \ \right\}.
\end{equation*}
Moreover, 
 for a given interval $I \subset \mathbb{R} \cup\{+\infty\}$, we  define 
 the subcategory $\HN^{\mu}_{H, B}(I) \subset
\Coh(X)$ by
\begin{equation}
\label{eqn:HN-mu-interval-subcat}
\HN^{\mu}_{H, B}(I) = \langle E \in \Coh(X) : E \text{ is } \mu_{H , B}\text{-semistable with }
\mu_{H , B}(E) \in I \rangle.
\end{equation}
The subcategories  $\tT_{H , B}$ and $\fF_{H , B}$ of $\Coh(X)$ are defined by
\begin{align*}
\tT_{H , B} = \HN^{\mu}_{H, B}((0, +\infty]), \ \ \
\fF_{H , B} = \HN^{\mu}_{H, B}((-\infty, 0]).
\end{align*}
Now $( \tT_{H , B} , \fF_{H, B})$ forms a torsion pair on $\Coh(X)$
and let the abelian category
\begin{equation*}
\bB_{H , B} = \langle \fF_{H , B}[1], \tT_{H, B} \rangle \subset D^b(X)
\end{equation*}
be the corresponding tilt of $\Coh(X)$.

Let $\alpha \in \RR_{>0}$. 
Following \cite{BMT}, the tilt-slope $\nu_{H, B, \alpha} $ on $\bB_{H , B}$ is defined by, for $E \in \bB_{H,B}$
$$
\nu_{H, B, \alpha}(E) =
\begin{cases}
+\infty & \text{if } H^2 \ch^B_1(E) = 0 \\
\frac{H \ch_2^{B}(E) - (\alpha^2/2) H^3\ch_0(E)}{H^2 \ch^B_1(E)} & \text{otherwise}.
\end{cases}
$$
In \cite{BMT}, the notion of $\nu_{H , B, \alpha}$-stability for objects in
$\bB_{H , B}$ is introduced in a similar way to $\mu_{H,
  B}$-stability on $\Coh(X)$. Also it is proved that the abelian
category $\bB_{H , B}$ satisfies the Harder-Narasimhan property with respect to
$\nu_{H , B, \alpha}$-stability. 
Then similar to \eqref{eqn:HN-mu-interval-subcat}  we define the subcategory $\HN^{\nu}_{H, B, \alpha}(I) \subset \bB_{H, B}$ for an
interval $I \subset \mathbb{R} \cup\{+\infty\}$.
The subcategories $\tT_{H , B, \alpha}'$ and $\fF_{H , B, \alpha}'$ of $\bB_{H, B}$ are defined by
\begin{align*}
\tT_{H , B, \alpha}' = \HN^{\nu}_{H, B, \alpha}((0, +\infty]), \ \ \
\fF_{H, B}' = \HN^{\nu}_{H, B, \alpha}((-\infty, 0]).
\end{align*}
Then $( \tT_{H , B, \alpha}' , \fF_{H , B, \alpha}')$ forms a torsion pair on
$\bB_{H , B}$, and let the abelian category 
\begin{equation}
\label{eqn:double-tilt-heart}
\aA_{H , B, \alpha} = \langle
\fF_{H , B, \alpha}'[1],\tT_{H , B, \alpha}' \rangle \subset D^b(X)
\end{equation}
 be the corresponding tilt.
 
 \begin{defi}
 \label{defi:central-charge}
 The central charge $Z_{H,B, \alpha} : K(X) \to \CC$ is defined by
 $$
 Z_{H,B, \alpha}(-) = \int_X e^{-B - i \sqrt{3}\alpha  H} \ch(-).
 $$
 \end{defi}
 
In \cite{BMT}, authors made the following conjecture to construct stability conditions.
 \begin{conj}[{\cite[Conjecture 3.2.6]{BMT}}]
 \label{prop:BMT-stab-cond-conjecture}
  The pair $(Z_{H,B, \alpha}, \aA_{H,B, \alpha})$
  is a Bridgeland stability condition on $D^b(X)$.
  \end{conj}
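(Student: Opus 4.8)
The plan is to deduce the conjecture for $X$ an abelian threefold (so $K_X = 0$) from the Fourier--Mukai machinery developed in the paper, via the criterion of Bayer--Macr\`i--Toda. Recall from \cite{BMT} that $(Z_{H,B,\alpha}, \aA_{H,B,\alpha})$ is a Bridgeland stability condition on $D^b(X)$ if and only if every $\nu_{H,B,\alpha}$-stable object $E \in \bB_{H,B}$ with $\nu_{H,B,\alpha}(E) = 0$ satisfies $\Ree Z_{H,B,\alpha}(E[1]) < 0$, equivalently the strong Bogomolov--Gieseker type inequality $\ch_3^B(E) - \tfrac{1}{6}\alpha^2 H^2\ch_1^B(E) \le 0$. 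So the whole problem reduces to Theorem~\ref{prop:intro-BGineq-abelian-threefolds}, and I would spend the bulk of the argument on that.

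First I would shrink the class of objects on which the inequality has to be checked. By Lemma~\ref{prop:reduction-BG-ineq-class} it is enough to verify it for objects in $\mM_\Omega$, the tilt-stable objects $E$ with $\nu = 0$ and $\Ext^1_X(\oO_x, E) = 0$ for all $x \in X$; and by Lemma~\ref{prop:minimal-objects-threefold-hearts} the objects of $\mM_\Omega[1]$ are precisely the minimal (simple) objects of the double-tilted heart $\aA_\Omega = \aA_{H,B,\alpha}$. The point of this reduction is that minimality is a purely categorical property, hence preserved by any equivalence of abelian categories.

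Next I would feed in the equivalence of hearts coming from the Fourier--Mukai transform. By Theorem~\ref{prop:intro-equivalence-threefolds}, for suitably paired complexified ample classes $\Omega$ on $X$ and $\Omega'$ on the derived-equivalent abelian threefold $Y$, the functor $\Phi_{\eE}^{\SX \to \SY}[1]$ restricts to an equivalence $\aA_\Omega \xrightarrow{\sim} \aA_{\Omega'}$, and by \eqref{eqn:FMT-action-central-charge} one has $\Phi_{\eE}^{\SX \to \SY}[1]\cdot Z_\Omega = \zeta\, Z_{\Omega'}$ with $\zeta \in \RR_{>0}$, so phases and in particular the zero-tilt-slope locus are matched on both sides. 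Hence a minimal object $E \in \mM_\Omega[1] \subset \aA_\Omega$ is carried to a minimal object $\Phi_{\eE}^{\SX \to \SY}[1](E) \in \mM_{\Omega'}[1] \subset \aA_{\Omega'}$; in particular, up to a shift it lies in the abelian category $\bB_{\Omega'}$, and this heart-membership, translated through the explicit anti-diagonal form of the cohomological Fourier--Mukai transform (Theorem~\ref{prop:antidiagonal-rep-cohom-FMT}), is exactly an inequality on the twisted Chern character of $E$ which unwinds to $\ch_3^B(E) - \tfrac{1}{6}\alpha^2 H^2\ch_1^B(E) \le 0$. Running the symmetric argument with $\Phi_{\eE^\vee}^{\SY \to \SX}[1]$ takes care of the opposite polarization and pins down the constant; this establishes Theorem~\ref{prop:intro-BGineq-abelian-threefolds} and hence the conjecture.

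The hard part will be Theorem~\ref{prop:intro-equivalence-threefolds} itself, i.e.\ that $\Phi_{\eE}^{\SX \to \SY}[1]$ genuinely maps $\aA_\Omega$ onto $\aA_{\Omega'}$. This is not formal: one must first analyse how slope stability of sheaves behaves under $\Phi_{\eE}^{\SX \to \SY}$ on abelian threefolds, show that the images of $\tT_\Omega$ and $\fF_\Omega$ land in the prescribed shifts of $\bB_{\Omega'}$, deduce $\Phi_{\eE}^{\SX \to \SY}(\bB_\Omega) \subset \langle \bB_{\Omega'}, \bB_{\Omega'}[-1], \bB_{\Omega'}[-2]\rangle$ together with the companion statement for $\Phi_{\eE^\vee}^{\SY \to \SX}[1]$, then use the inversion isomorphisms $\Phi_{\eE^\vee}^{\SY \to \SX}[1]\circ\Phi_{\eE}^{\SX \to \SY} \cong [-2]$ to squeeze the cohomological amplitude, and only then study $\nu$-stability of objects of $\bB_\Omega$ carefully enough to control the second tilt and conclude $\aA_\Omega \cong \aA_{\Omega'}$. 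That chain of slope/tilt computations under the transform is where the real work sits; once it and the anti-diagonal description of the cohomological transform are available, the reduction to minimal objects, the extraction of the Bogomolov--Gieseker inequality, and the final appeal to the Bayer--Macr\`i--Toda criterion are comparatively routine.
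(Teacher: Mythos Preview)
Your outline matches the paper's strategy almost exactly: reduce to the strong Bogomolov--Gieseker inequality for objects in $\mM_{\Omega}$ via Lemma~\ref{prop:reduction-BG-ineq-class}, transport minimal objects through the Fourier--Mukai equivalence of double-tilted hearts, and read off the inequality from the anti-diagonal cohomological action. Two steps, however, need sharpening before the argument goes through.

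First, the assertion that a minimal object of $\aA_{\Omega}$ is carried to an object of $\mM_{\Omega'}[1]$ is not automatic: minimal objects of $\aA_{\Omega'}$ include the skyscraper sheaves $\oO_y$ (Lemma~\ref{prop:minimal-objects-threefold-hearts}), and this case genuinely occurs---it corresponds precisely to $E \in \mM^o = \{\eE^*_{X\times\{y\}}[1] : y \in Y\}$. The paper treats $\mM^o$ separately (these objects satisfy the inequality trivially, cf.\ Example~\ref{example:minimal-objects-semihomogeneous-bundles} and Note~\ref{prop:BG-ineq-for-tilt-stable-trivial-discriminant}), and then uses $\Imm Z_{\Omega'}(\Psi[1](E[1]))=0$ together with Proposition~\ref{prop:first-tilt-behaves-like-sheaves-surfaces} to rule out the $\tT'_{\Omega'}$ case for $E\notin\mM^o$.

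Second, ``heart membership'' of the image is not by itself what produces the inequality. The paper shows that $\Psi[1](E)$ is $\nu_{\Omega'}$-stable with zero tilt slope and then invokes the concrete slope bound $v_1^{B'-\alpha'\ly}(\Psi[1](E)) \ge 0$ from Proposition~\ref{prop:slope-bounds}--(4); it is this inequality, translated back through Theorem~\ref{prop:antidiagonal-rep-cohom-FMT} and the relation $v_2^{-D_{\SX},\lx}(E)=\lambda\, v_1^{-D_{\SX},\lx}(E)$ coming from $\Imm Z_{\Omega}(E)=0$, that unwinds to the Bogomolov--Gieseker inequality for $E$. No symmetric pass with $\Phi_{\eE^\vee}^{\SY\to\SX}$ is required.
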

Let us assume $H, B \in \NS_{\QQ}(X)$ and $\alpha^2 \in \QQ$ then 
 similar to the proof of \cite[Proposition 5.2.2]{BMT}  one can show that the abelian category $\aA_{H,B, \alpha}$ is Noetherian. Therefore Conjecture \ref{prop:BMT-stab-cond-conjecture} is equivalent to saying that any $\nu_{H,B, \alpha}$-stable object
 $E \in \bB_{H,B}$ with 
 $\nu_{H,B,\alpha}(E) =0$ satisfies 
\begin{equation*}
\label{eqn:weak-BG-ineq}
\Ree Z_{H,B, \alpha}(E[1]) < 0.
\end{equation*}
See \cite[Corollary 5.2.4]{BMT} for further details.

Moreover in \cite{BMT} they proposed the following strong inequality: 
  \begin{conj}[{\cite[Conjecture 1.3.1]{BMT}}]  
  \label{prop:BG-ineq-conjecture}
Any $\nu_{H,B, \alpha}$ stable objects 
 $E \in \bB_{H,B}$ with 
 $\nu_{H,B,\alpha}(E) =0$ satisfies 
 the so-called \textbf{￼Bogomolov-Gieseker Type Inequality}:
$$
\ch_{3}^B(E) - \frac{1}{6}\alpha^2 \ch_1^{B}(E) \le 0.
$$
\end{conj}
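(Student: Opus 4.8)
The target is the strong Bogomolov--Gieseker inequality of Conjecture \ref{prop:BG-ineq-conjecture}: for every $\nu_{H,B,\alpha}$-stable $E\in\bB_{H,B}$ with $\nu_{H,B,\alpha}(E)=0$ one wants $\ch_3^B(E)-\tfrac16\alpha^2\ch_1^B(E)\le 0$. My first move is to recast this as a statement that need only be tested on a small spanning subclass. By the remark following Conjecture \ref{prop:BMT-stab-cond-conjecture} (that is \cite[Corollary 5.2.4]{BMT}), under the Noetherian hypothesis on $\aA_{H,B,\alpha}$ the inequality is precisely the refinement that upgrades the central-charge positivity $\Ree Z_{H,B,\alpha}(E[1])<0$, so it suffices to prove it for zero-tilt-slope stable objects. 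I would then shrink this class further: using the Harder-Narasimhan and Jordan--H\"older filtrations for $\nu_{H,B,\alpha}$-stability, the additivity of $\ch$ in short exact sequences, and the fact that the constraint $\nu_{H,B,\alpha}(E)=0$ ties $\ch_2^B$ to $\ch_0$ and leaves $\ch_3^B$ as the free top term, I would reduce to the \emph{minimal} objects of the double tilt, those $E$ for which $E[1]$ is simple in $\aA_{H,B,\alpha}$, equivalently $\Ext^1_{\SX}(\oO_x,E)=0$ for all $x$. Proving that the inequality on these minimal objects propagates to all zero-slope stable objects is the first technical step.

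For such a minimal $E$ the lower Chern data are already under control: $\ch_0$, $\ch_1^B$ and $\ch_2^B$ are constrained by $\nu$-semistability and by the classical inequality $\overline{\Delta}_{H,B}(E)\ge 0$ of Lemma \ref{prop:usual-BG-ineq}, applied to the slope-semistable factors of $\hH^{-1}(E)$ and $\hH^0(E)$. What no slope theory sees is an \emph{upper} bound on $\ch_3^B(E)$, and supplying one is the whole difficulty. The plan is to transport $E$ by a derived equivalence under which its top Chern datum becomes a bottom Chern datum that the classical inequality does govern. Concretely I would apply a Fourier--Mukai transform $\Phi_{\eE}^{\SX\to\SY}[1]$: by Theorem \ref{prop:intro-equivalence-threefolds} it carries the double tilt $\aA_{\Omega}$ to $\aA_{\Omega'}$, hence minimal objects to minimal objects, while by Theorem \ref{prop:antidiagonal-rep-cohom-FMT} the induced cohomological transform is anti-diagonal with respect to the twisted characters.

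The anti-diagonal shape is the crux of the computation: it interchanges the roles of $\ch_0$ and $\ch_3$ and of $\ch_1$ and $\ch_2$, so the classical inequality of Lemma \ref{prop:usual-BG-ineq} applied to the image $\Phi_{\eE}^{\SX\to\SY}[1](E)$ on $Y$ reads back on $X$ as exactly the desired upper bound $\ch_3^B(E)-\tfrac16\alpha^2\ch_1^B(E)\le 0$. In other words, the inequality for minimal objects on $X$ and the classical Bogomolov--Gieseker inequality for their transforms on $Y$ are two faces of one linear-algebra identity, read through the anti-diagonal matrix.

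The main obstacle --- and the reason this is stated as a conjecture rather than a theorem --- is that nothing in this strategy is available for an arbitrary smooth projective threefold. For a general $X$ there is no device producing a derived equivalence, or any substitute, that converts a $\ch_3$ bound into lower Chern data controlled by Lemma \ref{prop:usual-BG-ineq}; and the inequality is in fact \emph{false} for some threefolds, for instance the one-point blowup of $\PP^3$ found by Schmidt (\cite{SchCounterExample}). Any genuine proof must therefore inject geometry specific to $X$, and the route above succeeds exactly when a Fourier--Mukai transform with anti-diagonal cohomological action is at hand, as it is for abelian threefolds. I expect the hardest part to be not the Chern-character bookkeeping but the two structural inputs underpinning it: showing that $\Phi_{\eE}^{\SX\to\SY}[1]$ genuinely matches the double-tilted hearts and preserves minimal objects, and establishing the anti-diagonal normal form of the cohomological transform. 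Once those are secured, the inequality drops out by comparing top and bottom Chern terms across the equivalence.
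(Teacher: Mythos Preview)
Your outline is essentially the paper's own argument for the abelian-threefold case (Theorem \ref{prop:BG-ineq-abelian-threefolds}): reduce to objects in $\mM_{H,B,\alpha}$, transport minimal objects to minimal objects via the equivalence of Theorem \ref{prop:equivalence-hearts-abelian-threefolds}, and read the anti-diagonal cohomological action of Theorem \ref{prop:antidiagonal-rep-cohom-FMT} backwards. You also correctly flag that the statement is a conjecture in general and false on some threefolds.

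One point of imprecision worth correcting: on the image side the paper does \emph{not} invoke the classical Bogomolov--Gieseker inequality of Lemma \ref{prop:usual-BG-ineq} directly (that lemma is for torsion-free sheaves, while $\Psi[1](E)$ is a two-term complex in the tilted heart). What is actually used is the much simpler positivity of Proposition \ref{prop:slope-bounds}--(4): for a $\nu$-stable object of zero tilt slope one has $H^2\ch_1^{B'-\alpha' H}\ge 0$, i.e.\ $v_1^{B'-\alpha'\ly}(\Psi[1](E))\ge 0$. Under the anti-diagonal identification this single linear inequality in $v_0,v_1$ on $Y$ becomes exactly $v_3^{-D_{\SX},\lx}(E)-\lambda^2 v_1^{-D_{\SX},\lx}(E)\le 0$ on $X$, after substituting $v_2=\lambda v_1$ from $\Imm Z_{\Omega}(E)=0$. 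The classical inequality does enter, but only upstream, in establishing the slope bounds that feed into Proposition \ref{prop:slope-bounds}; the endgame is a $\ch_1$ positivity, not a discriminant bound. Your reduction step is also phrased a bit loosely: the passage from a general $\nu$-stable zero-slope $E$ to one in $\mM_{H,B,\alpha}$ is not via Harder--Narasimhan or Jordan--H\"older but via the short exact sequence $0\to E\to E'\to Q\to 0$ with $Q\in\Coh_0(X)$ of Lemma \ref{prop:reduction-BG-ineq-class}, where $\ch_3(Q)\ge 0$ makes the inequality only harder for $E'$.
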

Since this stronger conjectural inequality implies the above weak inequality,  Conjecture \ref{prop:BG-ineq-conjecture} implies Conjecture\ref{prop:BMT-stab-cond-conjecture}.
%%%%%%%%%%%%%%%%%%%%%%%%%%%%%%%%%%
%%%%%%%%%%%%%%%%%%%%%%%%%%%%%%%%%%
\subsection{Some properties of tilt stable objects and minimal objects}
\label{sec:tiltproperties}

Let $X$ be a smooth projective threefold.
We follow the same notations for tilt stability introduced in Section \ref{sec:double-tilting-construction} for $X$.

%%%%%%%%%%%%%%%%%%%%%%%%
\begin{prop}[{\cite[Lemma 3.2.1]{BMT}}]
\label{prop:first-tilt-behaves-like-sheaves-surfaces}
For any $0 \ne E \in \bB_{H,B}$, one of the following conditions holds:
\begin{enumerate}[label=(\roman*)]
\item $H^2 \ch_1^B(E) > 0$,
\item $H^2 \ch_1^B(E) =0$ and $\Imm Z_{H,B,\alpha}(E) > 0$,
\item  $H^2 \ch_1^B(E) = \Imm Z_{H,B,\alpha}(E) =0$, $- \Ree Z_{H,B,\alpha}(E) > 0$ and $E \cong T$ for some $0 \ne T \in \Coh_0(X)$.
\end{enumerate}
\end{prop}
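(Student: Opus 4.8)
The plan is to read the shape of $E$ off the construction of the tilted heart and then run a positivity computation on twisted Chern characters, the Bogomolov--Gieseker inequality being the only non-formal ingredient. By Lemma~\ref{sdsdsdsd} and $\bB_{H,B} = \langle \fF_{H,B}[1], \tT_{H,B}\rangle$, any $0\ne E\in\bB_{H,B}$ has $\hH^{-1}(E)\eqcn F\in\fF_{H,B}$, $\hH^0(E)\eqcn T\in\tT_{H,B}$ and no other cohomology, so $\ch(E)=\ch(T)-\ch(F)$ in $K(X)$. First I would record two elementary facts. Since $\fF_{H,B}=\HN^\mu_{H,B}((-\infty,0])$, the sheaf $F$ is torsion free and each of its $\mu_{H,B}$-Harder--Narasimhan factors $F_j$ has slope $\le 0$, whence $H^2\ch_1^B(F)=\sum_j\mu_{H,B}(F_j)\,H^3\ch_0(F_j)\le 0$, with equality exactly when all $F_j$ have slope $0$, i.e.\ $F\in\HN^\mu_{H,B}(\{0\})$. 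For $T$, separating the torsion subsheaf $T_{\mathrm{tor}}$ (whose first Chern class is effective) from the torsion free quotient $T/T_{\mathrm{tor}}$ (still in $\tT_{H,B}$, so with all HN slopes $>0$) one gets $H^2\ch_1^B(T)\ge 0$, with equality exactly when $T/T_{\mathrm{tor}}=0$ and $T_{\mathrm{tor}}$ is supported in dimension $\le 1$, i.e.\ $T\in\Coh_{\le 1}(X)$.

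Adding, $H^2\ch_1^B(E)=H^2\ch_1^B(T)-H^2\ch_1^B(F)\ge 0$. If this is strictly positive we are in case (i), so from now on I may assume $H^2\ch_1^B(E)=0$, which forces $H^2\ch_1^B(T)=H^2\ch_1^B(F)=0$, hence $T\in\Coh_{\le 1}(X)$ and $F\in\HN^\mu_{H,B}(\{0\})$.

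Next I would expand the central charge: one finds $\Imm Z_{H,B,\alpha}(E)=\sqrt 3\,\alpha\bigl(H\ch_2^B(E)-\tfrac{\alpha^2}{2}H^3\ch_0(E)\bigr)$ and $-\Ree Z_{H,B,\alpha}(E)=\ch_3^B(E)-\tfrac{3\alpha^2}{2}H^2\ch_1^B(E)$, so it suffices to show the bracket is $\ge 0$. From $\ch_0(T)=0$ one gets $H^3\ch_0(E)=-H^3\ch_0(F)\le 0$. Since $T\in\Coh_{\le 1}(X)$ we have $\ch_0(T)=\ch_1(T)=0$, so $H\ch_2^B(T)=H\ch_2(T)$ is the intersection of the ample class $H$ with the effective $1$-cycle $\ch_2(T)$; hence $H\ch_2^B(T)\ge 0$, with equality iff $T\in\Coh_0(X)$. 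For the remaining term I would use Lemma~\ref{prop:usual-BG-ineq}: each HN factor $F_j$ is $\mu_{H,B}$-semistable and torsion free with $H^2\ch_1^B(F_j)=0$, so $\overline{\Delta}_{H,B}(F_j)=-2H^3\ch_0(F_j)\,H\ch_2^B(F_j)\ge 0$ together with $H^3\ch_0(F_j)>0$ forces $H\ch_2^B(F_j)\le 0$, and summing gives $H\ch_2^B(F)\le 0$. Therefore $H\ch_2^B(E)-\tfrac{\alpha^2}{2}H^3\ch_0(E)=H\ch_2(T)+\bigl(\tfrac{\alpha^2}{2}H^3\ch_0(F)-H\ch_2^B(F)\bigr)\ge 0$. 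If it is positive we are in case (ii); if it is zero, both nonnegative summands vanish, so $H^3\ch_0(F)=0$ with $F$ torsion free gives $F=0$ (hence $E\cong T$) and $H\ch_2(T)=0$ with $T\in\Coh_{\le 1}(X)$ gives $T\in\Coh_0(X)$, necessarily nonzero since $E\ne 0$; finally $H^2\ch_1^B(T)=0$ and $\ch_3^B(T)=\length(T)$ yield $-\Ree Z_{H,B,\alpha}(E)=\length(T)>0$, which is case (iii).

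The argument is essentially formal bookkeeping with the torsion pair $(\tT_{H,B},\fF_{H,B})$, the additivity of $\ch^B$, and the positivity of an ample class against effective cycles; the one substantive input is the twisted Bogomolov--Gieseker inequality $\overline{\Delta}_{H,B}\ge 0$ for $\mu_{H,B}$-semistable torsion free sheaves (Lemma~\ref{prop:usual-BG-ineq}). It is precisely this that rules out a slope-$0$ sheaf $F$ with $H\ch_2^B(F)>0$, and hence fixes the sign of $\Imm Z_{H,B,\alpha}$ and closes the trichotomy; so that is the only step where anything beyond formal manipulation is needed.
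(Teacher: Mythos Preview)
The paper does not supply its own proof of this proposition; it simply cites \cite[Lemma~3.2.1]{BMT}. Your argument is correct and is exactly the standard one found there: decompose $E$ via the torsion pair $(\tT_{H,B},\fF_{H,B})$, use ampleness of $H$ against effective cycles to control the contribution of $\hH^0(E)$, and apply the Bogomolov--Gieseker inequality (Lemma~\ref{prop:usual-BG-ineq}) to the slope-zero torsion free sheaf $\hH^{-1}(E)$ to force $H\ch_2^B(\hH^{-1}(E))\le 0$.
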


%By construction, $\Coh_{\le 2}(X) \subset \bB_{H, B}$. 
%Moreover, we have the following for its subcategory $\Coh_{\le1}(X)$. 
%\begin{prop}
%\label{prop:Tor1inftyslope}
%We have  
%$\Coh_{\le1}(X) \subset \HN^{\nu}_{H, B, \alpha} (+\infty)$.
%\end{prop}

\begin{prop}[{\cite[Proposition 3.2]{PiyFano3}}]
\label{prop:reflexivityatminus1place}
Let $E \in \HN^{\nu}_{H, B, \alpha}((-\infty,+\infty))$. Then
 $\hH^{-1}(E)$ is a reflexive sheaf.
\end{prop}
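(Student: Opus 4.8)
The statement to prove is Proposition~\ref{prop:reflexivityatminus1place}: for $E \in \HN^{\nu}_{H,B,\alpha}((-\infty,+\infty))$, the sheaf $\hH^{-1}(E)$ is reflexive.

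\medskip

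The plan is to exploit the fact that $E$ lies in the tilted heart $\bB_{H,B}$, so by Lemma~\ref{sdsdsdsd} its only nonzero sheaf cohomologies sit in degrees $-1$ and $0$, with $\hH^{-1}(E) \in \fF_{H,B}$ (a $\mu_{H,B}$-semistable-in-pieces torsion-free sheaf of nonpositive slope) and $\hH^0(E) \in \tT_{H,B}$. First I would recall the standard criterion for reflexivity of a torsion-free sheaf $F$ on a smooth projective threefold: $F$ is reflexive if and only if $\codim \Supp\bigl(F^{**}/F\bigr) \ge 3$, equivalently the natural map $F \to F^{**}$ is an isomorphism outside a finite set of points; and for torsion-free $F$ the cokernel $Q = F^{**}/F$ is always supported in dimension $\le 1$, so the only thing to rule out is that $Q$ has one-dimensional support, i.e. $Q \in \Coh_{\le 1}(X)$ but $Q \notin \Coh_0(X)$. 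So set $F = \hH^{-1}(E)$, form the exact sequence $0 \to F \to F^{**} \to Q \to 0$ of sheaves, and assume for contradiction that $Q$ is not zero-dimensional.

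\medskip

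Next I would lift this to a statement in the triangulated category and then in $\bB_{H,B}$. Since $F$ and $F^{**}$ both lie in $\fF_{H,B}$ (the double dual of a $\mu_{H,B}$-semistable sheaf of slope $\le 0$ is again $\mu_{H,B}$-semistable of the same slope, as reflexive hull preserves $\mu$-semistability and $\mu_{H,B}(F^{**}) = \mu_{H,B}(F)$ because they agree in codimension one), we get a triangle $F[1] \to F^{**}[1] \to Q \to F[2]$, and $F[1], F^{**}[1] \in \bB_{H,B}$ while $Q \in \Coh_{\le 1}(X) \subset \tT_{H,B} \subset \bB_{H,B}$; hence this is a short exact sequence in $\bB_{H,B}$. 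Combining with the defining triangle $F[1] \to E \to \hH^0(E) \to F[2]$ in $\bB_{H,B}$, I would produce a subobject or quotient of $E$ in $\bB_{H,B}$ built from $Q$ — concretely, $Q$ (or a suitable subsheaf of it with pure one-dimensional support) maps into $E$ after pushing through, or one obtains a short exact sequence $0 \to F^{**}[1] \to E' \to \hH^0(E) \to 0$ relating $E$ to $E' $ with $Q$ the kernel of $E \twoheadrightarrow E'$; the point is that $Q$ with one-dimensional support becomes a sub- or quotient object of $E$ in $\bB_{H,B}$.

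\medskip

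Then I would compute tilt slopes. An object $Q \in \Coh_{\le 1}(X)$ has $H^2 \ch_1^B(Q) = 0$, so $\nu_{H,B,\alpha}(Q) = +\infty$ if $H\ch_2^B(Q) \ne 0$ (one-dimensional support with positive degree) — this is case (ii) of Proposition~\ref{prop:first-tilt-behaves-like-sheaves-surfaces}. A subobject of tilt-slope $+\infty$ inside $E$ contradicts $\nu_{H,B,\alpha}(E) < +\infty$ (which holds since $E \in \HN^{\nu}_{H,B,\alpha}((-\infty,+\infty))$ forces $H^2\ch_1^B(E) > 0$, hence finite tilt slope, by Proposition~\ref{prop:first-tilt-behaves-like-sheaves-surfaces}(i)); dually a quotient of slope $+\infty$ also contradicts semistability with finite slope. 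The remaining possibility is that $Q$ is supported in dimension $\le 1$ with $H\ch_2^B(Q) = 0$, i.e. effectively one could still have a curve in the support; but then one passes to the structure sheaf of that curve twisted appropriately, for which $H \ch_2^B > 0$, and reduces to the previous case — or one notes directly that a nonzero object of $\Coh_1(X)$ always has $H\ch_2^B > 0$ for $H$ ample, which settles it. I expect the main obstacle to be the bookkeeping in the second paragraph: carefully identifying $Q$ (or its pure one-dimensional part) as a genuine sub- or quotient object of $E$ inside the abelian category $\bB_{H,B}$ via the long exact cohomology sequence, rather than merely as a subquotient, since only honest sub/quotient objects interact with $\nu$-semistability. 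Once that identification is clean, the slope contradiction is immediate, so $Q \in \Coh_0(X)$ and $F = \hH^{-1}(E)$ is reflexive.
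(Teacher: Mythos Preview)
Your overall strategy is right, but there is a genuine error in the reflexivity criterion that propagates to your conclusion. You assert that a torsion-free sheaf $F$ on a smooth threefold is reflexive if and only if $\codim \Supp(F^{**}/F) \ge 3$, i.e.\ $Q = F^{**}/F \in \Coh_0(X)$. This is false: the ideal sheaf $\iI_p$ of a point has $\iI_p^{**}/\iI_p \cong \oO_p \in \Coh_0(X)$ yet is not reflexive. The correct criterion is simply $Q = 0$. Your final sentence ``$Q \in \Coh_0(X)$ and $F = \hH^{-1}(E)$ is reflexive'' is therefore a non sequitur, and the whole case analysis on $H\ch_2^B(Q)$, designed only to push $Q$ down to dimension zero, is aiming at the wrong target.

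Fortunately the same argument, once the bookkeeping is cleaned up, already gives $Q = 0$. The short exact sequence $0 \to F \to F^{**} \to Q \to 0$ in $\Coh(X)$, with $F, F^{**} \in \fF_{H,B}$ and $Q \in \Coh_{\le 1}(X) \subset \tT_{H,B}$, rotates to a short exact sequence
\[
0 \to Q \to F[1] \to F^{**}[1] \to 0
\]
in $\bB_{H,B}$; composing with the inclusion $F[1] \hookrightarrow E$ coming from $0 \to \hH^{-1}(E)[1] \to E \to \hH^0(E) \to 0$ exhibits $Q$ directly as a subobject of $E$ (no need for the vague ``sub or quotient'' discussion). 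Now \emph{any} nonzero $Q \in \Coh_{\le 1}(X)$, zero-dimensional or not, has $\ch_0(Q) = \ch_1(Q) = 0$, hence $H^2\ch_1^B(Q) = 0$ and $\nu_{H,B,\alpha}(Q) = +\infty$. Since $E \in \HN^{\nu}_{H,B,\alpha}((-\infty,+\infty))$ means every Harder--Narasimhan factor of $E$ has finite tilt slope, $E$ admits no nonzero subobject of infinite tilt slope; hence $Q = 0$ and $F = F^{**}$ is reflexive. (Note also that the contradiction is with $\nu_{H,B,\alpha}^{+}(E) < +\infty$, not with ``$\nu_{H,B,\alpha}(E) < +\infty$'' or ``semistability'' as you wrote --- $E$ is not assumed tilt-semistable.) The paper itself does not reprove this proposition, citing \cite{PiyFano3} instead, but this is the standard argument.
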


%We simply write $\overline{\Delta}_{H}= \overline{\Delta}_{H, 0}$.
%\end{defi}
%We have $\overline{\Delta}_{H, B} = H^3 H \cdot \Delta(E) + (H^2 \ch_1^B(E))^2 - H^3 H (\ch_1^B(E))^2$. 
%From the Hodge index theorem, $(H^2 \ch_1^B(E))^2 - H^3 H (\ch_1^B(E))^2 \ge 0$, 
%and so
%\begin{equation*}
%\overline{\Delta}_{H, B} (E) \ge  H^3 H \cdot \Delta(E).
%\end{equation*}
%
%For any $t \in \RR$ we have 
%\begin{align}
%\label{eqn:nu-to-0-stab-para}
%\nu_{H, B, \alpha}- t & = \frac{H \ch_2^B- (1/2) \alpha^2H^3 \ch_0}{H^2 \ch_1^B} -t \\
%& =  \frac{H \ch_2^{B+tH}  - (1/2)(t^2+\alpha^2)H^3 \ch_0 }{H^2 \ch_1^B }. \nonumber
%\end{align}

Let us recall the following slope bounds from \cite{PT} for cohomology sheaves of complexes in the abelian category  $\bB_{H,B}$. 
\begin{prop}
\label{prop:slope-bounds} 
Let $E \in \bB_{H, B}$.
Then we have the following:
 \begin{enumerate}[label=(\arabic*)]
\item if $E \in \HN^{\nu}_{H, B, \alpha}((-\infty, 0)) $,  then 
$\hH^{-1}(E) \in \HN^{\mu}_{H, B} ((-\infty, -  \alpha))$;

\item if $E \in \HN^{\nu}_{H, B}((0, +\infty)) $,  then 
$\hH^{0}(E) \in \HN^{\mu}_{H, B} ((  \alpha, +\infty])$;
and

\item if $E$ is tilt semistable with $\nu_{H,B, \alpha}(E) =0$, then
\begin{enumerate}[label=(\roman*)]
\item $\hH^{-1}(E) \in \HN^{\mu}_{H, B} ((-\infty, - \alpha])$ with equality
$\mu_{H,B}(E_{-1}(E)) = -\alpha $ holds
  if and only if $H^2 \ch_2^{B - \alpha H}(\hH^{-1}(E)) = 0$, that is 
  when $\Del_{H,B}(\hH^{-1}(E))=0$,
   and
\item when $\hH^{0}(E)$ is torsion free
 $\hH^{0}(E) \in \HN^{\mu}_{H, B} ([ \alpha , +\infty))$ with equality
$\mu_{H,B}(\hH^{0}(E)) = \alpha $ holds
  if and only if $H^2 \ch_2^{B+   \alpha H}(\hH^{0}(E)) =0$, that is 
  when $\Del_{H,B}(\hH^{0}(E))=0$.
\end{enumerate}
\item Let $E$ be $\nu_{H,B, \alpha}$-stable with $\nu_{H,B, \alpha}(E) =0$. Then 
$$
H^2\ch_1^{B + \alpha H}(E) \ge 0, \ \text{ and } \ H^2\ch_1^{B - \alpha H}(E) \ge 0.
$$
\end{enumerate}
\end{prop}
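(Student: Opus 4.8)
The plan is to deduce everything from the canonical short exact sequence $0\to\hH^{-1}(E)[1]\to E\to\hH^0(E)\to 0$ in $\bB_{H,B}$ together with the Bogomolov--Gieseker inequality (Lemma \ref{prop:usual-BG-ineq}). First I would set up a sub/quotient dictionary for the tilt: using Lemma \ref{sdsdsdsd} and the definition of the torsion pair $(\tT_{H,B},\fF_{H,B})$, any subsheaf $F\subseteq\hH^{-1}(E)$ with $\hH^{-1}(E)/F\in\fF_{H,B}$ --- in particular any step of the $\mu_{H,B}$-Harder--Narasimhan filtration of $\hH^{-1}(E)$, hence its maximal destabilizing subsheaf --- lies in $\fF_{H,B}$, so that $F[1]$ is a subobject of $E$ in $\bB_{H,B}$; dually, any quotient sheaf $\hH^0(E)\twoheadrightarrow Q$ lies in $\tT_{H,B}$ and is a quotient object of $E$, and I would apply this to the minimal $\mu_{H,B}$-destabilizing quotient of $\hH^0(E)$. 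I would also use the see-saw property of $\nu_{H,B,\alpha}$-semistability (subobjects have no larger, quotients no smaller, tilt slope, with the value $+\infty$ --- attained exactly on the locus $H^2\ch_1^B=0$, and $H^2\ch_1^B\ge0$ throughout $\bB_{H,B}$ by Proposition \ref{prop:first-tilt-behaves-like-sheaves-surfaces} --- read as the top value). The sole numerical input is that for a $\mu_{H,B}$-semistable torsion-free sheaf $F$ with $r:=H^3\ch_0(F)>0$ and $\mu:=\mu_{H,B}(F)$, the inequality $\overline{\Delta}_{H,B}(F)\ge0$, i.e.\ $H\ch_2^B(F)\le\tfrac{1}{2}r\mu^2$, yields
\[
\nu_{H,B,\alpha}(F[1])\ \ge\ \frac{\alpha^2-\mu^2}{-2\mu}\quad(\mu<0),\qquad \nu_{H,B,\alpha}(F)\ \le\ \frac{\mu^2-\alpha^2}{2\mu}\quad(\mu>0),
\]
with equality in either line exactly when $\overline{\Delta}_{H,B}(F)=0$, and $\nu_{H,B,\alpha}(F[1])=+\infty$ when $\mu=0$; the first right-hand side is positive on $(-\alpha,0)$, vanishes at $-\alpha$, is negative below $-\alpha$, and symmetrically for the second at $\alpha$.

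With this in hand, part (1) is immediate: take $F$ the maximal $\mu_{H,B}$-destabilizing subsheaf of $\hH^{-1}(E)$; then $F[1]$ is a subobject of $E$, so either $F[1]=E$, in which case $\mu_{H,B}(F)<-\alpha$ lest $\nu_{H,B,\alpha}(E)\ge0$, or $F[1]\subsetneq E$ and $\nu_{H,B,\alpha}(F[1])\le\nu_{H,B,\alpha}(E)<0$ forces $\mu_{H,B}(F)<-\alpha$; hence $\hH^{-1}(E)\in\HN^\mu_{H,B}((-\infty,-\alpha))$. Part (2) is dual, applied to the minimal $\mu_{H,B}$-destabilizing quotient $Q$ of $\hH^0(E)$ (if $Q$ is torsion then $\mu_{H,B}(Q)=+\infty>\alpha$; otherwise $\nu_{H,B,\alpha}(Q)\ge\nu_{H,B,\alpha}(E)>0$ forces $\mu_{H,B}(Q)>\alpha$), giving $\hH^0(E)\in\HN^\mu_{H,B}((\alpha,+\infty])$. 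The two inclusions in (3)(i),(ii) are the same arguments with $\nu_{H,B,\alpha}(E)=0$ in place of the strict inequality, producing $\mu^+_{H,B}(\hH^{-1}(E))\le-\alpha$ and, when $\hH^0(E)$ is torsion-free, $\mu^-_{H,B}(\hH^0(E))\ge\alpha$.

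For the equality clauses of (3) I would track the Bogomolov--Gieseker slack. If the bound $-\alpha$ is attained by the relevant $\mu_{H,B}$-semistable factor $F$ of $\hH^{-1}(E)$, then $F[1]\subseteq E$ has $\nu_{H,B,\alpha}(F[1])\ge0$ by the estimate and $\le\nu_{H,B,\alpha}(E)=0$ by semistability, hence $=0$, which forces $\overline{\Delta}_{H,B}(F)=0$; since $\mu_{H,B}(F)=-\alpha$, the identity $\ch^{B-\alpha H}=e^{\alpha H}\ch^B$ turns this into $H\ch_2^{B-\alpha H}(F)=0$, i.e.\ $\Del_{H,B}(F)=0$ in the sense of Definition \ref{def:discriminant}. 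Moreover $\nu_{H,B,\alpha}(F[1])=0=\nu_{H,B,\alpha}(E)$ means $F[1]$ has the same phase as $E$, so $E/F[1]$ is again tilt semistable of zero tilt slope with $\hH^{-1}(E/F[1])=\hH^{-1}(E)/F$, and an induction on the length of the Harder--Narasimhan filtration of $\hH^{-1}(E)$ promotes the statement to $\hH^{-1}(E)$; the $\hH^0(E)$-case is dual via $\ch^{B+\alpha H}=e^{-\alpha H}\ch^B$. Finally, for (4), assuming $E$ is $\nu_{H,B,\alpha}$-stable with $\nu_{H,B,\alpha}(E)=0$ so that (3) applies, additivity of $\ch$ along the canonical sequence gives $H^2\ch_1^{B\pm\alpha H}(E)=H^2\ch_1^{B\pm\alpha H}(\hH^0(E))-H^2\ch_1^{B\pm\alpha H}(\hH^{-1}(E))$; every Harder--Narasimhan factor of $\hH^{-1}(E)$ has $\mu_{H,B}\le-\alpha$, hence $H^2\ch_1^{B\pm\alpha H}\le0$ on it, whereas every torsion-free Harder--Narasimhan factor of $\hH^0(E)$ has $\mu_{H,B}\ge\alpha$ (by the argument of (2) applied to its minimal destabilizing quotient) and its torsion factors have $\ch_0=0$, $H^2\ch_1^B\ge0$, hence $H^2\ch_1^{B\pm\alpha H}\ge0$ on $\hH^0(E)$; adding gives $H^2\ch_1^{B\pm\alpha H}(E)\ge0$.

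The main obstacle is not the arithmetic but the categorical bookkeeping concentrated in the first step: checking carefully that the maximal destabilizing subsheaf of $\hH^{-1}(E)$ and the minimal destabilizing quotient of $\hH^0(E)$ really give honest sub- and quotient objects inside the tilted abelian category $\bB_{H,B}$, treating the degenerate locus $H^2\ch_1^B=0$ (where $\nu_{H,B,\alpha}=+\infty$) and the torsion sheaves uniformly, and --- for the equality clauses --- carrying out the induction that passes from a single extremal Harder--Narasimhan factor to all of $\hH^{-1}(E)$ or $\hH^0(E)$. Once these points are settled, the arguments for (1), (2) and (4) are direct applications of the Bogomolov--Gieseker inequality.
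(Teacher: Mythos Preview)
Your argument is correct. The paper does not prove this proposition in-text but simply cites the $t=0$ case of \cite[Proposition~3.13]{PT} for parts (1)--(3) and deduces (4) from (3) (alternatively from \cite[Proposition~3.6]{PiyFano3}); your approach --- lift the extremal $\mu$-Harder--Narasimhan sub/quotient of $\hH^{-1}(E)$, resp.\ $\hH^0(E)$, to a genuine sub/quotient in $\bB_{H,B}$ and bound its tilt slope via the Bogomolov--Gieseker inequality --- is exactly the standard argument underlying those references.
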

\begin{proof}
(1), (2) and (3) follows from $t=0$ case of \cite[Proposition 3.13]{PT}. 
(4) follows from (3) or from  $t=0$ case of \cite[Proposition 3.6]{PiyFano3}.
\end{proof}
%\subsection{Some minimal objects of the double tilted heart}
%\label{s3.1}
%%%%%%%%%%%%%%%%%%%%%%%%%%%%%%%%%%%%%%%%%%%%%%%%%%%%%%%%%%%%%%%%%%%%%%%%%%%%%%%%%%%%%%%%%%%%%%%%
%%%%%%%%%%%%%%%%%%%%%%%%
%%%%%%%%%%%%%%%%%%%%%%%%
%We identify some classes of minimal objects of the abelian category $\A_{\om,B}$ of  $X$ in this section.

First we recall the definition of a minimal object in an arbitrary abelian category.

\begin{defi}
\label{defi:minimal-objects}
\rm
Let $\mathcal{C}$ be an abelian category. Then a non-trivial object $A \in \mathcal{C}$ is
said to be a {\it minimal object} if
$0 \to E \to A \to F \to 0$ is a short exact sequence in $\mathcal{C}$ then
$E \cong 0$ or $F \cong 0$. That is, $A \in \cC$ is minimal when $A$ has no proper subobjects in $\cC$.
\end{defi}

\begin{defi}
\label{defi:double-tilt-minimal-objects}
\rm
Let $\mM_{H,B, \alpha}$ be the class of all objects $E \in \bB_{H , B,\alpha}$ such that
\begin{enumerate}[label=(\roman*)]
\item $E$ is $\nu_{H,B,\alpha}$-stable,
\item $\nu_{H,B, \alpha}(E) = 0$, and
\item $\Ext_{\SX}^1(\oO_x , E) = 0$ for any skyscraper sheaf $\oO_x$ of $x \in X$.
\end{enumerate}
\end{defi}

%%%%%%%%%%%%%%%%%%%%%%%%
%%%%%%%%%%%%%%%%%%%%%%%%
\begin{lem}[{\cite[Lemma 2.3]{MP1}}]
\label{prop:minimal-objects-threefold-hearts}
The following objects are minimal in  $\aA_{H,B,\alpha}$:
\begin{enumerate}
\item the skyscraper sheaves $\oO_x$  of any $x \in X$, and
\item objects which are isomorphic to $E[1]$, where $E \in \mM_{H,B,\alpha}$.
\end{enumerate}
\end{lem}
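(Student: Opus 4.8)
The plan is to show directly that each of the listed objects admits no proper nontrivial subobject in $\aA_{H,B,\alpha}$, using the description of this heart as the double tilt $\langle \fF_{H,B,\alpha}'[1], \tT_{H,B,\alpha}'\rangle$ and the fact (Proposition \ref{prop:property-higher-dim-stability-hearts} together with the constructions of Sections \ref{sec:preliminaries}) that every object of $\aA_{H,B,\alpha}$ has cohomology sheaves concentrated in degrees $-2,-1,0$. First I would handle the skyscraper sheaves: $\oO_x$ lies in $\Coh_0(X)$, hence in $\tT_{H,B}$, and it has tilt-slope $+\infty$ as $H^2\ch_1^B(\oO_x)=0$, so $\oO_x\in\tT_{H,B,\alpha}'\subset\aA_{H,B,\alpha}$. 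If $0\to P\to\oO_x\to Q\to 0$ were a short exact sequence in $\aA_{H,B,\alpha}$, passing to the long exact sequence of $\bB_{H,B}$-cohomology and then of coherent cohomology sheaves, together with the fact that $\oO_x$ is a simple object in $\Coh(X)$ and that $P,Q$ have the restricted cohomological amplitude, forces $P$ or $Q$ to vanish; this is the standard argument (as in \cite[Lemma 2.3]{MP1}) and I would reproduce it carefully.

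For the second class, let $E\in\mM_{H,B,\alpha}$; then $E$ is $\nu_{H,B,\alpha}$-stable with zero tilt-slope, so $E\in\fF_{H,B,\alpha}'$ and hence $E[1]\in\aA_{H,B,\alpha}$. Suppose $0\to A\to E[1]\to C\to 0$ is exact in $\aA_{H,B,\alpha}$ with $A,C\ne 0$. Taking $\bB_{H,B}$-cohomology, since $E[1]$ has $\bB_{H,B}$-cohomology only in degree $-1$ (namely $E$ itself, which is tilt-stable), the long exact sequence gives $H_{\bB}^{-1}(A)\hookrightarrow E$ and a four-term sequence relating $H^0_{\bB}(A)$, $H^{-1}_{\bB}(C)$, $E$, and $H^0_{\bB}(C)$. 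The key point is that $A\in\aA_{H,B,\alpha}$ means $H^{-1}_{\bB}(A)\in\fF'_{H,B,\alpha}$ and $H^0_{\bB}(A)\in\tT'_{H,B,\alpha}$, and similarly for $C$; combining this with $\nu_{H,B,\alpha}$-stability of $E$ and zero tilt-slope, one shows the only possibilities are $H^{-1}_{\bB}(A)=0$ (so $A\in\tT'_{H,B,\alpha}$ sits in degree $0$) or $H^{-1}_{\bB}(A)=E$. In the first case $A\in\aA_{H,B,\alpha}\cap\tT'_{H,B,\alpha}$ together with $A\hookrightarrow E[1]$ and vanishing of $\Hom(\tT'_{H,B,\alpha},E[1])$-type arguments forces $A$ to be a quotient of $\Coh_0$-type objects, and here the hypothesis $\Ext^1_X(\oO_x,E)=0$ for all $x$ enters: it rules out nontrivial extensions of a length-zero sheaf sitting in degree $0$ inside $E[1]$, forcing $A=0$. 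In the second case $H^{-1}_{\bB}(C)$ is a tilt-quotient of $E$ with the same slope, so by stability $C$ is concentrated in degree $0$ and is again a torsion-zero-dimensional object, killed by the same $\Ext^1$-vanishing hypothesis, so $C=0$.

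I would organize the argument so that the $\Ext^1_X(\oO_x,-)=0$ condition is invoked in exactly one lemma-like step: namely, that any object of $\aA_{H,B,\alpha}$ lying in $\Coh_0(X)$ (in degree $0$) which receives a map from, or maps to, $E[1]$ must be zero. This is really the content of condition (iii) in Definition \ref{defi:double-tilt-minimal-objects}, dualized via $\Ext^1_X(\oO_x,E)\cong\Ext^2_X(E,\oO_x)^\vee$ using Serre duality on the threefold, and it is the step I expect to be the main obstacle — everything else is bookkeeping with the two torsion pairs and the long exact cohomology sequences. Finally, I would remark that since $\aA_{H,B,\alpha}$ is a heart, "minimal" (no proper subobject) is equivalent to "simple", so the statement as phrased is exactly what has been verified. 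The whole proof closely follows \cite[Lemma 2.3]{MP1}, with the only new feature being that $H, B$ need not be proportional to a single polarization, which affects none of the homological steps.
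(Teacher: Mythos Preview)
The paper gives no proof of this lemma; it simply cites \cite[Lemma~2.3]{MP1}, and your outline follows that same argument and is essentially correct. One small correction: in your ``second case'' ($H^{-1}_{\bB_{H,B}}(A)=E$) the map $E\to H^{-1}_{\bB_{H,B}}(C)$ is actually zero, so $H^{-1}_{\bB_{H,B}}(C)\cong H^{0}_{\bB_{H,B}}(A)\in\fF'_{H,B,\alpha}\cap\tT'_{H,B,\alpha}=\{0\}$ and $C=0$ immediately, without any appeal to the $\Ext^1$ hypothesis; that hypothesis is needed only in your first case, where the clean way to see $A\in\Coh_0(X)$ is via $\Imm Z_{H,B,\alpha}(A)=0$ together with $A\in\tT'_{H,B,\alpha}$ and Proposition~\ref{prop:first-tilt-behaves-like-sheaves-surfaces}.
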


%%%%%%%%%%%%%%%%%%%%%%%%
%%%%%%%%%%%%%%%%%%%%%%%%
\begin{prop}[{\cite[Proposition 7.4.1]{BMT}}]
\label{prop:trivial-discriminant-tilt-stable-objects}
Let $E$ be a $\mu_{H, B}$-stable locally free sheaf on $X$ with $\overline{\Delta}_{H,B}(E) =0$. 
Then either $E$ or $E[1]$ in $\bB_{H,B}$ is  $\nu_{H,B, \alpha}$-stable.
\end{prop}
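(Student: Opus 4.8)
The plan is to reduce to the case where the relevant object lies in $\bB_{H,B}$, and then to rule out destabilizing subobjects by combining the Bogomolov--Gieseker inequality for tilt-semistable objects with a Hodge-index (reverse Cauchy--Schwarz) estimate; the point is that the hypothesis $\overline{\Delta}_{H,B}(E)=0$ puts $v^{B,H}(E)$ exactly on the boundary of the relevant positivity cone. Since $E$ is $\mu_{H,B}$-stable, hence $\mu_{H,B}$-semistable, it lies in $\tT_{H,B}\subseteq\bB_{H,B}$ when $\mu_{H,B}(E)>0$ and in $\fF_{H,B}$ (so that $E[1]\in\bB_{H,B}$) when $\mu_{H,B}(E)\le 0$. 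I would treat the case $\mu_{H,B}(E)>0$ in detail; the case $\mu_{H,B}(E)<0$ (with $E[1]\in\bB_{H,B}$) is its mirror image, with the roles of $\hH^{0}$ and $\hH^{-1}$ exchanged, and the boundary case $\mu_{H,B}(E)=0$ is dealt with separately --- there $H^{2}\ch_1^{B}(E[1])=0$, and using that $\mathcal{E}xt^{\ge1}(E,-)=0$ for locally free $E$, together with Serre duality and the fact (from Proposition~\ref{prop:first-tilt-behaves-like-sheaves-surfaces}) that every object of $\bB_{H,B}$ has $H^{2}\ch_1^{B}\ge0$, one checks that $E[1]$ admits no proper subobject in $\bB_{H,B}$ and so is trivially $\nu_{H,B,\alpha}$-stable. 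The two ingredients I will invoke are: (a) every $\nu_{H,B,\alpha}$-semistable object $F\in\bB_{H,B}$ satisfies $\overline{\Delta}_{H,B}(F)\ge 0$ --- the tilt Bogomolov--Gieseker inequality of \cite{BMT}; and (b) $\overline{\Delta}_{H,B}$, regarded as a quadratic form $q$ in the first three entries of $v^{B,H}(-)$, restricts on each plane $\Pi_{\nu}=\{v_2=\alpha^2v_0+2\nu v_1\}$ to a form of signature $(1,1)$ whose non-negative cone has its two components contained in $\{v_1>0\}$ and $\{v_1<0\}$ respectively; consequently, for $v,w\in\Pi_\nu$ with $q(v),q(w)\ge0$ and $v_1,w_1>0$ one has $q(v+w)\ge q(v)+q(w)$, with equality only if $v\parallel w$.

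Assume for contradiction that $E$ is not $\nu_{H,B,\alpha}$-stable. Using the Harder--Narasimhan and Jordan--H\"older properties of $(\bB_{H,B},\nu_{H,B,\alpha})$, I would first reduce --- this is the one substantial point, see the last paragraph --- to a short exact sequence $0\to F\to E\to G\to 0$ in $\bB_{H,B}$ with $F,G\ne0$, with $F$ and $G$ both $\nu_{H,B,\alpha}$-semistable, and $\nu_{H,B,\alpha}(F)=\nu_{H,B,\alpha}(E)=\nu_{H,B,\alpha}(G)$. Since $\nu_{H,B,\alpha}(E)$ is finite (because $H^{2}\ch_1^{B}(E)>0$), and since an object of $\bB_{H,B}$ has tilt-slope $+\infty$ precisely when its $H^{2}\ch_1^{B}$ vanishes, it follows that $H^{2}\ch_1^{B}(F)>0$ and $H^{2}\ch_1^{B}(G)>0$; from $\hH^{-1}(E)=0$ and the long exact cohomology sequence one also gets $\hH^{-1}(F)=0$, so $F$ is a sheaf in $\tT_{H,B}$. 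Hence $v^{B,H}(F)$, $v^{B,H}(E)=v^{B,H}(F)+v^{B,H}(G)$ and $v^{B,H}(G)$ all have positive $v_1$-entry and lie on the plane $\Pi_{\nu_{H,B,\alpha}(E)}$.

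Applying (a) and (b) with $q(v^{B,H}(E))=\overline{\Delta}_{H,B}(E)=0$ I get
$$0=\overline{\Delta}_{H,B}(E)=q\bigl(v^{B,H}(F)+v^{B,H}(G)\bigr)\ \ge\ \overline{\Delta}_{H,B}(F)+\overline{\Delta}_{H,B}(G)\ \ge\ 0,$$
so $\overline{\Delta}_{H,B}(F)=\overline{\Delta}_{H,B}(G)=0$ and, by the equality clause of (b), $v^{B,H}(F)\parallel v^{B,H}(G)\parallel v^{B,H}(E)$; write $v^{B,H}(F)=t\,v^{B,H}(E)$ in the first three coordinates, with $t\in\QQ_{>0}$ (positive as $H^{2}\ch_1^{B}(F)>0$). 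Now let $Q\subseteq E$ be the image of $F$ under the inclusion $F\hookrightarrow E$ (a genuine sheaf map since $\hH^{-1}(F)=0$); from the four-term exact sequence $0\to\hH^{-1}(G)\to F\to E\to\hH^{0}(G)\to0$ its kernel is $\hH^{-1}(G)\in\fF_{H,B}$, which being torsion-free has $H^{3}\ch_0\ge0$ and $H^{2}\ch_1^{B}\le0$. Then $Q\ne0$ (otherwise $F=\hH^{-1}(G)\in\tT_{H,B}\cap\fF_{H,B}=0$), so $H^{3}\ch_0(Q)>0$, and the exact sequence $0\to\hH^{-1}(G)\to F\to Q\to0$ gives $0<H^{3}\ch_0(Q)\le t\,H^{3}\ch_0(E)$ and $H^{2}\ch_1^{B}(Q)\ge t\,H^{2}\ch_1^{B}(E)>0$, whence $\mu_{H,B}(Q)\ge\mu_{H,B}(E)$. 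Since $E$ is $\mu_{H,B}$-stable this forces $Q=E$; comparing ranks and first Chern characters then gives $t=1$, hence $\hH^{-1}(G)=0$ and $G=\hH^{0}(G)=E/Q=0$, contradicting $G\ne0$. This would complete the proof.

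The main obstacle is the reduction, in the second paragraph, to a destabilizing sequence with $F$ \emph{and} $G$ both $\nu_{H,B,\alpha}$-semistable of slope $\nu_{H,B,\alpha}(E)$. If $E$ is $\nu_{H,B,\alpha}$-semistable but not stable this is immediate: take $F$ to be a Jordan--H\"older factor, so $G=E/F$ is automatically semistable of the same slope. If $E$ is not even $\nu_{H,B,\alpha}$-semistable, the maximal destabilizing subobject $F$ is semistable but $G=E/F$ need not be, and since $\overline{\Delta}_{H,B}$ is neither additive nor monotone under extensions one cannot claim $\overline{\Delta}_{H,B}(G)\ge0$ outright. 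I expect to circumvent this either by an induction (on $\ch_0(E)$, or on the number of Harder--Narasimhan factors of $E$), or, equivalently, via the nested-wall structure of tilt stability: a wall for $v^{B,H}(E)$ would be cut out by a class $w$ with $\overline{\Delta}_{H,B}(w)\ge0$ and $\overline{\Delta}_{H,B}(v^{B,H}(E)-w)\ge0$ on some plane $\Pi_\nu$ through $v^{B,H}(E)$, and the computation of the third paragraph (using $\overline{\Delta}_{H,B}(v^{B,H}(E))=0$) shows every such $w$ is proportional to $v^{B,H}(E)$; hence $E$ lies on no tilt-stability wall, its $\nu_{H,B,\alpha}$-stability is independent of $\alpha>0$, and it may be checked at a convenient parameter value where it reduces to the slope stability of $E$ together with the classical Bogomolov--Gieseker inequality of Lemma~\ref{prop:usual-BG-ineq}.
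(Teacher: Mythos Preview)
The paper does not give its own proof of this proposition; it is quoted from \cite[Proposition~7.4.1]{BMT} without argument. Your outline is essentially the proof in \cite{BMT}: the two ingredients you name --- the tilt Bogomolov--Gieseker inequality $\overline{\Delta}_{H,B}(F)\ge0$ for $\nu_{H,B,\alpha}$-semistable $F$, and the signature-$(1,1)$ reverse Cauchy--Schwarz estimate on the plane $\Pi_\nu$ --- are exactly what drives the original argument. Your treatment of the boundary case $\mu_{H,B}(E)=0$ is also correct: for $T\in\Coh_{\le1}(X)$ one has $\Ext^1_X(T,E)\cong\Ext^2_X(E,T)^*\cong H^2(X,E^\vee\otimes T)^*=0$ by Serre duality and local freeness of $E$, so every extension of such $T$ by $E$ splits and $E[1]$ is indeed a minimal object of $\bB_{H,B}$. (Your phrase ``$\calExt^{\ge1}(E,-)=0$'' is a slight misstatement of what is actually used, namely that $\RRR\calHom(E,-)\cong E^\vee\otimes(-)$.)

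The one point that genuinely needs care is the reduction you flag in your last paragraph, and your wall-crossing reformulation is the cleanest way to close it --- in fact it subsumes your contradiction argument entirely. Since $\overline{\Delta}_{H,B}(E)=0$ places $v^{B,H}(E)$ on the boundary of the Lorentzian cone of $q=\overline{\Delta}_{H,B}$, any class $w$ lying with $v^{B,H}(E)$ on a common plane $\Pi_\nu$ and satisfying $q(w)\ge0$, $q(v^{B,H}(E)-w)\ge0$ is forced (by your computation (b)) to be proportional to $v^{B,H}(E)$; hence there is no numerical wall for $E$. Combined with the large-volume limit $\alpha\gg0$, where a $\mu_{H,B}$-stable locally free sheaf is tilt-stable (this is \cite[Lemma~7.2.2]{BMT}, or can be checked directly from the definition), this already yields $\nu_{H,B,\alpha}$-stability for all $\alpha>0$, making the separate contradiction argument of your third paragraph redundant. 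If you prefer to keep that argument, note that your conclusion ``$t=1$'' should really read ``$t\ge1$'' (from $\ch_0(Q)=\ch_0(E)\le t\,\ch_0(E)$), which then contradicts $t<1$ coming from $v_1^{B,H}(G)=(1-t)\,v_1^{B,H}(E)>0$.
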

%%%%%%%%%%%%%%%%%%%%%%%%
%%%%%%%%%%%%%%%%%%%%%%%%
\begin{exam}
\label{example:minimal-objects-line-bundles}
\rm
Let $L$ be a line bundle on the smooth projective threefold $X$. 
Let $D = c_1(L)$.
By direct computation we have $\overline{\Delta}_{H, D \pm \alpha H}(L) =  0$ for any $\alpha >0$.
So by Proposition~\ref{prop:trivial-discriminant-tilt-stable-objects}, 
$L \in \bB_{H, D - \alpha H}$  and $L[1] \in \bB_{H, D + \alpha H}$
are tilt stable objects. 
Moreover, one can check that   $\nu_{H, D- \alpha H, \alpha}(L) =0$ and 
$\nu_{H, D + \alpha H, \alpha}(L[1]) =0$.
So by Lemma~\ref{prop:minimal-objects-threefold-hearts}, 
\begin{align*}
L[1] \in \aA_{H, D- \alpha H, \alpha}, \ \text{ and } \ L[2] \in \aA_{H, D + \alpha H, \alpha}
\end{align*}
  are  minimal objects.
\end{exam}
%%%%%%%%%%%%%%%%%%%%%%%%
%%%%%%%%%%%%%%%%%%%%%%%%
\begin{exam}
\label{example:minimal-objects-semihomogeneous-bundles}
\rm
Let $X$ be an abelian threefold. 
Let $\lx \in \NS_{\QQ}$ be an ample class. From Lemma \ref{prop:semihomo-numerical}--(2), for any $D \in \NS_{\QQ}(X)$ there exists  stable semihomogeneous bundles 
$E $ on $X$ with 
$$
D = c_1(E)/ \rk(E). 
$$
Moreover, $\ch(E) = \rk(E) e^{D}$.
By direct computation one can check that
$\overline{\Delta}_{\lx, D +\pm \alpha \lx}(E) =  0$ for any $\alpha >0$.
So by Proposition~\ref{prop:trivial-discriminant-tilt-stable-objects}, 
$E \in \bB_{\lx, D - \alpha \lx}$  and $E[1] \in \bB_{\lx, D + \alpha \lx}$
are tilt stable objects. 
Moreover, one can check that 
 $\nu_{\lx, D- \alpha \lx, \alpha}(E) =0$ and 
$\nu_{\lx, D + \alpha \lx, \alpha}(E[1]) =0$.
So by Lemma~\ref{prop:minimal-objects-threefold-hearts}, 
\begin{align*}
E[1] \in \aA_{\lx, D- \alpha \lx, \alpha}, \ \text{ and } \ E[2] \in \aA_{\lx, D + \alpha \lx, \alpha}
\end{align*}
  are  minimal objects.
\end{exam}

%%%%%%%%%%%%%%%%%%%%%%%%
%%%%%%%%%%%%%%%%%%%%%%%%

\begin{note}
\label{prop:BG-ineq-for-tilt-stable-trivial-discriminant}
\rm
The tilt stable objects associated to minimal objects in Examples
\ref{example:minimal-objects-line-bundles} and \ref{example:minimal-objects-semihomogeneous-bundles} clearly satisfy the corresponding Bogomolov-Gieseker type inequalities in Conjecture \ref{prop:BG-ineq-conjecture}.
\end{note}
%%%%%%%%%%%%%%%%%%%%%%%%%%%%%%%%%%%%%%%%%%%%%%%%
%\subsection{Reduction of the inequalities to minimal objects}
%\label{s3.2}
%%%%%%%%%%%%%%%%%%%%%%%%%%%%%%%%%%%%%%%%%%%%%%%%
Let us  reduce the requirement of Bogomolov-Gieseker type inequalities to the tilt stable objects in $\mM_{H,B, \alpha}$ (see Definition \ref{defi:double-tilt-minimal-objects}).
First we need the following proposition.
%%%%%%%%%%%%%%%%%%%%%%%%
%%%%%%%%%%%%%%%%%%%%%%%%
%\begin{prop}  {\rm (\cite[Proposition 3.1]{LM})}
%\label{l3.11}
%Let $E \in \B_{\om ,B}$ be a $\nu_{\om,B}$-semistable object with $\nu_{\om, B}(E) < +\infty$. Then
%$H^{-1}_{\coh(X)}(E)$ is a reflexive sheaf.
%\end{prop}
%Therefore, if $E \in \A_{\om ,B}$ is a minimal object not isomorphic to a skyscraper sheaf, then
%$H^{-2}_{\coh(X)}(E)$ is a reflexive sheaf. Also see (iii) of Note~\ref{l2.23}.
%%%%%%%%%%%%%%%%%%%%%%%%
%%%%%%%%%%%%%%%%%%%%%%%%
\begin{prop}[{\cite[Proposition 3.5]{LM}}]
\label{prop:some-tilt-stable-extensions}
Let
$0 \to E \to E' \to Q \to 0$
be  a non splitting short exact sequence  in $\bB_{H,B}$ with
 $Q \in \Coh_{0}(X)$, $\Hom_{\SX}(\oO_x, E') = 0$ for any $x \in X$, and
 $H^2 \ch_1^B(E) \ne 0$. If $E$ is $\nu_{H,B, \alpha}$-stable then $E'$
 is $\nu_{H,B, \alpha}$-stable.
\end{prop}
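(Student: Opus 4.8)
The plan is to argue by contradiction, working entirely inside the abelian category $\bB_{H,B}$. Write $\nu = \nu_{H,B,\alpha}$. Suppose $E'$ is not $\nu$-stable; then there is a proper nonzero subobject $A \subsetneq E'$ with $\nu(A) \ge \nu(E'/A)$, and the goal is to contradict $\nu$-stability of $E$. The guiding principle is that $Q$, being an object of $\Coh_0(X)$, satisfies $\ch_0(Q) = H^2\ch_1^B(Q) = H\ch_2^B(Q) = 0$, so the numerator and denominator defining the tilt-slope $\nu$ are unchanged when one passes to a subobject of, or a quotient by, an object of $\Coh_0(X)$. I will also use two standard facts: $\Coh_0(X)$ is a Serre subcategory of $\bB_{H,B}$ (closed under subobjects and quotients), and every nonzero object of $\Coh_0(X)$ receives a nonzero morphism from some skyscraper $\oO_x$.

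First I would set $K = A \cap E$ (the intersection of the subobjects $A, E \subseteq E'$, formed in $\bB_{H,B}$) and $C = E'/A$. Elementary diagram-chasing (the second and third isomorphism theorems in $\bB_{H,B}$, equivalently the octahedral axiom applied to $0\to E \to E' \to Q \to 0$ and $0 \to A \to E' \to C \to 0$) produces short exact sequences
$$0 \to K \to A \to B_0 \to 0, \qquad 0 \to E/K \to C \to Q_0 \to 0$$
in $\bB_{H,B}$, where $B_0 \subseteq Q$ is a subobject and $Q_0$ is a quotient of $Q$, so $B_0, Q_0 \in \Coh_0(X) \cup \{0\}$. By the guiding principle this gives $\nu(A) = \nu(K)$, and $\nu(C) = \nu(E/K)$ whenever $E/K \ne 0$ (with the convention $\nu = +\infty$ when $H^2\ch_1^B = 0$).

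Then I would split into cases according to $K$. If $K = 0$, then $A \cong B_0$ is a nonzero object of $\Coh_0(X)$, so some $\oO_x$ embeds into $A \subseteq E'$, contradicting $\Hom_{\SX}(\oO_x, E') = 0$. If $K = E$, then either $B_0 = Q$, whence $A = E'$, contradicting properness, or $C \cong Q/B_0$ is a nonzero object of $\Coh_0(X)$, so $\nu(C) = +\infty$ while $\nu(A) = \nu(E)$ is finite (here the hypothesis $H^2\ch_1^B(E) \ne 0$ is used), contradicting $\nu(A) \ge \nu(C)$. If $0 \ne K \subsetneq E$, then $\nu$-stability of $E$ gives $\nu(K) < \nu(E/K)$, that is $\nu(A) < \nu(C)$, again a contradiction. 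Since all cases are impossible, $E'$ is $\nu$-stable.

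\textbf{Main obstacle.} The real care is in the slope bookkeeping with the $+\infty$ convention, and in resisting the temptation to reduce "$E'$ not stable" to the single inequality "$\nu(A) \ge \nu(E')$": the usual see-saw property degenerates precisely when a quotient lands in $\Coh_0(X)$, so one must argue directly with the two sequences above rather than comparing slopes to $\nu(E')$. Everything else is routine — that $\Coh_0(X)$ is Serre in $\bB_{H,B}$, that a nonzero $0$-dimensional sheaf is hit by a skyscraper, and the isomorphism theorems in an abelian category. (Incidentally, the non-splitting hypothesis is automatic here: a splitting would exhibit $Q$ as a $\Coh_0(X)$-subobject of $E'$, contradicting $\Hom_{\SX}(\oO_x, E') = 0$.)
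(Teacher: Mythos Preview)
The paper does not give its own proof of this proposition; it is stated with a citation to \cite[Proposition~3.5]{LM}. Your argument is correct and self-contained: the reduction to the intersection $K = A \cap E$, the observation that passing to a $\Coh_0(X)$-sub or -quotient leaves the tilt-slope unchanged, and the trichotomy on $K$ together handle every destabilising subobject. The two auxiliary facts you invoke are both available in the paper's framework: that $\Coh_0(X)$ is closed under subobjects and quotients in $\bB_{H,B}$ follows from Proposition~\ref{prop:first-tilt-behaves-like-sheaves-surfaces} (the functions $H^2\ch_1^B$ and $\Imm Z_{H,B,\alpha}$ are both non-negative on $\bB_{H,B}$ when the former vanishes, and both vanish on $\Coh_0(X)$), and the existence of a skyscraper inside any nonzero $0$-dimensional sheaf is elementary. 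Your parenthetical remark that the non-splitting hypothesis is automatic under $\Hom_{\SX}(\oO_x,E')=0$ is also correct.
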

%%%%%%%%%%%%%%%%%%%%%%%%
%%%%%%%%%%%%%%%%%%%%%%%%
\begin{lem}[{\cite[Proposition 2.9]{MP1}}]
\label{prop:reduction-BG-ineq-class}
Let $E \in  \bB_{H,B}$ be $\nu_{H,B,\alpha}$ stable with $\nu_{H,B,\alpha}(E)=0$.
Then there exists 
$E' \in \mM_{H,B,\alpha}$
(that is $E'[1]$ is a minimal object in $\aA_{H,B,\alpha}$)  such that
$$
0 \to E \to E' \to Q \to 0
$$
is a short exact sequence  in $\bB_{H,B}$ for some $Q \in \Coh_{0}(X)$.

Since we have $\ch_{3}^B(Q) - \frac{1}{6}\alpha^2 \ch_1^{B}(Q) =  \ch_{3}(Q) \ge 0$,
$E$ satisfies the Bogomolov-Gieseker type inequality in Conjecture \ref{prop:BG-ineq-conjecture}
if $E' \in \mM_{H,B, \alpha}$   satisfies the corresponding
inequality.
\end{lem}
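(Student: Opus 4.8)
The plan is to build $E'$ from $E$ by a finite chain of universal extensions by skyscraper sheaves, keeping tilt stability along the way via Proposition~\ref{prop:some-tilt-stable-extensions} and using the Noetherianity of $\aA_{H,B,\alpha}$ to force the chain to terminate. Two preliminary observations. By Proposition~\ref{prop:first-tilt-behaves-like-sheaves-surfaces}, $\nu_{H,B,\alpha}(E)=0$ forces $H^2\ch_1^B(E)>0$, since cases (ii) and (iii) there would give $\nu_{H,B,\alpha}(E)=+\infty$. Also, being $\nu_{H,B,\alpha}$-stable with $\nu_{H,B,\alpha}(E)=0\in(-\infty,0]$, the object $E$ lies in $\fF_{H,B}'=\HN^{\nu}_{H,B,\alpha}((-\infty,0])$, so $E[1]\in\aA_{H,B,\alpha}$, and hence $\Hom_{\SX}(\oO_x,E)=\Hom_{\dD}(\oO_x,E[1][-1])=0$ for every $x$ because $\aA_{H,B,\alpha}$ is a heart. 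If moreover $\Ext^1_{\SX}(\oO_x,E)=0$ for all $x$, then $E\in\mM_{H,B,\alpha}$ and we are done with $E'=E$, $Q=0$.

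Otherwise I pick $x_0$ with $V:=\Ext^1_{\SX}(\oO_{x_0},E)\neq0$ and form the universal extension $0\to E\to E_1\to \oO_{x_0}\otimes V\to 0$ in $\bB_{H,B}$, classified by $\id_V\in\End(V)=\Ext^1_{\SX}(\oO_{x_0}\otimes V,E)$; it is non-split since $V\neq0$. Applying $\Hom_{\SX}(\oO_x,-)$ and using $\Hom_{\SX}(\oO_x,E)=0$ gives $\Hom_{\SX}(\oO_x,E_1)=0$ for $x\neq x_0$, while for $x=x_0$ the connecting map $V=\Hom_{\SX}(\oO_{x_0},\oO_{x_0}\otimes V)\to\Ext^1_{\SX}(\oO_{x_0},E)=V$ is the identity, so $\Hom_{\SX}(\oO_{x_0},E_1)=0$ too. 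Since $\oO_{x_0}\otimes V\in\Coh_0(X)$, $H^2\ch_1^B(E)\neq0$, and $E$ is $\nu_{H,B,\alpha}$-stable, Proposition~\ref{prop:some-tilt-stable-extensions} shows $E_1$ is $\nu_{H,B,\alpha}$-stable, and $\nu_{H,B,\alpha}(E_1)=\nu_{H,B,\alpha}(E)=0$ since a $0$-dimensional sheaf contributes nothing to $H^3\ch_0$, $H^2\ch_1^B$ or $H\ch_2^B$. Thus $E_1$ satisfies the same hypotheses as $E$, and iterating produces a strictly increasing sequence $E=E_0\subsetneq E_1\subsetneq E_2\subsetneq\cdots$ in $\bB_{H,B}$ with each $E_{i+1}/E_i\in\Coh_0(X)$, each $E_i$ being $\nu_{H,B,\alpha}$-stable with $\nu_{H,B,\alpha}(E_i)=0$ and $\Hom_{\SX}(\oO_x,E_i)=0$ for all $x$. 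The process stops at $E_n$ exactly when no $x$ has $\Ext^1_{\SX}(\oO_x,E_n)\neq0$, i.e.\ exactly when $E_n\in\mM_{H,B,\alpha}$; so it suffices to prove the process must stop.

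For each $i$, set $Q_i:=\coker(E\hookrightarrow E_i)\in\Coh_0(X)$, an iterated extension of the $\oO_{x_j}\otimes V_j$. Rotating the distinguished triangle attached to $0\to E\to E_i\to Q_i\to0$ yields a triangle $Q_i\to E[1]\to E_i[1]\to Q_i[1]$; since $E_i\in\fF_{H,B}'$ we have $E_i[1]\in\aA_{H,B,\alpha}$, and with $Q_i\in\Coh_0(X)\subset\aA_{H,B,\alpha}$ and $E[1]\in\aA_{H,B,\alpha}$ all three terms lie in the heart, so the long exact sequence of $\aA_{H,B,\alpha}$-cohomology collapses to a short exact sequence $0\to Q_i\to E[1]\to E_i[1]\to0$ in $\aA_{H,B,\alpha}$. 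The inclusions $E_i\hookrightarrow E_{i+1}$ make $Q_0\subseteq Q_1\subseteq Q_2\subseteq\cdots$ an ascending chain of subobjects of $E[1]$ in $\aA_{H,B,\alpha}$, strictly increasing because $Q_{i+1}/Q_i\cong E_{i+1}/E_i\neq0$. As $\aA_{H,B,\alpha}$ is Noetherian, this chain is finite, so the sequence stabilises at some $E_n=:E'$; with $Q:=Q_n$ this gives $0\to E\to E'\to Q\to0$ in $\bB_{H,B}$ with $E'\in\mM_{H,B,\alpha}$, and then $E'[1]$ is minimal in $\aA_{H,B,\alpha}$ by Lemma~\ref{prop:minimal-objects-threefold-hearts}. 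The closing assertion is immediate from additivity of $\ch^B$ along $0\to E\to E'\to Q\to0$ together with $\ch_1^B(Q)=\ch_1(Q)=0$ and $\ch_3^B(Q)=\ch_3(Q)=\length(Q)\ge0$.

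The hard part is the termination argument: one must convert the enlargements $E\hookrightarrow E_i$ in $\bB_{H,B}$ into honest subobjects of $E[1]$ in $\aA_{H,B,\alpha}$ so that Noetherianity applies, which rests both on $\nu_{H,B,\alpha}(E)=0$ placing $E$ in $\fF_{H,B}'$ (hence $E[1]\in\aA_{H,B,\alpha}$) and on tilt stability being inherited by every $E_i$; the latter forces the hypotheses of Proposition~\ref{prop:some-tilt-stable-extensions}, most delicately $\Hom_{\SX}(\oO_x,E_i)=0$ for all $x$, to be verified inductively rather than assumed.
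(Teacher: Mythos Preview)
Your argument is correct and is precisely the approach of \cite[Proposition~2.9]{MP1}: iterate universal extensions by skyscraper sheaves, invoke Proposition~\ref{prop:some-tilt-stable-extensions} to preserve tilt stability at each step, and terminate by translating the chain $E=E_0\subsetneq E_1\subsetneq\cdots$ into an ascending chain of subobjects $Q_i\subset E[1]$ in the Noetherian category $\aA_{H,B,\alpha}$. One small remark: the Noetherianity of $\aA_{H,B,\alpha}$ quoted in the paper is established for rational $H,B$ and $\alpha^2\in\QQ$, but this is harmless since the lemma is only applied (in Theorem~\ref{prop:BG-ineq-abelian-threefolds}) after reducing to such parameters.
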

%%%%%%%%%%%%%%%%%%%%%%%%
%%%%%%%%%%%%%%%%%%%%%%%%
%%%%%%%%%%%%%%%%%%%%%%%%%%%%%%%%%%%%%%%%%%%%%%%%%%%%%%%%%%%%%%%%%%%%%%%%%%%%%%%%%%%%%%%%%%%%%%%%
\subsection{Fourier-Mukai theory }
\label{sec:FM-theory}
%%%%%%%%%%%%%%%%%%%%%%%%%%%%%%%%%%
%%%%%%%%%%%%%%%%%%%%%%%%%%%%%%%%%%
Let us quickly recall some of the important notions in Fourier-Mukai theory.
Further details can be found in \cite{HuyFMTBook}.

Let $X,Y$ be smooth projective varieties and let $p_i$, $i=1,2$ be the  projection
maps from $X \times Y$ to $X$ and $Y$, respectively.
The {\it Fourier-Mukai functor} $\Phi_{\eE}^{\SX \to \SY}:
D^b(X) \to D^b(Y)$ with kernel $\eE \in D^b(X \times Y)$
is defined by
$$
\Phi_{\eE}^{\SX \to \SY}(-) = \RRR p_{2*} (\eE \stackrel{\textbf{L}}{\otimes} p_1^*(-)).
$$

%The map $\Sigma: Y \times X \to X \times Y$ is defined by $\Sigma(y,x) = (x,y)$ for any $x \in X$ and $y \in Y$.
Let
$ \eE_L =  \eE^\vee \stackrel{\BL}{\otimes} p_2^*\omega_Y \, [\dim Y] $, and 
$\eE_R = \eE^\vee \stackrel{\BL}{\otimes} p_1^* \omega_X \, [\dim X]$.
We have the following adjunctions (see \cite[Proposition 5.9]{HuyFMTBook}):
$$
\Phi_{\eE_L}^{\SY \to \SX} \dashv \Phi_{\eE}^{\SX \to \SY}  \dashv \Phi_{\eE_R}^{\SY \to \SX}.
$$

When $\Phi_{\eE}^{\SX \to \SY}$ is an equivalence of the derived categories,
usually it is called a {\it Fourier-Mukai transform}.
On the other hand by Orlov's Representability Theorem (see \cite[Theorem 5.14]{HuyFMTBook}),
any equivalence between $D^b(X)$ and $D^b(Y)$ is isomorphic
to a Fourier-Mukai  transform $\Phi_{\eE}^{\SX \to \SY}$ for some $\eE \in D^b(X \times Y)$.

Any Fourier-Mukai  functor $\Phi_{\eE}^{\SX \to \SY}: D^b(X) \to D^b(Y)$ induces a linear map
$\Phi^{\SH}_{\eE} : H^{2*}_{\alg}(X, \QQ)  \to   H^{2*}_{\alg}(Y, \QQ)$, usually 
called the cohomological Fourier-Mukai  functor,
 and it is a linear isomorphism when $\Phi_{\eE}^{\SX \to \SY}$ is a  Fourier-Mukai  transform.
The induced transform fits into the following commutative diagram, due to the Grothendieck-Riemann-Roch theorem.
$$
\xymatrixcolsep{4.5pc}
\xymatrixrowsep{2.25pc}
\xymatrix{
 D^b(X)  \ar[d]_{[-]} \ar[r]^{\Phi_{\eE}^{\SX \to \SY}}  &   D^b(Y) \ar[d]^{[-]} \\
 K(X)  \ar[d]_{v_X(-)}  \ar[r]^{\Phi^K_{\eE}}  &   K(Y) \ar[d]^{v_Y(-)} \\
 H^{2*}_{\alg}(X, \QQ) \ar[r]^{\Phi^{\SH}_{\eE}}  &   H^{2*}_{\alg}(Y, \QQ)
}
$$
Here $v_Z(-) = \ch(-) \sqrt{\text{td}_Z}$ is the Mukai vector map, where
$\ch: K(Z) \to H^{2*}_{\alg}(Z, \QQ)$ is the  Chern character map and $\text{td}_Z$ is the Todd class of $Z$.

Let  $v \in H^{2*}_{\alg}(X, \QQ)$  be a Mukai vector. Then $v= \sum_{i=0}^{\dim X} v_i$ for $v_i \in H^{2i}_{\alg}(X, \QQ)$ and the Mukai dual of
$v$ is  defined by $v^\vee = \sum_{i=0}^{\dim X} (-1)^i v_i$.
A symmetric bilinear form $\langle - , - \rangle_{\SX}$ called \textit{Mukai pairing} is defined by the formula
\begin{equation*}
\langle v, w \rangle_{\SX} = - \int_X v^\vee \cdot w \cdot e^{{c_1(X)}/{2} }.
\end{equation*}

Note that for an abelian variety $X$, $\text{td}_X =1$ and $c_1(X) =0$. Hence the Mukai vector $v(E)$ of $E \in D^b(X)$ is the same as its Chern character $\ch(E)$.

Due to Mukai and C\u{a}ld\u{a}raru-Willerton, 
for any $u \in H^{2*}_{\alg}(Y, \QQ)$ and $v \in H^{2*}_{\alg}(X, \QQ)$ we have  
\begin{equation}
\label{eqn:Mukai-pairing-isometry}
\left\langle \Phi^{\SH}_{\eE_L}(u) \, , \, v \right\rangle_{\scriptscriptstyle X}
= \left\langle  u \, , \, \Phi^{\SH}_{\eE }(v) \right\rangle_{\scriptscriptstyle Y}
\end{equation}
(see \cite[Proposition 5.44]{HuyFMTBook}, \cite{CW}).
%%%%%%%%%%%%%%%%%%%%%%%%%%%%%%%%%%
%%%%%%%%%%%%%%%%%%%%%%%%%%%%%%%%%%
\subsection{Abelian varieties}
\label{sec:abelian-varieties}
%%%%%%%%%%%%%%%%%%%%%%%%%%%%%%%%%%
%%%%%%%%%%%%%%%%%%%%%%%%%%%%%%%%%%
Over any field, an {\it abelian variety} $X$  is a complete group variety, that is
 $X$ is an algebraic variety equipped with the maps
 $X \times X \to X, \ (x,y) \mapsto x+y$  (the group law), and
$X \to X, \ x \mapsto -x$ (the inverse map),
together with the identity element $e \in X$.
For $a \in X$, the morphism $t_a : X \to X$ is defined by
$t_a  : x \mapsto x + a$.
Over the field of complex numbers, an abelian variety is a complex torus with the structure of a projective algebraic variety.

Let $\Pic^0(X)$ be  the subgroup of the abelian group $\Pic(X)$ consisting of elements represented by the line bundles which are algebraically equivalent to zero, and the corresponding quotient $\Pic(X)/ \Pic^0(X)$ is the N\'eron-Severi group 
$\NS(X) $. 
The group $\Pic^0(X)$ is naturally isomorphic to an abelian variety called the \textit{dual abelian variety} of $X$,  denoted by $\HX$.

The \textit{Poincar\'e line bundle} $\pP$ on the product $X \times \HX$ is the
uniquely determined line bundle satisfying
(i) $\pP_{X \times \{\widehat{x}\}} \in \Pic(X)$ is represented by $\widehat{x} \in \HX$, and
(ii) $\pP_{\{e\} \times \HX } \cong \oO_{\HX}$.
In \cite{MukFMT}, Mukai proved that the Fourier-Mukai functor $\Phi^{\SX \to \SHX}_{\pP}: D^b(X) \to D^b(\HX)$ is an equivalence of the derived categories, that is a Fourier-Mukai transform.

A vector bundle $E$ on an abelian variety $X$ is called \textit{homogeneous} if we have
$t_x^*E \cong E$ for all $x \in X$. A vector bundle $E$ on $X$ is homogeneous if and only if $E$
can be filtered by line bundles from $\Pic^0(X)$ (see \cite{MukSemihomo}).
We call a vector bundle $E$ is \textit{semihomogeneous} if for every $x \in X$ there exists a flat line bundle $\pP_{X \times \{\widehat{x}\}}$ on $X$
such that $t_x^*E \cong E \otimes  \pP_{X \times \{\widehat{x}\}}$.
A vector bundle $E$ is called \textit{simple} if we have $\End_{\SX}(E) \cong \CC$. 

\begin{lem}[{\cite[Theorem 5.8]{MukSemihomo}}]
\label{prop:Mukai-semihomognoeus-properties}
Let $E$ be a simple vector bundle on an abelian variety $X$. Then the following conditions are equivalent:
\begin{enumerate}[label=(\arabic*)]
\item $\dim H^1(X, \calEnd(E))=g$,
\item $E$ is semihomogeneous,
\item $\calEnd(E)$ is a homogeneous vector bundle.
\end{enumerate}
\end{lem}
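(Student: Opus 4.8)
The plan is to run the cycle of implications $(2)\Rightarrow(3)\Rightarrow(1)\Rightarrow(2)$, with the real content concentrated in the last step, which I would treat by a dimension count in the moduli space of simple sheaves on $X$. The implication $(2)\Rightarrow(3)$ is immediate: if $t_x^*E\cong E\otimes\pP_{X\times\{\widehat y\}}$ for a suitable $\widehat y\in\HX$ depending on $x$, then the line-bundle twist cancels in $\calHom(-,-)$, so $t_x^*\calEnd(E)=\calHom(t_x^*E,t_x^*E)\cong\calEnd(E)$ for every $x$, which is precisely homogeneity of $\calEnd(E)$.

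For $(3)\Rightarrow(1)$ I would feed in the structure of homogeneous bundles: by the characterization recalled above, $\calEnd(E)$ is filtered by line bundles in $\Pic^0(X)$, and since a nontrivial $P\in\Pic^0(X)$ is acyclic while $h^i(X,\oO_X)=\binom{g}{i}$, the cohomology of $\calEnd(E)$ is carried entirely by its trivial part. Simplicity of $E$ pins this part down: $h^0(X,\calEnd(E))=h^g(X,\calEnd(E))=1$ (using $\omega_X\cong\oO_X$ and $\calEnd(E)^\vee\cong\calEnd(E)$), and combined with Mukai's description of $\calEnd(E)=E^\vee\otimes E$ for a simple semihomogeneous $E$ (a sum of degree-zero line bundles in which $\oO_X$ occurs once) this yields $h^i(X,\calEnd(E))=\binom{g}{i}$, in particular $\dim H^1(X,\calEnd(E))=g$.

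The heart of the matter is $(1)\Rightarrow(2)$. Since $E$ is simple, $[E]$ is a point of the moduli space (algebraic space) $\mathcal{M}$ of simple coherent sheaves on $X$ with Chern character $\ch(E)$, whose Zariski tangent space there is $\Ext^1_{\SX}(E,E)=H^1(X,\calEnd(E))$, of dimension $g$ by hypothesis. I would then consider the morphism $\mu\colon X\times\HX\to\mathcal{M}$, $(x,\widehat y)\mapsto[\,t_x^*E\otimes\pP_{X\times\{\widehat y\}}\,]$, which is classified by an evident flat family and is equivariant for the natural translate-and-twist actions, so that its image $S$ is a homogeneous space $(X\times\HX)/G$ with $G=\{(x,\widehat y):t_x^*E\cong E\otimes\pP_{X\times\{-\widehat y\}}\}$ a closed subgroup and $\dim S=2g-\dim G$. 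On the other hand $\overline{S}$ is a subvariety of $\mathcal{M}$ through $[E]$, so $\dim S\le\dim T_{[E]}\mathcal{M}=g$, whence $\dim G\ge g$. Finally, the kernel of $\mathrm{pr}_1\colon G\to X$ equals $\{\widehat y:E\otimes\pP_{X\times\{\widehat y\}}\cong E\}$, and taking determinants forces $\pP_{X\times\{\widehat y\}}^{\otimes\rk E}\cong\oO_X$, so this kernel lies in $\HX[\rk E]$ and is finite; hence $\mathrm{pr}_1(G)$ is a closed subgroup of $X$ of dimension $\ge g$, i.e.\ all of $X$. That says exactly that for every $x\in X$ there is $\widehat y\in\HX$ with $t_x^*E\cong E\otimes\pP_{X\times\{-\widehat y\}}$, so $E$ is semihomogeneous.

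The points where one must lean on external input are twofold: in $(1)\Rightarrow(2)$ one needs the existence of the moduli space of simple sheaves together with the identification of its tangent space at $[E]$ with $\Ext^1(E,E)$, and the fact that $\mu$ has equidimensional fibres (which follows from its equivariance, using that group schemes in characteristic zero are smooth); and in $(3)\Rightarrow(1)$ one needs Mukai's structural classification of (semi)homogeneous bundles to evaluate $H^1(X,\calEnd(E))$. The genuinely self-contained core — and the delicate part to set up carefully — is the moduli-dimension inequality $\dim S\le\dim T_{[E]}\mathcal{M}$ combined with the determinant trick that kills the $\rk E$-torsion, and I expect that to be the main obstacle.
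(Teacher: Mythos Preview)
The paper does not prove this lemma; it is quoted from \cite{MukSemihomo} without argument, so there is no in-paper proof to compare against. Your $(2)\Rightarrow(3)$ is the standard one-line check, and your $(1)\Rightarrow(2)$ via the orbit map $\mu\colon X\times\HX\to\mathcal{M}$ together with the determinant bound on $\ker(\mathrm{pr}_1\colon G\to X)$ is essentially Mukai's own argument: he phrases it in terms of the closed subgroup $\Sigma(E)\subset X\times\HX$ rather than an ambient moduli space, but the dimension count and the conclusion $\mathrm{pr}_1(G)=X$ are the same.

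The step $(3)\Rightarrow(1)$, however, is circular as you have written it. You invoke ``Mukai's description of $\calEnd(E)$ for a simple \emph{semihomogeneous} $E$ (a sum of degree-zero line bundles in which $\oO_X$ occurs once)'', but semihomogeneity is condition (2), which is not available under hypothesis (3) alone. A homogeneous bundle is in general only \emph{filtered} by line bundles in $\Pic^0(X)$, not a direct sum of them, and its cohomology depends on the extensions, so you cannot compute $h^1$ by counting how many graded pieces are trivial. The clean repair over $\CC$ is the trace splitting $\calEnd(E)\cong\oO_X\oplus\calEnd_0(E)$: the traceless summand is again homogeneous (direct summands of homogeneous bundles are homogeneous) and has $h^0=0$ by simplicity of $E$. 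Now feed in the genuine structure theorem for \emph{homogeneous} bundles, $F\cong\bigoplus_i P_i\otimes U_i$ with $P_i\in\Pic^0(X)$ and $U_i$ unipotent: since any nonzero unipotent bundle has a nonzero section while $P\otimes U$ is acyclic for $P\not\cong\oO_X$, the condition $h^0(\calEnd_0(E))=0$ forces every $P_i\not\cong\oO_X$, hence $\calEnd_0(E)$ is acyclic and $h^1(\calEnd(E))=h^1(\oO_X)=g$.
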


\begin{lem}[{\cite{MukSemihomo, Orl}}]
\label{prop:semihomo-numerical}
We have the  following about simple semihomogeneous bundles:
\begin{enumerate}[label=(\arabic*)]
\item 
A rank $r$ simple semihomogeneous bundle $E$ has the Chern character 
\begin{equation*}
\label{semihomochern}
\ch(E) = r \ e^{c_1(E)/r}.
\end{equation*}
\item 
For any $D_{\SX} \in \NS_{\QQ}(X)$, there  exists simple semihomogeneous bundles $E$ on $X$ with 
$\ch(E) = r \, e^{D_{\SX}}$ for some $r \in \ZZ_{>0}$. 
\item 
Let  $E$ be a semihomogeneous bundle on $X$. Then $E$ is Gieseker semistable with respect to any ample bundle $L$, and if $E$ is simple then it is slope stable with respect to $c_1(L)$.
\end{enumerate}
\end{lem}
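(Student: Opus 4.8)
\emph{Overall strategy.} The plan is to deduce all three parts from Mukai's structure theorem for semihomogeneous bundles, which I would recall rather than reprove: after pullback along a suitable isogeny $\pi\colon Y\to X$, a semihomogeneous bundle becomes $N\otimes W$ with $N$ a line bundle and $W$ a homogeneous bundle; and a \emph{simple} semihomogeneous bundle $E$ of rank $r$ has the form $E\cong\pi_\ast N$ for an isogeny $\pi$ of degree $r$ and a line bundle $N$ on $Y$ whose $\ker\pi$-translates $t_y^\ast N$ are pairwise non-isomorphic, so that $\pi^\ast E\cong\bigoplus_{y\in\ker\pi}t_y^\ast N$ is a direct sum of $r$ pairwise algebraically equivalent (hence equi-slope) line bundles. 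This structure theorem is the real content; granting it, the three statements follow by Grothendieck--Riemann--Roch, injectivity of $\pi^\ast$ on rational cohomology, and descent of (semi)stability along finite flat maps, as I now indicate.

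\emph{Part (1).} Since translations act trivially on $H^{2\ast}(Y,\QQ)$, each summand $t_y^\ast N$ of $\pi^\ast E$ has $\ch(t_y^\ast N)=e^{c_1(N)}$, hence $\pi^\ast\!\ch(E)=\ch(\pi^\ast E)=r\,e^{c_1(N)}$. Comparing degree-one parts gives $\pi^\ast c_1(E)=r\,c_1(N)$, i.e.\ $c_1(N)=\pi^\ast(c_1(E)/r)$, so $\pi^\ast\!\ch(E)=\pi^\ast\!\bigl(r\,e^{c_1(E)/r}\bigr)$; since $\pi$ is finite and surjective, $\pi^\ast$ is injective on rational cohomology ($\pi_\ast\pi^\ast=\deg\pi$), and therefore $\ch(E)=r\,e^{c_1(E)/r}$. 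One may note that the degree-$\le 2$ part of this is already immediate from Lemma~\ref{prop:Mukai-semihomognoeus-properties}, which makes $\calEnd(E)=E\otimes E^\vee$ homogeneous, so that $0=\ch_2(\calEnd(E))=-\Delta(E)$; but the higher Chern terms genuinely require the structure theorem.

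\emph{Part (2).} This is a construction. Given $D\in\NS_{\QQ}(X)$, pick $n\in\ZZ_{>0}$ with $nD\in\NS(X)$; Mukai's construction then furnishes an isogeny $\pi\colon Y\to X$ and a line bundle $N$ on $Y$ with $c_1(N)=\pi^\ast D$ integral (e.g.\ for $\pi$ multiplication by $n$ one has $\pi^\ast D=n\,(nD)\in\NS(X)$ after rescaling) and with $E:=\pi_\ast N$ simple; semihomogeneity of $E$ is in any case immediate from $t_a\circ\pi=\pi\circ t_b$ and surjectivity of $\pi^\ast$ on $\Pic^0$. Since Todd classes of abelian varieties are trivial, Grothendieck--Riemann--Roch and the projection formula give $\ch(E)=\pi_\ast\!\ch(N)=\pi_\ast\bigl(\pi^\ast e^{D}\bigr)=(\deg\pi)\,e^{D}$, i.e.\ $\ch(E)=r\,e^{D}$ with $r=\deg\pi$, as required.

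\emph{Part (3), and the main obstacle.} Fix an ample class $H$ (in the statement $H=c_1(L)$). Pullback along the finite flat isogeny $\pi$ scales $H^{\dim X-1}c_1(-)$, $\rk(-)$ and $\chi(-)$ all by $\deg\pi$, hence preserves $\mu$-slopes and reduced Hilbert polynomials; so any $\mu$- or Gieseker-destabilizing subsheaf of $E$ pulls back to one of $\pi^\ast E$. But $\pi^\ast E\cong N\otimes W$ with $W$ homogeneous is an iterated extension of line bundles all of one slope and one Hilbert polynomial, hence $\mu$- and Gieseker-semistable, so $E$ is $\mu$- and Gieseker-semistable. When $E$ is moreover simple, $\pi^\ast E\cong\bigoplus_{y\in\ker\pi}t_y^\ast N$ is polystable with pairwise non-isomorphic stable summands; a saturated $F\subsetneq E$ with $\mu_{H}(F)=\mu_{H}(E)$ would pull back to a saturated subsheaf of $\pi^\ast E$ of the same slope, hence of the form $\bigoplus_{y\in S}t_y^\ast N$ for some $S\subseteq\ker\pi$, and $\ker\pi$-equivariance of $\pi^\ast F$ together with the free transitive action of $\ker\pi$ on the summands forces $S=\emptyset$ or $S=\ker\pi$ --- impossible since $0\ne\rk F<\rk E$. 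Hence $E$ is $\mu_{H}$-stable. The substance of the lemma is thus entirely concentrated in Mukai's structure theorem (the reduction to a twist of a homogeneous bundle after an isogeny, and its refinement $E\cong\pi_\ast N$ under a primitivity condition in the simple case); in a self-contained treatment that classification is the hard part, and here I would simply cite \cite{MukSemihomo, Orl}, after which (1)--(3) are the formal consequences above.
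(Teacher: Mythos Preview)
The paper does not supply its own proof of this lemma: it is stated with attribution to \cite{MukSemihomo, Orl} and followed only by the line ``See \cite{Orl} for further details.'' Your sketch is therefore strictly more than what the paper does, and it is correct. You rightly identify Mukai's structure theorem (simple semihomogeneous $E\cong\pi_\ast N$ with $\pi^\ast E\cong\bigoplus_{y\in\ker\pi}t_y^\ast N$) as the only substantive input; parts (1)--(3) then follow as you indicate, via injectivity of $\pi^\ast$ on $H^{2\ast}(-,\QQ)$, Grothendieck--Riemann--Roch with trivial Todd class, and descent of (semi)stability along finite flat isogenies together with the transitive $\ker\pi$-action on the summands. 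The only places I would tighten are cosmetic: in (2) the phrase ``$\pi^\ast D=n(nD)$ after rescaling'' is a bit garbled (for $\pi=[n]$ one has $[n]^\ast D=n^2 D$ on symmetric classes, which is what makes $c_1(N)$ integral), and in (3) you might add one clause noting that saturation is preserved, or harmless to restore, under $\pi^\ast$.
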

See \cite{Orl} for further details.

The image of an ample line bundle  $L$ on $X$ under the Fourier-Mukai transform $\Phi_{\pP}^{\SX \to \SHX}$ is
$$
\Phi_{\pP}^{\SX \to \SHX} (L) \cong \HL
$$
for some rank $\chi(L) = c_1(L)^g/g!$ semihomogeneous bundle $\HL$. Here 
$g = \dim X$.
Moreover, $-c_1(\HL)$ is an ample divisor class on $\HX$. See  \cite{BL-polarization}  for further details. Therefore, we have the following:

\begin{lem}[{\cite{BL-polarization}}]
\label{classicalcohomoFMT}
Let $\lx \in \NS_{\QQ}(X)$ be an ample class on $X$, and let $g = \dim X$.
 Under the induced cohomological transform $\Phi_{\pP}^{\SH}: H^{2*}_{\alg}(X, \QQ) \to H^{2*}_{\alg}(\HX,\QQ)$ of  $\Phi_{\pP}^{\SX \to \SHX}$ we have
\begin{equation*}
\Phi_{\pP}^{\SH}(e^{\lx}) = ({\lx^g}/{g!}) \, e^{-\lhx}
\end{equation*}
 for some ample class $\lhx \in \NS_{\QQ}(\HX)$, satisfying 
\begin{equation*}
({\lx^g}/{g!}) ({\lhx^g}/{g!}) =1.
\end{equation*}
Moreover, for each $0 \le i \le g$,
\begin{equation*}
\Phi_\pP^{\SH}\left( \frac{ \ell_{\SX}^i}{i!} \right) = \frac{(-1)^{g-i}  \ell_{\SX}^g}{g! (g-i)!} \, \ell_{\SHX}^{g-i}. 
\end{equation*}
\end{lem}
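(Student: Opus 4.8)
The plan is to assemble the cohomological statement from the structural description of $\Phi_{\pP}^{\SX\to\SHX}$ on ample line bundles recalled just above, together with the Grothendieck--Riemann--Roch square of Section~\ref{sec:FM-theory}, the Chern character formula for semihomogeneous bundles, and one further standard Mukai computation. By $\QQ$-linearity it suffices to treat $\lx = c_1(L)$ for a genuine ample line bundle $L$: each asserted identity is consistent under $\lx \mapsto \lambda\lx$, $\lhx \mapsto \lambda^{-1}\lhx$ for $\lambda\in\QQ_{>0}$, so the general case follows by rescaling. Since $X$ is abelian, $\td_X = 1$ and the square of Section~\ref{sec:FM-theory} reads $\Phi_{\pP}^{\SH}(\ch(L)) = \ch(\HL)$, where $\HL = \Phi_{\pP}^{\SX\to\SHX}(L)$.

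First I would compute $\ch(\HL)$. The bundle $\HL$ is semihomogeneous of rank $r = \chi(L) = \lx^g/g!$ with $-c_1(\HL)$ ample, and it is simple because $\Phi_{\pP}^{\SX\to\SHX}$ is fully faithful and $L$ is simple; hence by Lemma~\ref{prop:semihomo-numerical}(1), $\ch(\HL) = r\,e^{c_1(\HL)/r}$. Setting $\lhx := -c_1(\HL)/r \in \NS_{\QQ}(\HX)$ --- an ample class since $-c_1(\HL)$ is ample and $r>0$ --- we obtain $\Phi_{\pP}^{\SH}(e^{\lx}) = (\lx^g/g!)\,e^{-\lhx}$, which is the first assertion and also pins down $\lhx$.

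Next I would note that $\Phi_{\pP}^{\SH}$ sends $H^{2i}_{\alg}(X,\QQ)$ into $H^{2(g-i)}_{\alg}(\HX,\QQ)$: the normalizations defining $\pP$ force $c_1(\pP)$ into the Künneth summand $H^1(X)\otimes H^1(\HX)$ of $H^2(X\times\HX)$, so the degree-$2k$ component of $\ch(\pP) = e^{c_1(\pP)}$ lies in $H^k(X)\otimes H^k(\HX)$, and for $\alpha\in H^{2i}(X)$ only the term with $2i+k = 2g$ survives $p_{2*}\big(\ch(\pP)\cdot p_1^*\alpha\big)$, landing in $H^{2(g-i)}(\HX)$. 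Matching the homogeneous pieces of $e^{\lx}$ (the term $\lx^i/i!$ in degree $2i$) with those of $(\lx^g/g!)\,e^{-\lhx}$ (the term $(-1)^j\lhx^j/j!$ in degree $2j$) under $j = g-i$ gives the displayed formula $\Phi_{\pP}^{\SH}(\lx^i/i!) = (-1)^{g-i}\,\tfrac{\lx^g}{g!(g-i)!}\,\lhx^{g-i}$.

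Finally, for the normalization $(\lx^g/g!)(\lhx^g/g!) = 1$ I would put $i=0$ in the last formula to get $\Phi_{\pP}^{\SH}(1) = (-1)^g\,\tfrac{\lx^g}{(g!)^2}\,\lhx^g$, and compare with the independent value $\Phi_{\pP}^{\SH}(1) = \ch\big(\Phi_{\pP}^{\SX\to\SHX}(\oO_X)\big)$. By Mukai's vanishing for nontrivial degree-zero line bundles, $\Phi_{\pP}^{\SX\to\SHX}(\oO_X) \cong \oO_{\widehat{e}}[-g]$ for the origin $\widehat{e}\in\HX$, so $\Phi_{\pP}^{\SH}(1) = (-1)^g\,\ch(\oO_{\widehat{e}})$; equating the two expressions and integrating over $\HX$ yields $(\lx^g/g!)(\lhx^g/g!) = 1$. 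Almost everything here is quoted above or a routine Chern-character computation; the only point demanding care is the coherent bookkeeping of signs --- the shift $[-g]$ in $\Phi_{\pP}(\oO_X)$, the alternating signs in the Mukai dual $e^{-\lhx} = (e^{\lhx})^\vee$, and the degree-reversal of $\Phi_{\pP}^{\SH}$ --- so that the three displayed identities emerge with exactly the stated signs. (Alternatively, the normalization also follows from the invertibility of $\Phi_{\pP}^{\SH}$ combined with the analogue of the first three steps applied to $\HX$ via the biduality $\widehat{\HX}\cong X$.)
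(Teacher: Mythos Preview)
The paper does not supply its own proof of this lemma; it is stated with a citation to \cite{BL-polarization}, the preceding paragraph having recorded the input facts (that $\Phi_{\pP}(L)=\HL$ is a semihomogeneous bundle of rank $\chi(L)=\lx^g/g!$ with $-c_1(\HL)$ ample). Your argument is correct and supplies precisely the deduction the paper leaves implicit: simplicity of $\HL$ plus Lemma~\ref{prop:semihomo-numerical}(1) converts the description of $\HL$ into $\ch(\HL)=r\,e^{-\lhx}$; the K\"unneth argument for $c_1(\pP)\in H^1(X)\otimes H^1(\HX)$ gives the degree reversal $H^{2i}\to H^{2(g-i)}$ needed to read off the term-by-term formula; and Mukai's identity $\Phi_{\pP}(\oO_X)\cong\oO_{\widehat e}[-g]$ fixes the normalization. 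The rescaling step reducing to integral $\lx$ is fine once one observes that all three identities transform compatibly under $(\lx,\lhx)\mapsto(\lambda\lx,\lambda^{-1}\lhx)$.

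It is perhaps worth remarking that the paper's own generalisation of this result, Theorem~\ref{prop:general-cohomo-FMT}, is obtained by a rather different route via Proposition~\ref{prop:support-cohomological-result}: there one exploits that the auxiliary functor $\Gamma$ preserves each $\Coh_i$, writes $\lx^i$ as a $\QQ$-combination of $e^{k\lx}$, and appeals to Nakai--Moishezon for ampleness, rather than using the explicit K\"unneth structure of $\ch(\pP)$. Your approach is more direct in the Poincar\'e-bundle case because that structure is available; the paper's method in Section~\ref{sec:cohomological-FMT} trades this for an argument that applies uniformly to semihomogeneous kernels.
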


%%%%%%%%%%%%%%%%%%%%%%%%%%%%%%%%%%%
%%%%%%%%%%%%%%%%%%%%%%%%%%%%%%%%%%%
%%%%%%%%%%%%%%%%%%%%%%%%%%%%%%%%%%%
%%%%%%%%%%%%%%%%%%%%%%%%%%%%%%%%%%%

\subsection{Some sheaf theory}
\label{s2.8}
%%%%%%%%%%%%%%%%%%%%%%%%%%%%%%%%%%%%%%%%%%%%%%%%%%%%%%%%%%%%%%%%%%%%%%%%%%%%%%%%%%%%%%%%%%%%%%%%
%%%%%%%%%%%%%%%%%%%%%%%%%%%%%%%%%%%%%%%%%%%%%%%%%%%%%%%%%%%%%%%%%%%%%%%%%%%%%%%%%%%%%%%%%%%%%%%%
In this paper, we shall encounter reflexive sheaves at several occasions, and so we recall some of the key properties of them.

Let $X$ be a smooth projective variety of dimension $n$.

Any coherent sheaf $E$ on $X$ admits a \textit{locally free resolution} of length $n$.
In other words, $E$ fits into an exact sequence:
$$
0 \to F_n \to \cdots \to F_1 \to F_0 \to E \to 0
$$
for some locally free sheaves $F_i$ on $X$.

%(see \cite{HarshorneAG}, \cite[Section 4, Chapter 5]{GH})

For a coherent sheaf $E$ on $X$,  its dual is  $E^* = \calHom(E, \oO_X)$.
There is a natural map from any $E \in \Coh(X)$ to its double dual $E^{**}$, $E \to E^{**}$.
If this map is an isomorphism then $E$ is called a \textit{reflexive} sheaf.
When $E$ is a torsion free sheaf, $E$ injects into its double dual.

\begin{lem}[{\cite[Lemma 1.1.2]{OSS}}]
\label{prop:dim-of-supp-Ext}
For any coherent sheaf $E$ on $X$ we have
$$
\dim \Supp \left(\calExt^{i}(E, \oO_X)\right)\le (n-i), \text{ for all i}.
$$
\end{lem}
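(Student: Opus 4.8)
The plan is to reduce the statement to a local computation in commutative algebra. Because $X$ is smooth, every local ring $\oO_{X,x}$ is regular, and because $X$ is a variety (irreducible of pure dimension $n$) it is catenary and equidimensional, so $\dim \oO_{X,x} + \dim \overline{\{x\}} = n$ for each $x \in X$. Since $\calExt^i(E, \oO_X)$ is coherent, its support is closed and $\dim \Supp \calExt^i(E,\oO_X) = \sup_x \dim \overline{\{x\}}$, the supremum taken over points $x$ in the support. Hence it is enough to prove: if $x \in \Supp \calExt^i(E, \oO_X)$ then $\dim \oO_{X,x} \ge i$.

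Fix such an $x$, set $R = \oO_{X,x}$ (regular local of dimension $d := \dim R$) and $M = E_x$ (a finitely generated $R$-module, $E$ being coherent). As $\calExt$ commutes with localization for coherent sheaves on a Noetherian scheme, $\calExt^i(E,\oO_X)_x \cong \Ext^i_R(M,R) \ne 0$, so the projective dimension $\operatorname{pd}_R M$ is at least $i$. Now I would invoke the Auslander--Buchsbaum--Serre theorem --- $R$ regular implies every finitely generated module has finite projective dimension --- together with the Auslander--Buchsbaum formula $\operatorname{pd}_R M = \operatorname{depth} R - \operatorname{depth} M$. Since a regular local ring is Cohen--Macaulay, $\operatorname{depth} R = \dim R = d$, while $\operatorname{depth} M \ge 0$, so $i \le \operatorname{pd}_R M = d - \operatorname{depth} M \le d$, i.e. $\dim \oO_{X,x} \ge i$. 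This is exactly what was needed.

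There is no real obstacle here; the argument is standard, and the only facts requiring care are the two structural inputs used above: that a regular local ring is Cohen--Macaulay (so depth equals Krull dimension), and that a smooth variety is catenary and equidimensional (so the codimension of a subvariety equals the dimension of the associated local ring). One could instead argue by induction on $\dim X$ via a hyperplane section, or dualize the length-$n$ locally free resolution of $E$ recalled earlier and track where a nonzero stalk $\calExt^i(E,\oO_X)_x$ can occur, but the Auslander--Buchsbaum route is the most economical.
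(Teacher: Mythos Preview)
Your proof is correct. The paper does not give its own proof of this lemma; it merely quotes it as \cite[Lemma~1.1.2]{OSS} and uses it as a black box, so there is no in-paper argument to compare against. Your route via localization and the Auslander--Buchsbaum formula over the regular local rings $\oO_{X,x}$ is the standard one (and indeed is essentially the argument in the cited reference).
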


\begin{defi}
\label{defi:singularity-set}
The \textit{singularity set} $\Sing(E)$ of a coherent sheaf $E \in \Coh(X)$ is defined as the locus where $E$ is not locally free, that is
$$
\Sing(E) = \{ x \in X : \Ext_{\SX}^1(E, \oO_x) \ne 0 \}.
$$
\end{defi}
This coincides with
$$
S_{n-1}(E) = \bigcup_{i = 1}^n \Supp \left(\calExt^{i}(E, \oO_X)\right).
$$
See \cite[Chapter 2]{OSS} for further details.

We collect some of the useful results about reflexive sheaves as follows.
\begin{lem}
\label{prop:reflexive-sheaf-results}
We have the following:
\begin{enumerate}[label=(\arabic*)]
\item if $E$ is a reflexive sheaf then $\dim \Sing(E) \le n-3$;
\item a coherent sheaf $E$ is reflexive if and only if it fits into a short exact sequence
$$
0 \to E \to F \to G \to 0
$$
in $\Coh(X)$ for a locally free sheaf $F$ and a torsion free sheaf $G$;
\item any $E \in \Coh(X)$ fits into an exact sequence
$$
0 \to T \to E \to E^{**} \to Q \to 0
$$
in $\Coh(X)$, where $T$ is the maximal torsion subsheaf of $E$ and  $Q$ is a torsion sheaf supported in a subscheme of at least codimension $2$;
\item for any $E \in \Coh(X)$, its dual $E^*$ is a reflexive sheaf;
\item any rank one reflexive sheaf is locally free, that is a line bundle.
\end{enumerate}
\end{lem}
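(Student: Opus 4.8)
All five assertions are classical facts about reflexive sheaves on a smooth variety; see \cite[Chapter 2]{OSS} and \cite{HLBook}. The plan is to deduce them from two structural inputs: every coherent sheaf on $X$ has a finite locally free resolution, and the support estimates $\dim \Supp \calExt^i(E, \oO_X) \le n - i$ of Lemma \ref{prop:dim-of-supp-Ext} together with the vanishing $\calExt^i(Q, \oO_X) = 0$ for $i < \codim \Supp Q$. I would begin with (3). Let $T \subset E$ be the maximal torsion subsheaf and set $G := E/T$, which is torsion-free and hence embeds in $G^{**}$. Applying $\calHom(-, \oO_X)$ to $0 \to T \to E \to G \to 0$ gives $E^* \cong G^*$, so $E^{**} \cong G^{**}$, and the canonical map $E \to E^{**}$ factors as $E \twoheadrightarrow G \hookrightarrow G^{**}$; this yields the four-term exact sequence $0 \to T \to E \to E^{**} \to Q \to 0$ with $Q = \coker(G \hookrightarrow G^{**})$. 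Finally $\Supp Q$ has codimension $\ge 2$ because at a point of codimension $\le 1$ the local ring is a discrete valuation ring, over which a finitely generated torsion-free module is free and thus reflexive, so $G \to G^{**}$ is already an isomorphism there.

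Parts (2) and (4) I would treat together. For the ``only if'' direction of (2), take a locally free presentation $F_1 \to F_0 \to E^* \to 0$ and dualize to get $0 \to E^{**} \to F_0^* \to F_1^*$; since $E$ is reflexive this reads $0 \to E \to F_0^* \to G \to 0$ with $F_0^*$ locally free and $G := \im(F_0^* \to F_1^*) \subset F_1^*$ torsion-free. For the ``if'' direction, given $0 \to E \to F \to G \to 0$ with $F$ locally free and $G$ torsion-free, the inclusion $E \subset F$ shows $E$ is torsion-free, so $E \to E^{**}$ is injective; surjectivity I would obtain by applying $\calHom(-, \oO_X)$ twice and chasing, using that $\calExt^1(G, \oO_X)$ is supported in codimension $\ge 2$ (again because $G$ is free over codimension-$\le 1$ local rings) and is therefore annihilated by both $\calHom(-, \oO_X)$ and $\calExt^1(-, \oO_X)$; the chase identifies $E^{**}$ with $\ker(F \to G^{**}) = \ker(F \to G) = E$. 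Part (4) then follows at once: dualizing a locally free presentation $F_1 \to F_0 \to E \to 0$ exhibits $E^* = \ker(F_0^* \to F_1^*)$ as a subsheaf of the locally free sheaf $F_0^*$ with torsion-free cokernel, so the ``if'' direction of (2) applies.

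For (1), I would write a reflexive $E$ in the form given by (2) and run the local-to-global spectral sequence for $\calExt^\bullet(-, \oO_X)$ to sharpen Lemma \ref{prop:dim-of-supp-Ext} to $\codim \Supp \calExt^q(E, \oO_X) \ge q + 2$ for all $q \ge 1$; since $\Sing(E) = \bigcup_{q \ge 1} \Supp \calExt^q(E, \oO_X)$ by the remark following Definition \ref{defi:singularity-set}, this gives $\dim \Sing(E) \le n - 3$. For (5), a rank-one reflexive sheaf $E$ is invertible on the open set $U = X \setminus \Sing(E)$, whose complement has codimension $\ge 3$ by (1); since $X$ is smooth the restriction $\Pic(X) \to \Pic(U)$ is an isomorphism, so $E|_U$ extends to a line bundle $L$ on $X$, and reflexivity of $E$ (which coincides with $j_* j^* E$ for the open inclusion $j : U \hookrightarrow X$) forces $E \cong L$.

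The step I expect to be the main obstacle is the double-$\calHom$ diagram chase in the ``if'' direction of (2): one must pin down exactly which local $\calExt$ sheaves of a torsion-free sheaf vanish or are supported in codimension $\ge 2$, and verify that the identifications produced are compatible with the canonical biduality map. Everything else is a short deduction from this together with Lemma \ref{prop:dim-of-supp-Ext}. Since the whole lemma is standard, one could alternatively just cite \cite[Chapter 2]{OSS} and \cite{HLBook}.
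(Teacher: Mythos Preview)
Your proposal is correct in substance, but you should know that the paper does not prove this lemma at all: its proof consists entirely of the sentence ``See Propositions 1.1, 1.3, 1.9 and Corollary 1.2 of \cite{HarshorneReflexive} for proofs of (2), (1), (5) and (4). The claim in (3) is an easy exercise.'' So there is no mathematical content to compare against---you have simply supplied the arguments that the paper outsources to Hartshorne. Your own closing remark (``one could alternatively just cite \cite[Chapter 2]{OSS} and \cite{HLBook}'') is in fact exactly what the paper does, only with \cite{HarshorneReflexive} in place of those references.

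One small comment on your sketch: for the ``if'' direction of (2) and for (1), the double-$\calHom$ chase you outline works but is more delicate than necessary. The cleaner route (and the one Hartshorne takes) is via depth: from $0 \to E \to F \to G \to 0$ with $F$ locally free and $G$ torsion free, the depth lemma gives $\operatorname{depth} E_x \ge \min(\operatorname{depth} F_x, \operatorname{depth} G_x + 1) \ge \min(\dim \oO_{X,x}, 2)$, so $E$ is $S_2$ and hence reflexive; then Auslander--Buchsbaum immediately yields $\operatorname{pd} E_x \le \dim \oO_{X,x} - 2$ at points of codimension $\ge 2$, giving (1) directly. Your approach via the isomorphism $\calExt^q(E,\oO_X) \cong \calExt^{q+1}(G,\oO_X)$ together with the sharpened bound $\codim \Supp \calExt^i(G,\oO_X) \ge i+1$ for torsion-free $G$ reaches the same conclusion, so this is a matter of taste rather than a gap.
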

\begin{proof}
See  Propositions 1.1, 1.3, 1.9 and Corollary 1.2 of \cite{HarshorneReflexive} for proofs of (2), (1), (5) and (4).
The claim in (3) is an easy exercise.
\end{proof}

When $\dim X =3$, one can easily prove the following result which is useful in this  paper to identify reflexive sheaves.
\begin{lem}
\label{prop:reflexive-sheaf-threefold}
A coherent sheaf $E$ on a smooth projective threefold $X$ is reflexive if and only if
\begin{enumerate}[label=(\roman*)]
\item $\Ext^1_{\SX}(\oO_x, E) = 0$ for all $x \in X$, and
\item $\Ext^2_{\SX}(\oO_x, E) \ne 0$ for finitely many $x \in X$.
\end{enumerate}
\end{lem}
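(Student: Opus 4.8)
The plan is to rewrite both conditions, via Serre duality, as vanishing/finiteness statements for the sheaves $\calExt^j(E,\oO_X)$, and then to deduce reflexivity (and conversely) using the exact sequences of Lemma~\ref{prop:reflexive-sheaf-results}. The first step is the translation. Serre duality on the smooth projective threefold $X$ gives, for every $x\in X$ and every $i$, a natural isomorphism of finite-dimensional $\CC$-vector spaces $\Ext^i_{\SX}(\oO_x,E)\cong\Ext^{3-i}_{\SX}(E,\oO_x\otimes\omega_X)^{\vee}\cong\Ext^{3-i}_{\SX}(E,\oO_x)^{\vee}$, using $\oO_x\otimes\omega_X\cong\oO_x$. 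Since $\oO_x$ is supported at $x$, the local-to-global spectral sequence collapses and $\Ext^j_{\SX}(E,\oO_x)\cong\Ext^j_{\oO_{X,x}}(E_x,\CC)$, which over the regular local ring $\oO_{X,x}$ of dimension $3$ is non-zero exactly for $0\le j\le\operatorname{pd}_{\oO_{X,x}}E_x=3-\operatorname{depth}_{\oO_{X,x}}E_x$ (minimal free resolution plus Auslander--Buchsbaum). Hence (i) is equivalent to ``$\operatorname{pd}_{\oO_{X,x}}E_x\le1$ for every closed $x$'', equivalently $\calExt^j(E,\oO_X)=0$ for $j\ge2$; and, recalling from Definition~\ref{defi:singularity-set} that $\Sing(E)=\{x:\Ext^1_{\SX}(E,\oO_x)\ne0\}$, condition (ii) is equivalent to ``$\Sing(E)$ is finite''.

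For the forward implication, suppose $E$ is reflexive. Then $\dim\Sing(E)\le0$ by Lemma~\ref{prop:reflexive-sheaf-results}(1) (with $n=3$), which is (ii). For (i), I would take a short exact sequence $0\to E\to F\to G\to0$ with $F$ locally free and $G$ torsion-free (Lemma~\ref{prop:reflexive-sheaf-results}(2)) and apply $\Hom_{\SX}(\oO_x,-)$, giving an exact piece $\Hom_{\SX}(\oO_x,G)\to\Ext^1_{\SX}(\oO_x,E)\to\Ext^1_{\SX}(\oO_x,F)$. Here $\Hom_{\SX}(\oO_x,G)=0$ because $G$ is torsion-free, and $\Ext^1_{\SX}(\oO_x,F)=0$ because for locally free $F$ one has $\calExt^q(\oO_x,F)\cong\calExt^q(\oO_x,\oO_X)\otimes F=0$ for $q\ne3$ (local duality / the Koszul resolution of $\oO_x$), so the local-to-global spectral sequence forces $\Ext^i_{\SX}(\oO_x,F)=0$ for $i<3$. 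Thus $\Ext^1_{\SX}(\oO_x,E)=0$.

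For the converse, assume (i) and (ii). By (ii) the set $\Sing(E)$ is finite, so $E$ is locally free at every non-closed point of $X$; by (i) we have $\operatorname{pd}_{\oO_{X,x}}E_x\le1$ for closed $x$, hence also $\Ext^3_{\SX}(E,\oO_x)=0$, so $\Hom_{\SX}(\oO_x,E)\cong\Ext^3_{\SX}(E,\oO_x)^{\vee}=0$ for all $x$. Consequently the torsion subsheaf $T\subseteq E$ is supported at finitely many closed points, and if $T\ne0$ then for a closed point $x\in\Supp(T)$ the nonzero finite-length module $T_x$ has nonzero socle, giving $0\ne\Hom_{\SX}(\oO_x,T)\hookrightarrow\Hom_{\SX}(\oO_x,E)$, a contradiction; so $E$ is torsion-free. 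By Lemma~\ref{prop:reflexive-sheaf-results}(3) there is then a short exact sequence $0\to E\to E^{**}\to Q\to0$ with $Q$ torsion supported in codimension $\ge2$. Since $E_x\to E^{**}_x$ is an isomorphism wherever $E$ is locally free, $\Supp(Q)\subseteq\Sing(E)$ is finite; if $Q\ne0$, then for a closed $x\in\Supp(Q)$ we get, applying $\Hom_{\SX}(\oO_x,-)$ to the sequence and using $\Hom_{\SX}(\oO_x,E^{**})=0$ (as $E^{**}$ is reflexive by Lemma~\ref{prop:reflexive-sheaf-results}(4), hence torsion-free), an injection $0\ne\Hom_{\SX}(\oO_x,Q)\hookrightarrow\Ext^1_{\SX}(\oO_x,E)$, contradicting (i). Hence $Q=0$ and $E\cong E^{**}$ is reflexive.

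The delicate point is the converse: the hypotheses only test skyscrapers at closed points, so one must check that they nevertheless pin down $E$ in every codimension --- finiteness of $\Sing(E)$ rules out divisorial and curve-supported torsion and forces $E=E^{**}$ in codimension $\le2$, while the $\Ext^1$-vanishing rules out embedded points and makes the residual finite-length cokernel $Q$ vanish. Once the Serre-duality translation is in place, each remaining step is a one-line diagram chase with Lemma~\ref{prop:reflexive-sheaf-results}.
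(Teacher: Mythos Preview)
The paper does not actually supply a proof of this lemma; it only remarks that ``one can easily prove the following result'' and states it for later use (e.g.\ in Proposition~\ref{prop:FMT-F-1-reflexivity}). So there is nothing to compare against directly.

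Your argument is correct and is exactly the kind of proof one would expect here. The Serre duality translation $\Ext^i_{\SX}(\oO_x,E)\cong\Ext^{3-i}_{\SX}(E,\oO_x)^\vee$ together with the minimal free resolution over $\oO_{X,x}$ gives the clean dictionary: condition~(i) becomes $\operatorname{pd}_{\oO_{X,x}}E_x\le 1$ for every closed $x$, and condition~(ii) becomes finiteness of $\Sing(E)$. Both directions then follow from the short exact sequences listed in Lemma~\ref{prop:reflexive-sheaf-results}. The forward implication is immediate from parts~(1) and~(2) of that lemma, and for the converse you correctly rule out torsion (finiteness of $\Sing(E)$ confines it to dimension zero, and $\Hom_{\SX}(\oO_x,E)=0$ kills it) and then show $Q=E^{**}/E$ vanishes via the socle argument. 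No step is missing.
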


The following result of Simpson is very important for us.

\begin{lem}[{\cite[Theorem 2]{Simpson}}]
\label{prop:Simpson-result-trivial-disciminant}
Let $X$ be a smooth projective variety of dimension $n \ge 3$. Let $L$ be an ample line bundle on $X$ and 
let $H$ be $c_1(L)$.
Let $E$ be a slope semistable reflexive sheaf on $X$ with respect to $H$ such that $H^{n-1} \ch_1(E)=H^{n-2} \ch_2(E) =0$.
Then all the Jordan-H\"{o}lder slope stable factors of $E$ are 
locally free sheaves which have vanishing Chern classes.
\end{lem}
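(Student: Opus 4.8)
The plan is to separate the combinatorial/numerical part of the statement from the analytic input (Hermitian--Einstein theory for reflexive sheaves, equivalently Simpson's non-abelian Hodge theory with vanishing Higgs field), and to read off the assertion about Jordan--H\"older factors from the flat structure that emerges.

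First I would record the numerical consequences of the hypotheses. Since $H^{n-1}\ch_1(E)=0$, the sheaf $E$ has slope $\mu_H(E)=0$; being reflexive it is torsion free, so Lemma~\ref{prop:usual-BG-ineq} gives $H^{n-2}\Delta(E)\ge 0$. As $\ch_1(E)=c_1(E)$ lies in $\NS_{\QQ}(X)$ and is orthogonal to $H$ for the pairing $D\mapsto H^{n-1}D$, the Hodge index theorem gives $H^{n-2}\ch_1(E)^2\le 0$, with equality only when $\ch_1(E)$ is numerically trivial. From $\Delta(E)=\ch_1(E)^2-2\ch_0(E)\ch_2(E)$ and $H^{n-2}\ch_2(E)=0$ one gets $H^{n-2}\ch_1(E)^2=H^{n-2}\Delta(E)$, and the two inequalities force $H^{n-2}\ch_1(E)^2=H^{n-2}\Delta(E)=0$ and $\ch_1(E)\equiv 0$. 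The same computation applied to the reflexive hulls of the Jordan--H\"older $\mu_H$-stable factors of $E$ — using additivity of the discriminant along the filtration, where every cross term is a nonnegative Hodge index contribution because all the relevant first Chern classes are $H$-orthogonal of slope zero — shows that each such factor $G$ is $\mu_H$-stable reflexive with $\ch_1(G)\equiv 0$ and $H^{n-2}\ch_2(G)=0$. So it suffices to treat a single $\mu_H$-stable reflexive sheaf $G$ with these vanishings.

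Next comes the analytic step. By the Bando--Siu extension of the Donaldson--Uhlenbeck--Yau theorem to reflexive sheaves, the $\mu_H$-stable reflexive sheaf $G$ carries an admissible Hermitian--Einstein metric $h$ on $U=X\setminus\Sing(G)$. Since $\deg_H G=H^{n-1}\ch_1(G)=0$ the Einstein factor is $0$, so $\Lambda_\omega F_h=0$; together with $\ch_1(G)\equiv 0$, the Chern--Weil identity then expresses $H^{n-2}\ch_2(G)$ as a negative multiple of $\int_U |F_h|^2\,\omega^n$, so the hypothesis $H^{n-2}\ch_2(G)=0$ forces $F_h\equiv 0$ on $U$, i.e.\ $G|_U$ underlies a flat bundle. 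Because $G$ is reflexive, $\codim_X\Sing(G)\ge 3$ by Lemma~\ref{prop:reflexive-sheaf-results}(1); hence $\pi_1(U)\cong\pi_1(X)$, the flat bundle extends to a flat locally free bundle $\widetilde G$ on $X$, and since $G$ and $\widetilde G$ are both reflexive and agree off a set of codimension $\ge 2$ they are isomorphic. Therefore $G$ is locally free, corresponds to a representation $\rho\colon\pi_1(X)\to\GL_r(\CC)$ which is irreducible (as $G$ is $\mu_H$-stable), and by Chern--Weil its rational Chern classes $c_i(G)\in H^{2i}(X,\QQ)$ all vanish. Feeding this back through the first step, every Jordan--H\"older $\mu_H$-stable factor of $E$ is locally free with vanishing Chern classes, which is the claim.

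The only genuine obstacle is this analytic step: upgrading the Hermitian--Einstein metric to a flat one using the vanishing of the restricted Chern data, and having this input available for reflexive (not merely locally free) sheaves. This is exactly what is attributed to \cite{Simpson} (equivalently Bando--Siu); everything else is bookkeeping with the Bogomolov--Gieseker inequality (Lemma~\ref{prop:usual-BG-ineq}), the Hodge index theorem, and the codimension bound for the singular locus of a reflexive sheaf (Lemma~\ref{prop:reflexive-sheaf-results}). A purely algebraic alternative --- restricting $E$ to a general complete intersection surface via Mehta--Ramanathan, where the discriminant vanishes --- reduces to the surface case of the same circle of ideas, so it offers no essential simplification.
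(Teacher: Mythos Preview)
The paper does not supply its own proof of this lemma: it is simply quoted from \cite{Simpson} as a black box. So there is no argument in the paper to compare against; what you have written is a sketch of the proof that lies behind the cited reference.

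Your outline is correct and is essentially how the result is established. A couple of small points of phrasing: the discriminant is not literally additive along a Jordan--H\"older filtration, but your conclusion is right for the reason you indicate --- each stable factor $G_i$ has slope zero, so Hodge index gives $H^{n-2}\ch_1(G_i)^2\le 0$, while Bogomolov--Gieseker gives $H^{n-2}\ch_1(G_i)^2\ge 2r_iH^{n-2}\ch_2(G_i)$; hence each $H^{n-2}\ch_2(G_i)\le 0$, and since they sum to $H^{n-2}\ch_2(E)=0$ each vanishes. The analytic step (Bando--Siu admissible Hermitian--Einstein metric on a stable reflexive sheaf, the Chern--Weil identity forcing $F_h\equiv 0$, and the $\pi_1$ extension across the codimension $\ge 3$ singular set) is exactly the content attributed to \cite{Simpson}, and you are right that this is where the real work lies; the hypothesis $n\ge 3$ is used precisely to guarantee $\pi_1(U)\cong\pi_1(X)$.
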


%%%%%%%%%%%%%%%%%%%%%%%%%%%%%%%%%%
%%%%%%%%%%%%%%%%%%%%%%%%%%%%%%%%%%
\section{Cohomological Fourier-Mukai Transforms and Polarizations}
\label{sec:cohomological-FMT}

%%%%%%%%%%%%%%%%%%%%%%%%%%%%%%%%%%
%%%%%%%%%%%%%%%%%%%%%%%%%%%%%%%%%%
Let $Y$ be a $g$-dimensional abelian variety. Let us fix a class $D_{\SY} \in \NS_{\QQ}(Y)$. 
Let $X$ be the fine moduli space of rank $r$ simple semihomogeneous bundles $E$ on $Y$ with $c_1(E)/r =D_{\SY}$. Due to Mukai $X$ is a $g$-dimensional abelian variety. Let $\eE$ be the associated universal bundle on $X \times Y$; so by 
Lemma  \ref{prop:semihomo-numerical}--(1) we have 
$$
\ch(\eE_{\{x\} \times Y}) = r \, e^{D_{\SY}}.
$$
 Let $\Phi_\eE^{\SX  \to \SY} : D^b(X) \to D^b(Y)$ be the corresponding Fourier-Mukai transform from $D^b(X)$ to $D^b(Y)$ with kernel $\eE$.  
 Then its quasi inverse is given by 
$\Phi_{\eE^\vee}^{\SY \to \SX}[g]$. Again, by 
Lemma  \ref{prop:semihomo-numerical}--(1) we have
$$
\ch(\eE_{X \times \{y\}} ) = r \, e^{D_{\SX}}
$$
for some $D_{\SX} \in \NS_{\QQ}(X)$.

\begin{defi}
\label{defi:polarization}
\rm
A \textit{polarization} on $X$ is by definition the first Chern class $c_1(L)$ of an ample line bundle $L$ on $X$. 
However, it is usual to say the line bundle $L$ itself a polarization. 
\end{defi}

Let $a \in X$ and $b \in Y$.
Consider the Fourier-Mukai functor $\Gamma$ from $D^b(X)$ to $D^b(\HY)$ defined by 
$$
\Gamma = \Phi_\pP^{\SY  \to \SHY} \circ \eE_{\{a\}\times Y}^* \circ \Phi_\eE^{\SX  \to \SY} \circ \eE_{X \times \{b\}}^*\, [g],
$$
where $\eE_{\{a\}\times Y}^*$ denotes the functor $\eE_{\{a\}\times Y}^* \otimes(-)$ and similar for $ \eE_{X \times \{b\}}^*$. 
Let $\widehat \Gamma: D^b(\HY) \to D^b(X) $ be the  Fourier-Mukai functor defined by 
$$
\widehat \Gamma =    \eE_{X \times \{b\}} \circ \Phi_{\eE^\vee}^{\SY\to \SX  }  \circ \eE_{\{a\}\times Y} \circ\Phi_{\pP^\vee}^{ \SHY \to \SY }\, [g].
$$
Then $\widehat \Gamma $ and $\Gamma$ are adjoint functors to each other. By direct computation, 
$\Gamma (\oO_{x}) = \oO_{Z_x}$ for some $0$-subscheme $Z_x \subset \HY$, and 
$\Gamma (\oO_{\widehat y}) = \oO_{Z_{\widehat y}}$ for some $0$-subscheme $Z_{\widehat y} \subset  X$;
where the lengths of $Z_x$ and $Z_{\widehat y}$ are $r^3$ and $r$ respectively. 
Therefore, the Fourier-Mukai kernel  of $\Gamma$ is $\fF \in  \Coh_g(X \times \HY)$, with 
$\fF^{\vee} \cong \calExt^g(\fF, O_{X \times \HY})[-g]$.
So $\Gamma (\Coh_{i}(X)) \subset \Coh_i(\HY)$ and $\widehat \Gamma (\Coh_i (\HY)) \subset \Coh_i(X)$ for all $i$. 
Also by direct computation,  $\Gamma(\oO_X)$ and $\widehat \Gamma (\oO_{\HY}) $ are homogeneous bundles of rank $r$ and $r^3$ respectively. 

Let $\lx \in \NS_{\QQ}(X)$ be an ample class.
\begin{prop}
\label{prop:support-cohomological-result}
Under the induced cohomological map $ \Gamma^{\SH}: H^{2*}_{\alg}(X,\QQ) \to H^{2*}_{\alg}(\HY,\QQ) $,
$$
\Gamma^{\SH}(e^{\ell_{\SX}}) = r\  e^{\ell_{\SHY}}, 
$$
for some ample class $\ell_{\SHY} \in \NS_{\QQ}(\HY)$ satisfying $r^2 \, {\lx^g} =  {\lhy^g}$. 
Hence, under 
the induced cohomological map 
$ \widehat \Gamma^{\SH}: H^{2*}_{\alg}(\HY,\QQ) \to H^{2*}_{\alg}(X,\QQ) $,
$$
\widehat \Gamma^{\SH}(e^{\ell_{\SHY}}) = r^3 \  e^{\ell_{\SX}}.
$$
Moreover, for each $0 \le i \le g$,
$$
\Gamma^{\SH}( \ell_{\SX}^i) = r  \, \ell_{\SHY}^i, \  \ \
\widehat \Gamma^{\SH}(\lhy^i) = r^3 \,  \lx^i.
$$
\end{prop}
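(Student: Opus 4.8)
The plan is to pin down $\Gamma^{\SH}(e^{\lx})=\ch(\Gamma(L))$ for an ample line bundle $L$ on $X$ with $c_1(L)=\lx$, and to read off everything else from that. Two preliminary observations make the bookkeeping work. First, since $\Gamma(\Coh_i(X))\subset\Coh_i(\HY)$ and $\widehat\Gamma(\Coh_i(\HY))\subset\Coh_i(X)$, both $\Gamma^{\SH}$ and $\widehat\Gamma^{\SH}$ preserve the codimension filtration $F^c=\bigoplus_{k\ge c}H^{2k}_{\alg}$. Second, $\Gamma^{\SH}$ is invertible (each of $\Phi_{\pP}^{\SH}$, $\Phi_{\eE}^{\SH}$ is an isomorphism and multiplication by the Chern character of a bundle is a unit), and — by a direct computation of the composite kernels, in the spirit of the computations of $\Gamma(\oO_X)$ and $\widehat\Gamma(\oO_{\HY})$ above — $\widehat\Gamma\circ\Gamma$ and $\Gamma\circ\widehat\Gamma$ are isomorphic to $(-1)^{*}$ twisted by homogeneous bundles of rank $r^4$; since a homogeneous bundle of rank $N$ has Chern character $N$ (it is filtered by line bundles in $\Pic^0$, which have Chern character $1$), this gives $\widehat\Gamma^{\SH}\circ\Gamma^{\SH}=\Gamma^{\SH}\circ\widehat\Gamma^{\SH}=r^4\,\mathrm{id}$, i.e.\ $\widehat\Gamma^{\SH}=r^4(\Gamma^{\SH})^{-1}$. (Alternatively this follows from the adjunction \eqref{eqn:Mukai-pairing-isometry} once one notes that $\Gamma^{\SH}$ scales the Mukai pairing by $r^4$, being a composite of isometries together with two tensorings by rank-$r$ semihomogeneous bundles, each of which scales the pairing by $r^2$.)

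Now fix $L$ as above (replacing $\lx$ by an integral multiple if necessary, which only rescales both sides of the assertions). By Lemma \ref{prop:semihomo-numerical}, $\eE^*_{X\times\{b\}}$ is a simple semihomogeneous bundle of rank $r$ with $c_1/r=-D_{\SX}$, so $\eE^*_{X\times\{b\}}\otimes L^{\otimes n}$ is $\mu$-stable with $\Delta=0$ and normalized first Chern class $n\lx-D_{\SX}$, which is ample for $n\gg0$. The key claim is that for $n\gg0$ the object $\Gamma(L^{\otimes n})$ is a $\mu$-semistable reflexive sheaf on $\HY$ with $\Delta=0$; this is to be established by propagating $\mu$-semistability and the vanishing of the discriminant through the three operations defining $\Gamma$ — using that sufficiently positive sheaves are WIT for $\Phi_{\eE}$, Lemma \ref{classicalcohomoFMT} for the Poincar\'e transform, and the Bogomolov--Gieseker inequality (Lemma \ref{prop:usual-BG-ineq}) — or, alternatively, by reducing to the classical case via the description of $\Phi_{\eE}$ in terms of the Poincar\'e transform, an isogeny and a line-bundle twist. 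Granting the claim, Simpson's theorem (Lemma \ref{prop:Simpson-result-trivial-disciminant}), applied to $\Gamma(L^{\otimes n})$ twisted by $-c_1/\rk$, shows its Chern character equals $\rk\cdot e^{c_1/\rk}$; and the first preliminary observation forces $\rk=\ch_0(\Gamma(L^{\otimes n}))=[\Gamma^{\SH}(1)]_{0}=\ch_0(\Gamma(\oO_X))=r$ and $c_1/\rk=\tfrac1r[\Gamma^{\SH}(n\lx)]_{2}=n\lhy$, where $\lhy:=\tfrac1r[\Gamma^{\SH}(\lx)]_{2}\in\NS_{\QQ}(\HY)$. Thus $\Gamma^{\SH}(e^{n\lx})=r\,e^{n\lhy}$ for all $n\gg0$; comparing coefficients of this polynomial identity in $n$ yields
$$
\Gamma^{\SH}(\lx^i)=r\,\lhy^i\qquad(0\le i\le g),
$$
equivalently $\Gamma^{\SH}(e^{\lx})=r\,e^{\lhy}$.

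The remaining assertions are then formal. Comparing the degree-$2g$ part $\Gamma^{\SH}(\lx^g)=r\,\lhy^g$ with $\Gamma^{\SH}$ of the point class — which, since $\Gamma(\oO_x)=\oO_{Z_x}$ with $\length Z_x=r^3$, satisfies $\Gamma^{\SH}(\mathrm{pt}_X)=r^3\,\mathrm{pt}_{\HY}$ — gives $r^2\lx^g=\lhy^g$. For $\widehat\Gamma$, applying $\widehat\Gamma^{\SH}=r^4(\Gamma^{\SH})^{-1}$ to $\Gamma^{\SH}(\lx^i)=r\,\lhy^i$ gives $\widehat\Gamma^{\SH}(\lhy^i)=r^3\lx^i$, hence $\widehat\Gamma^{\SH}(e^{\lhy})=r^3e^{\lx}$ (for $i=0$ this is consistent with $\ch_0(\widehat\Gamma(\oO_{\HY}))=r^3$).

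The one step that is genuinely not formal is the ampleness of $\lhy$: the identities above only give $\lhy^g=r^2\lx^g>0$. The plan is to extract it from the positivity of the construction. The normalized first Chern class $n\lx-D_{\SX}$ of $\eE^*_{X\times\{b\}}\otimes L^{\otimes n}$ is ample for $n\gg0$; the Poincar\'e transform turns a bundle with ample normalized first Chern class into one with \emph{anti}-ample normalized first Chern class (Lemma \ref{classicalcohomoFMT}), and $\Phi_{\eE}$ behaves likewise, so the two sign-reversals occurring inside $\Gamma$ cancel and $n\lhy$ lands in the ample cone; equivalently, one shows directly that $\Gamma(L^{\otimes n})$ is globally generated for $n\gg0$. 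I expect this positivity step, together with the semistability-and-discriminant bookkeeping of the second paragraph, to be where the real work lies; the rest is a formal consequence of the two preliminary observations.
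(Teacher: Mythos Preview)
Your approach is different from the paper's and substantially more laborious; the main gap is that your ``key claim'' --- that $\Gamma(L^{\otimes n})$ is a $\mu$-semistable reflexive sheaf with $\Delta=0$ for $n\gg0$ --- is asserted but not proved, and establishing it by ``propagating $\mu$-semistability and vanishing of the discriminant through the three operations'' is nontrivial: it would require, for instance, that $\Phi_{\eE}$ preserves semistability and the discriminant for sufficiently positive sheaves, which is close in spirit to what the later sections of the paper develop at length. Your ampleness sketch has the same issue: tracking the sign of the normalized $c_1$ through two Fourier--Mukai transforms needs control over which cohomological degree each intermediate object sits in, and ``globally generated for $n\gg0$'' is not obviously available for $\Gamma(L^{\otimes n})$.

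The paper bypasses all of this with one structural observation: $\Gamma$, being a composite of Fourier--Mukai transforms between abelian varieties and tensoring by semihomogeneous bundles, sends semihomogeneous bundles to semihomogeneous bundles. Taking a semihomogeneous $E_k$ with $c_1(E_k)/\rk(E_k)=k\lx$ (Lemma~\ref{prop:semihomo-numerical}(2)), one gets $\Gamma(E_k)$ semihomogeneous, hence $\ch(\Gamma(E_k))=\rk\cdot e^{D_k}$ automatically by Lemma~\ref{prop:semihomo-numerical}(1) --- no Simpson, no discriminant bookkeeping, no reflexivity check. Your codimension-filtration observation then fixes the rank as $r$, and a short linearity argument forces $D_k=k\lhy$; one reads off $\Gamma^{\SH}(\lx^i)=r\,\lhy^i$ by writing $\lx^i$ as a $\QQ$-linear combination of $e^0,e^{\lx},\ldots,e^{g\lx}$.

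For ampleness the paper also takes a cleaner route: it checks Nakai--Moishezon directly. For any closed $j$-dimensional $Z\subset\HY$, one writes $\int_{\HY}\lhy^{g-j}\cdot[Z]$ as a Mukai pairing, uses the adjunction \eqref{eqn:Mukai-pairing-isometry} to move $\lhy^{g-j}$ across to $X$ (where, by $\widehat\Gamma^{\SH}(\lhy^{g-j})=r^3\lx^{g-j}$, it becomes a positive multiple of $\lx^{g-j}$), and concludes positivity from $\widehat\Gamma(\oO_Z)\in\Coh_j(X)$ and the ampleness of $\lx$. Your identity $\widehat\Gamma^{\SH}=r^4(\Gamma^{\SH})^{-1}$ is correct and a reasonable shortcut for the $\widehat\Gamma$ statements, though the paper simply reruns the direct argument for $\widehat\Gamma$ instead.
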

\begin{proof}
Since $\Gamma (\Coh_{i}(X)) \subset \Coh_i(\HY)$, for any $E$ we have
$$
\Ga^{\SH} (\ch_{\ge j}(E)) = \ch_{ \ge j}(\Gamma (E)).
$$
Here $\ch_{\ge j } = (0,\cdots, \ch_j, \ch_{j+1}, \cdots, \ch_g)$.
Therefore,
\begin{align*}
\Gamma^{\SH}(e^{ \ell_{\SX}})  &  = \Gamma^{\SH}\left(e^{0} + \ch_{\ge 1}(e^{\ell_{\SX}}) \right)  
 =\Gamma^{\SH}\left(e^{0} \right) + \Gamma^{\SH}\left( \ch_{\ge 1}(e^{\ell_{\SX}}) \right)\\
& = \ch(\Gamma(\oO_X)) + \Gamma^{\SH}\left( \ch_{\ge 1}(e^{\ell_{\SX}}) \right) 
 = (r, 0, \cdots,0) + (0, *, \cdots, *) \\ 
 & = (r, * , \cdots, *).
\end{align*}

For any $k \in \ZZ$, There exists a semihomogeneous bundle  $E_k$ with $k \lx = c_1(E_k)/\rk(E_k)$.
 Under the transform $\Gamma (E_k)$ is  also a semihomogeneous bundle 
such that $c_1(\Gamma (E_k))/ \rk(\Gamma (E_k)) = D_k$ for some $D_k \in \NS_{\QQ}(\HY)$.

So we deduce
$$
\Gamma^{\SH}(e^{ \ell_{\SX}}) = r \  e^{\ell_{\SHY}}, 
$$
for some  class $\ell_{\SHY} \in \NS_{\QQ}(\HY)$.

Moreover, for any $k$
\begin{align*}
\Gamma^{\SH}(e^{k \ell_{\SX}})  & = \Gamma^{\SH}\left(k e^{\lx} - (k-1) e^0 + (0,0, *, \cdots, *) \right) \\
& = k  r e^{\ell_{\SHY}} - (k-1)r e^0 + (0, 0, *, \cdots, *) = (r,  r k \lhy , *, \cdots, *).
\end{align*}
So it has to be equal to $re^{k \lhy}$. 

For any $0 \le i  \le  g$, we can write $\lx^i$ as a $\QQ$-linear combination of $\{e^0, e^{\ell_{\SX}}, \cdots, e^{g \ell_{\SX}}\}$.
Since $\Gamma^{\SH}(e^{k \ell_{\SX}})  = re^{k \lhy}$, we have 
$\Gamma^{\SH}( \ell_{\SX}^i) = r  \, \ell_{\SHY}^i$. 

Similarly, we can prove the results involving $\widehat \Gamma^{\SH}$.

Now let us prove that the class $\ell_{\SHY}$ is ample.

 For any $0 \le j \le g$, let $\HY^{(j)} \subset \HY$ be a closed $j$-dimensional subscheme of $\HY$.  
Then we have
\begin{align*}
\int_{\HY} \ell_{\SHY}^{g-j} \cdot [\HY^{(j)}] 
& =  \frac{1}{r^4}  \int_{\HY} \ell_{\SHY}^{g-j} \cdot \Gamma^{\SH} \widehat \Gamma^{\SH} [\HY^{(j)}] \\
& =  \frac{(-1)^{g-j}}{r^4} \left\langle  \ell_{\SHY}^{g-j} , \,  \Gamma^{\SH} \widehat \Gamma^{\SH} [\HY^{(j)}]  \right\rangle_{\SHY} \\
& =  \frac{(-1)^{g-j}}{r^4} \left\langle  \widehat \Gamma^{\SH}(\ell_{\SHY}^{g-j}) , \,   \widehat \Gamma^{\SH} [\HY^{(j)}]  \right\rangle_{\SX}, \ \ \ \text{by \eqref{eqn:Mukai-pairing-isometry}} \\
& =  \frac{(-1)^{g-j}}{r} \left\langle \ell_{\SX}^{g-j}  , \,   \widehat \Gamma^{\SH} [\HY^{(j)}]  \right\rangle_{\SX} \\
& = \frac{1}{r}  \int_{X} \ell_{\SX}^{g-j}  \cdot \widehat \Gamma^{\SH} [\HY^{(j)}] > 0,
\end{align*}
as $\widehat{\Gamma} \left(\oO_{\HY^{(j)}}\right) \in \Coh_{j}(X)$   and $\ell_{\SX}$ is an ample class.
Hence, from the Nakai-Moishezon criterion, $\ell_{\SHY}$ is an ample class on $\HY$.
 \end{proof}

 By Theorem \ref{classicalcohomoFMT}, under the induced cohomological map of $\Phi_{\pP^\vee}^{ \SHY \to \SY }$ we have
$$
\Phi_{\pP^\vee}^{\SH}(e^{\ell_{\SHY}}) =({\ell_{\SHY}^g}/{g!})  \, e^{- \ell_{\SY}},
$$
for some ample class $\ell_{\SY} \in \NS_{\QQ}(Y)$.

Let  $\Xi : D^b(X) \to D^b(Y)$ be the Fourier-Mukai functor defined by 
$$
\Xi = \eE_{\{a\}\times Y}^* \circ \Phi_\eE^{\SX  \to \SY} \circ \eE_{X \times \{b\}}^* =\Phi_{\pP^\vee}^{ \SHY \to \SY } \circ \Gamma.
$$
The image of $e^{\ell_{\SX}}$ under its induced cohomological transform $\Xi^{\SH}$  is 
$ ({r^3 \lx^g}/{g!}) \, e^{- \ell_{\SY}} $. Therefore, we deduce the following.

\begin{thm} 
\label{prop:general-cohomo-FMT}
 If $\lx \in \NS_{\QQ}(X)$ is an ample class then 
$$
 e^{- D_{\SY}} \, \Phi_{\eE}^{\SH} \, e^{-D_{\SX}} ( e^{\ell_{\SX}}) =  (r \, {\lx^g}/{g!}) \,  e^{-\ell_{\SY}},
$$
for some ample class $\ly \in \NS_{\QQ}(Y)$, satisfying $({\lx^g}/g!)({\ly^g}/g!)=  1/r^2$.
Moreover,  for each $0 \le i \le g$,
\begin{equation*}
 e^{- D_{\SY}} \, \Phi_{\eE}^{\SH} \, e^{-D_{\SX}} \left( \frac{ \ell_{\SX}^i}{i!} \right) = \frac{(-1)^{g-i}  r \, \ell_{\SX}^g}{g! (g-i)!} \, \ly^{g-i}. 
\end{equation*}
\end{thm}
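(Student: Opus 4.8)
The plan is to reduce everything to the two cohomological computations already available --- Proposition~\ref{prop:support-cohomological-result} for $\Gamma^{\SH}$ and Theorem~\ref{classicalcohomoFMT} for the classical Poincar\'e transform --- via the factorisation $\Xi = \Phi_{\pP^\vee}^{\SHY \to \SY} \circ \Gamma$ recorded above; the only genuinely new step is to locate the operator $e^{-D_{\SY}}\,\Phi_{\eE}^{\SH}\,e^{-D_{\SX}}$ inside $\Xi^{\SH}$.

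First I would compute the cohomological effect of the two tensoring functors appearing in the definition of $\Xi$. The restrictions $\eE_{X \times \{b\}}$ and $\eE_{\{a\}\times Y}$ are vector bundles of rank $r$ with $\ch(\eE_{X \times \{b\}}) = r\, e^{D_{\SX}}$ and $\ch(\eE_{\{a\}\times Y}) = r\, e^{D_{\SY}}$, so their duals have Chern characters $r\, e^{-D_{\SX}}$ and $r\, e^{-D_{\SY}}$. Since $\td_X = \td_Y = 1$ and $c_1 = 0$ on abelian varieties, the Grothendieck--Riemann--Roch square of Section~\ref{sec:FM-theory} (together with compatibility of cohomological Fourier--Mukai transforms with composition) shows that on $H^{2*}_{\alg}$ the functor $\eE_{X \times \{b\}}^* \otimes (-)$ is multiplication by $r\, e^{-D_{\SX}}$ and $\eE_{\{a\}\times Y}^* \otimes (-)$ is multiplication by $r\, e^{-D_{\SY}}$; composing these with $\Phi_{\eE}^{\SX \to \SY}$ gives
$$
\Xi^{\SH}(v) = r^2\, e^{-D_{\SY}}\, \Phi_{\eE}^{\SH}\bigl( e^{-D_{\SX}}\, v \bigr), \qquad v \in H^{2*}_{\alg}(X,\QQ),
$$
that is, $e^{-D_{\SY}}\, \Phi_{\eE}^{\SH}\, e^{-D_{\SX}} = \tfrac{1}{r^2}\, \Xi^{\SH}$.

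Next I would evaluate the right-hand side through the factorisation $\Xi^{\SH} = \Phi_{\pP^\vee}^{\SH} \circ \Gamma^{\SH}$. By Proposition~\ref{prop:support-cohomological-result}, $\Gamma^{\SH}(e^{\lx}) = r\, e^{\lhy}$ with $\lhy^g = r^2 \lx^g$, and more generally $\Gamma^{\SH}(\lx^i/i!) = r\, \lhy^i/i!$. By Theorem~\ref{classicalcohomoFMT} applied to $\Phi_{\pP^\vee}^{\SHY \to \SY}$ (exactly as in the paragraph preceding the theorem), $\Phi_{\pP^\vee}^{\SH}(e^{\lhy}) = (\lhy^g/g!)\, e^{-\ly}$ for an ample class $\ly \in \NS_{\QQ}(Y)$ with $(\lhy^g/g!)(\ly^g/g!) = 1$, and $\Phi_{\pP^\vee}^{\SH}(\lhy^i/i!) = \tfrac{(-1)^{g-i}\lhy^g}{g!\,(g-i)!}\, \ly^{g-i}$. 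Feeding $e^{\lx}$ and $\lx^i/i!$ through this chain and substituting $\lhy^g = r^2 \lx^g$ gives $\Xi^{\SH}(e^{\lx}) = r^3\,(\lx^g/g!)\, e^{-\ly}$ and $\Xi^{\SH}(\lx^i/i!) = \tfrac{(-1)^{g-i}\,r^3\,\lx^g}{g!\,(g-i)!}\, \ly^{g-i}$; dividing by $r^2$ yields the two displayed identities of the theorem with the same ample $\ly$, and the normalisation $(\lx^g/g!)(\ly^g/g!) = 1/r^2$ follows from $(\lhy^g/g!)(\ly^g/g!) = 1$ together with $\lhy^g = r^2\lx^g$.

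I do not expect a serious obstacle: once Proposition~\ref{prop:support-cohomological-result} and Theorem~\ref{classicalcohomoFMT} are in hand, the argument is a chain of substitutions. The point demanding care is tracking the scalar $r^2$ produced by the two tensor twists and checking that the twisted-exponential conjugation $e^{-D_{\SY}}(-)e^{-D_{\SX}}$ matches the definition $\Xi = \eE_{\{a\}\times Y}^* \circ \Phi_{\eE}^{\SX \to \SY} \circ \eE_{X \times \{b\}}^*$; one should also confirm that the ample classes $\lhy$ and $\ly$ used are literally those fixed in Proposition~\ref{prop:support-cohomological-result} and in the paragraph before the theorem, so that all normalisation constants chain up consistently. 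The ampleness of $\ly$ is not something to reprove --- it is part of Theorem~\ref{classicalcohomoFMT}.
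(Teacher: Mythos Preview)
Your proposal is correct and follows essentially the same route as the paper: factor $\Xi^{\SH} = \Phi_{\pP^\vee}^{\SH}\circ\Gamma^{\SH}$, feed in Proposition~\ref{prop:support-cohomological-result} and Theorem~\ref{classicalcohomoFMT}, and then strip off the two rank-$r$ tensor twists to pass from $\Xi^{\SH}$ to $e^{-D_{\SY}}\Phi_{\eE}^{\SH}e^{-D_{\SX}}$. The paper compresses this into a single sentence (``The image of $e^{\ell_{\SX}}$ under $\Xi^{\SH}$ is $(r^3\lx^g/g!)\,e^{-\ell_{\SY}}$. Therefore, we deduce the following.''), whereas you spell out the $r^2$ bookkeeping and the chain of substitutions explicitly; there is no substantive difference.
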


This gives us the following:
\begin{thm}
\label{prop:derived-induce-polarization}
If the ample line bundle $L$ defines a polarization on $X$,  then
the line bundle $\det (\Xi(L))^{-1}$ is ample and so it defines a polarization on $Y$.
\end{thm}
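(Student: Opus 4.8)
The plan is to pass to cohomology and simply read off the first Chern class of $\det(\Xi(L))$ from the cohomological statements already proven. Since $L$ is a line bundle on $X$ we have $\ch(L) = e^{\lx}$ with $\lx = c_1(L)$, and by hypothesis $\lx$ is an ample (hence rational) class, so Theorem~\ref{prop:general-cohomo-FMT} applies to it. Because $\Xi\colon D^b(X)\to D^b(Y)$ is a Fourier-Mukai functor between derived categories of abelian varieties (and $\td_X=\td_Y=1$), the Grothendieck--Riemann--Roch commutative diagram gives $\ch(\Xi(L)) = \Xi^{\SH}(e^{\lx})$. Then by the computation carried out just before Theorem~\ref{prop:general-cohomo-FMT} — namely $\Xi = \Phi_{\pP^\vee}^{\SHY\to\SY}\circ\Gamma$ together with Proposition~\ref{prop:support-cohomological-result} and Lemma~\ref{classicalcohomoFMT} — one has
$$
\ch(\Xi(L)) \;=\; \Xi^{\SH}(e^{\lx}) \;=\; \frac{r^{3}\,\lx^{g}}{g!}\; e^{-\ly},
$$
where $\ly\in\NS_{\QQ}(Y)$ is the ample class produced there and $g=\dim Y$.

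Next I would recall that for a perfect complex $F^{\bullet}$ on a smooth projective variety the determinant $\det(F^{\bullet}) = \bigotimes_i \det(F^{i})^{(-1)^{i}}$ is a well-defined line bundle with $c_{1}(\det F^{\bullet}) = \sum_i (-1)^{i} c_{1}(F^{i}) = \ch_{1}(F^{\bullet})$, independently of the chosen representative. Taking $F^{\bullet} = \Xi(L)$ and reading off the degree-one part of $\tfrac{r^{3}\lx^{g}}{g!}\,e^{-\ly}$ gives
$$
c_{1}\bigl(\det(\Xi(L))\bigr) \;=\; \ch_{1}(\Xi(L)) \;=\; -\,\frac{r^{3}\,\lx^{g}}{g!}\;\ly,
$$
hence $c_{1}\bigl(\det(\Xi(L))^{-1}\bigr) = \tfrac{r^{3}\lx^{g}}{g!}\,\ly$, a strictly positive rational multiple of the ample class $\ly$. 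This class is integral, being the first Chern class of an honest line bundle, and it lies in the ample cone of $\NS_{\RR}(Y)$; therefore $\det(\Xi(L))^{-1}$ is ample, and so it defines a polarization on $Y$, as claimed.

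The entire content of the statement is in fact already packaged in Theorem~\ref{prop:general-cohomo-FMT} and Proposition~\ref{prop:support-cohomological-result}; the only point requiring care is the bookkeeping of the two twists $\eE_{\{a\}\times Y}^{*}$ and $\eE_{X\times\{b\}}^{*}$ in passing from $\Phi_{\eE}^{\SH}$ to $\Xi^{\SH}$ — and all one actually uses about the resulting leading constant is that it is positive — together with the standard fact that the determinant of a bounded complex is the alternating tensor product, so that $c_{1}\circ\det = \ch_{1}$ on $D^{b}(Y)$. No estimate or delicate argument is needed once Theorem~\ref{prop:general-cohomo-FMT} is in hand.
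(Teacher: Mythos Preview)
Your proof is correct and matches the paper's approach exactly: the paper does not even give a separate argument for this theorem, presenting it as an immediate consequence of the cohomological computation $\Xi^{\SH}(e^{\lx}) = (r^{3}\lx^{g}/g!)\,e^{-\ly}$ established just before Theorem~\ref{prop:general-cohomo-FMT}. Your reading off of $\ch_{1}$ and the identification $c_{1}(\det F^{\bullet}) = \ch_{1}(F^{\bullet})$ is precisely the intended route, and the ampleness of $\ly$ (proved via Nakai--Moishezon in Proposition~\ref{prop:support-cohomological-result}) is the only substantive input.
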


Let us introduce the following notation:
\begin{nota}
\rm
Let $B, \lx \in \NS_{\QQ}(X)$. For $E \in D^b(X)$, the entries $v^{B,{\lx}}_i(E)$, $i=0, \ldots, g$, are defined by
$$
v^{B,{\lx}}_i(E) = i! \, \lx^{g-i} \cdot \ch^B_{i}(E).
$$
Here $\ch^B_{i}(E)$ is the $i$-th component of the $B$-twisted Chern character $\ch^B(E)= e^{-B} \ch(E)$. 

The vector $v^{B,{\lx}}(E)$ is defined by 
\begin{equation*}
v^{B,{\lx}}(E) = \left( v^{B,{\lx}}_0(E) , \ldots, v^{B,{\lx}}_g(E) \right).
\end{equation*}

We will denote an $g \times g$ anti-diagonal matrix with entries $a_k$, $k=1, \ldots,  g$ by
$$
\Adiag(a_1, \ldots, a_g)_{ij} : = \begin{cases}
a_k & \text{if } i=k, j=g+1-k \\
0 & \text{otherwise}.
\end{cases}
$$
\end{nota}

\begin{thm}
\label{prop:antidiagonal-rep-cohom-FMT}
If we consider $v^{-D_{\SX} ,\lx}, v^{D_{\SY},\ly}$ as column vectors, then 
$$
v^{D_{\SY},\ly}\left(\Phi_\eE^{\SX  \to \SY}(E) \right) = \frac{g!}{r \, \ell_{\SX}^g} \,  \Adiag\left(1,-1,\ldots, (-1)^{g-1}, (-1)^g\right)  \ v^{-D_{\SX}, \lx}(E).
$$
\end{thm}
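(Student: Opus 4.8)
The plan is to reduce everything to the formula of Theorem~\ref{prop:general-cohomo-FMT} for the cohomological transform of exponentials of divisor classes, and then to transfer integrals from $Y$ back to $X$ by means of the Mukai pairing \eqref{eqn:Mukai-pairing-isometry}. First, by the Grothendieck--Riemann--Roch diagram of Section~\ref{sec:FM-theory}, and since $\td_X=\td_Y=1$ on abelian varieties, $\ch(\Phi_\eE^{\SX\to\SY}(E))=\Phi_\eE^{\SH}(\ch(E))$; twisting, $\ch^{D_{\SY}}(\Phi_\eE^{\SX\to\SY}(E))=e^{-D_{\SY}}\Phi_\eE^{\SH}\!\bigl(e^{-D_{\SX}}\ch^{-D_{\SX}}(E)\bigr)$. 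I then encode the vector $v^{B,\lx}(E)$ in the generating polynomial
\[
  p^{B,\lx}_E(t):=\int_X e^{t\lx}\ch^B(E)=\sum_{i=0}^{g}\frac{v^{B,\lx}_i(E)}{i!\,(g-i)!}\,t^{\,g-i},
\]
so that $v^{B,\lx}(E)$ and $p^{B,\lx}_E$ determine one another; comparing coefficients of $t^{g-k}$, the asserted anti-diagonal identity becomes equivalent to
\[
  p^{D_{\SY},\ly}_{\Phi_\eE^{\SX\to\SY}(E)}(t)=\frac{g!}{r\,\lx^g}\,t^{\,g}\,p^{-D_{\SX},\lx}_E(-1/t)\qquad(E\in D^b(X)).
\]

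For the transfer, write $p^{D_{\SY},\ly}_{\Phi_\eE^{\SX\to\SY}(E)}(t)=\int_Y e^{t\ly-D_{\SY}}\ch(\Phi_\eE^{\SX\to\SY}(E))$ and apply \eqref{eqn:Mukai-pairing-isometry} with $v=\ch(E)$ and $u=e^{D_{\SY}-t\ly}$ (so $u^\vee=e^{t\ly-D_{\SY}}$), using $\langle a,b\rangle_{\SX}=-\int_X a^\vee b$ on an abelian variety. Since $\omega_Y\cong\oO_Y$, the adjoint kernel $\eE_L=\eE^\vee\otimes p_2^*\omega_Y[g]$ is $\eE^\vee[g]$, which is the kernel of the quasi-inverse $\Phi_{\eE^\vee}^{\SY\to\SX}[g]$ of $\Phi_\eE^{\SX\to\SY}$; hence $\Phi_{\eE_L}^{\SH}=(\Phi_\eE^{\SH})^{-1}$, and the signs on the two sides of \eqref{eqn:Mukai-pairing-isometry} cancel to give
\[
  p^{D_{\SY},\ly}_{\Phi_\eE^{\SX\to\SY}(E)}(t)=\int_X\bigl((\Phi_\eE^{\SH})^{-1}(e^{D_{\SY}-t\ly})\bigr)^{\!\vee}\cdot\ch(E).
\]

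It then remains to evaluate the inverse transform on these exponentials. Summing the identity of Theorem~\ref{prop:general-cohomo-FMT} over $i$ gives $e^{-D_{\SY}}\Phi_\eE^{\SH}e^{-D_{\SX}}(e^{s\lx})=\tfrac{r\lx^g}{g!}\,s^{g}e^{-\ly/s}$, so $\Phi_\eE^{\SH}(e^{s\lx-D_{\SX}})=\tfrac{r\lx^g}{g!}\,s^{g}e^{D_{\SY}-\ly/s}$; putting $s=1/t$ and inverting, $(\Phi_\eE^{\SH})^{-1}(e^{D_{\SY}-t\ly})=\tfrac{g!\,t^{g}}{r\lx^g}\,e^{\lx/t-D_{\SX}}$. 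The Mukai dual is a ring homomorphism sending $e^{c}$ to $e^{-c}$, so substituting into the previous display and using $e^{D_{\SX}}\ch(E)=\ch^{-D_{\SX}}(E)$ yields $p^{D_{\SY},\ly}_{\Phi_\eE^{\SX\to\SY}(E)}(t)=\tfrac{g!\,t^{g}}{r\lx^g}\int_X e^{-\lx/t}\ch^{-D_{\SX}}(E)=\tfrac{g!\,t^{g}}{r\lx^g}\,p^{-D_{\SX},\lx}_E(-1/t)$, which is exactly the identity wanted. Reading off the coefficient of $t^{g-k}$ gives $v^{D_{\SY},\ly}_k(\Phi_\eE^{\SX\to\SY}(E))=\tfrac{g!}{r\lx^g}(-1)^{k}v^{-D_{\SX},\lx}_{g-k}(E)$, i.e. the claimed $\Adiag(1,-1,\dots,(-1)^{g})$ formula.

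The one genuinely delicate point is the sign- and scalar-bookkeeping: one must correctly track the Mukai dual (acting by $(-1)^i$ on $H^{2i}$), the shift $[g]$ (acting by $(-1)^{g}$ on cohomology), and the orientation of the adjunction, so that $\Phi_{\eE_L}^{\SH}$ really is $(\Phi_\eE^{\SH})^{-1}$ and not its negative, together with the normalization $(\lx^g/g!)(\ly^g/g!)=1/r^{2}$ of Theorem~\ref{prop:general-cohomo-FMT}; only with all of these in place does the alternating-sign anti-diagonal matrix with scalar $g!/(r\lx^g)$ come out as stated. Everything else is a routine unwinding of the definitions of $\ch^B$ and $v^{B,\lx}$.
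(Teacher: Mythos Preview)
Your proof is correct and rests on the same two ingredients as the paper's: the Mukai pairing isometry \eqref{eqn:Mukai-pairing-isometry} (identifying $\Phi_{\eE_L}^{\SH}$ with $(\Phi_\eE^{\SH})^{-1}$ since $\omega_Y\cong\oO_Y$) together with the explicit formula of Theorem~\ref{prop:general-cohomo-FMT} for the inverse cohomological transform. The paper carries this out entry-by-entry, computing $v_i^{D_{\SY},\ly}(\Phi_\eE(E))$ directly by pairing $\ell_{\SY}^{g-i}$ against $e^{-D_{\SY}}\Phi_\eE^{\SH}e^{-D_{\SX}}(\ch^{-D_{\SX}}(E))$, then pulling back through the isometry and reading off the coefficient via Theorem~\ref{prop:general-cohomo-FMT}. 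You instead package all $g+1$ entries into the generating polynomial $p^{B,\lx}_E(t)=\int_X e^{t\lx}\ch^B(E)$ and prove the single functional equation $p^{D_{\SY},\ly}_{\Phi_\eE(E)}(t)=\tfrac{g!}{r\lx^g}\,t^g\,p^{-D_{\SX},\lx}_E(-1/t)$, from which the anti-diagonal matrix drops out upon comparing coefficients; this is the same computation run once against $e^{t\ly}$ rather than $g+1$ times against the monomials $\ell_{\SY}^{g-i}$. Your packaging is a bit slicker and makes the alternating signs appear automatically from the substitution $s\mapsto -1/t$, whereas the paper's version keeps the sign bookkeeping explicit at each step; but the underlying argument is identical.
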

\begin{proof}
The $i$-th entry of $v^{D_{\SY},\ly}\left(\Phi_\eE^{\SX  \to \SY}(E) \right) $ is
\begin{align*}
v^{D_{\SY},\ly}_{i} \left(\Phi_\eE^{\SX  \to \SY}(E) \right)
& = i! \, \ly^{g-i} \cdot \ch^{D_{\SY}}_{i}\left(\Phi_\eE^{\SX  \to \SY}(E) \right) \\
& = i!  \int_{Y} {\ell_{\SY}^{g-i}} \cdot \ch^{D_{\SY}}\left(\Phi_\eE^{\SX  \to \SY}(E) \right) \\
& = i!  \int_{Y} {\ell_{\SY}^{g-i}} \cdot e^{-D_{\SY}}\ch\left(\Phi_\eE^{\SX  \to \SY}(E) \right) \\
& =(-1)^{g-i}  i!  \left\langle {\ell_{\SY}^{g-i}} , \, e^{-D_{\SY}} \ch\left(\Phi_\eE^{\SX  \to \SY}(E) \right) \right\rangle_{\SY}\\
& =(-1)^{g-i}  i!  \left\langle {\ell_{\SY}^{g-i}} , \, e^{-D_{\SY}} \Phi_\eE^{\SH}(\ch(E))  \right\rangle_{\SY}\\
& =(-1)^{g-i}  i!  \left\langle {\ell_{\SY}^{g-i}} , \, e^{-D_{\SY}} \Phi_\eE^{\SH}e^{-D_{\SX}}(\ch^{-D_{\SX}}(E))  \right\rangle_{\SY}\\
& =(-1)^{g-i}  i!  \left\langle {\left(e^{-D_{\SY}} \Phi_\eE^{\SH}e^{-D_{\SX}}\right)^{-1}(\ell_{\SY}^{g-i}}) , \, \ch^{-D_{\SX}}(E) \right\rangle_{\SX}\\
& =\frac{ g! (g-i)! }{r \, \lx^g}  \left\langle \lx^i , \, \ch^{-D_{\SX}}(E) \right\rangle_{\SX}, \ \ \  \text{ from Theorem \ref{prop:general-cohomo-FMT} }\\
& =\frac{ (-1)^i g! (g-i)! }{r  \, \lx^g}  \int_X  \lx^i \cdot \ch^{-D_{\SX}}(E) \\
& =\frac{ (-1)^i g! (g-i)! }{r \, \lx^g}  \,  \lx^i \cdot \ch^{-D_{\SX}}_{g-i}(E) \\
& =\frac{ (-1)^i g! }{r  \, \lx^g}  \, v^{-D_{\SX},\lx}_{g-i} (E). 
\end{align*}
This completes the proof. 
\end{proof}
%%%%%%%%%%%%%%%%%%%%%%%%%%%%%%%%%%
%%%%%%%%%%%%%%%%%%%%%%%%%%%%%%%%%%
Let $\DD$ denote the derived dualizing functor $\RRR \calHom(-, \oO_X)$. 
The following   is a generalization of Mukai's result on classical  Fourier-Mukai  transform.
\begin{lem}[{\cite[Lemma 2.2]{PP}}] 
\label{prop:dual-FMT}
We have the isomorphism
$$
(\Phi^{\SX \to \SY}_{\eE^\vee} \circ \DD) [g]\cong  \DD \circ \Phi^{\SX \to \SY}_{\eE}.
$$
Here $\Phi^{\SX \to \SY}_{\eE^\vee}: D^b(X) \to D^b(Y)$ is the  Fourier-Mukai  transform 
from $X$ to $Y$ with the kernel $\eE^\vee$.
\end{lem}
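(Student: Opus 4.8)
The plan is to unwind the definitions of the Fourier-Mukai functors and reduce the statement to Grothendieck-Verdier (relative) duality along the second projection $p_2\colon X\times Y\to Y$. Write $\DD_X=\RRR\calHom_X(-,\oO_X)$, $\DD_Y=\RRR\calHom_Y(-,\oO_Y)$ and $\DD_{X\times Y}=\RRR\calHom_{X\times Y}(-,\oO_{X\times Y})$; these all agree with $\DD$ on the respective varieties. For $E\in D^b(X)$ one has by definition $\Phi_{\eE}^{\SX\to\SY}(E)=\RRR p_{2*}(\eE\dotimes p_1^* E)$, so applying $\DD_Y$ and invoking Grothendieck duality for the proper (smooth) morphism $p_2$ gives
$$
\DD_Y\bigl(\Phi_{\eE}^{\SX\to\SY}(E)\bigr)\;\cong\;\RRR p_{2*}\,\RRR\calHom_{X\times Y}\bigl(\eE\dotimes p_1^* E,\ p_2^{!}\oO_Y\bigr).
$$

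Next I would compute the two ingredients on the right. Since $p_2$ is smooth of relative dimension $g=\dim X$ with relative canonical bundle $p_1^*\omega_X$, we have $p_2^{!}\oO_Y\cong p_1^*\omega_X\,[g]$, and because $X$ is an abelian variety $\omega_X\cong\oO_X$, so $p_2^{!}\oO_Y\cong\oO_{X\times Y}[g]$. On the smooth variety $X\times Y$ every object of $D^b$ is perfect, and using the tensor-hom adjunction together with the fact that $\eE$ is locally free, and that $\RRR\calHom$ commutes with the flat pullback $p_1^*$, one gets
$$
\RRR\calHom_{X\times Y}\bigl(\eE\dotimes p_1^* E,\ \oO_{X\times Y}\bigr)\;\cong\;\eE^\vee\dotimes\RRR\calHom_{X\times Y}(p_1^* E,\oO_{X\times Y})\;\cong\;\eE^\vee\dotimes p_1^*\DD_X(E).
$$
Substituting into the displayed isomorphism and pulling out the shift $[g]$, the right-hand side becomes $\RRR p_{2*}\bigl(\eE^\vee\dotimes p_1^*\DD_X(E)\bigr)[g]=\Phi_{\eE^\vee}^{\SX\to\SY}\bigl(\DD_X(E)\bigr)[g]$, which is exactly the asserted isomorphism.

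Most of this is routine: the compatibilities of $\RRR\calHom$ with flat pullback and with tensoring by a locally free sheaf on a smooth scheme are standard, and no boundedness issues arise since everything in sight is a perfect complex on a smooth projective variety. The one step that genuinely uses the hypotheses is the identification $p_2^{!}\oO_Y\cong\oO_{X\times Y}[g]$, i.e.\ the triviality of the relative dualizing complex, which is where abelianness of $X$ (hence $\omega_X\cong\oO_X$) enters; over a general base one would instead obtain $\Phi_{\eE^\vee}^{\SX\to\SY}(\DD_X(E)\otimes\omega_X)[g]$. The main point requiring care is the bookkeeping: one must check that the chain of natural isomorphisms (Grothendieck-Verdier duality, the projection-type identity $\RRR\calHom(A\dotimes B,C)\cong B^\vee\dotimes\RRR\calHom(A,C)$ for $B$ locally free, and flat base change for $\RRR\calHom$) composes to a single functorial isomorphism, so that the resulting identification is canonical and not merely objectwise.
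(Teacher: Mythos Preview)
Your proof is correct and is the standard argument via Grothendieck--Verdier duality for the projection $p_2$. The paper does not give its own proof of this lemma; it simply cites it from \cite[Lemma 2.2]{PP}, and your argument is essentially the one given there.
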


This gives us the convergence of the following spectral sequence.
%%%%%%%%%%%%%%%%%%%%%%%%
%%%%%%%%%%%%%%%%%%%%%%%%
\begin{dualspecseq}
\label{Spec-Seq-Dual}
$$
\hH^p  \left( \Phi^{\SX \to \SY}_{\eE^\vee} \left( \calExt^{q+g} (E, \oO_X) \right) \right)
 \Longrightarrow \ \ ? \ \ \Longleftarrow 
 \calExt^{p+g}\left( \hH^{g-q} \left( \Phi^{\SX \to \SY}_{\eE}(E)\right), \oO_X \right)
$$
for $E \in \Coh(X)$.
\end{dualspecseq}

We have the following for the Fourier-Mukai transform $\Phi^{\SX \to \SY}_{\eE^\vee}: D^b(X) \to D^b(Y)$ :
\begin{prop}
\label{prop:dual-antidiagonal-rep-cohom-FMT}
If we consider $v^{D_{\SX} ,\lx}, v^{-D_{\SY},\ly}$ as column vectors, then 
$$
v^{-D_{\SY},\ly}\left(\Phi_\eE^{\SX  \to \SY}(E) \right) = \frac{g!}{r \, \ell_{\SX}^g} \,  \Adiag\left(1,-1,\ldots, (-1)^{g-1}, (-1)^g\right)  \ v^{D_{\SX}, \lx}(E).
$$
\end{prop}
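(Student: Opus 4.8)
The plan is to follow the proof of Theorem~\ref{prop:antidiagonal-rep-cohom-FMT} almost verbatim, with the signs of the twisting classes $D_{\SX}$ and $D_{\SY}$ reversed throughout. First I would unwind the definition
$$
v^{-D_{\SY},\ly}_{i}\bigl(\Phi_\eE^{\SX \to \SY}(E)\bigr)
= i!\,\ly^{\,g-i}\cdot \ch^{-D_{\SY}}_{i}\bigl(\Phi_\eE^{\SX \to \SY}(E)\bigr)
= i!\int_Y \ly^{\,g-i}\, e^{D_{\SY}}\,\Phi_\eE^{\SH}(\ch(E)),
$$
rewrite the integral as a Mukai pairing on $Y$, and then transport everything onto $X$. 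Concretely, using that multiplication by $e^{D_{\SY}}$ is adjoint to multiplication by $e^{-D_{\SY}}$ for $\langle-,-\rangle_{\SY}$, together with the isometry \eqref{eqn:Mukai-pairing-isometry} of $\Phi_\eE^{\SH}$ (so that $\Phi^{\SH}_{\eE_L}=(\Phi^{\SH}_\eE)^{-1}$ on an abelian variety), the computation reduces to evaluating the operator $\bigl(e^{D_{\SY}}\Phi_\eE^{\SH}e^{D_{\SX}}\bigr)^{-1}$ on the classes $\ly^{\,g-i}$ and then pairing against $\ch^{D_{\SX}}(E)$, exactly mirroring the passage to $\langle \lx^{i},\ch^{-D_{\SX}}(E)\rangle_{\SX}$ in Theorem~\ref{prop:antidiagonal-rep-cohom-FMT}. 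Note that in order to land on the twisted character $\ch^{D_{\SX}}(E)$ demanded by the target vector $v^{D_{\SX},\lx}(E)$, the natural operator here is the \emph{sign-reversed} one $e^{D_{\SY}}\Phi_\eE^{\SH}e^{D_{\SX}}$, not the operator $e^{-D_{\SY}}\Phi_\eE^{\SH}e^{-D_{\SX}}$ controlled by Theorem~\ref{prop:general-cohomo-FMT}.

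The one genuinely new ingredient is therefore the sign-reversed analogue of Theorem~\ref{prop:general-cohomo-FMT},
$$
e^{D_{\SY}}\,\Phi_\eE^{\SH}\,e^{D_{\SX}}\Bigl(\tfrac{\lx^{i}}{i!}\Bigr)
= \frac{(-1)^{g-i}\,r\,\lx^{g}}{g!\,(g-i)!}\;\ly^{\,g-i},
$$
equivalently $e^{D_{\SY}}\Phi_\eE^{\SH}e^{D_{\SX}}(e^{-\lx})=(r\lx^{g}/g!)\,e^{\ly}$, with the \emph{same} ample class $\ly\in\NS_{\QQ}(Y)$ as in Theorem~\ref{prop:general-cohomo-FMT}. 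To obtain it I would combine Lemma~\ref{prop:dual-FMT} (and the Duality Spectral Sequence~\ref{Spec-Seq-Dual}) with Theorem~\ref{prop:general-cohomo-FMT} and the elementary dualization identity $\ch^{-B}_{i}(F^{\vee})=(-1)^{i}\ch^{B}_{i}(F)$, which at the level of vectors reads $v^{-B,H}_{i}(F^{\vee})=(-1)^{i}v^{B,H}_{i}(F)$. Feeding the isomorphism $\bigl(\Phi_\eE^{\SX \to \SY}(E)\bigr)^{\vee}\cong \Phi^{\SX \to \SY}_{\eE^{\vee}}(E^{\vee})[g]$ of Lemma~\ref{prop:dual-FMT} into this identity converts the graded data of Theorem~\ref{prop:general-cohomo-FMT} (which controls $e^{-D_{\SY}}\Phi_\eE^{\SH}e^{-D_{\SX}}$ on the subalgebra generated by $\lx$, hence also on $e^{-\lx}$) into the sign-reversed transform. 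Since the dual kernel $\eE^{\vee}$ carries exactly the opposite slopes $-D_{\SX},-D_{\SY}$, the resulting normalized configuration is the same one, which is what should force the \emph{same} dual class $\ly$ rather than a new one.

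I expect the main obstacle to be precisely this last point. A priori, reversing the signs of $D_{\SX}$ and $D_{\SY}$ could produce a different dual polarization $\ly'$, constrained only by $(\lx^{g}/g!)(\ly'^{\,g}/g!)=1/r^{2}$ (hence $\ly'^{\,g}=\ly^{\,g}$) and ampleness, exactly as in the Nakai–Moishezon step of Proposition~\ref{prop:support-cohomological-result}; the content of the proposition is that in fact $\ly'=\ly$, and this rigidity is what Lemma~\ref{prop:dual-FMT} must be used to pin down, since it ties the two twistings together through Mukai duality. Once the sign-reversed identity is secured, the rest is the same linear algebra over the $\lx$-subalgebra as in Theorem~\ref{prop:antidiagonal-rep-cohom-FMT}: inverting the anti-diagonal action gives $\bigl(e^{D_{\SY}}\Phi_\eE^{\SH}e^{D_{\SX}}\bigr)^{-1}(\ly^{\,g-i})=\tfrac{(-1)^{g-i}g!\,(g-i)!}{r\lx^{g}\,i!}\,\lx^{i}$, and substituting back through the Mukai pairing yields the entrywise identity $v^{-D_{\SY},\ly}_{i}\bigl(\Phi_\eE^{\SX \to \SY}(E)\bigr)=\tfrac{(-1)^{i}g!}{r\lx^{g}}\,v^{D_{\SX},\lx}_{g-i}(E)$, which is the asserted anti-diagonal form. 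The only remaining care is the sign and index bookkeeping through $\langle-,-\rangle_{\SX}$ and $\langle-,-\rangle_{\SY}$, which runs identically to the proof of Theorem~\ref{prop:antidiagonal-rep-cohom-FMT}.
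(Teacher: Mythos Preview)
Your overall strategy---rerun the computation of Theorem~\ref{prop:antidiagonal-rep-cohom-FMT} with the signs of $D_{\SX},D_{\SY}$ flipped, and use Lemma~\ref{prop:dual-FMT} to pin down that the \emph{same} $\ly$ appears---is exactly what the paper intends (it gives no proof, but the placement immediately after Lemma~\ref{prop:dual-FMT} and the Duality Spectral Sequence signals this). Your identification of ``$\ly'=\ly$'' as the only genuine issue is also correct.

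There is, however, a real gap caused by a typo you have inherited. The sentence introducing the proposition says explicitly that it concerns $\Phi_{\eE^\vee}^{\SX\to\SY}$; the $\Phi_{\eE}$ appearing in the displayed formula is a misprint. You have taken the formula at face value and are trying to prove the ``sign-reversed analogue''
\[
e^{D_{\SY}}\,\Phi_\eE^{\SH}\,e^{D_{\SX}}\bigl(\lx^{i}/i!\bigr)=\frac{(-1)^{g-i}\,r\,\lx^{g}}{g!\,(g-i)!}\;\ly^{\,g-i}
\]
for the \emph{original} kernel $\eE$. This identity is false in general: Theorem~\ref{prop:general-cohomo-FMT} only determines $\Phi_\eE^{\SH}$ on the coset $e^{-D_{\SX}}\cdot\QQ[\lx]\subset H^{2*}_{\alg}(X,\QQ)$, and there is no reason whatsoever for $\Phi_\eE^{\SH}\bigl(e^{D_{\SX}}\lx^{i}\bigr)$ to lie in $e^{-D_{\SY}}\cdot\QQ[\ly]$. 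If you actually carry out the duality computation you sketch, writing $\Phi_{\eE^\vee}^{\SH}(v)=(-1)^g\bigl(\Phi_\eE^{\SH}(v^\vee)\bigr)^\vee$ from Lemma~\ref{prop:dual-FMT} and using multiplicativity of the Mukai dual together with Theorem~\ref{prop:general-cohomo-FMT}, what drops out is
\[
e^{D_{\SY}}\,\Phi_{\eE^\vee}^{\SH}\,e^{D_{\SX}}\bigl(\lx^{i}/i!\bigr)=\frac{(-1)^{g-i}\,r\,\lx^{g}}{g!\,(g-i)!}\;\ly^{\,g-i},
\]
i.e.\ the identity for the \emph{dual} kernel, with the same $\ly$. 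Plugging this into your Mukai-pairing calculation then yields
\[
v^{-D_{\SY},\ly}_{i}\bigl(\Phi_{\eE^\vee}^{\SX\to\SY}(E)\bigr)=\frac{(-1)^{i}g!}{r\,\lx^{g}}\,v^{D_{\SX},\lx}_{g-i}(E),
\]
which is the intended statement. So the fix is purely notational: replace every $\Phi_\eE^{\SH}$ in your ``sign-reversed'' display by $\Phi_{\eE^\vee}^{\SH}$, and correspondingly read the proposition as being about $\Phi_{\eE^\vee}^{\SX\to\SY}$. After that your argument goes through verbatim.
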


\section{Stability Conditions Under  FM Transforms on Abelian Varieties }
\label{sec:stability-under-FMT}
\subsection{Action of FM transforms on Bridgeland Stability Conditions}
\label{sec:action-FMT-central-charge}
%%%%%%%%%%%%%%%%%%%%%%%%%%%%%%%%%%
%%%%%%%%%%%%%%%%%%%%%%%%%%%%%%%%%%
This section generalizes some of the similar results in \cite{MP2, PiyThesis}.

Recall that a Bridgeland stability condition $\sigma$ on a triangulated category
$\dD$ consists of a stability function $Z$ together with a slicing $\pP$
of $\dD$ satisfying certain axioms. Equivalently, one can define
$\sigma$ by giving a bounded t-structure on $\dD$ together with a
stability function $Z$ on the corresponding heart $\aA$ satisfying the
Harder-Narasimhan property. Then $\sigma$ is usually written as the pair $(Z, \pP)$
or $(Z, \aA)$. 

Let $\Upsilon:   \dD \to \dD'$ be an equivalence of triangulated categories, and let $W: K(\dD) \to \CC$ be a group homomorphism. Then
$$
\left( \Upsilon \cdot W \right) ([E]) = W \left( [ \Upsilon^{-1}(E) ] \right)
$$
defines an induced group morphism $ \Upsilon \cdot W$ in  $\Hom (K(\dD'),
\CC)$ by the equivalence $\Upsilon$. Moreover, this can be extended to a natural induced stability condition 
 on $\dD'$ by defining
$\Upsilon \cdot (Z, \aA) = (\Upsilon \cdot Z , \Upsilon(\aA))$.

Let $X, Y$ be two derived equivalent $g$-dimensional abelian varieties as in Section \ref{sec:cohomological-FMT},
 which is given by the Fourier-Mukai transform $\Phi_{\eE}^{\SX \to \SY} : D^b(X) \to D^b(Y)$. 
Let $\lx \in \NS_{\QQ}(X)$ be an ample class on $X$ and let $\ly \in \NS_{\QQ}(Y)$ be the induced ample class on $Y$ as in Theorem \ref{prop:general-cohomo-FMT}. 

Let $u$ be a complex number. 
% = \lambda e^{i \varphi}
Consider the 
function $Z_{-D_{\SX}+u\lx}: K(X) \to \CC$ defined by 
$$
Z_{-D_{\SX}+u\lx}(E)= -\int_{X} e^{-\left( -D_{\SX}+u\lx \right)}\ch(E) = 
\left\langle  e^{ -D_{\SX}+u\lx } , \ \ch(E)  \right\rangle_{\SX}.
$$
For $E \in D^b(Y)$ we have
\begin{align*}
\left( \Phi_{\eE}^{\SX \to \SY}  \cdot Z_{-D_{\SX}+u\lx}\right)(E) 
& = \left\langle  e^{ -D_{\SX}+u\lx } , \ \ch\left(\left(\Phi_{\eE}^{\SX \to \SY}\right)^{-1}(E)\right)  \right\rangle_{\SX} \\
& = \left\langle    e^{ -D_{\SX}+ u\lx } , \ \left(\Phi_{\eE}^{\SH}\right)^{-1} \left(\ch(E)\right)  \right\rangle_{\SX} \\
& = \left\langle  \Phi_{\eE}^{\SH} \left( e^{ -D_{\SX}+ u\lx }\right) , \ \ch(E)  \right\rangle_{\SY}\\
& = \left\langle e^{D_{\SY}} \left( e^{-D_{\SY}}  \Phi_{\eE}^{\SH} e^{ -D_{\SX}}\right) (e^{ u\lx })  , \ \ch(E)  \right\rangle_{\SY} \\
& =  (r \, \lx^g u^g /g!) \ \left \langle   e^{D_{\SY} -\ly/u}  , \ \ch(E)  \right\rangle_{\SY},
\end{align*}
 since by Theorem \ref{prop:general-cohomo-FMT},   
$e^{-D_{\SY}}  \Phi_{\eE}^{\SH} e^{ -D_{\SX}}(e^{ u\lx }) = (r \, \lx^g u^g /g!) \, e^{-\ly/u} $. So we have the following relation:

\begin{lem}
\label{prop:FMTact-central-charge}
We have $\Phi_{\eE}^{\SX \to \SY}  \cdot Z_{-D_{\SX}+u\lx} = \zeta\ Z_{D_{\SY} - \ly/u}$, for $\zeta = r \, \lx^g u^g /g!$.
\end{lem}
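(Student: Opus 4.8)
The plan is to unwind the definition of the action of $\Phi_{\eE}^{\SX \to \SY}$ on central charge functions, to express both $Z_{-D_{\SX}+u\lx}$ and $Z_{D_{\SY}-\ly/u}$ through the Mukai pairing, to move the cohomological Fourier--Mukai transform across the pairing by means of the isometry \eqref{eqn:Mukai-pairing-isometry}, and finally to evaluate the resulting class using Theorem \ref{prop:general-cohomo-FMT}. Since $X$ and $Y$ are abelian varieties we have $\td_X = \td_Y = 1$ and $c_1(X) = c_1(Y) = 0$, so the Mukai vector of an object coincides with its Chern character and $Z_{-D_{\SX}+u\lx}(E) = \langle e^{-D_{\SX}+u\lx}, \ch(E) \rangle_{\SX}$ for $E \in D^b(X)$, and similarly $Z_{D_{\SY}-\ly/u}(E) = \langle e^{D_{\SY}-\ly/u}, \ch(E) \rangle_{\SY}$ for $E \in D^b(Y)$.

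First I would fix $E \in D^b(Y)$ and write $(\Phi_{\eE}^{\SX \to \SY} \cdot Z_{-D_{\SX}+u\lx})(E) = Z_{-D_{\SX}+u\lx}((\Phi_{\eE}^{\SX \to \SY})^{-1}(E)) = \langle e^{-D_{\SX}+u\lx}, \ch((\Phi_{\eE}^{\SX \to \SY})^{-1}(E)) \rangle_{\SX}$. By the Grothendieck--Riemann--Roch compatibility displayed in the commutative diagram of Section \ref{sec:FM-theory} (and again using $\td = 1$), this equals $\langle e^{-D_{\SX}+u\lx}, (\Phi_{\eE}^{\SH})^{-1}(\ch(E)) \rangle_{\SX}$. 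Because $\Phi_{\eE}^{\SX \to \SY}$ is an equivalence, its left adjoint $\Phi_{\eE_L}^{\SY \to \SX}$ is its quasi-inverse, whence on cohomology $(\Phi_{\eE}^{\SH})^{-1} = \Phi_{\eE_L}^{\SH}$; combining \eqref{eqn:Mukai-pairing-isometry} with the symmetry of the Mukai pairing then transports the transform to the other slot and gives $\langle \Phi_{\eE}^{\SH}(e^{-D_{\SX}+u\lx}), \ch(E) \rangle_{\SY}$.

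Next I would factor $e^{-D_{\SX}+u\lx} = e^{-D_{\SX}} e^{u\lx}$ and insert $e^{D_{\SY}} e^{-D_{\SY}} = 1$, rewriting the last expression as $\langle e^{D_{\SY}} (e^{-D_{\SY}}\Phi_{\eE}^{\SH} e^{-D_{\SX}})(e^{u\lx}), \ch(E) \rangle_{\SY}$. To evaluate $(e^{-D_{\SY}}\Phi_{\eE}^{\SH} e^{-D_{\SX}})(e^{u\lx})$ I would expand $e^{u\lx} = \sum_{i=0}^{g} u^i \lx^i / i!$, apply the second identity of Theorem \ref{prop:general-cohomo-FMT} termwise, that is $(e^{-D_{\SY}}\Phi_{\eE}^{\SH} e^{-D_{\SX}})(\lx^i / i!) = \tfrac{(-1)^{g-i} r\, \lx^g}{g!\,(g-i)!}\, \ly^{g-i}$, and then resum, setting $j = g - i$, to obtain $\tfrac{r\, \lx^g}{g!} \sum_{j=0}^{g} \tfrac{(-\ly/u)^j}{j!}\, u^g = \tfrac{r\, \lx^g\, u^g}{g!}\, e^{-\ly/u}$. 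Multiplying by $e^{D_{\SY}}$ and pulling the scalar $\zeta = r\, \lx^g u^g / g!$ out of the pairing yields $\zeta\, \langle e^{D_{\SY}-\ly/u}, \ch(E) \rangle_{\SY} = \zeta\, Z_{D_{\SY}-\ly/u}(E)$, which is exactly the claim.

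The argument is mostly routine bookkeeping; the two points deserving care are the identification $(\Phi_{\eE}^{\SH})^{-1} = \Phi_{\eE_L}^{\SH}$, which is what makes \eqref{eqn:Mukai-pairing-isometry} applicable, and the passage from the fixed-ample-class statement of Theorem \ref{prop:general-cohomo-FMT} to the exponential $e^{u\lx}$ with $u$ an arbitrary nonzero complex number, which must proceed through the degree-by-degree formulas for $\lx^i$ and the resummation above rather than by a direct substitution of $u\lx$ for $\lx$. I do not expect any genuine difficulty here, since the statement is in effect a corollary of the anti-diagonal description of $\Phi_{\eE}^{\SH}$ established earlier in the paper.
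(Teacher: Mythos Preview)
Your proposal is correct and follows essentially the same route as the paper's own argument, which is carried out in the paragraph immediately preceding the lemma: express $Z$ via the Mukai pairing, move $\Phi_{\eE}^{\SH}$ across using \eqref{eqn:Mukai-pairing-isometry}, factor through $e^{D_{\SY}}(e^{-D_{\SY}}\Phi_{\eE}^{\SH}e^{-D_{\SX}})$, and invoke Theorem~\ref{prop:general-cohomo-FMT}. The only difference is that the paper asserts $e^{-D_{\SY}}\Phi_{\eE}^{\SH}e^{-D_{\SX}}(e^{u\lx}) = (r\,\lx^g u^g/g!)\,e^{-\ly/u}$ as an immediate consequence of Theorem~\ref{prop:general-cohomo-FMT}, while you supply the termwise expansion and resummation needed to pass from the fixed class $\lx$ to $u\lx$ for complex $u$; this is a welcome clarification rather than a different method.
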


Assume there exist a stability condition for any complexified ample class  $-D_{\SX}+u\lx$ with a heart $\aA^{\SX}_{-D_{\SX}+u\lx}$ and a slicing $\pP^{\SX}_{-D_{\SX}+u\lx}$ associated to the central charge function $Z_{-D_{\SX}+u\lx}$.  
Furthermore, assume similar stability conditions exist on $Y$. 
From Lemma \ref{prop:FMTact-central-charge} for any $\phi \in \RR$,  \ $\zeta \, Z_{D_{\SY} - \ly/u}\left( \Phi_{\eE}^{\SX \to \SY}\left( \pP^{\SX}_{-D_{\SX}+u\lx}(\phi) \right)\right) \subset \RR_{>0} e^{i\pi \phi}$; that is 
$$
Z_{D_{\SY} - \ly/u}\left( \Phi_{\eE}^{\SX \to \SY}\left(  \pP^{\SX}_{-D_{\SX}+u\lx}(\phi)  \right)\right)  \subset \RR_{>0} e^{i\left(\pi \phi - \arg(\zeta)\right)}.
$$
So  we would expect
$$
 \Phi_{\eE}^{\SX \to \SY}\left( \pP^{\SX}_{-D_{\SX}+u\lx} (\phi) \right) = \pP^{\SY}_{D_{\SY}- \ly/u} \left( \phi - \frac{\arg (\zeta)}{\pi}\right),
$$
and so 
$$
 \Phi_{\eE}^{\SX \to \SY}\left( \pP^{\SX}_{-D_{\SX}+u\lx} ((0,\,1])\right) =  \pP^{\SY}_{D_{\SY}- \ly/u} \left(\left( -\frac{\arg (\zeta)}{\pi},  \, -\frac{\arg (\zeta)}{\pi} +1\right]\right).
$$
For $0\le \alpha <1$, 
$  \pP^{\SY}_{D_{\SY}- \ly/u} ((\alpha, \alpha+1]) =\left \langle   \pP^{\SY}_{D_{\SY}- \ly/u} ((0, \alpha])\,[1] , \,   \pP^{\SY}_{D_{\SY}- \ly/u}  ((\alpha, 1]) \right \rangle$ is a tilt of $  \aA^{\SY}_{D_{\SY}- \ly/u}  =   \pP^{\SY}_{D_{\SY}- \ly/u} ((0,1])$
 associated to a torsion theory coming from $Z_{D_{\SY}- \ly/u} $ stability. 
Therefore, one would expect $  \Phi_{\eE}^{\SX \to \SY} \left( \aA^{\SX}_{-D_{\SX} + u\lx } \right)$  is a
tilt of $\aA^{\SY}_{D_{\SY}- \ly/u}$ associated to a torsion theory coming from $Z_{D_{\SY}- \ly/u}$ stability, up to some shift. 

Moreover, for the Fourier-Mukai transform $\Phi_{\eE}^{\SX \to \SY}$ when $\zeta$ is real, that is, 
\begin{equation}
\label{eqn:condition-for-abelian-equivalence}
u^g \in \RR
\end{equation}
we would expect that the Fourier-Mukai transform $\Phi_{\eE}^{\SX \to \SY}: D^b(X) \to D^b(Y)$ gives the equivalence of associated stability condition hearts. We conjecturally formulate this for any dimensional abelian varieties in Section \ref{sec:cojectural-any-abelian}. 

\subsection{Very weak stability conditions}
\label{sec:very-weak-stability}
%%%%%%%%%%%%%%%%%%%%%%%%%%%%%%%%%%
%%%%%%%%%%%%%%%%%%%%%%%%%%%%%%%%%%
Let us recall the 
general arguments of very weak stability conditions.
We closely follow the notions as in \cite[Section 2]{PT}.

Let $\dD$ be a triangulated category, and 
$K(\dD)$ its Grothendieck group. 
%We fix a finitely generated free abelian 
%group $\Gamma$ and a group homomorphism
%\begin{align*}
%\cl \colon K(\dD) \to \Gamma. 
%\end{align*}

\begin{defi}
\label{defi:weak-stability}
\rm
A \textit{very weak stability condition}
on $\dD$ is a pair $(Z, \aA)$, 
where $\aA$ is the heart of a bounded t-structure on $\dD$, 
and $Z: K(\dD) \to \mathbb{C}$ is a group homomorphism 
satisfying the following conditions: 
\begin{enumerate}
\item For any $E \in \aA$, we have
$Z(E) \in \mathbb{H} \cup \mathbb{R}_{\le 0}$.
%Here and elsewhere we write $Z(E) = Z(\cl(E))$, and 
Here $\mathbb{H}$ is the 
upper half plane $ \{ z \in \CC: \Imm Z >0\}$. 

\item The associated slope function $\mu:  \aA \to \mathbb{R} \cup \{+\infty\}$ is defined by 
$$
\mu (E) =
\begin{cases}
+ \infty & \ \text{if } \Imm Z(E)=0 \\
 -\frac{\Ree Z(E)}{\Imm Z(E)} & \ \text{otherwise},
\end{cases}
$$
and it satisfies the Harder-Narasimhan  property.  
\end{enumerate}
\end{defi}

We say that 
$E \in \aA$ is $\mu$-(semi)stable 
if for any non-zero subobject $F \subset E$
in $\aA$, 
we have the inequality:
$\mu(F) <(\le) \, \mu(E/F)$.

The  Harder-Narasimhan filtration of an object $E \in \aA$ is 
a chain of subobjects 
$0=E_0 \subset E_1 \subset \cdots \subset E_n=E$
 in $\aA$ such that each $F_i=E_i/E_{i-1}$ is 
$\mu$-semistable with 
$\mu(F_i)>\mu(F_{i+1})$. 
If such  Harder-Narasimhan filtrations exists for all objects in $\aA$,
we say that $\mu$ satisfies the  Harder-Narasimhan property.

For a given a very weak stability condition $(Z, \aA)$, 
we define its slicing on $\dD$ (see \cite[Definition~3.3]{BriStab})
\begin{align*}
\{\pP(\phi)\}_{\phi \in \mathbb{R}}, \
\pP(\phi) \subset \dD
\end{align*}
as in the case of Bridgeland 
stability conditions (see \cite[Proposition~5.3]{BriStab}). 
Namely, for $0<\phi \le 1$, 
the category $\pP(\phi)$ is defined to 
be 
\begin{align*}
\pP(\phi) =\{ 
E \in \aA : 
E \mbox{ is } \mu \mbox{-semistable with }
\mu(E)=-1/\tan (\pi \phi)\} \cup \{0\}.
\end{align*} 
Here we set $-1/\tan \pi =\infty$. 
The other subcategories are defined by setting
\begin{align*}
\pP(\phi+1)=\pP(\phi)[1].
\end{align*}
For an interval $I \subset \mathbb{R}$, 
we define $\pP(I)$ to be the smallest extension 
closed subcategory of $\dD$ which contains 
$\pP(\phi)$ for each $\phi \in I$. 
For $0 \le s \le 1$, the pair $(\pP((s, 1]), \pP((0, s]) )$ of subcategories of $\aA = \pP((0,1])$ is a torsion pair, and the corresponding tilt is $\pP((s, s+1])$. 

Note that  
the category $\pP(1)$ contains the 
following category 
\begin{align*}
\cC \cneq \{E \in \aA : Z(E)=0\}. 
\end{align*}
It is easy to check that $\cC$ 
is closed under subobjects and quotients in $\aA$. 
In particular, $\cC$ is an 
abelian subcategory of 
$\aA$. 
Moreover, 
the pair $(Z, \aA)$ gives a \textit{Bridgeland stability 
condition} on $\dD$ if $\cC=\{0\}$. 
%%%%%%%%%%%%%%%%%%%%%%%%%%%%%%%%%%
%%%%%%%%%%%%%%%%%%%%%%%%%%%%%%%%%%
\subsection{Conjectural stability conditions}
\label{sec:cojectural-any-abelian}
Let $X$ be a $g$-dimensional abelian variety with $g \ge 2$. 
Motivated by the constructions for smooth projective surfaces  (see \cite{BriK3, AB}) together with some observations in  Mathematical Physics, for  $X$, it is expected that 
the function defined by
\begin{equation*}
Z_{B + i  \omega}(-) = - \int_X e^{-B- i \omega} \ch(-)
\end{equation*}
is a central charge function of some geometric stability condition on $D^b(X)$ (see \cite[Conjecture 2.1.2]{BMT}). 
Here $B + i \omega \in \NS_{\CC}(X)$ is a complexified ample class on $X$, that is by definition $B, \omega \in \NS_{\RR}(X)$ with $\omega$ an ample class. 
By using the notion of very weak stability, let us conjecturally construct a heart for this central charge function.  

For $0\le k\le g$, we define the $k$-truncated Chern character by
$$
\ch_{\le k}(E) = (\ch_0(E), \ch_1(E), \ldots, \ch_k(E), 0, \ldots, 0),
$$
and the function $Z^{(k)}_{B + i \omega} : K(X) \to \CC$ by
\begin{equation*}
Z^{(k)}_{B + i \omega}(E) = - i^{n-k}\int_X e^{-B- i \omega} \ch_{\le k}(E).
\end{equation*}

The usual slope stability on sheaves gives the very weak stability condition $(Z^{(1)}_{B + i \omega}, \Coh(X))$. 
Moreover, we formulate the following:
\begin{conj} 
\label{prop:conjecture-stab-cond}
For each $1 \le k < g$, the pair $\sigma_k = (Z^{(k)}_{B + i \omega}, \aA^{(k)}_{B + i \omega})$ gives a very weak stability condition on $D^b(X)$, where the hearts $\aA^{(k)}_{B + i \omega}$, $1\le k \le g$  are defined by 
\begin{equation*}
 \left.\begin{aligned}
         & \aA^{(1)}_{B + i \omega} = \Coh(X) \\
         &\aA^{(k+1)}_{B + i \omega} = \pP_{\sigma_k}((1/2,\,3/2])
       \end{aligned}
  \ \right\}.
\end{equation*}
Moreover, the pair $\sigma_g = (Z_{B + i \omega}, \aA^{(g)}_{B + i \omega})$ is a Bridgeland stability condition on $D^b(X)$. 
\end{conj}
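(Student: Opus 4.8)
The plan is to prove Conjecture~\ref{prop:conjecture-stab-cond} by induction on $k$, following the single-tilt construction of Bridgeland and Arcara--Bertram (\cite{BriK3, AB}) and the double-tilt construction of \cite{BMT}, with the Bogomolov--Gieseker type inequalities needed at each stage supplied by Fourier--Mukai theory wherever possible. The base case $k=1$ is classical: the pair $(Z^{(1)}_{B+i\omega},\Coh(X))$ is the twisted $\mu_{\omega,B}$-slope stability on coherent sheaves, which is a very weak stability condition because slope stability on $\Coh(X)$ satisfies the Harder--Narasimhan property. For the inductive step, assume $\sigma_k=(Z^{(k)}_{B+i\omega},\aA^{(k)}_{B+i\omega})$ is a very weak stability condition with slicing $\pP_{\sigma_k}$. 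By Lemma~\ref{sdsdsdsd} the pair $(\pP_{\sigma_k}((1/2,1]),\pP_{\sigma_k}((0,1/2]))$ is a torsion pair on $\aA^{(k)}_{B+i\omega}$, so its tilt $\aA^{(k+1)}_{B+i\omega}=\pP_{\sigma_k}((1/2,3/2])$ is the heart of a bounded t-structure on $D^b(X)$; this step is purely formal.

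The two substantive points of the inductive step are that $Z^{(k+1)}_{B+i\omega}$ is a weak stability function on $\aA^{(k+1)}_{B+i\omega}$ and that it satisfies the Harder--Narasimhan property. The rotation by $i^{g-k}$ in the definition of $Z^{(k)}_{B+i\omega}$ is arranged so that, up to a positive real scalar, the functional $\Imm Z^{(k+1)}_{B+i\omega}$ equals $-\Ree Z^{(k)}_{B+i\omega}$; granting this routine identity, $\Imm Z^{(k+1)}_{B+i\omega}\ge 0$ on $\aA^{(k+1)}_{B+i\omega}$ is immediate from the definition of the tilt, and the only genuine input is the boundary case $\Imm Z^{(k+1)}_{B+i\omega}(E)=0$. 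Decomposing such an $E$ by its $\sigma_k$-Harder--Narasimhan filtration, the required inequality $\Ree Z^{(k+1)}_{B+i\omega}(E)\le 0$ reduces to a Bogomolov--Gieseker type inequality for the $\sigma_k$-semistable objects $F$ with $\Ree Z^{(k)}_{B+i\omega}(F)=0$ (suitably shifted into $\aA^{(k+1)}_{B+i\omega}$). For $k=1$ this is the classical inequality of Lemma~\ref{prop:usual-BG-ineq}; for $k=2$ and $g=3$ it is exactly Theorem~\ref{prop:intro-BGineq-abelian-threefolds}. The Harder--Narasimhan property is then obtained as in \cite[Proposition~5.2.2]{BMT} and \cite[Section~2]{PT}: first assume $B,\omega\in\NS_{\QQ}(X)$, prove that $\aA^{(k+1)}_{B+i\omega}$ is Noetherian, combine this with the discreteness of the image of $\Imm Z^{(k+1)}_{B+i\omega}$ and the boundedness of slopes on a fixed numerical class, and finally remove the rationality hypothesis by a limit argument over complexified ample classes.

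Finally, $\sigma_g=(Z_{B+i\omega},\aA^{(g)}_{B+i\omega})$ is a genuine Bridgeland stability condition exactly when the abelian subcategory $\{E\in\aA^{(g)}_{B+i\omega}:Z_{B+i\omega}(E)=0\}$ is zero, and by the same reduction this amounts to the \emph{strict} Bogomolov--Gieseker type inequality for the relevant $\sigma_{g-1}$-semistable objects; for $g=3$ this follows from Theorem~\ref{prop:intro-BGineq-abelian-threefolds} together with the argument of \cite[Corollary~5.2.4]{BMT}. The main obstacle is therefore the Bogomolov--Gieseker type inequality at the $k$-th stage when $k\ge 2$ and $g\ge 4$: this is a genuinely higher-dimensional statement and is open in general. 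The strategy I would pursue is the bootstrap already used for threefolds, via Conjecture~\ref{prop:intro-conjectural-equivalence}: if $\Phi_{\eE}^{\SX\to\SY}$ induces the expected equivalence $\aA^{(k)}_{\Omega}\simeq\aA^{(k)}_{\Omega'}$ of the conjectural hearts, then, since a Fourier--Mukai equivalence carries minimal objects to minimal objects (cf. Lemma~\ref{prop:minimal-objects-threefold-hearts}), the inequality for a $\sigma_k$-stable object reduces to a statement about the top Chern characters of minimal objects on the derived-equivalent abelian variety, which are controlled by the anti-diagonal form of the cohomological transform in Theorem~\ref{prop:antidiagonal-rep-cohom-FMT}. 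Carrying this out uniformly in $k$ and $g$ --- in particular establishing Conjecture~\ref{prop:intro-conjectural-equivalence} --- is where the difficulty concentrates.
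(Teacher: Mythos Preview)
The statement you are attempting to prove is labeled a \emph{Conjecture} in the paper, and the paper does not supply a proof. Immediately after stating it, the paper notes only that it is known for abelian surfaces (\cite{BriK3, AB}) and abelian threefolds (\cite{MP1, MP2, PiyThesis, BMS}), and Remark~\ref{prop:remark-conj-stab} sketches the state of the art: the pair $(Z^{(2)}_{B+i\omega},\aA^{(2)}_{B+i\omega})$ is a very weak stability condition in any dimension by the usual Bogomolov--Gieseker inequality and the arguments of \cite{BMT}, so $\aA^{(3)}_{B+i\omega}$ can always be constructed; beyond that, nothing is claimed.

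Your proposal is not a proof but a proof strategy, and you say as much: you correctly isolate the missing ingredient as the Bogomolov--Gieseker type inequality for $\sigma_k$-semistable objects when $k\ge 2$ and $g\ge 4$, and you correctly observe that this is open. Your outline of the inductive mechanism (tilt, rotation identity between $\Imm Z^{(k+1)}$ and $-\Ree Z^{(k)}$, Noetherianity for Harder--Narasimhan) matches the paper's Remark~\ref{prop:remark-conj-stab} and the \cite{BMT} template. The bootstrap via Conjecture~\ref{prop:conjecture-equivalence-stab-hearts} that you sketch at the end is precisely the philosophy the paper advocates, but the paper does not carry it out for $g\ge 4$ either; indeed, Conjecture~\ref{prop:conjecture-equivalence-stab-hearts} is itself open in those cases. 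So there is no discrepancy to report: both you and the paper regard this statement as conjectural beyond dimension three, and your plan agrees with the paper's framing of where the difficulty lies.
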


This is known to be true for abelian surfaces (\cite{BriK3, AB}) and abelian threefolds (\cite{MP1, MP2, PiyThesis, BMS}). 
%The main aim of the rest of this paper  is to show this conjecture indeed holds on abelian threefolds by using Fourier-Mukai theory. 

\begin{rmk}
\label{prop:remark-conj-stab}
\rm
Although we assumed $X$ to be an abelian variety, the above Conjecture \ref{prop:conjecture-stab-cond} makes sense for any smooth projective variety.
In fact,  $(Z^{(1)}_{\omega,B}, \aA^{(1)}_{B + i \omega}  = \Coh(X))$ is a very weak stability condition for any variety and a Bridgeland stability condition for curves. By \cite{BriK3, AB}, $ (Z^{(2)}_{B + i \omega}, \aA^{(2)}_{B + i \omega} )$ is a Bridgeland stability condition for surfaces. 
In \cite{BMT}, the authors proved that the pair $(Z^{(2)}_{B + i \omega}, \aA^{(2)}_{B + i \omega} )$  is again a 
very weak stability condition for threefolds. Here the stability was called tilt slope stability.
The usual Bogomolov-Gieseker inequality for $Z^{(1)}_{B + i \omega}$ stable sheaves plays a crucial role in these proofs. Clearly the same arguments work for any higher dimensional varieties. Therefore, we  can always construct the category  $\aA^{(3)}_{B + i \omega}$ when $\dim X \ge 2$. In \cite{BMT}, the authors conjectured that this category is a heart of a Bridgeland stability condition with the central charge $Z_{B+i\omega}$. Moreover, they reduced it to prove Bogomolov-Gieseker type inequalities for $Z^{(2)}_{B + i \omega}$ stable objects $E \in \aA^{(2)}_{B + i \omega}$ with $\Ree Z^{(2)}_{B + i \omega}(E) =0$, and the strong form of this inequality is
\begin{equation*}
\label{BGineq}
\ch^B_3 (E) \le \frac{\omega^2}{18} \ch_1^B(E).
\end{equation*}
This is exactly Conjecture \ref{prop:BG-ineq-conjecture}. 
\end{rmk}

Let $X, Y$ be two derived equivalent $g$-dimensional abelian varieties as in Section \ref{sec:cohomological-FMT}, which is given by the Fourier-Mukai transform $\Phi_{\eE}^{\SX \to \SY} : D^b(X) \to D^b(Y)$. 
Let $\lx \in \NS_{\QQ}(X)$ be an ample class on $X$ and let $\ly \in \NS_{\QQ}(Y)$ be the induced ample class on $Y$ as in Theorem \ref{prop:general-cohomo-FMT}. 

By considering the complexified classes associated to the condition \eqref{eqn:condition-for-abelian-equivalence}, we conjecture the following for all  abelian varieties.

\begin{conj}
\label{prop:conjecture-equivalence-stab-hearts}
The Fourier-Mukai transform $\Phi_{\eE}^{\SX \to \SY}: D^b(X) \to D^b(Y)$ gives the equivalence of stability condition hearts conjecturally constructed in Conjecture \ref{prop:conjecture-stab-cond}:
$$
\Phi_{\eE}^{\SX \to \SY} [k]  \left(  \aA_{\Omega}  \right) = \aA_{\Omega'}.
$$
Here  $\Omega = -D_{\SX} + \lambda e^{i k\pi/g }\, \lx $ and $\Omega' = D_{\SY}  - (1/\lambda) e^{-i k \pi/g} \, \ly$
are complexified ample classes on $X$ and $Y$ respectively, for  any $k \in \{1, 2, \ldots, (g-1)\}$ and any $ \lambda \in \RR_{>0}$. 
\end{conj}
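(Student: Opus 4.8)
The plan is to carry out, level by level in the iterated tilting, the same scheme that is executed for $g=2$ in Section~\ref{sec:equivalence-stab-hearts-surface} and for $g=3$ in Sections~\ref{sec:FMT-sheaves-abelian-threefolds}--\ref{sec:FMT-tilt-stability}. First I would record the numerical compatibility. With $u = \lambda e^{ik\pi/g}$ one has $u^g = (-1)^k\lambda^g \in \RR$, so the reality condition \eqref{eqn:condition-for-abelian-equivalence} holds, and Lemma~\ref{prop:FMTact-central-charge} gives $\Phi_{\eE}^{\SX \to \SY}\cdot Z_{\Omega} = \zeta\, Z_{\Omega'}$ with $\zeta = r\,\lx^g u^g/g! \in \RR\setminus\{0\}$; here $\Omega = -D_{\SX}+u\lx$ and $\Omega' = D_{\SY}-\ly/u$, which is exactly the pair in the statement. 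Theorem~\ref{prop:antidiagonal-rep-cohom-FMT} simultaneously pins down the induced map on numerical Grothendieck groups in anti-diagonal form, and this is the bookkeeping device that lets one track slopes and tilt-slopes, and more generally the slope functions $\mu$ of the very weak stability conditions $\sigma_j$, across the transform.

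The core of the argument is an induction on the tilting level $j = 1,\dots,g$ showing that $\Phi_{\eE}^{\SX \to \SY}$ (with the appropriate shift) carries the $j$-th conjectural heart $\aA^{(j)}_{\Omega}$ into a finite window $\langle \aA^{(j)}_{\Omega'}[-j+1],\dots,\aA^{(j)}_{\Omega'}\rangle$, together with the mirror inclusion for the quasi-inverse $\Phi_{\eE^\vee}^{\SY \to \SX}[g]$. The base case $j=1$ is the study of slope stability of (twisted) sheaves under $\Phi_{\eE}^{\SX \to \SY}$, generalizing Section~\ref{sec:FMT-abelian-varieties}: one shows that the transform of a $\mu$-semistable sheaf of the relevant slope is, up to the anti-diagonal shift, concentrated in a controlled range of standard cohomological degrees, using WIT-type vanishing for simple semihomogeneous bundles together with the usual Bogomolov--Gieseker inequality (Lemma~\ref{prop:usual-BG-ineq}). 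For the inductive step, since $\aA^{(j+1)}_{\Omega}=\pP_{\sigma_j}((1/2,3/2])$ is a torsion-theoretic tilt of $\aA^{(j)}_{\Omega}$ and the transform matches the central charges up to the real scalar $\zeta$, the image of the $(j{+}1)$-st heart is squeezed between two consecutive translates of the window; one then invokes the composition identities $\Phi_{\eE^\vee}^{\SY \to \SX}\circ\Phi_{\eE}^{\SX \to \SY}\cong[-g]$ and its mate to collapse the two-sided inclusions to a single shifted copy --- precisely the sandwiching argument used to pass from the two-sided inclusions to Theorem~\ref{prop:intro-equivalence-threefolds} in the threefold case. Running this up to $j=k$ yields $\Phi_{\eE}^{\SX \to \SY}[k](\aA^{(k)}_{\Omega})=\aA^{(k)}_{\Omega'}$; the equivalence of the final hearts $\aA^{(g)}$ then follows formally, because $\Phi_{\eE}^{\SX \to \SY}[k]$ intertwines $Z_{\Omega}$ with $\zeta Z_{\Omega'}$ and the shift $[k]$ aligns the phases, so the $Z_{\Omega}$-Harder--Narasimhan torsion pairs defining the remaining tilts correspond to the $Z_{\Omega'}$-ones, and the transform therefore commutes with iterating the tilt from level $k$ to level $g$.

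I expect the obstacle to be twofold. Conceptually, Conjecture~\ref{prop:conjecture-stab-cond} is itself open for $g\ge 4$, so the statement is only meaningful once those very weak stability conditions (equivalently, the higher Bogomolov--Gieseker type inequalities) are in place; the Fourier--Mukai equivalence is, as in the threefold case, exactly the tool one wants to use to \emph{prove} those inequalities inductively, so the argument must be organized to avoid circularity --- one proves the BG-type inequality at level $j$ from the already-established equivalence at levels $<j$ together with the fact that minimal objects go to minimal objects (the higher analogue of Lemma~\ref{prop:minimal-objects-threefold-hearts}), which forces an inequality on the top Chern character. Technically, the delicate point is controlling the cohomological amplitude of the transforms of the tilted hearts: for a geometric heart Proposition~\ref{prop:property-higher-dim-stability-hearts} already bounds the standard amplitude by $g-1$, but tracking how this amplitude propagates through $j$ successive tilts, and checking that the window never grows beyond $j$ consecutive shifts, becomes a genuine combinatorial problem for large $g$ and would seem to require a uniform WIT-type statement for the semihomogeneous sheaves and complexes that arise at each stage.
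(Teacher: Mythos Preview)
The statement you are attempting to prove is labeled as a \emph{conjecture} in the paper (Conjecture~\ref{prop:conjecture-equivalence-stab-hearts}) and is \emph{not} proved there in general. The paper establishes only the cases $g=2$ (Section~\ref{sec:equivalence-stab-hearts-surface}, following Huybrechts and Yoshioka) and $g=3$ (Theorem~\ref{prop:equivalence-stab-hearts-threefolds}, the main result). Your outline correctly abstracts the strategy used in those two cases and you rightly isolate the fundamental obstruction: Conjecture~\ref{prop:conjecture-stab-cond} is open for $g\ge 4$, so for higher $g$ the hearts $\aA^{(j)}_{\Omega}$ are not even known to exist. There is thus no proof in the paper to compare your proposal against; what you have written is essentially the author's implicit programme, not a proof.

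There is also a confusion in your sketch worth flagging. You write ``Running this up to $j=k$ yields $\Phi_{\eE}^{\SX\to\SY}[k](\aA^{(k)}_{\Omega})=\aA^{(k)}_{\Omega'}$'' and then propose to pass ``formally'' to level $g$. This conflates two different roles of $k$: in the conjecture, $\aA_{\Omega}$ denotes the \emph{final} heart $\aA^{(g)}_{\Omega}$, and $k\in\{1,\ldots,g-1\}$ is the phase parameter fixing both $\Omega$ and the shift $[k]$, not a tilting depth. In the threefold case the paper does \emph{not} reach an intermediate tilt and then bootstrap: it shows directly (Theorem~\ref{prop:image-B-under-FMT}) that $\Psi$ and $\HPsi[1]$ send the first tilt $\bB^{\SX}$ into a window of $\bB^{\SY}$-cohomological amplitude~$3$, and then carries out a separate, hands-on analysis of tilt stability (Propositions~\ref{prop:FMT-B-cat-bounds-Imginary-Z}--\ref{prop:tilt-bounds-FMT-B-1}) to collapse this to amplitude~$1$ at the second tilt. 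Each step uses specific sheaf-theoretic input (reflexivity, Simpson's theorem via Lemma~\ref{prop:Simpson-result-trivial-disciminant}, the Bogomolov--Gieseker inequality) that does not obviously iterate; the ``formal'' passage you envisage would itself presuppose Conjecture~\ref{prop:conjecture-stab-cond} at every intermediate level.
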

%%%%%%%%%%%%%%%%%%%%%%%%%%%%%%%%%%
%%%%%%%%%%%%%%%%%%%%%%%%%%%%%%%%%%
\begin{note}
\rm
This conjecture is known to be true for abelian surfaces and we discuss it in Section \ref{sec:equivalence-stab-hearts-surface}. 
Moreover, the main aim of the next sections  is to show this conjecture indeed holds on abelian threefolds; see  Theorem \ref{prop:equivalence-hearts-abelian-threefolds}. 
\end{note}
%%%%%%%%%%%%%%%%%%%%%%%%%%%%%%%%%%
%%%%%%%%%%%%%%%%%%%%%%%%%%%%%%%%%%
\section{Bogomolov-Gieseker Type Inequality on Abelian Threefolds}
%%%%%%%%%%%%%%%%%%%%%%%%
%%%%%%%%%%%%%%%%%%%%%%%%
Let $X, Y$ be derived equivalent abelian threefolds  and let $\lx, \ly$ be ample classes on them respectively as in Theorem \ref{prop:general-cohomo-FMT}.
\begin{nota}
\rm
Let $\Psi$ be the Fourier-Mukai transform $\Phi_{\eE}^{\SX \to \SY}$ from $X$ to $Y$ with kernel  $\eE$, and let $\HPsi =  \Phi_{\eE^\vee}^{\SY \to \SX}$.  
\end{nota}
\begin{prop}
\label{prop:imgainary-part-central-charge}
We have the following:
\begin{enumerate}[label=(\arabic*)]
\item For $E \in D^b(X)$, 
$$
\Imm Z_{\lx, -D_{\SX} + \frac{ \lambda}{2} \lx , \frac{ \lambda}{2} }(E) =\frac{ \lambda \sqrt{3}}{4} \left( v_2^{-D_{\SX}, \lx}(E)- \lambda v_1^{-D_{\SX}, \lx}(E) \right), 
$$
and  for $E \in D^b(Y)$, 
$$
\Imm Z_{\ly, D_{\SY} - \frac{1}{2 \lambda} \ly ,  \frac{1}{2 \lambda}}(E) =\frac{ \sqrt{3}}{4\lambda} \left( v_2^{D_{\SY}, \ly}(E) +\frac{1}{\lambda} v_1^{-D_{\SY}, \ly}(E) \right).
$$
\item For $E \in D^b(Y)$, 
$$
\Imm Z_{\lx, -D_{\SX} + \frac{ \lambda}{2} \lx , \frac{ \lambda}{2} }(\HPsi[1](E)) 
=- \frac{3!\lambda^3}{ r \ly^3}  \Imm Z_{\ly, D_{\SY} - \frac{1}{2 \lambda} \ly ,  \frac{1}{2 \lambda}}(E),
$$
and for $E \in D^b(X)$
$$
 \Imm Z_{\ly, D_{\SY} - \frac{1}{2 \lambda} \ly ,  \frac{1}{2 \lambda}}(\Psi(E)) 
 = - \frac{3!}{\lambda^3 r \lx^3}  \Imm Z_{\lx, -D_{\SX} + \frac{ \lambda}{2} \lx , \frac{ \lambda}{2} }(E).
$$
\end{enumerate}
\end{prop}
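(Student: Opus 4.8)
The plan is to treat part (1) as a direct expansion of the central charge of Definition \ref{defi:central-charge} in the $v$-coordinates, and part (2) as a substitution of the anti-diagonal description of the cohomological Fourier--Mukai transform (Theorems \ref{prop:antidiagonal-rep-cohom-FMT} and \ref{prop:general-cohomo-FMT}) into the formulas of part (1).

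For part (1), the first step is to notice that $\tfrac12(1+i\sqrt3)=e^{i\pi/3}$ and $\tfrac12(1-i\sqrt3)=e^{-i\pi/3}$, so that with $B=-D_{\SX}+\tfrac\lambda2\lx$ and $\alpha=\tfrac\lambda2$ the exponent $-B-i\sqrt3\alpha\lx$ equals $D_{\SX}-\lambda e^{i\pi/3}\lx$, and hence (up to the overall sign convention of $Z$) $Z_{\lx,-D_{\SX}+\frac\lambda2\lx,\frac\lambda2}(E)=\pm\int_X e^{-\lambda e^{i\pi/3}\lx}\,\ch^{-D_{\SX}}(E)$. Expanding $e^{-\lambda e^{i\pi/3}\lx}$, extracting the codimension-three component of the product, and rewriting $\lx^{3-i}\cdot\ch^{-D_{\SX}}_i(E)=\tfrac{1}{i!}v^{-D_{\SX},\lx}_i(E)$, the integral becomes $\tfrac{\lambda^3}{6}v_0+\tfrac{\lambda^2 e^{2i\pi/3}}{2}v_1-\tfrac{\lambda e^{i\pi/3}}{2}v_2+\tfrac16 v_3$ (all $v$'s with twist $-D_{\SX}$ and class $\lx$). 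Since $\Imm e^{i\pi/3}=\Imm e^{2i\pi/3}=\tfrac{\sqrt3}{2}$ while the $v_0,v_3$ terms are real, the imaginary part collapses to $\tfrac{\lambda\sqrt3}{4}\bigl(v^{-D_{\SX},\lx}_2(E)-\lambda v^{-D_{\SX},\lx}_1(E)\bigr)$. The same computation on $Y$ --- now with $e^{-i\pi/3}$, the opposite sign on the $\ly$-term, and twist $D_{\SY}$ --- gives the second displayed identity.

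For part (2), the point is that by part (1) each imaginary part is, up to a fixed nonzero real scalar, a fixed linear combination of $v_1$ and $v_2$, so it is enough to track the transformation of the pair $(v_1,v_2)$. For $\Psi=\Phi_\eE^{\SX\to\SY}$, Theorem \ref{prop:antidiagonal-rep-cohom-FMT} with $g=3$ gives $v^{D_{\SY},\ly}(\Psi E)=\tfrac{3!}{r\,\lx^3}\,\Adiag(1,-1,1,-1)\,v^{-D_{\SX},\lx}(E)$; since $\Adiag(1,-1,1,-1)$ sends $(v_0,v_1,v_2,v_3)$ to $(v_3,-v_2,v_1,-v_0)$, the middle entries read $v^{D_{\SY},\ly}_2(\Psi E)=\tfrac{3!}{r\lx^3}v^{-D_{\SX},\lx}_1(E)$ and $v^{D_{\SY},\ly}_1(\Psi E)=-\tfrac{3!}{r\lx^3}v^{-D_{\SX},\lx}_2(E)$. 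Plugging these into the $Y$-formula of part (1) and rearranging produces exactly $-\tfrac{3!}{\lambda^3 r\,\lx^3}$ times the $X$-formula of part (1), i.e.\ the second claimed identity. For the identity involving $\HPsi[1]=\Phi_{\eE^\vee}^{\SY\to\SX}[1]$, I would use that $\HPsi[1]$ coincides with $\Psi^{-1}[-2]$ (from $\HPsi[1]\circ\Psi\cong[-2]$), so that $v^{-D_{\SX},\lx}(\HPsi[1](E))=v^{-D_{\SX},\lx}(\Psi^{-1}(E))$; inverting the matrix identity above --- note $\Adiag(1,-1,1,-1)^2=-\idd$, so the inverse matrix is $-\Adiag(1,-1,1,-1)$ --- and converting $\lx^3$ to $\ly^3$ via $\tfrac{\lx^3}{3!}\cdot\tfrac{\ly^3}{3!}=\tfrac1{r^2}$ from Theorem \ref{prop:general-cohomo-FMT} yields $v^{-D_{\SX},\lx}(\HPsi[1](E))=-\tfrac{3!}{r\,\ly^3}\,\Adiag(1,-1,1,-1)\,v^{D_{\SY},\ly}(E)$; feeding this into the $X$-formula of part (1) and comparing with the $Y$-formula produces the factor $-\tfrac{3!\lambda^3}{r\,\ly^3}$. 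Alternatively, one can invoke the $Y\to X$ analogue of Proposition \ref{prop:dual-antidiagonal-rep-cohom-FMT} in place of the inversion argument.

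None of these steps is deep --- the whole argument is bookkeeping --- and the only genuine care point is getting the signs right in the $\HPsi[1]$ case, where the shift $[1]$, the shift $[-2]$ coming from $\HPsi[1]\circ\Psi\cong[-2]$, and the sign in $\Adiag(1,-1,1,-1)^{-1}=-\Adiag(1,-1,1,-1)$ all interact, together with the need to pass correctly between $\lx^3$ and $\ly^3$ and between the twists $-D_{\SX}$ and $D_{\SY}$. Keeping the four-entry anti-diagonal action and these conversions straight is where one is most likely to slip.
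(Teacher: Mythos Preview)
Your proof is correct and follows essentially the same approach as the paper. For part (1) the paper does the identical expansion of $Z_{H,B,\alpha}=-\int_X e^{-B-i\sqrt3\alpha H}\ch(-)$ after absorbing $e^{D_{\SX}}$ into the twisted Chern character, and for part (2) the paper simply says the result follows from Theorem~\ref{prop:antidiagonal-rep-cohom-FMT} together with part (1), which is exactly the substitution you carry out; your treatment of the $\HPsi[1]$ case via inverting the anti-diagonal matrix and the relation $(\lx^3/3!)(\ly^3/3!)=1/r^2$ just makes explicit what the paper leaves to the reader.
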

\begin{proof}
Let us prove (1).  By definition 
\begin{align*}
Z_{\lx, -D_{\SX} + \frac{ \lambda}{2} \lx , \frac{ \lambda}{2} }(E) 
& = - \int_X e^{D_{\SX} - \frac{ \lambda}{2} \lx - i \frac{ \lambda \sqrt{3}}{2} \lx } \ch(E) \\
& = - \int_X e^{-\lambda \lx \left( \frac{1}{2} + i \frac{\sqrt{3}}{2} \right) }   \ch^{-D_{\SX}}(E).
\end{align*}
Hence its imaginary part is 
\begin{align*}
\Imm Z_{\lx, -D_{\SX} + \frac{ \lambda}{2} \lx , \frac{ \lambda}{2} }(E) 
& = \int_X (0 , \lambda \lx \sqrt{3}/2, -\lambda^2 \lx^2 \sqrt{3}/4, 0) \cdot \ch^{-D_{\SX}}(E) \\
& = \frac{ \lambda \sqrt{3}}{4} \left( v_2^{-D_{\SX}, \lx}- \lambda v_1^{-D_{\SX}, \lx} \right)
\end{align*}
as required.  Similarly one can prove the other part. \\

Part (2) follows from Theorem \ref{prop:antidiagonal-rep-cohom-FMT} together with part (1). 
\end{proof}

Most of the next sections are devoted to prove the following:
\begin{thm}
\label{prop:equivalence-hearts-abelian-threefolds}
The   Fourier-Mukai  transforms $\Psi[1]$ and $\HPsi[2]$ give the equivalences
\begin{align*}
&\Psi[1]\left(\aA_{\lx, -D_{\SX} + \frac{ \lambda}{2} \lx , \frac{ \lambda}{2} } \right) \cong 
\aA_{\ly, D_{\SY} - \frac{1}{2 \lambda} \ly ,  \frac{1}{2 \lambda}}, \\
&\HPsi[2] \left( \aA_{\ly, D_{\SY} - \frac{1}{2 \lambda} \ly ,  \frac{1}{2 \lambda}} \right) \cong
\aA_{\lx, -D_{\SX} + \frac{ \lambda}{2} \lx , \frac{ \lambda}{2} }
\end{align*}
of the abelian categories as in \eqref{eqn:double-tilt-heart} of Section \ref{sec:double-tilting-construction}.
\end{thm}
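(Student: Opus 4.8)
The plan is to deduce both equivalences from analogous ``windowing'' inclusions for the two iterated tilts, after which the conclusion is purely formal. Abbreviate the hearts in the statement by $\aA_{\SX}=\aA_{\lx,\,-D_{\SX}+\frac{\lambda}{2}\lx,\,\frac{\lambda}{2}}$ and $\aA_{\SY}=\aA_{\ly,\,D_{\SY}-\frac{1}{2\lambda}\ly,\,\frac{1}{2\lambda}}$, their first tilts by $\bB_{\SX},\bB_{\SY}$, their central charges by $Z_{\SX},Z_{\SY}$, and let $(\tT_{\SX},\fF_{\SX})$ and $(\tT_{\SX}',\fF_{\SX}')$ be the torsion pairs on $\Coh(X)$ and on $\bB_{\SX}$ that define $\bB_{\SX}$ and $\aA_{\SX}$ (and likewise on $Y$). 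Suppose one has shown
$$\Psi(\tT_{\SX}')\subset\langle\fF_{\SY}',\,\tT_{\SY}'[-1]\rangle,\qquad \Psi(\fF_{\SX}')\subset\langle\fF_{\SY}'[-1],\,\tT_{\SY}'[-2]\rangle,$$
together with the two inclusions of the same shape for $\HPsi[1]$ (with the roles of $X$ and $Y$ exchanged). Since $\aA_{\SX}=\langle\fF_{\SX}'[1],\tT_{\SX}'\rangle$ and exact functors carry extension closures to extension closures, applying $\Psi[1]$ yields
$$\Psi[1](\aA_{\SX})\subset\langle\Psi(\fF_{\SX}')[2],\,\Psi(\tT_{\SX}')[1]\rangle\subset\langle\fF_{\SY}'[1],\,\tT_{\SY}'\rangle=\aA_{\SY},$$
and symmetrically $\HPsi[2](\aA_{\SY})\subset\aA_{\SX}$. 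Because $\HPsi[1]\circ\Psi\cong[-2]$ and $\Psi\circ\HPsi[1]\cong[-2]$, the functors $\Psi[1]$ and $\HPsi[2]$ are mutually quasi-inverse equivalences of $D^b(X)$ and $D^b(Y)$, and each carries the relevant heart into the other; hence they restrict to mutually inverse exact equivalences $\Psi[1](\aA_{\SX})\cong\aA_{\SY}$ and $\HPsi[2](\aA_{\SY})\cong\aA_{\SX}$, which is the assertion.

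The displayed tilt inclusions are proved in the same spirit one level down, and the first step is the first-tilt window
$$\Psi(\tT_{\SX})\subset\langle\bB_{\SY},\bB_{\SY}[-1],\bB_{\SY}[-2]\rangle,\qquad \Psi(\fF_{\SX})\subset\langle\bB_{\SY}[-1],\bB_{\SY}[-2],\bB_{\SY}[-3]\rangle,$$
and the same for $\HPsi[1]$, so that $\Psi(\bB_{\SX})$ and $\HPsi[1](\bB_{\SY})$ live in three consecutive $\bB$-cohomological degrees. To get this I would filter an object of $\tT_{\SX}$ or $\fF_{\SX}$ by its Harder--Narasimhan filtration into $\mu_{\lx,\,-D_{\SX}+\frac{\lambda}{2}\lx}$-semistable torsion-free sheaves and torsion sheaves, and treat each factor with the analysis of slope stability of sheaves under $\Psi$ carried out in the preceding sections: the image of a $\mu$-semistable sheaf has coherent cohomology only in a bounded range of degrees, each cohomology sheaf being again $\mu$-semistable in a controlled slope range. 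The anti-diagonal formula of Theorem~\ref{prop:antidiagonal-rep-cohom-FMT} then determines $v_0^{D_{\SY},\ly}$ and $v_1^{D_{\SY},\ly}$ of these sheaves, hence their membership in $\tT_{\SY}$ or $\fF_{\SY}$, and the Happel--Reiten--Smal{\o} tilting dictionary (Lemma~\ref{sdsdsdsd}) converts their $\Coh$-cohomological position into a $\bB_{\SY}$-cohomological position. The borderline factors --- $\mu$-semistable sheaves whose slope equals the wall value defining $(\tT_{\SX},\fF_{\SX})$, torsion sheaves supported in low dimension, and sheaves of vanishing discriminant --- are handled using that $\Gamma$ preserves the dimension of support, Simpson's theorem (Lemma~\ref{prop:Simpson-result-trivial-disciminant}), and the fact that such sheaves transform to single shifted sheaves given by semihomogeneous bundles.

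Granted the first-tilt window, the pair $\bB_{\SX},\bB_{\SY}$ behaves under $\Psi,\HPsi[1]$ exactly as coherent sheaves on an abelian surface behave under a Fourier--Mukai transform, so the argument for the abelian surface case (Section~\ref{sec:equivalence-stab-hearts-surface}, following Yoshioka) transplants with $\Coh$ replaced by $\bB$ and slope stability replaced by tilt stability. The numerical engine is Proposition~\ref{prop:imgainary-part-central-charge}: $\Imm(Z_{\SX}\circ\HPsi[1])$ and $\Imm(Z_{\SY}\circ\Psi)$ are strictly negative multiples of $\Imm Z_{\SY}$ and $\Imm Z_{\SX}$, which forces $\Psi[1]$ to interchange the tilt-positive and tilt-non-positive parts of the windowed categories. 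Combined with the $\bB$-window this sends the building blocks of $\tT_{\SX}'$, namely tilt-semistable objects of positive tilt-slope, into $\langle\fF_{\SY}',\tT_{\SY}'[-1]\rangle$ and those of $\fF_{\SX}'$ into $\langle\fF_{\SY}'[-1],\tT_{\SY}'[-2]\rangle$; the tilt-slope-zero objects and the objects of vanishing discriminant are pinned down by the slope bounds of Proposition~\ref{prop:slope-bounds}, with Proposition~\ref{prop:first-tilt-behaves-like-sheaves-surfaces} used to decide in which of $\bB_{\SY},\bB_{\SY}[-1],\bB_{\SY}[-2]$ a given $\Psi$-image sits. Running this for $\HPsi[1]$ gives the mirror inclusions and supplies the hypotheses of the first paragraph.

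The main obstacle is exactly this wall-crossing bookkeeping. The objects whose ordinary or tilt slope lies on a torsion-pair wall, the torsion sheaves whose support drops dimension, and the (semi)stable objects of vanishing discriminant are those whose $\Psi$-image can change cohomological degree, so a crude degree count is insufficient; one must separately establish the vanishing of $H^i_{\bB_{\SY}}(\Psi(E))$ and $H^i_{\bB_{\SX}}(\HPsi[1](E))$ outside the three-term windows, which again reduces to sign statements for the slopes of $\Psi$-images of $\mu$-semistable sheaves. Executing this uniformly for an arbitrary abelian threefold, rather than the principally polarized Picard-rank-one case of \cite{MP1,MP2,PiyThesis}, is where the weight of the following sections falls.
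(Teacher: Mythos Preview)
Your overall architecture is exactly the paper's: first establish the three-term $\bB$-window (the paper's Theorem~\ref{prop:image-B-under-FMT}), then run the abelian-surface argument with $\Coh$ replaced by $\bB$ and $\mu$ replaced by $\nu$ (the paper's Section~\ref{sec:FMT-tilt-stability}, via Spectral Sequence~\ref{Spec-Seq-B}, Proposition~\ref{prop:FMT-B-cat-bounds-Imginary-Z}, Lemma~\ref{prop:B-cohomo-vanishing-FMT-0-2}, and Propositions~\ref{prop:tilt-bounds-FMT-B-cat-0-2}--\ref{prop:tilt-bounds-FMT-B-1}), and then conclude formally from $\HPsi[2]\circ\Psi[1]\cong\id$ as you do in your first paragraph.

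Where your sketch and the paper diverge is in how the first-tilt window is obtained. Your description suggests that the anti-diagonal formula of Theorem~\ref{prop:antidiagonal-rep-cohom-FMT} ``determines $v_0^{D_{\SY},\ly}$ and $v_1^{D_{\SY},\ly}$ of these sheaves,'' meaning the individual cohomology sheaves $\Psi^i(E)$; it does not --- it only constrains the alternating sum $\sum(-1)^i v^{D_{\SY},\ly}(\Psi^i(E))$. Pinning down the slope interval of each $\Psi^i(E)$ separately is the real content of Sections~\ref{sec:FMT-sheaves-abelian-threefolds}--\ref{sec:further-FMT-sheaves-abelian-threefolds}, and the decisive step (Proposition~\ref{prop:special-slope-bound-FMT-0-3}) is not handled by the anti-diagonal formula, by $\Gamma$ preserving support dimension, or by Simpson's theorem: the paper introduces an \emph{auxiliary} Fourier--Mukai transform $\Pi:D^b(X)\to D^b(Z)$ to a third abelian threefold $Z$ and a composite $\Xi=\Pi\circ\HPsi[3]:D^b(Y)\to D^b(Z)$, and plays the two associated spectral sequences against each other to force $\Psi^0(E)\in\HN^{\mu}_{\ly,D_{\SY}}((-\infty,-1/\lambda])$ for $E\in\HN^{\mu}_{\lx,-D_{\SX}}((0,\lambda])$. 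Likewise the bounds on $\Psi^1,\Psi^2$ (Propositions~\ref{prop:slope-bound-FMT-F-1} and~\ref{prop:slope-bound-FMT-T-2}) come from Mukai and ``duality'' spectral sequences together with the Bogomolov--Gieseker-type estimate of Proposition~\ref{prop:FMT-bridge-result}, not directly from Theorem~\ref{prop:antidiagonal-rep-cohom-FMT}. Simpson's theorem enters only later, in Section~\ref{sec:special-stable-reflexive-sheaves-abelain-threefolds}, to identify certain reflexive sheaves with semihomogeneous bundles; it is not part of the window argument. Your last paragraph correctly flags that this bookkeeping is the crux, but the specific tools you name there are not the ones that actually close the argument.
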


\begin{rmk}
\label{prop:equivalence-stab-parameters}
\rm
One can see that 
the complexified ample classes 
\begin{equation*}
\left.\begin{aligned}
 &\Omega = \left( -D_{\SX} +  \lambda \lx/2 \right)  + i \sqrt{3} \lambda \lx/2 \\        
 &  \Omega' = \left( D_{\SY} - \ly/(2 \lambda) \right) + i  \sqrt{3}  \ly/(2 \lambda)
       \end{aligned}
  \ \right\}
\end{equation*}
on $X$, $Y$ associated to  the above theorem are exactly the
solutions given for the $g=3$ case in \eqref{eqn:condition-for-abelian-equivalence}. Moreover, the
shifts are compatible with the images of the skyscraper sheaves $\oO_x$, $\oO_y$ under the   Fourier-Mukai  transforms which are also minimal objects in the corresponding abelian categories, as
discussed in Example~\ref{example:minimal-objects-semihomogeneous-bundles}.
\end{rmk}

%The notion of tilt stability can be extended from rational to real
%as considered in \cite{MacriP3} for
%$\mathbb{P}^3$. As a result of the above theorem we get the
%following.

\begin{thm}
\label{prop:BG-ineq-abelian-threefolds}
The Bogomolov-Gieseker type inequality in Conjecture \ref{prop:BG-ineq-conjecture} holds for $X$.
\end{thm}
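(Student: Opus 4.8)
The strategy is to reduce the Bogomolov–Gieseker type inequality to a statement about the Fourier–Mukai transform $\Psi[1]$, exploiting that it carries minimal objects to minimal objects. By Lemma~\ref{prop:reduction-BG-ineq-class}, it suffices to verify the inequality for $E \in \mM_{\lx, -D_{\SX} + \frac{\lambda}{2}\lx, \frac{\lambda}{2}}$, i.e.\ for tilt stable objects with zero tilt slope whose shift $E[1]$ is a minimal object in the double-tilted heart $\aA_{\lx, -D_{\SX} + \frac{\lambda}{2}\lx, \frac{\lambda}{2}}$. So the first step is to invoke Theorem~\ref{prop:equivalence-hearts-abelian-threefolds}: since $\Psi[1]$ is an equivalence of the hearts $\aA_{\lx, -D_{\SX} + \frac{\lambda}{2}\lx, \frac{\lambda}{2}} \cong \aA_{\ly, D_{\SY} - \frac{1}{2\lambda}\ly, \frac{1}{2\lambda}}$, it sends minimal objects to minimal objects; in particular, for such $E$, the object $\Psi[1](E[1]) = \Psi(E)[2]$ is minimal in $\aA_{\ly, D_{\SY} - \frac{1}{2\lambda}\ly, \frac{1}{2\lambda}}$. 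By Lemma~\ref{prop:minimal-objects-threefold-hearts} together with Proposition~\ref{prop:property-higher-dim-stability-hearts} applied on $Y$, minimal objects of this heart are either skyscraper sheaves or of the form $F[1]$ with $F \in \mM_{\ly, D_{\SY} - \frac{1}{2\lambda}\ly, \frac{1}{2\lambda}}$; one then checks (using that $\Psi(E)$ cannot be a shifted skyscraper for numerical reasons, e.g.\ by comparing ranks via Theorem~\ref{prop:antidiagonal-rep-cohom-FMT}) that $\Psi(E) \cong F[1]$ for such an $F$.

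**Passing to Chern characters.** Having both $E \in \mM_{\Omega}$ on $X$ and $F = \Psi(E)[-1] \in \mM_{\Omega'}$ on $Y$ tilt stable with zero tilt slope, I would feed their numerical data through the anti-diagonal cohomological formula. Theorem~\ref{prop:antidiagonal-rep-cohom-FMT} gives
\[
v^{D_{\SY},\ly}\big(\Psi(E)\big) = \frac{3!}{r\,\lx^3}\,\Adiag(1,-1,1,-1)\; v^{-D_{\SX},\lx}(E),
\]
so writing $v^{-D_{\SX},\lx}(E) = (v_0, v_1, v_2, v_3)$ one gets $v^{D_{\SY},\ly}(F)$ (up to the shift, hence a sign) essentially equal to a scalar multiple of $(v_3, -v_2, v_1, -v_0)$. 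The zero–tilt–slope condition $\nu_{\lx, -D_{\SX}+\frac{\lambda}{2}\lx, \frac{\lambda}{2}}(E) = 0$ translates, via the tilt–slope formula in the Notation section, into $v_2 = \lambda v_1$ (equivalently $\Del$-type relations), and the analogous condition on $F$ is automatic from $\Imm Z$ vanishing — indeed Proposition~\ref{prop:imgainary-part-central-charge}(2) says precisely that $\Imm Z_{\Omega'}(\Psi(E)) $ is a nonzero multiple of $\Imm Z_{\Omega}(E)$, so the vanishing transfers. The key point is that the Bogomolov–Gieseker expression $\ch_3^B(E) - \frac{1}{6}\alpha^2\ch_1^B(E)$, which in the $v$-notation is (a multiple of) $v_3 - \lambda^2 v_1$ — equivalently $v_3 - \lambda v_2$ — sits in the \emph{same} component slot that, on the $Y$–side, becomes (a multiple of) $v_0$, i.e.\ the rank. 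So the inequality $v_3 - \lambda v_2 \le 0$ on $X$ is equivalent to a sign condition on $\rk(F)$, or more precisely on the leading term of $v^{D_{\SY},\ly}(F)$ together with the next term, which is controlled because $F[1] \in \aA_{\Omega'}$.

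**The crucial positivity input.** The heart of the argument is then to extract positivity from membership in the double-tilted heart. Since $F$ is tilt stable with zero tilt slope and $\Ext^1_{\SY}(\oO_y, F) = 0$, Lemma~\ref{prop:reflexive-sheaf-threefold} and Proposition~\ref{prop:slope-bounds}(3) force strong constraints: $\hH^{-1}(F)$ is reflexive with $\mu$ bounded above, $\hH^0(F)$ is torsion free with $\mu$ bounded below, and the zero tilt slope condition pins the discriminant-type quantities $\overline{\Delta}_{\ly, D_{\SY}\pm \frac{1}{2\lambda}\ly}$ to be $\ge 0$ by the classical Bogomolov–Gieseker inequality (Lemma~\ref{prop:usual-BG-ineq}). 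One then argues that $6\ch_3^{D_{\SY}}(F)$ — which is $v_3^{D_{\SY},\ly}(F)$, the slot that maps to (minus) $6\ch_3^{-D_{\SX}}(E)$'s partner — cannot have the wrong sign, using either the Simpson vanishing result (Lemma~\ref{prop:Simpson-result-trivial-disciminant}) in the degenerate case $\overline{\Delta} = 0$ (where all Jordan–Hölder factors are vanishing-Chern-class bundles, for which the inequality is an equality, cf.\ Note~\ref{prop:BG-ineq-for-tilt-stable-trivial-discriminant}), or a direct estimate on the reflexive sheaf $\hH^{-1}(F)$ combined with the fact that $\hH^0(F) \in \Coh_0(Y)$ contributes $\ch_3 \ge 0$. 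I expect \textbf{this last step — squeezing the sign of $\ch_3$ of the transformed object out of the tilt-stability constraints and the Bogomolov–Gieseker inequality for sheaves — to be the main obstacle}, since it is where the actual geometry enters and where the argument genuinely generalizes the principally polarized Picard-rank-one case of \cite{MP1, MP2, PiyThesis}; everything before it is bookkeeping with the anti-diagonal transformation law and the abstract properties of minimal objects. Once the sign is established on the $Y$–side, transporting it back through Theorem~\ref{prop:antidiagonal-rep-cohom-FMT} yields $\ch_3^{-D_{\SX}}(E) - \frac{1}{6}\big(\tfrac{\lambda}{2}\big)^2 \lx^2\, \ch_1^{-D_{\SX}}(E) \le 0$, and since $B = -D_{\SX} + \frac{\lambda}{2}\lx$ differs from $-D_{\SX}$ by a multiple of $\lx$ — under which both sides of the BG inequality transform compatibly — one deduces the inequality for the original twist, completing the proof by Lemma~\ref{prop:reduction-BG-ineq-class}.
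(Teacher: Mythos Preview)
Your overall strategy matches the paper's: reduce via Lemma~\ref{prop:reduction-BG-ineq-class} to $E \in \mM_{\lx,B,\alpha}$, use the equivalence of hearts in Theorem~\ref{prop:equivalence-hearts-abelian-threefolds} to transport the minimal object $E[1]$ to a minimal object on $Y$, and read off the inequality through the anti-diagonal formula of Theorem~\ref{prop:antidiagonal-rep-cohom-FMT}. But you have miscalculated where the positivity on the $Y$-side must come from, and this makes your ``main obstacle'' both harder and aimed at the wrong quantity. The BG expression $v_3^{-D_{\SX},\lx}(E) - \lambda^2 v_1^{-D_{\SX},\lx}(E)$ (equivalently $v_3 - \lambda v_2$, using $v_2 = \lambda v_1$) corresponds under the anti-diagonal not to $\ch_3$ or $v_0$ of $F := \Psi[1](E)$ alone, but to the combination $v_1^{D_{\SY},\ly}(F) + \tfrac{1}{\lambda} v_0^{D_{\SY},\ly}(F) = v_1^{B'-\alpha'\ly,\ly}(F)$, i.e.\ the twisted first Chern number $\ly^2 \ch_1^{B'-\alpha'\ly}(F)$. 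Its nonnegativity is exactly Proposition~\ref{prop:slope-bounds}-(4), valid for any tilt-stable object of zero tilt slope; this is a one-line consequence of the slope bounds in part~(3) (which you do cite) and the classical Bogomolov--Gieseker inequality for sheaves. Your plan to ``squeeze the sign of $\ch_3$'' of $F$ via Simpson's theorem and reflexivity of cohomology sheaves is therefore unnecessary.

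Two smaller issues. First, Lemma~\ref{prop:minimal-objects-threefold-hearts} only says certain objects \emph{are} minimal; it does not classify all minimal objects, and you cannot invoke Proposition~\ref{prop:property-higher-dim-stability-hearts} because it presupposes a stability condition, which is precisely what you are establishing. The paper avoids this: a minimal object of $\aA_{\Omega'}$ lies either in $\fF'_{\Omega'}[1]$ or in $\tT'_{\Omega'}$; since $\Imm Z_{\Omega'}(\Psi[1](E[1]))=0$ by Proposition~\ref{prop:imgainary-part-central-charge}-(2), the $\tT'_{\Omega'}$ case forces (via Proposition~\ref{prop:first-tilt-behaves-like-sheaves-surfaces}) the image into $\Coh_0(Y)$, whence $E$ lies in $\mM^o := \{\eE^*_{X\times\{y\}}[1]\}$, already handled by Note~\ref{prop:BG-ineq-for-tilt-stable-trivial-discriminant}. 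For $E \notin \mM^o$ one then gets $F = \Psi[1](E)$ tilt-stable with zero slope, and the $\Ext^1(\oO_y,F)=0$ condition is verified directly from that for $E$. Second, your shift bookkeeping is off: $\Psi[1](E[1]) = F[1]$ gives $F = \Psi[1](E) = \Psi(E)[1]$, not $\Psi(E)[-1]$.
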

\begin{proof}
By deforming tilt stability parameters it is enough to consider  a dense
family of classes $B \in \NS_{\QQ}(X)$, and $\alpha \lx  \in \NS_{\QQ}(X)$  for 
$\nu_{\lx, B, \alpha}$  stable objects of zero tilt slope.

For any given $B \in \NS_{\QQ}(X), \alpha \in \QQ_{>0}$ and ample class 
$\lx \in \NS_{\QQ}(X)$,  one can 
find  $-D_{\SX} \in \NS_{\QQ}(X)$ and 
$\lambda \in \QQ_{>0}$ such that
\begin{align*}
B =  -D_{\SX} +  \lambda \lx/2,  \ \text{ and} \  \alpha   = \lambda /2 .
\end{align*} 
Now   one can find a non-trivial   Fourier-Mukai  transform $\Psi$ which gives the
equivalence of abelian categories as in Theorem~\ref{prop:equivalence-hearts-abelian-threefolds}.
From Lemma \ref{prop:reduction-BG-ineq-class}, it is enough to check that the  
Bogomolov-Gieseker type inequality is satisfied by each object in  $\mM_{\lx, B, \alpha }$.

Moreover,  the objects  in 
$$
\mM^o: = \{M: M \cong \eE^*_{X \times\{y\}}[1]
\text{ for some } y \in X \} \subset \mM_{\lx, B, \alpha }
$$ 
satisfy the  
Bogomolov-Gieseker type inequality (Example~\ref{example:minimal-objects-semihomogeneous-bundles} and Note~\ref{prop:BG-ineq-for-tilt-stable-trivial-discriminant}).
So we only need to check the Bogomolov-Gieseker type inequality for objects in
$\mM_{\lx, B, \alpha } \setminus \mM^o$.

Let  $E \in \mM_{\lx, B, \alpha } \setminus \mM^o$.
Then $E[1] \in \aA_{\lx, B, \alpha }$ is a minimal object and so by the
equivalence in Theorem~\ref{prop:equivalence-hearts-abelian-threefolds},
$\Psi[1](E[1]) \in \aA_{\ly, B', \alpha'}$ is also a minimal object. 
Here 
\begin{align*}
  B' =  D_{\SY} - \ly/(2 \lambda),    \ \text{ and} \  \alpha' = {1}/(2 \lambda).
\end{align*}

So
$\Psi[1](E[1]) \in \fF'_{\ly, B', \alpha'}[1]$ or 
 $\Psi[1](E[1]) \in \tT'_{\ly, B', \alpha'}$.
Since 
$\Imm   Z_{\lx, B, \alpha}(E) = 0$,
from Proposition~\ref{prop:imgainary-part-central-charge}, $\Imm  Z_{\ly, B', \alpha'}(\Psi[1](E[1]))=0$.
 
If $\Psi[1](E[1]) \in  \tT'_{\ly, B', \alpha'}$
then by Proposition~\ref{prop:first-tilt-behaves-like-sheaves-surfaces},  $\Psi[1](E[1]) \in \Coh_{0}(Y)$ and so $E$ has a filtration
of objects from $\mM^o$; which is not possible.
Hence, $\Psi[1](E)$
is a $\nu_{\ly, B', \alpha'}$ stable object with zero tilt slope.
Moreover, for any $y \in Y$ we have
$$
\Ext_{\SY}^1(\oO_y, \Psi[1](E)) 
\cong \Hom_{\SY}(\oO_y, \Psi[2](E)) 
\cong \Hom_{\SX}(\eE^*_{X \times\{y\}}[1],E) 
= 0,
$$
as $E \not \cong  \eE^*_{X \times\{y\}}[1]$. 
Hence,  $\Psi[1](E) \in \mM_{\ly, B', \alpha' }$. 
Therefore, from Proposition \ref{prop:slope-bounds}-(4)
$v_1^{B' - \alpha'\ly}(\Psi[1](E)) \ge 0$. So
we have 
\begin{equation*}
v_1^{B' - \alpha'\ly}(\Psi[1](E)) =v_1^{D_{\SY}  , \ly}(\Psi[1](E)) + \frac{1}{ \lambda} v_0^{D_{\SY} , \ly}(\Psi[1](E)) \ge 0.
\end{equation*}
From Theorem \ref{prop:antidiagonal-rep-cohom-FMT},
 we have 
 \begin{equation*}
v_2^{-D_{\SX}  , \lx}(E) - \frac{1}{ \lambda} v_3^{-D_{\SX} , \lx}(E) \ge 0.
\end{equation*}
Since $\Imm   Z_{\lx, B, \alpha}(E) = 0$,  from Proposition~\ref{prop:imgainary-part-central-charge}--(1)
$$
v_2^{-D_{\SX}, \lx}(E) = \lambda v_1^{-D_{\SX}, \lx}(E).
$$
Therefore 
\begin{equation*}
\label{BGineq}
v_3^{-D_{\SX}, \lx}(E) - \lambda^2 v_1^{-D_{\SX}, \lx}(E) \le 0.
\end{equation*}
This is the required  Bogomolov-Gieseker type inequality for $E$.
\end{proof}

We can now deduce the main theorem of this paper:
\begin{thm}
\label{prop:Bridgeland-stab-abelian-threefolds}
Conjecture \ref{prop:BMT-stab-cond-conjecture} holds for abelian threefolds. Therefore, we have the symmetries of Bridgeland stability conditions as in Theorem \ref{prop:intro-main-stab-symmetries}.
\end{thm}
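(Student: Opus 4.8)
The plan is to combine the Bogomolov--Gieseker type inequality just established with the Bayer--Macr\`i--Toda reduction and with the equivalence of double-tilt hearts from Theorem~\ref{prop:equivalence-hearts-abelian-threefolds}. First I would dispose of the rational case: when $H, B \in \NS_{\QQ}(X)$ with $H$ ample and $\alpha^2 \in \QQ$, the heart $\aA_{H,B,\alpha}$ of \eqref{eqn:double-tilt-heart} is Noetherian (as in the proof of \cite[Proposition 5.2.2]{BMT}), so by \cite[Corollary 5.2.4]{BMT} the pair $(Z_{H,B,\alpha}, \aA_{H,B,\alpha})$ is a Bridgeland stability condition on $D^b(X)$ exactly when the weak inequality $\Ree Z_{H,B,\alpha}(E[1]) < 0$ holds for every $\nu_{H,B,\alpha}$-stable $E \in \bB_{H,B}$ with $\nu_{H,B,\alpha}(E)=0$. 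The strong Bogomolov--Gieseker type inequality of Conjecture~\ref{prop:BG-ineq-conjecture} implies this weak inequality, and Theorem~\ref{prop:BG-ineq-abelian-threefolds} supplies it for any abelian threefold; hence Conjecture~\ref{prop:BMT-stab-cond-conjecture} holds for all rational parameters on $X$. For arbitrary $H, B \in \NS_{\RR}(X)$ and $\alpha \in \RR_{>0}$ I would then deform off the rational locus: the Bogomolov--Gieseker inequality is the positivity of a quadratic form on the numerical Grothendieck group, i.e.\ it provides the support property, so the stability conditions constructed at rational points extend over all of $\NS_{\RR}(X) \times \RR_{>0}$ by the standard deformation argument of \cite{BMT}. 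This gives the first assertion.

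For the symmetry statement, fix $\lambda \in \RR_{>0}$ and consider the complexified ample classes
$$
\Omega = \Bigl(-D_{\SX} + \tfrac{\lambda}{2}\lx\Bigr) + i\sqrt{3}\,\tfrac{\lambda}{2}\lx, \qquad
\Omega' = \Bigl(D_{\SY} - \tfrac{1}{2\lambda}\ly\Bigr) + i\sqrt{3}\,\tfrac{1}{2\lambda}\ly,
$$
which are the $g=3$ solutions of \eqref{eqn:condition-for-abelian-equivalence} recorded in Remark~\ref{prop:equivalence-stab-parameters}. By the first part, both $(Z_\Omega, \aA_\Omega)$ and $(Z_{\Omega'}, \aA_{\Omega'})$ are Bridgeland stability conditions. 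Theorem~\ref{prop:equivalence-hearts-abelian-threefolds} (= Theorem~\ref{prop:intro-equivalence-threefolds}) gives $\Psi[1](\aA_\Omega) \cong \aA_{\Omega'}$, while Lemma~\ref{prop:FMTact-central-charge}, corrected by the shift $[1]$ (which multiplies a central charge by $-1$), yields $\Psi[1] \cdot Z_\Omega = \zeta\, Z_{\Omega'}$ with $\zeta \in \RR_{>0}$ (explicitly $\zeta = r\,\lambda^3 \lx^3 / 6$, using that the relevant $u = \lambda e^{i\pi/3}$ has $u^3 = -\lambda^3$). Since an exact equivalence of triangulated categories carries Bridgeland stability conditions to Bridgeland stability conditions and rescaling a central charge by a positive real number changes neither the heart nor the stability, we conclude
$$
\Psi[1] \cdot (Z_\Omega, \aA_\Omega) = (\zeta Z_{\Omega'}, \aA_{\Omega'}),
$$
which is precisely the symmetry of Theorem~\ref{prop:intro-main-stab-symmetries}, together with the positivity $\zeta \in \RR_{>0}$.

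The genuine mathematical content here is entirely upstream: Theorem~\ref{prop:BG-ineq-abelian-threefolds} (whose own proof first reduces to a dense family of rational parameters via the wall structure of tilt-stability) and Theorem~\ref{prop:equivalence-hearts-abelian-threefolds}, both resting on the Fourier--Mukai analysis of slope and tilt stability carried out in the earlier sections. Within the present deduction the only delicate point is the passage from rational to arbitrary real parameters in Conjecture~\ref{prop:BMT-stab-cond-conjecture}; I expect that to be the main obstacle in this final step, and I would resolve it exactly as in \cite{BMT}, using the support property furnished by the Bogomolov--Gieseker inequality to propagate the stability conditions from the rational locus to the whole space of parameters.
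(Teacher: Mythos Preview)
Your proposal is correct and follows essentially the same approach as the paper, which in fact gives no explicit proof at all: the theorem is stated immediately after Theorem~\ref{prop:BG-ineq-abelian-threefolds} with only the sentence ``We can now deduce the main theorem of this paper''. Your argument spells out exactly the intended deduction --- BG-type inequality (Theorem~\ref{prop:BG-ineq-abelian-threefolds}) plus the BMT reduction gives Conjecture~\ref{prop:BMT-stab-cond-conjecture}, and then Theorem~\ref{prop:equivalence-hearts-abelian-threefolds} together with Lemma~\ref{prop:FMTact-central-charge} gives the symmetry statement --- and your computation of $\zeta = r\lambda^3\lx^3/6 > 0$ via $u = \lambda e^{i\pi/3}$, $u^3 = -\lambda^3$, and the sign flip from the shift $[1]$ is correct. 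The one place where you are more careful than the paper is the passage from rational to arbitrary real parameters; the paper leaves this entirely implicit (its discussion after Conjecture~\ref{prop:BMT-stab-cond-conjecture} only treats the rational case, and the deformation remark in the proof of Theorem~\ref{prop:BG-ineq-abelian-threefolds} concerns the BG inequality, not the stability condition itself), so your invocation of the support property and the deformation argument from \cite{BMT} is a genuine and necessary addition rather than a departure.
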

%%%%%%%%%%%%%%%%%%%%%%%%%%%%%%%%%%%%%%%%%%%%%%%%%%%%%%%%%%%%%%%%%%%%%%%%%%%%%%%%%%%%%%%%%%%%%%%%

%%%%%%%%%%%%%%%%%%%%%%%%%%%%%%%%%%%
%%%%%%%%%%%%%%%%%%%%%%%%%%%%%%%%%%%
%%%%%%%%%%%%%%%%%%%%%%%%%% 
\section{FM Transform of Sheaves on Abelian Varieties}
\label{sec:FMT-abelian-varieties}
%%%%%%%%%%%%%%%%%%%%%%%%%%%%%%%%%%
%%%%%%%%%%%%%%%%%%%%%%%%%%%%%%%%%%

Let $X, Y$ be two derived equivalent $g$-dimensional abelian varieties as in Section \ref{sec:cohomological-FMT}, which is given by the Fourier-Mukai transform $\Phi_{\eE}^{\SX \to \SY} : D^b(X) \to D^b(Y)$. 
Let $\lx \in \NS_{\QQ}(X)$ be an ample class on $X$ and let $\ly \in \NS_{\QQ}(Y)$ be the induced ample class on $Y$ as in Theorem \ref{prop:general-cohomo-FMT}.

We study some properties of the slope stability of the images under the  Fourier-Mukai transform $\Phi_{\eE}^{\SX \to \SY}$ in this section. 
The slope $\mu_{\lx, B}$ of $E \in \Coh(X)$ is defined by 
$$
\mu_{\lx, B}(E) = \frac{\lx^{g-1} \ch_1^{B}(E)}{\lx^{g} \ch_0^{B}(E)},
$$
where $g = \dim X$, and we consider the notion of slope stability as similar to threefolds in Section \ref{sec:double-tilting-construction}.

\begin{nota}
\rm
Let use write  
\begin{align*}
&\Psi : = \Phi_{\eE}^{\SX \to \SY}  : D^b(X) \to D^b(Y), \\
&\HPsi : =\Phi_{\eE^\vee}^{\SY \to \SX}  : D^b(Y) \to D^b(X).
\end{align*}
Let $\cC$ be a heart of a bounded t-structure on $D^b(Y)$. For a sequence of integers $i_1, i_2, \ldots, i_k$  we define
$$
V^{\Psi}_{\cC}(i_1, i_2, \ldots, i_k) = \{ E \in D^b(X) : H^{i}_{\cC}(E) =0 \text{ for } i \ne \{ i_1, i_2, \ldots, i_k \}\}.
$$
For $E \in D^b(X)$ we write 
$$
\Psi^{k}(E) = H^k_{\Coh(Y)} (\Psi(E)) = \hH^k(\Psi(E)).
$$
We consider similar notions for $\HPsi$. 
\end{nota}

%The following is cruicial for us in the rest of this section.
%\begin{prop}[{\cite[Lemma 7.1]{BMS}}]
%Let $F_1, \cdots, F_m$  be a finite collection of sheaves on a smooth projective variety. Then any globally generated linear system contains an open subset of divisors $D$ with
%$$
%\TOR^i(\oO_D, F_j) =0,
%$$
%for all $i \ge 1$ and $1 \le j \le m$.
%\end{prop}

\begin{note}
\label{prop:restriction-setup}
\rm
There exists a minimal $N \in \ZZ_{>0}$ such that 
$N \lx$ becomes integral.
Let us fix a divisor $H_{\SX}$ from the linear system $| N \lx |$. 
The divisor $H_{\SX, x} \in | N \lx |$ is the translation of $H_{\SX}$ by $-x$:
\begin{equation*}
H_{\SX, x} := t_x^{-1}\left( H_{\SX} \right).
\end{equation*}
For positive integer $m$, let $m H_{\SX, x}$ be the divisor in the linear system $|m N \lx |$.
So $mH_{\SX, x}$ is the zero locus of a section of the line bundle $\oO_X(mH_{\SX, x})$, and
 we have the short exact sequence
\begin{equation*}
\label{ses_res}
0 \to \oO_X(- mH_{\SX, x}) \to \oO_X \to \oO_{mH_{\SX}, x} \to 0
\end{equation*}
in $\Coh(X)$.

Let $E \in \Coh(X)$.
Apply the functor $E \stackrel{\textbf{L}}{\otimes} (-)$ to  the above short exact sequence and consider the long exact sequence
of $\Coh(X)$-cohomologies.
Since $\oO_X(- mH_{\SX, x}), \oO_X$ are locally free, we have the long exact sequence
$$
 0 \to \TOR_1(E, \oO_{mH_{\SX, x}}) \to  E(- mH_{\SX, x}) \to E \to E \otimes\oO_{mH_{\SX, x}} \to 0
$$
in $\Coh(X)$ and $\TOR_i(E, \oO_{mH_{\SX, x}}) = 0$ for $i \ge 2$.

Assume  $E \in \Coh_k(X)$ for some $k \in \{0,1,\cdots, g\}$.
 For generic $x \in X$, we have $\dim (\Supp(E) \cap H_{\SX,x}) \le (k-1)$ and so
 $\TOR_1(E, \oO_{mH_{\SX,x}}) \in \Coh_{\le k-1}(X)$.
However, $  E(- mH_{\SX,x}) \in \Coh_k(X)$, and so  $\TOR_1(E, \oO_{mH_{\SX, x}}) =0$.
Therefore we have the short exact sequence
\begin{equation}
\label{eqn:restriction-ses-higher-degree}
0   \to  E(- mH_{\SX, x}) \to E \to E|_{m H_{\SX,x}} \to 0
\end{equation}
in $\Coh(X)$, where we write 
$$
E|_{mH_{\SX,x}} := E \otimes\oO_{mH_{\SX,x}}.
$$
Since any $E \in \Coh(X)$ is an extension of sheaves from $\Coh_k(X)$, $1 \le k \le g$,
for generic $x \in X$ we have  $\TOR_i(E, \oO_{mH_{\SX,x}}) = 0$ for $i \ge 1$
and so the short exact sequence \eqref{eqn:restriction-ses-higher-degree}.
Moreover, when $0 \to E_1 \to E_2 \to E_3 \to 0$ is a short exact sequence in $\Coh(X)$,
for generic $x \in X$ we have $\TOR_i(E_j, \oO_{mH_{\SX,x}}) = 0$ for $i \ge 1$ and all $j$, and so
\begin{equation*}
0 \to E_1|_{mH_{\SX,x}} \to E_2|_{mH_{\SX,x}} \to E_3|_{mH_{\SX,x}} \to 0
\end{equation*}
is a short exact sequence in $\Coh(X)$.
\end{note}

\begin{nota}
\rm
Similarly, we consider the divisors $H_{\SY, y}, H_{\SHX, \hx}, H_{\SHY,\hy}$ on $Y, \HX, \HY$ with respect to the induced ample classes $\ly, \lhx, \lhy$ as in Theorem \ref{prop:general-cohomo-FMT}  under the Fourier-Mukai transforms.
\end{nota}

\begin{note}
\rm
From the definition of  Fourier-Mukai  transform, for any $E \in \Coh(X)$ we have 
$$
\Psi^k(E) = 0, \text{ for all } k \not \in \{0, 1, \ldots, g\}.
$$
\end{note}

\begin{prop}
\label{prop:V-0-for-higher-ample-twist}
Let $E \in \Coh(X)$. Then for large enough $m \in \ZZ_{>0}$, and any $x \in X$, we have the following:
\begin{enumerate}
\item $E(mH_{\SX, x}) \in V^{\Psi}_{\Coh(Y)}(0)$.
\item If $E \in \Coh_k(X)$ such that $\calExt^i(E, \oO_X) =0$ for $i \ne k$, then 
$E(-mH_{\SX, x}) \in V^{\Psi}_{\Coh(Y)}(g-k)$.
\end{enumerate}
\end{prop}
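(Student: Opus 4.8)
The plan is to prove both parts by computing the Fourier--Mukai transform fibrewise. Since the kernel $\eE$ is a vector bundle on $X\times Y$, for any sheaf $F\in\Coh(X)$ we have $\Psi(F)=\dR p_{2*}(\eE\otimes p_1^*F)$ and hence $\Psi^i(F)=R^ip_{2*}(\eE\otimes p_1^*F)$. By the cohomology-and-base-change (``WIT'') argument of \cite{MukFMT}, it is enough to show that the fibre cohomology $H^i(X,\eE_{X\times\{y\}}\otimes F)$ is concentrated in the single claimed degree for \emph{every} $y\in Y$: then $\dR p_{2*}(\eE\otimes p_1^*F)$ is a shift of a single sheaf. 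So everything reduces to controlling $H^\bullet\!\left(X,\eE_{X\times\{y\}}\otimes E(\pm mH_{\SX,x})\right)$, the only genuine subtlety being that the threshold on $m$ must be uniform in both $x\in X$ and $y\in Y$.

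For (1), I would observe that $\{\eE_{X\times\{y\}}\}_{y\in Y}$ is a bounded family of vector bundles on $X$ and that the divisors $H_{\SX,x}$, $x\in X$, are all translates of a fixed ample divisor (so they differ from it by elements of the proper variety $\Pic^0(X)$). Applying the relative form of Serre vanishing on $X\times(X\times Y)\to X\times Y$ — with respect to a line bundle on $X\times X\times Y$ whose restriction to the fibre over $(x,y)$ is the ample bundle $\oO_X(H_{\SX,x})$, and the sheaf obtained by pulling back $\eE$ and $E$ — yields an $m_0$, independent of $(x,y)$, with $H^i(X,\eE_{X\times\{y\}}\otimes E(mH_{\SX,x}))=0$ for all $i>0$, all $x,y$, and all $m\ge m_0$. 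Base change then gives $\Psi^i(E(mH_{\SX,x}))=0$ for $i\ne0$, i.e.\ $E(mH_{\SX,x})\in V^\Psi_{\Coh(Y)}(0)$.

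For (2), set $G=G_y:=\eE_{X\times\{y\}}\otimes E$; since $\eE_{X\times\{y\}}$ is locally free, $\calExt^i(G,\oO_X)\cong\calExt^i(E,\oO_X)\otimes\eE_{X\times\{y\}}^\vee$, which vanishes for $i\ne k$ by hypothesis. For the anti-ample twist I would use Serre duality on the abelian variety $X$ (where $\omega_X=\oO_X$): $H^i(X,G(-mH_{\SX,x}))^\vee\cong\Ext^{g-i}_X(G,\oO_X(mH_{\SX,x}))$. In the local-to-global spectral sequence $H^p(X,\calExt^q(G,\oO_X(mH_{\SX,x})))\Rightarrow\Ext^{p+q}_X(G,\oO_X(mH_{\SX,x}))$ only the row $q=k$ survives, so it degenerates and $\Ext^n_X(G,\oO_X(mH_{\SX,x}))\cong H^{n-k}(X,\calExt^k(G,\oO_X)(mH_{\SX,x}))$. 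The sheaves $\calExt^k(G_y,\oO_X)=\calExt^k(E,\oO_X)\otimes\eE_{X\times\{y\}}^\vee$ again form a bounded family, so by the same uniform Serre vanishing as in (1) there is an $m_0$ with $H^j(X,\calExt^k(G_y,\oO_X)(mH_{\SX,x}))=0$ for all $j>0$, all $x,y$, and all $m\ge m_0$. For such $m$ this forces $\Ext^n_X(G,\oO_X(mH_{\SX,x}))=0$ unless $n=k$, hence $H^i(X,\eE_{X\times\{y\}}\otimes E(-mH_{\SX,x}))=0$ unless $g-i=k$; so the fibre cohomology is concentrated in degree $g-k$ for every $y$, and base change gives $\Psi^i(E(-mH_{\SX,x}))=0$ for $i\ne g-k$, i.e.\ $E(-mH_{\SX,x})\in V^\Psi_{\Coh(Y)}(g-k)$.

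The main obstacle is the uniformity in $x$ (and $y$): for each fixed pair one needs only ordinary Serre vanishing, Serre duality, and the collapse of the Ext spectral sequence, all of which are routine; the work is in packaging these as a relative Serre vanishing statement over the compact base $X\times Y$, using boundedness of the two relevant families of sheaves together with the fact that the $H_{\SX,x}$ are numerically equivalent translates. Once that uniform threshold is in hand, the base-change/WIT step and the degeneration of the spectral sequence (which uses the $\calExt$-vanishing hypothesis exactly once) are formal.
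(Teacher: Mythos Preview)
Your proof is correct, and the underlying mathematics is the same as in the paper: Serre vanishing for (i), then duality to reduce (ii) to (i). The packaging differs in two ways. First, you argue fibrewise via cohomology and base change (the WIT criterion), whereas the paper works top-down by computing $\Hom_{\SY}(\Psi^k(E'),\oO_y)$ through the Fourier--Mukai adjunction and peeling off the cohomology sheaves one degree at a time; your relative-Serre-vanishing setup over $X\times Y$ makes the uniformity in $(x,y)$ more explicit. Second, for (ii) you invoke Serre duality on the fibre together with the collapse of the local-to-global $\Ext$ spectral sequence, while the paper uses the global isomorphism $\DD\circ\Phi_{\eE}\cong(\Phi_{\eE^\vee}\circ\DD)[g]$ (Lemma~\ref{prop:dual-FMT}) to identify $\Psi(E(-mH_{\SX,x}))$ with the derived dual of $\Phi_{\eE^\vee}^0\!\left(\calExt^k(E,\oO_X)(mH_{\SX,x})\right)$ shifted by $g-k$. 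The paper's route is more conceptual but implicitly uses that this $\Phi_{\eE^\vee}^0$ is locally free (so that its derived dual is again a single sheaf); your fibrewise computation sidesteps that point entirely.
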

\begin{proof}
(i) \
Let $y \in Y$ be any point.
We have 
\begin{align*}
\Hom_{\SY}(\Psi^g(E(mH_{\SX, x})), \oO_y) 
& \cong \Hom_{\SY}( \Psi(E(mH_{\SX, x}))[g], \Psi(\eE^*_{X \times \{y\}}[g]) \\
& \cong \Hom_{\SX}(E(mH_{\SX, x}), \eE^*_{X \times \{y\}} ) \\
& \cong \Hom_{\SX}(E(mH_{\SX, x}) \otimes \eE_{X \times \{y\}}, \oO_{X}) \\
& \cong \Hom_{\SX}(\oO_{X}, E(mH_{\SX, x}) \otimes \eE_{X \times \{y\}}[g] )\\
& \cong H^{g}(X, E(mH_{\SX, x}) \otimes \eE_{X \times \{y\}}) =0,
\end{align*}
for large enough $m\in \ZZ_{>0}$. Hence, $\Psi^g(E(mH_{\SX, x})) =0$. So we have 
\begin{align*}
\Hom_{\SY}(\Psi^{g-1}(E(mH_{\SX, x})), \oO_y) 
& \cong \Hom_{\SY}( \Psi(E(mH_{\SX, x}))[g-1], \Psi(\eE^*_{X \times \{y\}}[g]) \\
& \cong \Hom_{\SX}(E(mH_{\SX, x}), \eE^*_{X \times \{y\}}[1] ) \\
& \cong \Hom_{\SX}(E(mH_{\SX, x}) \otimes \eE_{X \times \{y\}}, \oO_{X}[1]) \\
& \cong \Hom_{\SX}(\oO_{X}, E(mH_{\SX, x}) \otimes \eE_{X \times \{y\}}[g-1] )\\
& \cong H^{g-1}(X, E(mH_{\SX, x}) \otimes \eE_{X \times \{y\}}) =0,
\end{align*}
for large enough $m\in \ZZ_{>0}$.
Hence, $\Psi^{g-1}(E(mH_{\SX, x})) =0$. In this way, one can show that for large enough $m \in \ZZ_{>0}$,
$\Psi^{k}(E(mH_{\SX, x})) =0$ for all $k \ne 0$ as required. \\

\noindent (ii) \
From Lemma \ref{prop:dual-FMT} and part (i), we have  
\begin{align*}
 \Phi^{\SX \to \SY}_{\eE}(E(-mH_{\SX, x}))  
& \cong \left( \Phi^{\SX \to \SY}_{\eE^{\vee}} \left( \left(E(-mH_{\SX, x})\right)^\vee \right)[g] \right)^\vee \\
  & \cong \left( \Phi^{\SX \to \SY}_{\eE^{\vee}} \left(  \calExt^k(E, \oO_X)(mH_{\SX, x})  \right)[g-k] \right)^\vee\\
  & \cong \left( \hH^0 \left( \Phi^{\SX \to \SY}_{\eE^{\vee}} \left(  \calExt^k(E, \oO_X)(mH_{\SX, x})  \right) \right) \right)^\vee [-(g-k)].
\end{align*}
 Therefore, we have the required claim.
\end{proof}

\begin{prop}
\label{prop:FMT-cohomology-vanishing-torsion-sheaves}
Let $E \in \Coh_{\le k}(X)$. Then for $j \ge k+1$
$$
\Psi^{j}(E) = 0.
$$
\end{prop}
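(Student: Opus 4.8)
The plan is to reduce the statement to the classical bound on higher direct images of a coherent sheaf along a proper morphism in terms of the fibre dimension of its support. First I would observe that the kernel $\eE$ is a \emph{vector bundle} on $X\times Y$ and that $p_1$ is flat, so for a sheaf $E\in\Coh(X)$ the derived tensor product $\eE\dotimes p_1^*E$ is just the honest sheaf $\eE\otimes p_1^*E$, concentrated in degree $0$. Hence
\[
\Psi^j(E)=\hH^j\bigl(\BR p_{2*}(\eE\otimes p_1^*E)\bigr)=R^jp_{2*}\bigl(\eE\otimes p_1^*E\bigr).
\]
Writing $\gG=\eE\otimes p_1^*E$, its support is contained in $\Supp(E)\times Y$, and the restriction of $p_2$ to $\Supp(E)\times Y\to Y$ is proper with every fibre isomorphic to $\Supp(E)$, hence of dimension $\le\dim\Supp(E)\le k$.

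Then I would invoke the standard fact that for a proper morphism $f\colon Z\to W$ and a coherent sheaf $\gG$ on $Z$ one has $R^jf_*\gG=0$ whenever $j$ exceeds the maximal dimension of $\Supp(\gG)\cap f^{-1}(w)$ over $w\in W$; this follows from the theorem on formal functions together with Grothendieck vanishing on the fibres. Applying this with $f=p_2$ gives $R^jp_{2*}\gG=0$ for $j>k$, that is $\Psi^j(E)=0$ for all $j\ge k+1$, as claimed. (If $k\ge g$ the assertion is vacuous, since $\Psi^j(E)=0$ for $j\notin\{0,\ldots,g\}$ in any case.)

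Alternatively, and more in the spirit of the proof of Proposition~\ref{prop:V-0-for-higher-ample-twist}, one can argue by descending induction on $j$ from $g$ down to $k+1$. Once $\Psi^{j'}(E)=0$ is known for every $j'>j$, a hyper-$\Ext$ spectral sequence (equivalently, a truncation argument using that $\hH^{j'}(\Psi(E))=0$ for $j'>j$) identifies, for each $y\in Y$,
\[
\Hom_{\SY}\bigl(\Psi^j(E),\oO_y\bigr)\cong\Hom_{D^b(Y)}\bigl(\Psi(E)[j],\oO_y\bigr)\cong\Ext^{g-j}_{\SX}\bigl(E,\eE^*_{X\times\{y\}}\bigr)\cong\Ext^{g-j}_{\SX}\bigl(E\otimes\eE_{X\times\{y\}},\oO_X\bigr),
\]
using that $\eE^*_{X\times\{y\}}$ is locally free and $\Psi\bigl(\eE^*_{X\times\{y\}}[g]\bigr)\cong\oO_y$, exactly as in the cited proof. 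By Serre duality on the abelian variety $X$ (so $\omega_X\cong\oO_X$) the last group is $H^j\bigl(X,E\otimes\eE_{X\times\{y\}}\bigr)^\vee$, which vanishes since $E\otimes\eE_{X\times\{y\}}$ is supported in dimension $\le k<j$. As this holds for every $y\in Y$, the sheaf $\Psi^j(E)$ is zero, completing the induction.

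There is no serious obstacle. The only points requiring a little care are: in the first route, quoting the relative-dimension vanishing bound for $R^jp_{2*}$ of a coherent sheaf correctly; and in the second route, justifying the collapse $\Hom_{D^b(Y)}\bigl(\Psi(E)[j],\oO_y\bigr)\cong\Hom_{\SY}\bigl(\Psi^j(E),\oO_y\bigr)$, which is precisely where the inductive hypothesis $\Psi^{j'}(E)=0$ for $j'>j$ enters, together with the observation that $\oO_y$ pairs nontrivially only with the degree-$j$ cohomology sheaf of $\Psi(E)[j]$.
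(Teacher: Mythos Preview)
Your second (``alternative'') argument is precisely the paper's proof: the paper runs the same descending induction from $j=g$ down to $j=k+1$, using the identification
\[
\Hom_{\SY}\bigl(\Psi^{j}(E),\oO_y\bigr)\cong H^{j}\bigl(X,E\otimes\eE_{X\times\{y\}}\bigr)
\]
(obtained exactly as in your chain, via $\oO_y\cong\Psi(\eE^*_{X\times\{y\}}[g])$ and Serre duality on the abelian variety $X$) together with Grothendieck vanishing for the sheaf $E\otimes\eE_{X\times\{y\}}\in\Coh_{\le k}(X)$.

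Your first argument is correct and genuinely more direct than the paper's. Since $\eE$ is indeed a vector bundle on $X\times Y$ (it is the universal bundle for a moduli problem of semihomogeneous bundles), the derived tensor collapses and $\Psi^{j}(E)=R^{j}p_{2*}(\eE\otimes p_1^*E)$ on the nose; then the relative-dimension bound on higher direct images kills these for $j>k$ in one stroke, without any induction or use of the adjunction with $\HPsi$. The paper's route, by contrast, works entirely at the level of Hom's to skyscrapers and mirrors the template set up in Proposition~\ref{prop:V-0-for-higher-ample-twist}; this keeps all the vanishing arguments in the paper uniform but is less economical for this particular statement. Either proof is fine.
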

\begin{proof}
Let $y \in Y$ be any point.
If $k \le (g-1)$ then as similar to the proof of Proposition \ref{prop:V-0-for-higher-ample-twist}--(i),
we have 
\begin{align*}
\Hom_{\SY}(\Psi^g(E), \oO_y) 
%& \cong \Hom_{\SY}( \Psi(E)[g], \Psi(\eE^*_{X \times \{y\}}[g]) \\
%& \cong \Hom_{\SX}(E , \eE^*_{X \times \{y\}} ) \\
%& \cong \Hom_{\SX}(E \otimes \eE_{X \times \{y\}}, \oO_{X}) \\
%& \cong \Hom_{\SX}(\oO_{X}, E \otimes \eE_{X \times \{y\}}[g] )\\
& \cong H^{g}(X, E \otimes \eE_{X \times \{y\}}) =0,
\end{align*}
as $E \otimes \eE_{X \times \{y\}} \in \Coh_{\le k}(X)$;  hence, $\Psi^g(E(mH_{\SX, x})) =0$. 

If $k \le (g-2)$, then similarly we have 
\begin{align*}
\Hom_{\SY}(\Psi^{g-1}(E ), \oO_y) 
%& \cong \Hom_{\SY}( \Psi(E )[g-1], \Psi(\eE^*_{X \times \{y\}}[g]) \\
%& \cong \Hom_{\SX}(E, \eE^*_{X \times \{y\}}[1] ) \\
%& \cong \Hom_{\SX}(E  \otimes \eE_{X \times \{y\}}, \oO_{X}[1]) \\
%& \cong \Hom_{\SX}(\oO_{X}, E  \otimes \eE_{X \times \{y\}}[g-1] )\\
& \cong H^{g-1}(X, E  \otimes \eE_{X \times \{y\}}) =0,
\end{align*}
as $E \otimes \eE_{X \times \{y\}} \in \Coh_{\le k}(X) \subset  \Coh_{\le (g-2)}(X)$;
hence, $\Psi^{g-1}(E(mH_{\SX, x})) =0$. In this way, one can show that   for $j \ge k+1$, 
$
\Psi^{j}(E) = 0$.
\end{proof}

\begin{prop}
\label{prop:FMT-0-cohomology-reflexive}
Let $E \in \Coh(X)$. Then we have the following:
\begin{enumerate}
\item If $E \in V^{\Psi}_{\Coh(Y)}(0)$ then $\Psi^0(E)$ is a locally free sheaf. 
\item $\Psi^0(E)$ is a reflexive sheaf on $Y$.
\end{enumerate} 
\end{prop}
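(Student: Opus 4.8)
The plan is to establish (1) by a short adjunction computation, and then to deduce (2) from (1) together with the divisor-restriction technique of Note~\ref{prop:restriction-setup}. For (1): since $E\in V^{\Psi}_{\Coh(Y)}(0)$ one has $\Psi(E)\cong\Psi^0(E)$, so by Definition~\ref{defi:singularity-set} it suffices to show $\Ext^1_{\SY}(\Psi^0(E),\oO_y)=0$ for every $y\in Y$. Using the quasi-inverse $\HPsi[g]$ of $\Psi$ one writes $\oO_y\cong\Psi\bigl(\eE^*_{X\times\{y\}}[g]\bigr)$, and then for every $i$
\[
\Ext^{i}_{\SY}(\Psi^0(E),\oO_y)\;\cong\;\Hom_{D^b(Y)}\!\bigl(\Psi(E),\Psi(\eE^*_{X\times\{y\}}[g+i])\bigr)\;\cong\;\Ext^{g+i}_{\SX}(E,\eE^*_{X\times\{y\}})\;\cong\;\Ext^{g+i}_{\SX}(E\otimes\eE_{X\times\{y\}},\oO_X),
\]
the last isomorphism holding because $\eE_{X\times\{y\}}$ is locally free. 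I would then note that for $i\ge1$ the degree $g+i$ exceeds $g=\dim X$, so the local-to-global spectral sequence $H^p(X,\calExt^q(E\otimes\eE_{X\times\{y\}},\oO_X))\Rightarrow\Ext^{p+q}_{\SX}(E\otimes\eE_{X\times\{y\}},\oO_X)$ has no nonzero term in total degree $>g$: by Lemma~\ref{prop:dim-of-supp-Ext} the sheaf $\calExt^q(-,\oO_X)$ is supported in dimension $\le g-q$, so $H^p$ of it vanishes once $p>g-q$. Hence $\Ext^{i}_{\SY}(\Psi^0(E),\oO_y)=0$ for all $i\ge1$ and all $y$, and $\Psi^0(E)$ is locally free.

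For (2) the key point I would isolate first is that $\Psi^0(G)$ is \emph{torsion free} for any $G\in\Coh(X)$: indeed $\Psi(G)=\RRR p_{2*}(\eE\otimes p_1^*G)$ with $\eE\otimes p_1^*G$ flat over $Y$ (it is a locally free twist of the $Y$-flat sheaf $p_1^*G$), so $\Psi^0(G)=p_{2*}(\eE\otimes p_1^*G)$, and the $p_2$-pushforward of a $Y$-flat sheaf carries no $\oO_Y$-torsion — a local section annihilated by a nonzero function on $Y$ must vanish at every associated point of $\eE\otimes p_1^*G$, and all of these dominate $Y$ by flatness. Then I would fix a generic $x\in X$ so that the sequence of Note~\ref{prop:restriction-setup} (applied to $E(mH_{\SX,x})$ in place of $E$) reads $0\to E\to E(mH_{\SX,x})\to Q\to0$ in $\Coh(X)$ with $Q$ a sheaf, and choose $m\gg0$ so that $E(mH_{\SX,x})\in V^{\Psi}_{\Coh(Y)}(0)$ by Proposition~\ref{prop:V-0-for-higher-ample-twist}(1). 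Applying $\Psi$ and using $\Psi^{-1}(Q)=0$ ($Q$ is a sheaf) and $\Psi^{1}(E(mH_{\SX,x}))=0$, the cohomology long exact sequence produces
\[
0\;\to\;\Psi^{0}(E)\;\to\;\Psi^{0}(E(mH_{\SX,x}))\;\to\;C\;\to\;0
\]
with $C\subseteq\Psi^{0}(Q)$. By (1) the middle term is locally free, and by the observation above $C$ is torsion free, so Lemma~\ref{prop:reflexive-sheaf-results}(2) gives that $\Psi^{0}(E)$ is reflexive.

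The adjunction bookkeeping in (1) and the compatibility of the choices of $x$ and $m$ in (2) are routine. The one place that needs a genuine idea is the torsion-freeness of $\Psi^0(-)$: this is exactly what makes reflexivity in (2) drop out formally from local freeness in (1), and the cleanest route to it is the flat-pushforward argument above (alternatively one could extract it from the ``duality'' spectral sequence~\ref{Spec-Seq-Dual}). I do not anticipate a serious obstacle beyond this.
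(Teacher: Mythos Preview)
Your proposal is correct and follows essentially the same route as the paper: the adjunction computation for (1) and the twist-and-restrict sequence plus Lemma~\ref{prop:reflexive-sheaf-results}(2) for (2) are exactly what the paper does. The only noteworthy difference is in how you justify torsion-freeness of $\Psi^0(Q)$: the paper reruns the twisting argument on the restricted sheaf $E(mH_{\SX,x})|_{mH_{\SX,x}}$ to embed its $\Psi^0$ into a locally free sheaf, whereas you give a uniform $Y$-flatness argument for $\Psi^0(G)$ with $G$ arbitrary; both are valid, and your version is arguably cleaner since it avoids a second application of Proposition~\ref{prop:V-0-for-higher-ample-twist}.
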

\begin{proof}(i) \ \
Suppose  $E \in V^{\Psi}_{\Coh(Y)}(0)$.  
For any $y \in Y$, we have 
\begin{align*}
\Ext^{1}_{\SY}(\Psi^0(E), \oO_y) & \cong \Hom_{\SY}(\Psi^0(E), \oO_y[1]) \\
& \cong  \Hom_{\SX}(\HPsi  \Psi^0(E)[g], \HPsi (\oO_y) [g+1]) \\
& \cong \Hom_{\SX}(E,  \eE^*_{X \times \{y\}}[g+1]).
\end{align*}
Hence $\Sing \Psi^0(E) = \emptyset$, that is $\Psi^0(E)$ is a locally free sheaf (see Definition \ref{defi:singularity-set}). \\

\noindent (ii) \ For generic $x \in X$ and  $m \in \ZZ_{>0} $, apply the  Fourier-Mukai  transform 
$\Psi$ to the $\oO_X(mH_{\SX, x})$ twisted short exact sequence \eqref{eqn:restriction-ses-higher-degree}:
$$
0 \to E \to E(mH_{\SX, x}) \to E(mH_{\SX, x})|_{m H_{\SX, x}} \to 0.
$$
By considering long exact sequence of $\Coh(Y)$ cohomologies, we get the following short exact sequence 
\begin{equation*}
0 \to \Psi^0(E) \to \Psi^0(E(mH_{\SX, x})) \to Q \to 0
\end{equation*}
 for some subsheaf $Q$ of $\Psi^0(E(mH_{\SX, x} )|_{mH_{\SX, x}} )$.

From Proposition \ref{prop:V-0-for-higher-ample-twist}--(i), for large enough $m \in \ZZ_{>0}$, 
$E(m H_{\SX, x})  \in V^{\Psi}_{\Coh(Y)}(0)$. From part (i) 
$\Psi^0(E(mH_{\SX, x}))$ is locally free. 
Hence  
$\Psi^0(E)$ is a torsion free sheaf. Similarly, one can show that $\Psi^0(E(m H_{\SX, x})|_{m H_{\SX, x}} )$ is torsion free. 
Therefore, from Lemma  \ref{prop:reflexive-sheaf-results}--(2), $\Psi^0(E)$ is a reflexive sheaf on $Y$.
\end{proof}

\begin{prop}
\label{prop:FMT-0-g-cohomo-vanishing}
Let $E \in \Coh(X)$. Then we have the following:
\begin{enumerate}
\item If $E \in \HN_{\lx, -D_{\SX}}^{\mu}((0, +\infty])$ then $\Psi^g(E) = 0$.
\item If $E \in \HN_{\lx, -D_{\SX}}^{\mu}(0)$ then $\Psi^g(E) = \Coh_0(Y)$.
\item If $E \in \HN_{\lx, -D_{\SX}}^{\mu}((-\infty,0])$ then $\Psi^0(E) = 0$.
\end{enumerate}
\end{prop}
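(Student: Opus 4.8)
The plan is to convert the cohomology sheaves $\Psi^{g}(E)$ and $\Psi^{0}(E)$ into $\Hom$-groups on $X$, using that $\Psi$ is an equivalence with quasi-inverse $\HPsi[g]$, and then to exploit the stability of the fibres of the universal bundle (Lemma~\ref{prop:semihomo-numerical}). For~(1): since $E$ is a sheaf, $\Psi(E)$ has cohomology only in degrees $0,\dots,g$, so truncating at the top gives $\Hom_{\SY}(\Psi^{g}(E),\oO_{y})\cong\Hom_{\SY}(\Psi(E),\oO_{y}[-g])$. Because $\HPsi(\oO_{y})=\eE^{\vee}|_{X\times\{y\}}=\eE^{*}_{X\times\{y\}}$ and $\HPsi[g]$ is the quasi-inverse of $\Psi$, we have $\oO_{y}[-g]\cong\Psi(\eE^{*}_{X\times\{y\}})$, so by full faithfulness $\Hom_{\SY}(\Psi^{g}(E),\oO_{y})\cong\Hom_{\SX}(E,\eE^{*}_{X\times\{y\}})$ --- the same device already used in the proof of Proposition~\ref{prop:V-0-for-higher-ample-twist}. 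Now $\ch^{-D_{\SX}}(\eE^{*}_{X\times\{y\}})=(r,0,\dots,0)$, so $\eE^{*}_{X\times\{y\}}$ is $\mu_{\lx,-D_{\SX}}$-stable of slope $0$ by Lemma~\ref{prop:semihomo-numerical}(3), whereas every Harder-Narasimhan factor of $E\in\HN^{\mu}_{\lx,-D_{\SX}}((0,+\infty])$ is torsion or has positive slope; a filtration argument then gives $\Hom_{\SX}(E,\eE^{*}_{X\times\{y\}})=0$. Hence $\Psi^{g}(E)$ has no skyscraper quotient, and therefore $\Psi^{g}(E)=0$.

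For~(2), the long exact cohomology sequence together with $\Psi^{g+1}(-)=0$ shows that $\Psi^{g}(-)$ sends a short exact sequence of sheaves to a right-exact one, and $\Coh_{0}(Y)$ is closed under subquotients and extensions; passing to a Jordan-H\"older filtration I may therefore assume $E$ is $\mu_{\lx,-D_{\SX}}$-stable of slope $0$. Then $\Supp\Psi^{g}(E)=\{y\in Y:\Hom_{\SX}(E,\eE^{*}_{X\times\{y\}})\neq0\}$ by the computation in~(1); since $E$ and each $\eE^{*}_{X\times\{y\}}$ are stable of the same slope, a nonzero such $\Hom$ forces $E\cong\eE^{*}_{X\times\{y\}}$, which occurs for at most one $y$ because the $\eE^{*}_{X\times\{y\}}=\HPsi(\oO_{y})$ are the pairwise non-isomorphic images of the distinct skyscrapers under the equivalence $\HPsi[g]$. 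Hence $\Psi^{g}(E)$ is supported on at most one point, i.e. $\Psi^{g}(E)\in\Coh_{0}(Y)$; when $E\cong\eE^{*}_{X\times\{y_{0}\}}$ one even has $\Psi(E)=\oO_{y_{0}}[-g]$.

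Part~(3) is the mirror of~(1) for the bottom cohomology. By Proposition~\ref{prop:FMT-0-cohomology-reflexive} the sheaf $\Psi^{0}(E)$ is reflexive, in particular torsion free, so it suffices to show $\Hom_{\SY}(\oO_{Y}(-mH_{\SY,y}),\Psi^{0}(E))=0$ for $m\gg0$. Truncating at the bottom replaces $\Psi^{0}(E)$ by $\Psi(E)$, and the adjunction $\HPsi[g]\dashv\Psi$ identifies this group with $\Hom_{\SX}(\HPsi[g](\oO_{Y}(-mH_{\SY,y})),E)$. For $m$ large the restriction of $\eE^{\vee}\otimes p_{2}^{*}\oO_{Y}(-mH_{\SY,y})$ to each fibre $\{x\}\times Y$ is a sufficiently anti-ample bundle, so $\HPsi(\oO_{Y}(-mH_{\SY,y}))$ is concentrated in degree $g$, of the form $\mathcal{N}_{m}[-g]$ with $\mathcal{N}_{m}$ a semihomogeneous bundle on $X$ (the Fourier-Mukai transform of a semihomogeneous line bundle), and a computation with the cohomological transform (Theorem~\ref{prop:general-cohomo-FMT}, applied to the quasi-inverse $\HPsi[g]$) gives $\mu_{\lx,-D_{\SX}}(\mathcal{N}_{m})>0$. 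Since $\mathcal{N}_{m}$ is $\mu_{\lx,-D_{\SX}}$-semistable and every Harder-Narasimhan factor of $E\in\HN^{\mu}_{\lx,-D_{\SX}}((-\infty,0])$ has slope $\le0$, it follows that $\Hom_{\SX}(\mathcal{N}_{m},E)=0$, and hence $\Psi^{0}(E)=0$. The step needing the most care is this last one: verifying, uniformly in $x\in X$, both the concentration of $\HPsi(\oO_{Y}(-mH_{\SY,y}))$ in top degree and the positivity of the slope of $\mathcal{N}_{m}$; the rest is a formal manipulation of the earlier lemmas, and~(2) rests only on the moduli-theoretic fact (immediate from $\HPsi[g]$ being an equivalence) that distinct $y$ give non-isomorphic $\eE^{*}_{X\times\{y\}}$.
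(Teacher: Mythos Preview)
Your argument for (1) is correct and coincides with the paper's.

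For (2) there is a genuine gap. From ``$E$ and $\eE^*_{X\times\{y\}}$ are $\mu$-stable of the same slope'' you conclude that a nonzero map $E\to\eE^*_{X\times\{y\}}$ is an isomorphism. This is false for $\mu$-stability: take $D_{\SX}=0$, $r=1$, and $E=\iI_p$ the ideal sheaf of a point; then $E$ and $\oO_X=\eE^*_{X\times\{0\}}$ are both $\mu$-stable of slope $0$ and the inclusion $\iI_p\hookrightarrow\oO_X$ is not an isomorphism. What one actually gets is only an injection with cokernel in $\Coh_{\le g-2}(X)$, so your claim that the support of $\Psi^g(E)$ is at most one point is unjustified. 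The paper proceeds differently: having the short exact sequence $0\to E\to \eE^*_{X\times\{y\}}\to Q\to 0$ with $Q\in\Coh_{\le g-2}(X)$, it applies $\Psi$ and uses $\Psi(\eE^*_{X\times\{y\}})=\oO_y[-g]$ together with Proposition~\ref{prop:FMT-cohomology-vanishing-torsion-sheaves} (giving $\Psi^{g-1}(Q)=\Psi^g(Q)=0$) to read off $\Psi^g(E)\cong\oO_y$ directly from the long exact sequence. This simultaneously proves $\Psi^g(E)\in\Coh_0(Y)$ and identifies it.

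For (3) your route is genuinely different from the paper's and more delicate than you indicate. The paper tests $\Psi^0(E)$ against skyscrapers: for \emph{generic} $y$ one has $\Hom_{\SY}(\Psi^0(E),\oO_y)\cong\Hom_{\SY}(\Psi(E),\oO_y)$ (this needs the truncation argument with $\calExt^i(\Psi^j(E),\oO_Y)\in\Coh_{\le g-i}(Y)$, not the easier bottom-truncation you use), and then Serre duality gives $\Hom_{\SX}(\eE^*_{X\times\{y\}},E)^\vee$, which vanishes by slope comparison after reducing to $E$ stable; reflexivity of $\Psi^0(E)$ then forces $\Psi^0(E)=0$. Your approach of testing against $\oO_Y(-mH_{\SY,y})$ avoids the ``generic $y$'' subtlety, but the two facts you flag --- that $\HPsi(\oO_Y(-mH_{\SY,y}))$ is concentrated in degree $g$ for $m\gg0$ uniformly in the fibre, and that the resulting $\mathcal{N}_m$ is $\mu_{\lx,-D_{\SX}}$-semistable of strictly positive slope --- are not available from the paper as stated. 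The concentration follows from Proposition~\ref{prop:V-0-for-higher-ample-twist}(ii) applied to $\HPsi$, but the semistability of $\mathcal{N}_m$ requires knowing that Fourier--Mukai transforms of semihomogeneous bundles (here a line bundle) are again semihomogeneous, which the paper does not prove. Without that, $\Hom_{\SX}(\mathcal{N}_m,E)=0$ does not follow from the slope inequality alone.
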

\begin{proof}
(i) \
Let $E \in \HN_{\lx, -D_{\SX}}^{\mu}((0, +\infty])$. Then for any $y \in Y$, we have
\begin{align*}
\Hom_{\SY}(\Psi^g(E), \oO_y) 
 & \cong \Hom_{\SY}(\Psi(E)[g], \oO_y) \\
                  & \cong \Hom_{\SY}(\Psi(E)[g], \Psi(\eE^*_{X \times\{y\}})[g]) \\
                 & \cong \Hom_{\SX}(E, \eE_{X \times\{y\}}^*) = 0,
\end{align*}
as $\eE_{X \times\{y\}}^* \in \HN_{\lx, -D_{\SX}}^{\mu}(0)$. Therefore, $\Psi^g(E) = 0$ as required. \\
 
 \noindent (ii) \ Suppose $E \in \HN_{\lx, -D_{\SX}}^{\mu}(0)$ is slope stable. If $\Psi^g(E) \ne 0$ then there exists $y  \in Y$ such that 
 $\Hom_{\SY}(\Psi^g(E), \oO_{y }) \ne 0$. Hence,    as in part (i) there exists a non-trivial map 
 $E \to \eE_{X \times\{y\}}^*$. Since $E$ is slope stable, this map is an injection with a quotient in $\Coh_{\le (g-2)}(X)$. By applying the  Fourier-Mukai  transform $\Psi$ to this short exact sequence of sheaves on $X$,
 and considering the long exact sequence of $\Coh(Y)$ cohomologies, we obtain that $\Psi^g(E) \cong \oO_{y }$. This completes the proof.
\\

\noindent (iii) \
Let $E \in \HN_{\lx, -D_{\SX}}^{\mu}((-\infty,0])$. We can assume $E$ is slope stable using the Harder-Narasimhan and
Jordan-H\"older filtrations. 

Since 
$$
\calExt^{i}(\Psi^j(E),\oO_{\SY}) \in \Coh_{\le (g-i)}(Y),
$$
for generic $y \in  Y$ we have 
\begin{align*}
& \Hom_{\SY}(\tau_{\ge 1} \Psi(E), \oO_y) =0, \\
& \Hom_{\SY}(\tau_{\ge 1} \Psi(E) [-1], \oO_y).
\end{align*}
Hence, by applying the functor $\Hom_{\SY}(-, \oO_y)$ to the distinguished triangle
$$
 \tau_{\ge 1} \Psi(E)[-1] \to  \Psi^0(E) \to  \Psi(E) \to \tau_{\ge 1} \Psi(E)
$$
for generic $y \in Y$, we have
\begin{align*}
\Hom_{\SY}(\Psi^0(E), \oO_y) 
 & \cong \Hom_{\SY}(\Psi(E), \oO_y) \\
                 & \cong \Hom_{\SY}(\Psi(E), \Psi(\eE_{X \times\{y\}}^* )[g]) \\
                 & \cong \Hom_{\SX}(E,  \eE_{X \times\{y\}}^* [g]) \\
                 & \cong \Hom_{\SX} (\eE_{X \times\{y\}}^* ,  E)^\vee.
\end{align*}

If $\mu_{\lx, -D_{\SX}} (E) <0$  then $\Hom_{\SX} (\eE_{X \times\{y\}}^* ,  E) = 0$.
Otherwise, $\mu_{\lx, -D_{\SX}} (E) =0$ and since $E$ is assumed to be slope stable, any non-trivial map in
$\Hom_{\SX} (\eE_{X \times\{y\}}^* ,  E)$ gives rise to an isomorphism of sheaves; and  in this case we have  $\Psi^0(E)=0$.

Therefore, for generic $y \in Y$, $\Hom_{\SY}(\Psi^0(E), \oO_y) = 0$. By Proposition~\ref{prop:FMT-0-cohomology-reflexive}, 
 $\Psi^0(E)$   is reflexive, and so we have $\Psi^0(E) = 0$. 
\end{proof}
%%%%%%%%%%%%%%%%%%%%%%%%
%%%%%%%%%%%%%%%%%%%%%%%%
\begin{prop}
\label{prop:sheaf-cohomo-vanishing-FMT-vanishing}
We have the following for $E \in \Coh(X)$:
\begin{enumerate}
\item If $H^g(X, E \otimes \eE_{X \times\{y\}}) =0 $ for any $y \in Y$, then $\Psi^g(E) = 0$.
\item If  $H^0(X, E \otimes \eE_{X \times\{y\}}) =0 $ for any $y \in Y$,  then $\Psi^0(E) = 0$.
\end{enumerate}
\end{prop}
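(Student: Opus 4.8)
The plan is to deduce both vanishings from the single identity $\Psi(\eE^*_{X\times\{y\}})\cong\oO_y[-g]$, which holds because $\HPsi[g]$ is the quasi-inverse of $\Psi$ and $\HPsi(\oO_y)\cong\eE^*_{X\times\{y\}}$. Granting this, full faithfulness of $\Psi$, the isomorphism $\Hom_{\SX}(E,\eE^*_{X\times\{y\}})\cong\Hom_{\SX}(E\otimes\eE_{X\times\{y\}},\oO_X)$ (valid since $\eE_{X\times\{y\}}$ is locally free), and Serre duality on $X$ (where $\omega_X\cong\oO_X$ and $\dim X=g$) together give, for every $E\in\Coh(X)$ and $y\in Y$,
$$
\Hom_{D^b(Y)}\!\big(\Psi(E)[g],\oO_y\big)\;\cong\;\Hom_{\SX}(E,\eE^*_{X\times\{y\}})\;\cong\;H^g(X,E\otimes\eE_{X\times\{y\}})^{\vee},
$$
$$
\Hom_{D^b(Y)}\!\big(\Psi(E),\oO_y\big)\;\cong\;\Ext^g_{\SX}(E,\eE^*_{X\times\{y\}})\;\cong\;H^0(X,E\otimes\eE_{X\times\{y\}})^{\vee}.
$$
These are the chains of adjunctions already used (apart from the last Serre-duality step) in the proofs of Propositions \ref{prop:V-0-for-higher-ample-twist} and \ref{prop:FMT-0-g-cohomo-vanishing}, so this part is routine.

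For (1) I would argue as follows. Since $\Psi(E)$ has cohomology concentrated in degrees $0,\ldots,g$, the shifted complex $\Psi(E)[g]$ lives in non-positive degrees with top cohomology sheaf $\Psi^g(E)$, whence $\Hom_{D^b(Y)}(\Psi(E)[g],\oO_y)\cong\Hom_{\SY}(\Psi^g(E),\oO_y)$ for every $y$. The hypothesis then says $\Hom_{\SY}(\Psi^g(E),\oO_y)=0$ for all $y\in Y$. A nonzero coherent sheaf admits a nonzero surjection onto a nonzero direct sum of copies of $\oO_y$ for any $y$ in its support, so this forces $\Psi^g(E)=0$.

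For (2) the crucial point is that $\Psi^0(E)$, being the bottom cohomology of $\Psi(E)$, is reflexive by Proposition \ref{prop:FMT-0-cohomology-reflexive}, hence torsion free. First I would apply $\Hom_{D^b(Y)}(-,\oO_y)$ to the truncation triangle $\Psi^0(E)\to\Psi(E)\to\tau_{\ge1}\Psi(E)\to\Psi^0(E)[1]$; the contributions of $\tau_{\ge1}\Psi(E)$ to the resulting long exact sequence are assembled, via the cohomology filtration of $\tau_{\ge1}\Psi(E)$, from the groups $\Ext^{k}_{\SY}(\Psi^j(E),\oO_y)$ with $j\ge1$ and $k\ge1$. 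Choosing $y$ outside the proper closed subset $\bigcup_{j=1}^{g}\Sing(\Psi^j(E))$, each $\Psi^j(E)$ with $j\ge1$ is locally free near $y$, so all those Ext groups vanish and the triangle yields $\Hom_{\SY}(\Psi^0(E),\oO_y)\cong\Hom_{D^b(Y)}(\Psi(E),\oO_y)\cong H^0(X,E\otimes\eE_{X\times\{y\}})^{\vee}=0$ by hypothesis. A nonzero torsion-free sheaf on the integral variety $Y$ is locally free of positive rank on a dense open set, hence has a nonzero map to $\oO_y$ there; therefore $\Psi^0(E)=0$.

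The hard part is the final step of (2): the conclusion "$\Hom_{\SY}(\Psi^0(E),\oO_y)=0$ for generic $y$" is far too weak for a general coherent sheaf, and it is exactly the reflexivity of $\Psi^0(E)$ that rules out a nonzero $\Psi^0(E)$ supported in codimension $\ge1$. Beyond that, the only bookkeeping to get right is which $\Psi^j(E)$ contribute in which degree to the truncation triangle and the fact that the relevant singular loci are proper closed subsets; everything else is formal.
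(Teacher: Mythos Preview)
Your proof is correct and follows essentially the same approach as the paper: both parts reduce to computing $\Hom_{\SY}(\Psi^i(E),\oO_y)$ via the identification $\oO_y\cong\Psi(\eE^*_{X\times\{y\}})[g]$ and Serre duality, and part~(ii) uses the reflexivity of $\Psi^0(E)$ from Proposition~\ref{prop:FMT-0-cohomology-reflexive} to conclude from a generic vanishing. The only cosmetic difference is that in part~(ii) you kill the contributions of $\tau_{\ge 1}\Psi(E)$ by choosing $y$ outside $\bigcup_{j\ge 1}\Sing(\Psi^j(E))$, whereas the paper phrases the same genericity via $\calExt^i(\Psi^j(E),\oO_Y)\in\Coh_{\le g-i}(Y)$ (Lemma~\ref{prop:dim-of-supp-Ext}); these are equivalent ways of saying the relevant higher $\Ext$ groups vanish at a general point.
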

\begin{proof}
(i) \
As similar to the proof Proposition \ref{prop:V-0-for-higher-ample-twist}--(i), for any $y \in Y$  
\begin{align*}
\Hom_{\SY}(\Psi^g(E), \oO_y) 
% & \cong \Hom_{\SY}(\Psi(E)[g], \oO_y) \\
%                  & \cong \Hom_{\SY}(\Psi(E)[g], \Psi(\eE^*_{X \times\{y\}})[g]) \\
%                 & \cong \Hom_{\SX}(E, \eE_{X \times\{y\}}^*) \\
                 & \cong H^g(X, E \otimes \eE_{X \times\{y\}}) =0.
\end{align*}
 Therefore, $\Psi^g(E) = 0$ as required. \\
 
\noindent (ii) \
Suppose $H^0(X, E \otimes \eE_{X \times\{y\}}) =0 $ for any $y \in Y$.  
By similar arguments in the proof of (iii) of Proposition \ref{prop:FMT-0-g-cohomo-vanishing}, 
for generic $y \in Y$, we have
\begin{align*}
\Hom_{\SY}(\Psi^0(E), \oO_y) 
%              & \cong \Hom_{\SY}(\Psi(E), \oO_y) \\
%                & \cong \Hom_{\SY}(\Psi(E), \Psi(\eE_{X \times\{y\}}^* )[g]) \\
%                & \cong \Hom_{\SX}(E,  \eE_{X \times\{y\}}^* [g]) \\
                 & \cong \Hom_{\SX} (\eE_{X \times\{y\}}^* ,  E)^\vee \\
                 & \cong \Hom_{\SX} (\oO_X ,  E \otimes \eE_{X \times\{y\}})^\vee \\
                 & \cong H^0(X, E \otimes \eE_{X \times\{y\}})^\vee = 0.
\end{align*}
Since  
 $\Psi^0(E)$   is reflexive (Proposition \ref{prop:FMT-0-cohomology-reflexive}--(ii)),  we have $\Psi^0(E) = 0$ as required. 
\end{proof}
%%%%%%%%%%%%%%%%%%%%%%%%
%%%%%%%%%%%%%%%%%%%%%%%%
\begin{prop}
\label{prop:slope-bound-FMT-0-g}
Let $E \in \Coh(X)$. Then we have the following:
\begin{enumerate}
\item $\Psi^0(E) \in \HN_{\ly, D_{\SY}}^{\mu}((-\infty,0])$.
\item If $E \in \Coh_{\ge 1}(X)$ then $\Psi^0(E) \in \HN_{\ly, D_{\SY}}^{\mu}((-\infty,0))$.
\end{enumerate}
\end{prop}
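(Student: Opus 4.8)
The plan is to prove both parts simultaneously by induction on $\dim\Supp(E)$, using the hyperplane–restriction sequences of Note~\ref{prop:restriction-setup} and the vanishing in Proposition~\ref{prop:sheaf-cohomo-vanishing-FMT-vanishing}. The crucial preliminary remark is that, since $\Psi(F)$ has cohomology only in degrees $0,\dots,g$ for every sheaf $F$, the functor $\Psi^0=\hH^0\circ\Psi$ is left exact: a short exact sequence $0\to E'\to E\to E''\to 0$ in $\Coh(X)$ produces an exact sequence $0\to\Psi^0(E')\to\Psi^0(E)\to\Psi^0(E'')$, so $\Psi^0(E')$ is a subsheaf of $\Psi^0(E)$. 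Moreover, for $m\gg 0$ uniformly in $x$ — since $\{E\otimes\eE_{X\times\{y\}}\}_{y\in Y}$ is a bounded family of sheaves on $X$ — one has $H^0\bigl(X,E(-mH_{\SX,x})\otimes\eE_{X\times\{y\}}\bigr)=0$ for all $x\in X$ and all $y\in Y$, hence $\Psi^0\bigl(E(-mH_{\SX,x})\bigr)=0$ by Proposition~\ref{prop:sheaf-cohomo-vanishing-FMT-vanishing}; combining this with left exactness and the restriction sequence $0\to E(-mH_{\SX,x})\to E\to E|_{mH_{\SX,x}}\to 0$ of Note~\ref{prop:restriction-setup} gives, for generic $x$, an inclusion $\Psi^0(E)\hookrightarrow\Psi^0\bigl(E|_{mH_{\SX,x}}\bigr)$ with $E|_{mH_{\SX,x}}$ of strictly smaller support dimension.

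Part (1) then follows by induction. In the base case $E\in\Coh_0(X)$, every such $E$ is an iterated extension of skyscrapers $\oO_x$; since $\Psi(\oO_x)=\eE_{\{x\}\times Y}$ is a sheaf and $\Psi$ is exact, $\Psi^0$ is exact on $\Coh_0(X)$ with image in $\Coh(Y)$, and $\eE_{\{x\}\times Y}$ is $\mu_{\ly,D_{\SY}}$-stable of slope $0$ (it is simple semihomogeneous, hence slope stable by Lemma~\ref{prop:semihomo-numerical}, while $\ch(\eE_{\{x\}\times Y})=r\,e^{D_{\SY}}$ forces its slope to be $0$); therefore $\Psi^0(E)$ is $\mu_{\ly,D_{\SY}}$-semistable of slope $0$, so it lies in $\fF_{\ly,D_{\SY}}=\HN^{\mu}_{\ly,D_{\SY}}((-\infty,0])$. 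For the inductive step, $\Psi^0(E)\hookrightarrow\Psi^0(E|_{mH_{\SX,x}})$, the right-hand side lies in $\fF_{\ly,D_{\SY}}$ by the inductive hypothesis, and $\fF_{\ly,D_{\SY}}$ is closed under subobjects, whence $\Psi^0(E)\in\fF_{\ly,D_{\SY}}$.

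For part (2), the torsion filtration together with the fact that $\HN^{\mu}_{\ly,D_{\SY}}((-\infty,0))$ is closed under subobjects and extensions reduces the claim to $E$ pure, i.e. $E\in\Coh_k(X)$ with $1\le k\le g$; repeating the restriction step (a generic hyperplane section of a pure sheaf is again pure) reduces the case $k\ge 2$ to the case $k-1$, so it remains to treat $E\in\Coh_1(X)$. Then $E|_{mH_{\SX,x}}\in\Coh_0(X)$, so by the base case of (1) its image $\Psi^0(E|_{mH_{\SX,x}})$ is an iterated extension of sheaves $\eE_{\{x_i\}\times Y}$, and $\Psi^0(E)\hookrightarrow\Psi^0(E|_{mH_{\SX,x}})$; in particular $\mu^{+}_{\ly,D_{\SY}}(\Psi^0(E))\le 0$ by (1). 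If equality held, the maximal destabilizing subsheaf of $\Psi^0(E)$ would be a nonzero $\mu_{\ly,D_{\SY}}$-semistable slope-$0$ subsheaf of $\Psi^0(E|_{mH_{\SX,x}})$, and by Jordan--H\"older it would contain some $\eE_{\{x_0\}\times Y}$ as a subsheaf, yielding a nonzero morphism $\eE_{\{x_0\}\times Y}\to\Psi^0(E)$. Composing it with the natural map $\Psi^0(E)\to\Psi(E)$ and applying the quasi-inverse $\Psi^{-1}=\HPsi[g]$ — note $\Psi(\oO_{x_0})=\eE_{\{x_0\}\times Y}$, so $\Psi^{-1}(\eE_{\{x_0\}\times Y})\cong\oO_{x_0}$ — gives a morphism $\oO_{x_0}\to E$ in $\Coh(X)$, which is zero because $E\in\Coh_{\ge 1}(X)$. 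Transporting this vanishing back along the left-exact maps $\Hom_{\SY}(\eE_{\{x_0\}\times Y},\Psi^0(E))\hookrightarrow\Hom_{\SY}(\eE_{\{x_0\}\times Y},\Psi^0(E|_{mH_{\SX,x}}))\cong\Hom_{\SX}(\oO_{x_0},E|_{mH_{\SX,x}})$ — the last isomorphism because $\Psi^0(E|_{mH_{\SX,x}})$ is a sheaf and $\Psi$ is an equivalence — forces the original morphism $\eE_{\{x_0\}\times Y}\to\Psi^0(E)$ to have been zero, a contradiction; hence $\mu^{+}_{\ly,D_{\SY}}(\Psi^0(E))<0$.

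The step I expect to be the main obstacle is this last part of (2): because $\Psi^0$ is only left exact, $\Hom_{\SY}(A,\Psi^0(E))$ differs from $\Hom_{D^b(Y)}(A,\Psi(E))$ by a contribution of $\Hom_{\SY}(A,\Psi^1(E))$, so a nonzero map into $\Psi^0(E)$ cannot be carried across the equivalence directly; the resolution is to route everything through the genuinely short exact restriction sequence and the degree-$0$ object $\Psi^0(E|_{mH_{\SX,x}})$, which demands care in identifying the images under adjunction and in pinning down which $\eE_{\{x_0\}\times Y}$ occurs as a subobject. One must also check the boundedness used to choose $m$ uniformly in $x$ and $y$, and the standard fact that generic hyperplane sections of pure sheaves remain pure.
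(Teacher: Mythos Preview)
Your approach is genuinely different from the paper's: you twist by $\oO_X(-mH_{\SX,x})$ and induct downward on support dimension, whereas the paper twists by $\oO_X(+mH_{\SX,x})$ so that $E(mH_{\SX,x})\in V^{\Psi}_{\Coh(Y)}(0)$ for $m\gg0$. Then $\Psi^0(E(mH_{\SX,x}))=\Psi(E(mH_{\SX,x}))$ and the equivalence gives
\[
\Hom_{\SY}\bigl(T,\Psi^0(E(mH_{\SX,x}))\bigr)\cong\Hom_{\SX}\bigl(\HPsi(T),E(mH_{\SX,x})[-g]\bigr)\cong\Hom_{\SX}\bigl(\HPsi^g(T),E(mH_{\SX,x})\bigr),
\]
which vanishes by Proposition~\ref{prop:FMT-0-g-cohomo-vanishing}: for (i) because $\HPsi^g(T)=0$ when $T\in\HN^{\mu}_{\ly,D_{\SY}}((0,+\infty])$, and for (ii) because $\HPsi^g(T)\in\Coh_0(X)$ when $T\in\HN^{\mu}_{\ly,D_{\SY}}([0,+\infty])$ while $E(mH_{\SX,x})\in\Coh_{\ge1}(X)$. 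No induction is needed.

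Your argument has two gaps. The minor one: the vanishing $H^0\bigl(X,E(-mH_{\SX,x})\otimes\eE_{X\times\{y\}}\bigr)=0$ for $m\gg0$ is false whenever $E$ carries nonzero $0$-dimensional torsion $T_0$, since $T_0(-mH_{\SX,x})\cong T_0$ contributes nonzero $H^0$ for every $m$; so your inductive step for (1) breaks unless you first strip off $T_0(E)$ via the torsion filtration. The serious one is the Jordan--H\"older step in (2): a $\mu$-semistable slope-$0$ subsheaf $S\subset\Psi^0(E|_{mH_{\SX,x}})$ need \emph{not} contain any $\eE_{\{x_0\}\times Y}$. Already when $\Psi^0(E|_{mH_{\SX,x}})=\eE_{\{x_1\}\times Y}$, any proper full-rank subsheaf $S\subsetneq\eE_{\{x_1\}\times Y}$ with cokernel in $\Coh_{\le g-2}(Y)$ is $\mu$-stable of slope $0$, yet $\Hom(\eE_{\{x_0\}\times Y},S)=0$ for every $x_0$ (the identity of $\eE_{\{x_1\}\times Y}$ maps nontrivially to the cokernel, so nothing factors through $S$). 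The point is that $\mu$-stability is not simplicity in an abelian category, so Jordan--H\"older does not control subsheaves in the way you assume. The repair is to drop the reduction to $\eE_{\{x_0\}\times Y}$ and observe that for \emph{any} sheaf $T$ one has $\Hom_{\SY}(T,\Psi^0(E))\cong\Hom_{D^b(Y)}(T,\Psi(E))\cong\Hom_{\SX}(\HPsi^g(T),E)$ (the first isomorphism because $\tau_{\ge1}\Psi(E)$ lives in degrees $\ge1$, the second because $(\tau_{\le g-1}\HPsi(T))[g]$ lives in degrees $\le-1$); then Proposition~\ref{prop:FMT-0-g-cohomo-vanishing}(ii) together with $E\in\Coh_{\ge1}(X)$ finishes. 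This is essentially the paper's computation, and it also shows that your anticipated obstacle is illusory: $\Hom_{\SY}(A,\Psi^0(E))$ and $\Hom_{D^b(Y)}(A,\Psi(E))$ coincide for every sheaf $A$.
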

\begin{proof}
 For generic $x \in X$ and large enough  $m \in \ZZ_{>0} $, apply the  Fourier-Mukai  transform 
$\Psi$ to the $\oO_X(mH_{\SX, x})$ twisted short exact sequence \eqref{eqn:restriction-ses-higher-degree}:
$$
0 \to E \to E(mH_{\SX, x}) \to E(mH_{\SX, x})|_{m H_{\SX, x}} \to 0.
$$
By considering the long exact sequence of $\Coh(Y)$ cohomologies   we get 
 $$
\Psi^0(E) \hookrightarrow \Psi^0(E(mH_{\SX, x})).
 $$
 Therefore, it is enough to 
 show the corresponding claims for $E(mH_{\SX, x})$ with  large enough $m \in \ZZ_{>0}$ and generic $x \in X$.
For such $m$, $E(mH_{\SX, x}) \in V^{\Psi}_{\Coh(Y)}(0)$.  \\

\noindent (i) \  
For any $ T \in \HN_{\ly, D_{\SY}}^{\mu}((0, +\infty])$, we have 
\begin{align*}
\Hom_{\SY}(T, \Psi^0(E(m H_{\SX, x}))) & \cong \Hom_{\SX}(\HPsi(T), \HPsi \Psi^0(E(m H_{\SX, x}))) \\
& \cong \Hom_{\SX}(\HPsi(T),E(m H_{\SX, x})[-g]) =0,
\end{align*}
as from Proposition \ref{prop:FMT-0-g-cohomo-vanishing}--(i), $\HPsi^g(T) =0$.
Hence, $\Psi^0(E(mH_{\SX, x})) \in \HN_{\ly, D_{\SY}}^{\mu}((-\infty,0])$ as required. \\

\noindent (ii) \ Let us assume $E \in \Coh_{\ge 1}(X)$. For any $ T \in \HN_{\ly, D_{\SY}}^{\mu}([0, +\infty])$, we have 
\begin{align*}
\Hom_{\SY}(T, \Psi^0(E(m H_{\SX, x}))) & \cong \Hom_{\SX}(\HPsi(T), \HPsi \Psi^0(E(m H_{\SX, x}))) \\
& \cong \Hom_{\SX}(\HPsi(T),E(m H_{\SX, x})[-g]) \\
& \cong \Hom_{\SX}(\HPsi^g(T),E(m H_{\SX, x}))  =0,
\end{align*}
as from Proposition \ref{prop:FMT-0-g-cohomo-vanishing}--(ii), $\HPsi^g(T) \in \Coh_0(X)$.
Hence, $\Psi^0(E(m H_{\SX, x})) \in \HN_{\ly, D_{\SY}}^{\mu}((-\infty,0))$ as required. \\
\end{proof}
%%%%%%%%%%%%%%%%%%%%%%%%
%%%%%%%%%%%%%%%%%%%%%%%%

\begin{prop}
\label{prop:slope-bound-torsion-sheaf}
Let $1 \le k \le g$. If  $E \in \Coh_{\le k}(X)$, then 
$\Psi^k(E) \in  \HN_{\ly, D_{\SY}}^{\mu}((0, +\infty])$.
\end{prop}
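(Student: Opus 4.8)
The plan is to show that $\Psi^k(E)$ lies in the torsion class $\tT_{\ly,D_{\SY}}=\HN^{\mu}_{\ly,D_{\SY}}((0,+\infty])$; equivalently, that $\Hom_{\SY}(\Psi^k(E),G)=0$ for every $\mu_{\ly,D_{\SY}}$-semistable torsion-free sheaf $G$ on $Y$ with $\mu_{\ly,D_{\SY}}(G)\le 0$ (if $\Psi^k(E)\notin\tT_{\ly,D_{\SY}}$ then its minimal Harder--Narasimhan quotient is such a $G$ admitting a nonzero map). By Proposition~\ref{prop:FMT-cohomology-vanishing-torsion-sheaves} the complex $\Psi(E)$ has nonzero cohomology only in degrees $0,1,\dots,k$, so $\Psi^k(E)$ is its top cohomology sheaf. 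Applying $\Hom_{\SY}(-,G[-k])$ to the truncation triangle $\tau_{\le k-1}\Psi(E)\to\Psi(E)\to\Psi^k(E)[-k]\to(\tau_{\le k-1}\Psi(E))[1]$ and using that $\Hom_{D^b(Y)}(\tau_{\le k-1}\Psi(E),G[-k])$ and $\Hom_{D^b(Y)}(\tau_{\le k-1}\Psi(E),G[-k-1])$ vanish for t-structure reasons, one obtains an isomorphism $\Hom_{\SY}(\Psi^k(E),G)\cong\Hom_{\SY}(\Psi(E),G[-k])$.

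Next I would transport this to $X$. As $\omega_X\cong\oO_X$ and $\omega_Y\cong\oO_Y$, the kernel $\eE_R$ equals $\eE^\vee[g]$, so $\HPsi[g]$ is right adjoint to $\Psi$ and $\Hom_{\SY}(\Psi(E),G[-k])\cong\Hom_{\SX}(E,\HPsi(G)[g-k])$; Serre duality on $X$ rewrites this as $\Hom_{D^b(X)}(\HPsi(G),E[k])^{\vee}$. Since $G\in\HN^{\mu}_{\ly,D_{\SY}}((-\infty,0])$, the statement for $\HPsi$ symmetric to Proposition~\ref{prop:FMT-0-g-cohomo-vanishing}(iii) gives $\HPsi^0(G)=0$, so $\HPsi(G)$ is concentrated in degrees $1,\dots,g$. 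Filtering $\HPsi(G)$ by its cohomology sheaves, $\Hom_{D^b(X)}(\HPsi(G),E[k])$ is built from the groups $\Ext^{k+i}_{\SX}(\HPsi^i(G),E)$ with $1\le i\le g-k$ (the upper bound because $\Ext^{j}_{\SX}$ between coherent sheaves vanishes for $j>g$); in particular for $k=g$ this range is empty and the claim is immediate, so one may assume $k<g$.

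To kill these groups I would use that $E\in\Coh_{\le k}(X)$, so $\codim\Supp(E)\ge g-k$, together with Serre duality once more: $\Ext^{k+i}_{\SX}(\HPsi^i(G),E)\cong\Ext^{\,g-k-i}_{\SX}(E,\HPsi^i(G))^{\vee}$, whose upper index satisfies $g-k-i<g-k$. The required input is that the sheaves $\HPsi^i(G)$ are sufficiently deep along $\Supp(E)$. Reducing $G$ to a $\mu_{\ly,D_{\SY}}$-semistable sheaf, the crucial point is the identity $\mu_{\ly}(G\otimes\eE^{\vee}_{\{x\}\times Y})=\mu_{\ly,D_{\SY}}(G)$, valid because $\eE^{\vee}_{\{x\}\times Y}$ is a semihomogeneous bundle with $\ch=r\,e^{-D_{\SY}}$; as a tensor product of slope-semistable sheaves is slope-semistable, $G\otimes\eE^{\vee}_{\{x\}\times Y}$ is slope-semistable of nonpositive slope. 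When $\mu_{\ly,D_{\SY}}(G)<0$, a cohomology-vanishing theorem for semistable sheaves on abelian varieties (via Serre duality on $Y$ and Mukai-type regularity for semistable sheaves of positive slope) gives $H^i(Y,G\otimes\eE^{\vee}_{\{x\}\times Y})=0$ for all $i<g$ and all $x$; an induction from the top cohomology then forces $\HPsi^i(G)=0$ for $i<g$, so $\HPsi(G)\cong\HPsi^g(G)[-g]$ and $\Hom_{D^b(X)}(\HPsi(G),E[k])=\Ext^{\,k+g}_{\SX}(\HPsi^g(G),E)=0$ since $k+g>g$.

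The main obstacle is the boundary case $\mu_{\ly,D_{\SY}}(G)=0$, where $\HPsi(G)$ need not be concentrated in degree $g$ and the argument of the previous paragraph fails; I expect one must instead show that for such $G$ the intermediate transforms $\HPsi^i(G)$, $1\le i\le g-1$, are torsion sheaves (the sheaf $G\otimes\eE^{\vee}_{\{x\}\times Y}$ having exceptional higher cohomology only for $x$ in a proper closed subset) and then combine this with the bound $\codim\Supp(E)\ge g-k$ to make each $\Ext^{\,g-k-i}_{\SX}(E,\HPsi^i(G))$ vanish; making this precise uses the $\mathrm{WIT}$ structure theory of Fourier--Mukai transforms of semistable sheaves on abelian varieties. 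An alternative I would try is to run the restriction/twisting argument of Proposition~\ref{prop:slope-bound-FMT-0-g} on a locally free resolution of $E$, reducing the whole statement to the degree-$0$ transform of a torsion sheaf (Proposition~\ref{prop:FMT-0-cohomology-reflexive} and the slope bound already give control there), which sidesteps the intermediate cohomologies but requires careful bookkeeping to match the twist $D_{\SY}$ appearing in the conclusion.
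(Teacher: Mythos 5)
Your reduction to showing $\Hom_{\SY}(\Psi^k(E),G)=0$ for semistable $G$ with $\mu_{\ly,D_{\SY}}(G)\le 0$, and the adjunction/Serre-duality transfer to $\Hom_{D^b(X)}(\HPsi(G),E[k])^{\vee}$, are fine, but the step that is supposed to make the whole thing work is false. For a slope-semistable sheaf $G$ of strictly negative slope it is \emph{not} true that $H^i(Y,G\otimes\eE^{\vee}_{\{x\}\times Y})=0$ for all $i<g$ and all $x$ (equivalently that $\HPsi^i(G)=0$ for $i<g$): take the principally polarized situation with $\eE=\pP$, $D_{\SY}=0$, and $G=\iI_Z\otimes L^{-1}$ with $L$ ample and $Z$ a point; then $H^0(L^{-1}\otimes\pP^{\vee}_{\{x\}\times Y})=H^1(L^{-1}\otimes\pP^{\vee}_{\{x\}\times Y})=0$ forces $H^1(Y,G\otimes\pP^{\vee}_{\{x\}\times Y})\cong H^0(\oO_Z)\ne 0$ for \emph{every} $x$, so $\HPsi^1(G)\ne 0$. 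Such index-theorem (WIT$_g$) behaviour holds for special sheaves (line bundles, semihomogeneous bundles), not for arbitrary negative-slope semistable sheaves; indeed the paper's Proposition \ref{prop:FMT-F-1-reflexivity} exists precisely because $\Psi^1$ of nonpositive-slope sheaves is typically a nonzero reflexive sheaf. With this lemma gone, your third paragraph collapses, and the boundary case $\mu_{\ly,D_{\SY}}(G)=0$ is in any event left as an admitted sketch. Moreover the fallback you propose there (intermediate transforms are torsion, then use $\codim\Supp(E)\ge g-k$) does not close the gap: the spectral-sequence term with $i=g-k$ is a plain $\Hom_{\SX}(E,\HPsi^{g-k}(G))$, and low-degree $\Ext$-vanishing from codimension of $\Supp(E)$ needs the target to be locally free or of sufficient depth along $\Supp(E)$ — two torsion sheaves with overlapping support can have nonzero $\Hom$ — so "torsion" is not enough.

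For comparison, the paper avoids any such vanishing theorem. It first reduces to $E\in\Coh_k(X)$ pure, then inducts down to $k=1$: restricting to generic translates of a divisor in $|mN\lx|$ and using $E(mH_{\SX,x})\in V^{\Psi}_{\Coh(Y)}(0)$ (Proposition \ref{prop:V-0-for-higher-ample-twist}) gives $\Psi^k(E)\cong\Psi^{k-1}\bigl(E(mH_{\SX,x})|_{mH_{\SX,x}}\bigr)$. For $k=1$ it twists down, obtains a surjection $\Psi^1(E(-mH_{\SX,x}))\twoheadrightarrow\Psi^1(E)$, and identifies $\Psi^1(E(-mH_{\SX,x}))$ via the duality isomorphism of Lemma \ref{prop:dual-FMT} as the dual of $\hH^0$ of a Fourier--Mukai image of a sheaf in $\Coh_{\ge 1}$, which has strictly negative slopes by Proposition \ref{prop:slope-bound-FMT-0-g}--(2); dualizing flips the sign and the surjection finishes the proof. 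If you want to salvage your approach you would need a genuinely different argument for the intermediate transforms $\HPsi^i(G)$, not the cohomology vanishing you assumed.
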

\begin{proof}
Consider the torsion sequence of $E \in \Coh_{\le k}(X)$; so $E$ fits into the short exact sequence
$$
0 \to E_{\le (k-1)} \to E \to E_k \to 0,
$$
for some $E_{\le (k-1)} \in \Coh_{\le (k-1)}(X)$ and $E_k \in \Coh_k(X)$. By applying the  Fourier-Mukai  transform $\Psi$ and considering the long exact sequence of $\Coh(Y)$ cohomologies, we obtain 
$$
\Psi^k (E) = \Psi^k(E_k).
$$
Hence, we can assume $E \in \Coh_k(X)$. 

 For generic $x \in X$ and large enough $m \in \ZZ_{>0} $, apply the  Fourier-Mukai  transform 
$\Psi$ to the $\oO_X(mH_{\SX, x})$ twisted short exact sequence \eqref{eqn:restriction-ses-higher-degree}:
$$
0 \to E \to E(mH_{\SX, x}) \to E(mH_{\SX, x})|_{m H_{\SX, x}} \to 0.
$$
Here $E(m H_{\SX, x})|_{m H_x} \in \Coh_{(k-1)}(X)$. 
By considering long exact sequence of $\Coh(Y)$ cohomologies, we get 
\begin{equation*}
 \Psi^k(E) \cong  \Psi^{k-1}\left(  E(m H_{\SX, x})|_{m H_{\SX, x}}\right),
\end{equation*}
as for large enough $m \in \ZZ_{>0}$, $E(m H_{\SX, x})  \in V^{\Psi}_{\Coh(Y)}(0)$. 
Therefore, inductively we only need to consider the case $k=1$. 

Suppose $E \in \Coh_1(X)$. For generic $x \in X$ and large enough $m \in \ZZ_{>0}$, apply the  Fourier-Mukai  transform 
$\Psi$ to the short exact sequence \eqref{eqn:restriction-ses-higher-degree}:
$$
0 \to E(-m H_{\SX, x}) \to E \to E|_{m H_{\SX, x}} \to 0,
$$
where $E|_{m H_{\SX, x}} \in \Coh_{0}(X)$. 
By considering long exact sequence of $\Coh(Y)$ cohomologies, we get $E(-m H_{\SX, x}) \in  V^{\Psi}_{\Coh(Y)}(1)$ and also
\begin{equation}
\label{eqn:quotient-torsion-1}
 \Psi^1( E(-m H_{\SX, x})) \twoheadrightarrow \Psi^1( E).
\end{equation}
%Therefore, we further can assume $E  \in  V^{\Psi}_{\Coh(Y)}(1)$.

From Lemma \ref{prop:dual-FMT}, $\left(\Phi^{\SX \to \SY}_{\eE}( E(-m H_{\SX, x}))\right)^\vee \cong \Phi^{\SX \to \SY}_{\eE^{\vee}} \left( \left( E(-m H_{\SX, x})\right)^\vee \right)[g]$,  and from Proposition \ref{prop:slope-bound-FMT-0-g}--(ii), 
$$
\hH^0 (\Phi^{\SX \to \SY}_{\eE^{\vee}}(\calExt^1(E, \oO_X) (m H_{\SX, x})) \in \HN_{\ly, -D_{\SY}}^{\mu}((-\infty ,0)),
$$
so we deduce 
$$
\Psi^1( E(-m H_{\SX, x})) \cong \left( \hH^0 (\Phi^{\SX \to \SY}_{\eE^{\vee}}(\calExt^1(E, \oO_X) (m H_{\SX, x})) \right)^* \in  \HN_{\ly, D_{\SY}}^{\mu}((0, +\infty)).
$$
From \eqref{eqn:quotient-torsion-1} we have $\Psi^1( E)  \in  \HN_{\ly, D_{\SY}}^{\mu}((0, +\infty])$.
This completes the proof.
\end{proof}

%%%%%%%%%%%%%%%%%%%%%%%%%%%%%%%%%%
%%%%%%%%%%%%%%%%%%%%%%%%%%%%%%%%%%
\section{Equivalences of Stability Condition Hearts on Abelian Surfaces}
\label{sec:equivalence-stab-hearts-surface}
%%%%%%%%%%%%%%%%%%%%%%%%%%%%%%%%%%
%%%%%%%%%%%%%%%%%%%%%%%%%%%%%%%%%%
In this section  we show that the expectation in the end of Section \ref{sec:action-FMT-central-charge}, more precisely Conjecture \ref{prop:conjecture-equivalence-stab-hearts} holds on abelian surfaces. 
This result is already known due to Huybrechts and Yoshioka \cite{HuyK3Equivalence, YoshiokaFMT}. However, as for completeness and as a warm-up to study the abelian threefold case in the next sections we present the complete proof and we closely follow that of Yoshioka. 

%In order to achieve this we study various stabilities of sheaves and complexes of them under the Fourier-Mukai transforms. 

Let $X, Y$ be derived equivalent abelian surfaces  and let $\lx, \ly$ be ample classes on them respectively as in Theorem \ref{prop:general-cohomo-FMT}.
Let $\Psi$ be the Fourier-Mukai transform $\Phi_{\eE}^{\SX \to \SY}$ from $X$ to $Y$ with kernel  $\eE$, and let $\HPsi =  \Phi_{\eE^\vee}^{\SY \to \SX}$. We have 
\begin{equation*}
\label{imagesurface}
\Psi(\Coh(X)) \subset \langle \Coh(Y), \Coh(Y)[-1], \Coh(Y)[-2] \rangle,
\end{equation*}
and similar relation for $\HPsi$.
Since 
 $\HPsi \circ \Psi \cong  [-2]$ and $\Psi \circ \HPsi \cong  [-2]$, 
 we have the following convergences of the spectral sequences.

\begin{equation}
\label{eqn:mukai-specseq}
\left.\begin{aligned}
 &  E_2^{p,q} = \HPsi^{p} \Psi^q(E) \Longrightarrow \hH^{p+q-2}(E) \\        
 &  E_2^{p,q} = \Psi^{p}\HPsi^q(E) \Longrightarrow \hH^{p+q-2}(E)
       \end{aligned}
  \ \right\}.
\end{equation}
Here and elsewhere we write $\HPsi^{p}(E) = \hH^p (\HPsi(E))$ and $\Psi^{q}(E) = \hH^q (\Psi(E))$. Immediately from the convergence of this spectral sequence for $E \in \Coh(X)$, we deduce that
\begin{itemize}
%[leftmargin=1cm]
\item $ \Psi^0(E) \in V^{\HPsi}_{\Coh(X)}(2) $, and $\Psi^2(E) \in V^{\HPsi }_{ \Coh(X)}(0)$;
\item there is an injection $\HPsi^0\Psi^1(E) \hookrightarrow \HPsi^2\Psi^0(E)$, and
 a surjection $ \HPsi^0\Psi^2(E) \twoheadrightarrow \HPsi^2\Psi^1(E)$.
\end{itemize}

Let us recall the notation in Conjecture \ref{prop:conjecture-equivalence-stab-hearts} for our derived equivalent abelian surfaces. 
Consider  the complexified ample classes $\Omega = -D_{\SX} + i \lambda \lx$, 
$\Omega' = D_{\SY} + i (1 /\lambda) \ly $ on $X$, $Y$ respectively. 
The function defined by $Z^{(1)}_{\Omega} = -i \int_{X} e^{- \Omega } (\ch_0, \ch_1, 0)$ together with the 
standard heart $\Coh(X)$ defines a very weak stability condition $\sigma_1$ on $D^b(X)$. 
Define the subcategories 
$$
\fF^{\SX} = \pP^{\SX}_{\sigma_1}((0,\, 1/2]), \ \ \tT^{\SX}= \pP^{\SX}_{\sigma_1}((1/2 ,\, 1])
$$
of $\Coh(X)$ in terms of the associated slicing $\pP^{\SX}_{\sigma_1}$.
In other words,  
$$
\fF^{\SX} =  \HN_{\lx, -D_{\SX}}^{\mu}([0, +\infty)), \ \
 \tT^{\SX}=  \HN_{\lx, -D_{\SX}}^{\mu}((0, +\infty)).
$$
Then the Bridgeland  stability condition heart in Conjecture \ref{prop:conjecture-equivalence-stab-hearts} is 
$$
\bB^{\SX} = \langle \fF^{\SX}[1] , \tT^{\SX} \rangle = \pP^{\SX}_{\sigma_1}((1/2 ,\, 3/2]).
$$
We consider similar subcategories  associated to $\Omega'$ on $Y$. 

%Let us sketch the proof of the equivalence in Conjecture \ref{conjequivalence}. This is essentially a generalization of Yoshioka's in \cite{Yos}, and . 

We need the following results about cohomology sheaves of the images under the Fourier-Mukai transforms, and closely follow the arguments in the author's PhD thesis \cite[Section 6]{PiyThesis} and which is also adopted from \cite{YoshiokaFMT}.

\begin{prop}
\label{prop:slope-bound-FMT-surface}
Let $E \in \Coh(X)$. Then we have the following:
\begin{enumerate}[label=(\arabic*)]
\item (i) If $E \in \tT^{\SX}$ then $\Psi^2(E) = 0$, and (ii) if $E \in  \fF^{\SX}$ then $\Psi^0(E) = 0$.
\item (i) $\Psi^2(E) \in \tT^{\SY}$, and (ii) $\Psi^0(E) \in \fF^{\SY}$.
\item (i) if $E \in \tT^{\SX}$ then $\Psi^1(E) \in \tT^{\SY}$, and (ii) if $E \in \fF^{\SX}$ then $\Psi^1(E) \in \fF^{\SY}$.
\end{enumerate}
\end{prop}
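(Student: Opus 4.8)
The plan is to read parts (1) and (2) directly off the results of Section~\ref{sec:FMT-abelian-varieties} specialised to $g=2$, and then to treat part (3), which is the substantive statement, by feeding those same results and their mirror versions for $\HPsi$ into the two Mukai spectral sequences \eqref{eqn:mukai-specseq}.

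Parts (1) and (2) require no new input. By the very definitions of $\tT^{\SX}$ and $\fF^{\SX}$, (1)(i) is Proposition~\ref{prop:FMT-0-g-cohomo-vanishing}(1) and (1)(ii) is Proposition~\ref{prop:FMT-0-g-cohomo-vanishing}(3). Every coherent sheaf on a surface lies in $\Coh_{\le 2}(X)$, so Proposition~\ref{prop:slope-bound-torsion-sheaf} with $k=g=2$ gives $\Psi^2(E)\in\tT^{\SY}$, which is (2)(i); and Proposition~\ref{prop:slope-bound-FMT-0-g}(1) already puts $\Psi^0(E)$ in $\fF^{\SY}$, which is (2)(ii) (Proposition~\ref{prop:FMT-0-cohomology-reflexive}(2) moreover shows $\Psi^0(E)$ is locally free, since reflexive sheaves on a surface are locally free).

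For part (3) I would first record that, by the same proofs with the roles of $(\lx,-D_{\SX})$ and $(\ly,D_{\SY})$ interchanged, the analogues of Propositions~\ref{prop:FMT-cohomology-vanishing-torsion-sheaves}, \ref{prop:FMT-0-g-cohomo-vanishing}, \ref{prop:slope-bound-FMT-0-g} and \ref{prop:slope-bound-torsion-sheaf} hold for $\HPsi$; in particular $\HPsi^0(-)\in\fF^{\SX}$ and $\HPsi^2(-)\in\tT^{\SX}$ for any sheaf, $\HPsi^2(T)=0$ for $T\in\tT^{\SY}$, and $\HPsi^0(F)=0$ for $F\in\fF^{\SY}$. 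The torsion case is then immediate: if $E\in\Coh_0(X)$ then $\Psi^1(E)=0$ by Proposition~\ref{prop:FMT-cohomology-vanishing-torsion-sheaves}, and if $E\in\Coh_1(X)$ then $\Psi^2(E)=0$ and $\Psi^1(E)\in\tT^{\SY}$ by Propositions~\ref{prop:FMT-cohomology-vanishing-torsion-sheaves} and \ref{prop:slope-bound-torsion-sheaf} with $k=1$. For torsion-free $E$ one reduces, via the long exact $\Coh(Y)$-cohomology sequences under $\Psi$ — in which all the $\Psi^2$-terms of the $\mu$-Jordan--H\"older factors vanish by part (1), and using that $\tT^{\SY}$, $\fF^{\SY}$ are respectively closed under quotients/extensions and subobjects/extensions — to the case of a $\mu_{\lx,-D_{\SX}}$-stable sheaf $E$. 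For such an $E$ one plays parts (1)--(2) against the spectral sequence $E_2^{p,q}=\HPsi^p\Psi^q(E)\Rightarrow\hH^{p+q-2}(E)$: since $E$ is a sheaf all $E_\infty$-terms with $p+q\ne 2$ vanish, which forces $\HPsi(\Psi^1(E))$ to be supported only in the degrees left open by parts (1)--(2) and identifies $\HPsi^0\Psi^1(E)$, $\HPsi^2\Psi^1(E)$ as the kernel and cokernel of an explicit morphism between the $\HPsi^{0}$/$\HPsi^{2}$ of the neighbouring cohomology sheaves. Comparing this with the bounds $\HPsi^0(-)\in\fF^{\SX}$, $\HPsi^2(-)\in\tT^{\SX}$, together with the anti-diagonal description of the induced cohomological transform (Theorem~\ref{prop:antidiagonal-rep-cohom-FMT}, which for $g=2$ reads off the numerical slopes at the critical value $0$), pins the Harder--Narasimhan slopes of $\Psi^1(E)$ on the required side of $0$; running the same scheme for $\HPsi$ in place of $\Psi$ gives (3)(ii).

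The main obstacle is a genuine circularity: via adjunction the statement ``$\Psi^1(\tT^{\SX})\subset\tT^{\SY}$'' is essentially equivalent to the mirror statement ``$\HPsi^1(\fF^{\SY})\subset\fF^{\SX}$'' (and (3)(ii) is equivalent to the mirror of (3)(i)), so no purely $\Hom$-theoretic argument can close the loop. The way around it is precisely the extra rigidity supplied by the spectral sequences — the injection $\HPsi^0\Psi^1(E)\hookrightarrow\HPsi^2\Psi^0(E)$ and the surjection $\HPsi^0\Psi^2(E)\twoheadrightarrow\HPsi^2\Psi^1(E)$ they produce are invisible to $\Hom$ alone — combined with the reduction to stable sheaves, where the Fourier--Mukai weak index theorem dichotomy (a $\mu_{\lx,-D_{\SX}}$-stable sheaf of slope $>0$, resp.\ $<0$, has $\Psi$ concentrated in degrees $\{0,1\}$, resp.\ $\{1,2\}$, and at slope $0$ it is concentrated in degree $1$ unless it is isomorphic to some $\eE^{*}_{X\times\{y\}}$) makes all the cohomology sheaves accessible. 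A secondary technical point is to check that $\Psi^1(E)$ is torsion-free when $E\in\fF^{\SX}$, so that the assertion $\Psi^1(E)\in\HN^{\mu}_{\ly,D_{\SY}}((-\infty,0])$ is even meaningful; this follows from applying the reflexivity statement of Proposition~\ref{prop:FMT-0-cohomology-reflexive} to $\HPsi$ together with the vanishing $\Psi^0(E)=0$.
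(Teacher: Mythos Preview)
Your treatment of parts (1) and (2) is correct and matches the paper's: both are read off directly from Propositions~\ref{prop:FMT-0-g-cohomo-vanishing}, \ref{prop:slope-bound-FMT-0-g}, and \ref{prop:slope-bound-torsion-sheaf}.

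For part (3), however, your argument has a genuine gap. You correctly identify the circularity obstacle and correctly point to the spectral sequence and the anti-diagonal formula as the extra input needed to break it, but you never actually close the loop. Concretely: knowing from the spectral sequence that $\HPsi^2\Psi^1(E)=0$ and that $\HPsi^0\Psi^1(E)\hookrightarrow\HPsi^2\Psi^0(E)$ tells you something about $\HPsi(\Psi^1(E))$ \emph{globally}, but not about the individual Harder--Narasimhan factors of $\Psi^1(E)$; at best you extract a sign on the total $v_1^{D_{\SY},\ly}(\Psi^1(E))$, which does not prevent an $\fF^{\SY}$-quotient. Your reduction to $\mu$-stable $E$ does not help here --- the difficulty is the same for stable $E$ --- and the ``weak index dichotomy'' you invoke is not used at all.

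The paper's proof is more direct and does not reduce to stable sheaves. For $E\in\tT^{\SX}$ it takes the torsion-pair decomposition $0\to T\to\Psi^1(E)\to F\to 0$ with $T\in\tT^{\SY}$, $F\in\fF^{\SY}$, and applies $\HPsi$ to \emph{this} short exact sequence. Since $\Psi^2(E)=0$, the spectral sequence gives $\HPsi^2\Psi^1(E)=0$ and shows $\HPsi^1\Psi^1(E)$ is a quotient of $E$, hence lies in $\tT^{\SX}$. Combined with $\HPsi^2(T)=0$ and $\HPsi^0(F)=0$, one obtains $F\in V^{\HPsi}_{\Coh(X)}(1)$ and a surjection $\HPsi^1\Psi^1(E)\twoheadrightarrow\HPsi^1(F)$, so $\HPsi^1(F)\in\tT^{\SX}$ and in particular $\lx\ch_1^{-D_{\SX}}(\HPsi^1(F))\ge 0$. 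Only now does the anti-diagonal formula enter: it forces $\lx\ch_1^{-D_{\SX}}(\HPsi^1(F))\le 0$, hence equality, hence $\HPsi^1(F)\in\Coh_0(X)$; but then $\Psi\HPsi^1(F)\cong F[-1]$ would have to lie in $\Coh(Y)$, which is absurd unless $F=0$. The missing step in your outline is precisely this: you must apply $\HPsi$ to the HN decomposition of $\Psi^1(E)$, not just to $\Psi^1(E)$ itself, to isolate the hypothetical $\fF^{\SY}$-piece and kill it numerically.
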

\begin{proof}
\noindent (1) and (2) follows from  Propositions \ref{prop:FMT-0-g-cohomo-vanishing}-(i), \ref{prop:FMT-0-g-cohomo-vanishing}-(iii), \ref{prop:slope-bound-FMT-0-g}--(i), and  \ref{prop:slope-bound-torsion-sheaf}.  \\

\noindent Let us prove part (3)--(i). Let $E \in \tT^{\SX}$.
By the Harder-Narasimhan  filtration of $\Psi^1(E)$ there exists $T \in \tT^{\SY}$ and $F \in \fF^{\SY}$ such that
$0 \to T \to \Psi^1(E) \to F \to 0$
is a short exact sequence  in $\Coh(Y)$. Assume $F \neq 0$ for a contradiction.
Now apply the Fourier-Mukai transform $\HPsi$ to this short exact sequence  and then consider the long exact sequence of $\Coh(X)$ cohomologies.
By (1)(i) of this proposition, $\Psi^2(E) = 0$. 
So from the convergence of the  Spectral Sequence \ref{Spec-Seq-Mukai} for $E$, $\HPsi^2 \Psi^1(E) = 0$ and
$\HPsi^1 \Psi^1(E)$ is quotient of $E \in \tT^{\SX}$. Hence, we have $\HPsi^1 \Psi^1(E) \in \tT^{\SX}$.
By (1)(i) of this proposition, $\HPsi^2(T) = 0$ and so there is a surjection $\HPsi^1 \Psi^1(E) \twoheadrightarrow \HPsi^1(F)$.
Therefore, 
$$
\lx \cdot \ch_1^{-D_{\SX}}(\HPsi^1(F))  \ge 0,
$$ 
where the equality holds when $\HPsi^1(F) \in \Coh_0(X)$.
Also $\HPsi (F)  \in \Coh(X)[-1]$, and so by Theorem \ref{prop:antidiagonal-rep-cohom-FMT}, 
$$
\lx \cdot \ch_1^{-D_{\SX}}(\HPsi^1(F))  \le 0.
$$
Therefore, $\lx \cdot \ch_1^{-D_{\SX}}(\HPsi^1(F))  = 0$, and so $\HPsi^1(F) \in \Coh_0(X)$. But this is not possible as $\Psi \HPsi^1(F) \in \Coh(Y)[-1]$.
This is the required contradiction to complete the proof.

Similarly one can prove (3)(ii).
\end{proof}

In other words, the results of the above proposition say
\begin{equation*}
\left.\begin{aligned}
 & \Psi (\tT^{\SX}) \subset \langle \fF^{\SY} , \tT^{\SY}[-1]  \rangle \\        
 &  \Psi (\fF^{\SX}) \subset \langle \fF^{\SY}[-1] , \tT^{\SY}[-2]  \rangle
       \end{aligned}
  \ \right\}.
\end{equation*}
Similar results hold for $\HPsi$. Since $\bB^{\SX} = \langle \fF^{\SX}[1] , \tT^{\SX} \rangle$ and  $\bB^{\SY} = \langle \fF^{\SY}[1] , \tT^{\SY} \rangle$,
we have $\Psi [1] (\bB^{\SX} ) \subset \bB^{\SY}$ and $\HPsi [1] (\bB^{\SY} ) \subset \bB^{\SX}$. Hence, 
\begin{equation}
\label{eqn:equivalence-surfaces-hearts}
\Psi [1] (\bB^{\SX} ) \cong  \bB^{\SY}.
\end{equation}
as expected in Conjecture \ref{prop:conjecture-equivalence-stab-hearts} for $g=2$ case. 

\begin{note}
\label{prop:BG-ineq-surface-FMT}
\rm
We can use the equivalence \eqref{eqn:equivalence-surfaces-hearts} of the tilted hearts  to prove the usual Bogomolov-Gieseker type inequality for slope stable torsion free sheaves on an abelian surface. 

Let $E$ be a slope stable torsion free sheaf on an abelian surface $X$ with respect to an ample class 
$\lx \in \NS_{\QQ}(X)$. 
Then it fits into the short exact sequence $0 \to E \to E^{**} \to T \to 0$ for some torsion sheaf 
$T \in \Coh_0(X)$.
Let 
$$
-D_{\SX} = \frac{c_1(E)}{\rk(E)}.
$$
Then consider  the corresponding Fourier-Mukai transform $\Phi_{\eE}^{\SX \to \SY}: D^b(X) \to D^b(Y)$ as in Section \ref{sec:cohomological-FMT}.
Similar to Lemma \ref{prop:minimal-objects-threefold-hearts}, for surfaces (see \cite[Theorem 0.2]{HuyK3Equivalence}), the object 
$$
E^{**}[1] \in \bB^{\SX} 
$$
is a minimal object. Therefore under equivalence \eqref{eqn:equivalence-surfaces-hearts}, the object 
$$
F := \Phi_{\eE}^{\SX \to \SY}[1](E^{**}[1]) \in \bB^{\SY}
$$
is also a minimal object in $\bB^{\SY}$. 
Since $F$ fits in to the short exact sequence 
$$
0 \to \hH^{-1}(F)[1] \to F \to \hH^0(F) \to 0,
$$
we have either $\hH^{-1}(F) =0$ or $\hH^0(F)=0$.

In the first case one can show that $\hH^0(F) \cong \oO_y$ some $y\in Y$, and so $E^{**} \cong \eE^{*}_{X \times \{y\}}$; which satisfies   
$\ch_2^{-D_{\SX}}(E^{**}) = 0$.

In the remaining case, since 
$$
 -\ch_0(F) = \rk (\hH^{-1}(F) )  >0,
$$
from Theorem \ref{prop:antidiagonal-rep-cohom-FMT} we get 
$\ch_2^{-D_{\SX}}(E^{**}[1]) >0$. 

So we have 
$$
\ch_2^{-D_{\SX}}(E) \le 0
$$
as required in the usual Bogomolov-Gieseker inequality for $E$.
\end{note}
%%%%%%%%%%%%%%%%%%%%%%%%%%%%%%%%%%
%%%%%%%%%%%%%%%%%%%%%%%%%%%%%%%%%%
\section{FM Transform of Sheaves on  Abelian Threefolds}
\label{sec:FMT-sheaves-abelian-threefolds}
%%%%%%%%%%%%%%%%%%%%%%%%%%%%%%%%%%
%%%%%%%%%%%%%%%%%%%%%%%%%%%%%%%%%%
%%%%%%%%%%%%%%%%%%%%%%%%%%%%%%%%%%
 In this section we further study the slope stability of sheaves under the  Fourier-Mukai  transforms on abelian threefolds continuing Section \ref{sec:FMT-abelian-varieties}. 

Let $X, Y$ be derived equivalent abelian threefolds  and let $\lx, \ly$ be ample classes on them respectively as in Theorem \ref{prop:general-cohomo-FMT}.
Let $\Psi$ be the Fourier-Mukai transform $\Phi_{\eE}^{\SX \to \SY}$ from $X$ to $Y$ with kernel  $\eE$, and let $\HPsi =  \Phi_{\eE^\vee}^{\SY \to \SX}$. Then 
 $\HPsi \circ \Psi \cong  [-3] $ and $\Psi \circ \HPsi \cong  [-3] $.

\begin{nota}
\rm
As in Section \ref{sec:FMT-abelian-varieties}, we write 
$$
\Psi^p(E) = \hH^p\left( \Psi(E)\right)
$$
 and use similar notation for $\HPsi$. 
\end{nota}
\begin{mukaispecseq}
\label{Spec-Seq-Mukai}
$$
E_2^{p,q} = \HPsi^p  \Psi^q (E) \Longrightarrow \hH^{p+q-3}(E).
$$
\end{mukaispecseq}

We can describe the second page of the Mukai Spectral Sequence for $E \in \Coh(X)$  as in the following diagram:

%%%%%%%%%%%%%%%%%%%%%%%%%%%%%%%%%%%%%%%%%%%
\begin{center}
\begin{tikzpicture}[scale=1.7]
\draw[gray,very thin] (0,0) grid (4,4);
\draw[->,thick] (3.75,0.25) -- (4.5, 0.25) node[above] {$p$};
\draw[->,thick] (0.25,3.75) -- (0.25,4.5) node[left] {$q$};
\draw (2.5,0.5) node(a) {$\scriptstyle \HPsi^2  \Psi^0 (E) $};
\draw (0.5,1.5) node(b) {$\scriptstyle \HPsi^0  \Psi^1 (E)$};
\draw[->,thick] (b) -- node[below] {$\cong$} (a);
\draw (3.5,2.5) node(c) {$\scriptstyle \HPsi^3  \Psi^2 (E)$};
\draw (1.5,3.5) node(d) {$\scriptstyle \HPsi^1  \Psi^3 (E)$};
\draw[->,thick] (d) -- node[above] {$\cong$} (c);
\draw (3.5,0.5) node(e) {$\scriptstyle \HPsi^3  \Psi^0 (E) $};
\draw (1.5,1.5) node(f) {$\scriptstyle \HPsi^1  \Psi^1 (E)$};
\draw[>->,thick] (f) -- node[above] {$ $} (e);
\draw (2.5,2.5) node(g) {$\scriptstyle \HPsi^2  \Psi^2 (E) $};
\draw (0.5,3.5) node(h) {$\scriptstyle \HPsi^0  \Psi^3 (E) $};
\draw[->>,thick] (h) -- node[above] {$ $} (g);
\draw (2.5,1.5) node(i) {$\scriptstyle \HPsi^2  \Psi^1 (E) $};
\draw (3.5,1.5) node(j) {$\scriptstyle \HPsi^3  \Psi^1 (E) $};
\draw (0.5,2.5) node(k) {$\scriptstyle \HPsi^0  \Psi^2 (E)$};
\draw (1.5,2.5) node(l) {$\scriptstyle \HPsi^1  \Psi^2 (E)$};
\draw[->,thick] (l) -- node[above] {$ $} (j);
\draw[->,thick] (k) -- node[above] {$ $} (i);
\end{tikzpicture}
\end{center}  
%%%%%%%%%%%%%%%%%%%%%%%%%%%%%%%%%%%%%%%%%%%

We deduce the following immediately from the convergence of the Mukai Spectral Sequence for $E \in \Coh(X)$:
\begin{align*}
& \HPsi^0  \Psi^0 (E) =  \HPsi^1  \Psi^0(E) =  \HPsi^2 \Psi^3 (E) =  \HPsi^3 \Psi^3 (E) = 0, \\
& \HPsi^0  \Psi^1 (E) \cong  \HPsi^2  \Psi^0(E), \\
& \HPsi^1  \Psi^3 (E) \cong  \HPsi^3  \Psi^2(E).
\end{align*}

%%%%%%%%%%%%%%%%%%%%%%%%%
%%%%%%%%%%%%%%%%%%%%%%%%%
%Moreover, we have the following by considering the convergence of the Mukai Spectral Sequence.
%%%%%%%%%%%%%%%%%%%%%%%%%
%%%%%%%%%%%%%%%%%%%%%%%%%
%\begin{prop}
%\label{prop4.0}
%Let $E \in \coh(X)$. Then we have the following:
%\begin{enumerate}[label=(\roman*)]
%\item if $E^0 = 0$ then $E^{10} = E^{11} = 0$, and
%\item if $E^3= 0$ then $E^{22} = E^{23} = 0$.
%\end{enumerate}
%\end{prop}
%%%%%%%%%%%%%%%%%%%%%%%%%
%%%%%%%%%%%%%%%%%%%%%%%%%
\begin{prop}
\label{prop:FMT-F-1-reflexivity}
Let $E \in \HN_{\lx, -D_{\SX}}^{\mu}((-\infty,0])$. Then $\Psi^1(E)$ is a reflexive sheaf.
\end{prop}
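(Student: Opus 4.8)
The goal is to show that if $E \in \HN_{\lx,-D_{\SX}}^\mu((-\infty,0])$, then $\Psi^1(E)$ is reflexive. My plan is to use the characterization of reflexivity on a threefold provided by Lemma \ref{prop:reflexive-sheaf-threefold}: it suffices to show (i) $\Ext^1_{\SY}(\oO_y, \Psi^1(E)) = 0$ for all $y \in Y$, and (ii) $\Ext^2_{\SY}(\oO_y, \Psi^1(E)) \ne 0$ for only finitely many $y$. As a preliminary reduction, using the Harder--Narasimhan and Jordan--H\"older filtrations together with the fact that $\Psi^1(-)$ of a short exact sequence of sheaves in $\HN_{\lx,-D_{\SX}}^\mu((-\infty,0])$ behaves well (indeed, by Proposition \ref{prop:FMT-0-g-cohomo-vanishing}--(iii), $\Psi^0$ vanishes on this class, so the long exact sequence gives left-exactness of $\Psi^1$ and control of its cokernel), I may assume $E$ is slope stable with $\mu_{\lx,-D_{\SX}}(E) \le 0$. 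Also note $\Psi^3(E) = 0$ whenever $E$ is torsion-free with negative-or-zero slope, since $\Psi^3$ of a torsion-free sheaf is detected by $H^3(X, E \otimes \eE_{X\times\{y\}})$, which vanishes for such $E$ by the argument in Proposition \ref{prop:sheaf-cohomo-vanishing-FMT-vanishing}--(i) (a stable sheaf of nonpositive slope has no sections after twisting by the flat-by-$\eE$ bundle, and dually no $H^3$); so $\Psi(E)$ is concentrated in degrees $0,1,2$ with $\Psi^0(E)=0$, i.e. $\Psi(E) \in V^\Psi_{\Coh(Y)}(1,2)$.

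The key computation is to express $\Ext^i_{\SY}(\oO_y, \Psi^1(E))$ in terms of data on $X$ via adjunction. Using $\HPsi \circ \Psi \cong [-3]$ and $\HPsi(\oO_y) \cong \eE^*_{X\times\{y\}}$ (up to shift), I would write, for the distinguished triangle decomposing $\Psi(E)$ into its cohomology sheaves $\Psi^1(E)[-1]$ and $\Psi^2(E)[-2]$,
$$
\Hom_{\SY}(\oO_y, \Psi^1(E)[i]) \ \cong \ \Hom_{\SX}\bigl(\eE^*_{X\times\{y\}}[\ast], E[\ast]\bigr)
$$
after chasing the relevant shifts and the contribution of $\Psi^2(E)$. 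Concretely: apply $\HPsi$ to the triangle $\Psi^1(E)[-1] \to \Psi(E) \to \Psi^2(E)[-2] \to$, use that $\HPsi(\Psi(E)) \cong E[-3]$, and use the already-established vanishing/slope bounds for $\HPsi^k\Psi^j(E)$ coming from the Mukai spectral sequence page displayed above and from Propositions \ref{prop:FMT-0-g-cohomo-vanishing}, \ref{prop:slope-bound-FMT-0-g}, \ref{prop:slope-bound-torsion-sheaf} applied to the sheaves $\Psi^1(E)$ and $\Psi^2(E)$ (noting $\Psi^2(E) \in \Coh_{\le 1}(Y)$ when $E$ is torsion-free of nonpositive slope, by a dimension count as in Proposition \ref{prop:FMT-cohomology-vanishing-torsion-sheaves} combined with the dual Fourier--Mukai transform, since $\Psi^2(E)$ dualizes to $\Psi^0$ of an $\calExt$-sheaf which is supported in codimension $\ge 2$). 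For (i): $\Ext^1_{\SY}(\oO_y,\Psi^1(E)) \cong \Hom_{\SX}(\eE^*_{X\times\{y\}}, E)$ or a subquotient thereof, and this vanishes because $\eE^*_{X\times\{y\}}$ is slope stable of slope $0$ (Lemma \ref{prop:semihomo-numerical}--(3)) while $E$ is slope stable of slope $\le 0$, so any nonzero map would force $\mu(E)=0$ and be an isomorphism of sheaves — but then $\Psi^1(E)$ would be $\Psi^1$ of a semihomogeneous bundle, which one checks directly is $0$; contradiction, unless the $\Ext^1$ was already zero.

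For (ii): $\Ext^2_{\SY}(\oO_y,\Psi^1(E))$ should translate, again via adjunction and Serre duality on $X$, to $\Hom_{\SX}(\eE^*_{X\times\{y\}}, E)^\vee$-type terms together with a contribution from $\Psi^2(E)$; the first term is nonzero for at most finitely many $y$ by the stability argument just used (it is nonzero only when $E \cong \eE^*_{X\times\{y\}}$, a single $y$), and the $\Psi^2(E)$ contribution is finite because $\Psi^2(E) \in \Coh_{\le 1}(Y)$ so $\Ext^2_{\SY}(\oO_y, \Psi^2(E)[-1]) = \Ext^1_{\SY}(\oO_y,\Psi^2(E))$ is nonzero only on the $1$-dimensional support, and reflexivity is only obstructed at isolated points — more precisely, I would invoke the estimate $\dim\Supp\calExt^i_{\SY}(\Psi^1(E),\oO_Y) \le 3-i$ (Lemma \ref{prop:dim-of-supp-Ext}) to control where $\Ext^2$ can be nonzero.

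\textbf{Main obstacle.} The delicate point is bookkeeping the shifts and the $\Psi^2(E)$-contributions in the adjunction/spectral-sequence chase: I need $\Psi(E)$ to be supported in degrees $1,2$ (not $0$ — which is Proposition \ref{prop:FMT-0-g-cohomo-vanishing}--(iii)) and I need tight control of $\Psi^2(E)$'s support dimension to ensure the $\Ext^2$ obstruction is only at finitely many points. Establishing $\Psi^2(E) \in \Coh_{\le 1}(Y)$ for slope-stable torsion-free $E$ of nonpositive slope — via $\bigl(\Psi(E)\bigr)^\vee \cong \HPsi_{\eE^\vee}\bigl(E^\vee\bigr)[3]$ (Lemma \ref{prop:dual-FMT}) and $E^\vee = \calHom(E,\oO_X)$ since $E$ is torsion-free, so $\Psi^2(E)$ is dual to $\Psi^0$ of an $\calExt^{\ge 1}$-sheaf supported in codimension $\ge 2$ — is the crux, and then Proposition \ref{prop:FMT-0-cohomology-reflexive} plus the support estimate finishes reflexivity of $\Psi^1(E)$.
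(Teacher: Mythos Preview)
Your proposal has several genuine gaps that the paper's proof avoids entirely.

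First, the reduction to slope-stable $E$ is problematic: reflexivity is not preserved under extensions in the way you need. From $0 \to \Psi^1(E_1) \to \Psi^1(E) \to K \to 0$ with $K \subset \Psi^1(E_2)$, knowing $\Psi^1(E_1)$ and $\Psi^1(E_2)$ are reflexive does not force $\Psi^1(E)$ to be reflexive, since $K$ is only torsion-free, and $\Ext^1_{\SY}(\oO_y,K)$ need not vanish. Second, your claim $\Psi^3(E)=0$ is false when $\mu_{\lx,-D_{\SX}}(E)=0$: by Proposition~\ref{prop:FMT-0-g-cohomo-vanishing}--(ii), $\Psi^3(E)\in\Coh_0(Y)$ can be nonzero (e.g.\ $E=\eE^*_{X\times\{y\}}$). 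Third, and most seriously, the assertion $\Psi^2(E)\in\Coh_{\le 1}(Y)$ is unjustified and generally wrong. Your duality argument misidentifies which $\calExt$-terms contribute: the duality spectral sequence gives $(\Psi^2(E))^*$ as an extension of a subsheaf of $\TPsi^0(\calExt^1(E,\oO_X))$ by $\TPsi^1(E^*)$, and the latter is typically a large reflexive sheaf, not zero.

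The paper's route sidesteps all of this. It never analyzes $\Psi^2(E)$ or $\Psi^3(E)$; instead it uses only $\Psi^0(E)=0$. This forces, via the Mukai Spectral Sequence~\ref{Spec-Seq-Mukai}, that $\HPsi^0\Psi^1(E)=\HPsi^1\Psi^1(E)=0$, so $\HPsi(\Psi^1(E))$ is concentrated in degrees $2,3$. Then for $0\le i\le 2$ adjunction gives
\[
\Ext^i_{\SY}(\oO_y,\Psi^1(E))\cong\Hom_{\SX}\bigl(\eE^*_{X\times\{y\}},\HPsi^2\Psi^1(E)[i-2]\bigr),
\]
since the degree-$3$ piece contributes only $\Ext^{i-3}$ between sheaves, which vanishes. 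Hence $\Ext^0=\Ext^1=0$ automatically. For $\Ext^2$, the spectral sequence gives a short exact sequence $0\to\HPsi^0\Psi^2(E)\to\HPsi^2\Psi^1(E)\to F\to 0$ with $F\subset E$; both ends lie in $\HN^{\mu}_{\lx,-D_{\SX}}((-\infty,0])$ (the first by Proposition~\ref{prop:slope-bound-FMT-0-g}--(i)), so $\Hom_{\SX}(\eE^*_{X\times\{y\}},-)$ is nonzero for at most finitely many $y$. The key idea you are missing is to compute the $\Ext$ groups through $\HPsi^2\Psi^1(E)$ rather than through the cohomology sheaves of $\Psi(E)$.
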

\begin{proof}
By Proposition \ref{prop:FMT-0-g-cohomo-vanishing}--(iii), $\Psi^0(E) = 0$.
Let $y \in Y$. From the convergence of Mukai Spectral Sequence~\ref{Spec-Seq-Mukai} for $E$ and $0 \le i \le 2$, we have
\begin{align*}
\Ext^i_{\SY}(\oO_y, \Psi^1(E))
                     &  \cong \Hom_{\SY}(\oO_y, \Psi^1(E)[i]) \\
                     & \cong  \Hom_{\SX}(\HPsi(\oO_y), \HPsi(\Psi^1(E))[i]) \\
                     & \cong  \Hom_{\SX}(\eE^*_{X \times\{y\}},  \HPsi^2\Psi^1(E)[i-2])
\end{align*}
as $\Hom_{\SX}(\eE^*_{X \times\{y\}}, \tau_{\ge 3}\HPsi(\Psi^1(E))[i])
\cong\Hom_{\SX}(\eE^*_{X \times\{y\}}, \HPsi^3\Psi^1(E)[i-3])=0$. 
Therefore, $\Hom_{\SY}(\oO_y, \Psi^1(E)) =\Ext_{\SY}^1(\oO_y, \Psi^1(E))=0$, 
and $\Ext_{\SY}^2(\oO_y, \Psi^1(E))  \cong \Hom_{\SX}(\eE^*_{X \times\{y\}} , \HPsi^2\Psi^1(E))$.

From the convergence of  Mukai Spectral Sequence~\ref{Spec-Seq-Mukai} for $E$,
$$
0 \to \HPsi^0\Psi^2(E) \to \HPsi^2\Psi^1(E) \to F \to 0
$$
is a short exact sequence  in $\Coh(X)$. Here $F$ is a subobject of $E$ and so $F \in  \HN_{\lx, -D_{\SX}}^{\mu}((-\infty,0])$.
By
applying the functor $\Hom_{\SX}(\eE^*_{X \times\{y\}}, -)$, we obtain the  exact sequence
$$
0 \to \Hom_{\SX}(\eE^*_{X \times\{y\}}, \HPsi^0\Psi^2(E)) 
\to \Hom_{\SX}(\eE^*_{X \times\{y\}},  \HPsi^2\Psi^1(E)) 
\to \Hom_{\SX}(\eE^*_{X \times\{y\}} , F) \to \cdots.
$$
Here $F\in \HN_{\lx, -D_{\SX}}^{\mu}((-\infty,0])$, and by Proposition~\ref{prop:slope-bound-FMT-0-g}--(i), $\HPsi^0\Psi^2$ is also in $\HN_{\lx, -D_{\SX}}^{\mu}((-\infty,0])$. Therefore,
  $\Hom_{\SX}(\eE^*_{X \times\{y\}} , F) \ne 0$ 
  or $\Hom_{\SX}(\eE^*_{X \times\{y\}}, \HPsi^0\Psi^2(E)) \ne 0$ for at
most a finite number of points $y \in Y$. 
Therefore, from Lemma~\ref{prop:reflexive-sheaf-threefold}, $\Psi^1(E)$ is a reflexive sheaf.
\end{proof}
%%%%%%%%%%%%%%%%%%%%%%%%
%%%%%%%%%%%%%%%%%%%%%%%%
For any positive
 integer $s$, the semihomogeneous bundle
 \begin{equation*}
 \widehat{\oO_{\HY}(sH_{\SHY })}  = \Phi^{\SHY \to \SY}_{\pP}(\oO_{\HY}(sH_{\SHY}))
\end{equation*}  
  is slope stable on $Y$.
In the rest of this section we abuse notation to write $\widehat{\oO_{\HY}(sH_{\SHY})}$ for the
functor $\widehat{\oO_{\HY}(sH_{\SHY})} \otimes(-)$.
%%%%%%%%%%%%%%%%%%%%%%%%
%%%%%%%%%%%%%%%%%%%%%%%%
\begin{prop}
\label{prop:slope-limit-check-higehst-HN}
Let $E_n \in  \HN_{\ly, D_{\SY}}^{\mu}([0, +\infty))$, $n \in \ZZ_{>0}$ be a sequence of coherent sheaves on $Y$.
For any $s>0$ there is $N(s) > 0$ such that for any $n > N(s)$ we have $\widehat{\oO_{\HY}(sH_{\SHY})}  E_n \in V_{\Coh(X)}^{\HPsi}(3)$.
Then
$\mu_{\ly, D_{\SY}}^+(E_n) \to 0$ as $n \to +\infty$.
\end{prop}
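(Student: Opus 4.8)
The plan is to read the statement off directly from the slope bound of Proposition~\ref{prop:slope-bound-FMT-0-g}--(1) by an $\varepsilon$--$N$ argument. The key point is that membership in $V^{\HPsi}_{\Coh(X)}(3)$ forces $\widehat{\oO_{\HY}(sH_{\SHY})}\otimes E_n$ to be literally $\Psi^0(H)$ for a coherent sheaf $H$ on $X$, whereupon Proposition~\ref{prop:slope-bound-FMT-0-g}--(1) caps its $\mu^+_{\ly,D_{\SY}}$ by $0$; since twisting by the (locally free) bundle $\widehat{\oO_{\HY}(sH_{\SHY})}$ only shifts slopes by an amount going to $0$ as $s\to+\infty$ (Lemma~\ref{classicalcohomoFMT}), this squeezes $\mu^+_{\ly,D_{\SY}}(E_n)$ down to $0$.

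In detail: fix $s>0$, write $B=\widehat{\oO_{\HY}(sH_{\SHY})}$, and take $n>N(s)$, so that $G:=B\otimes E_n\in\Coh(Y)$ lies in $V^{\HPsi}_{\Coh(X)}(3)$, i.e.\ $\HPsi(G)\cong H[-3]$ with $H:=\HPsi^3(G)\in\Coh(X)$. Applying $\Psi$ and using $\Psi\circ\HPsi\cong[-3]$ gives $\Psi(H)[-3]\cong\Psi\HPsi(G)\cong G[-3]$, hence $\Psi(H)\cong G$; since $G$ is a sheaf, $\Psi^j(H)=0$ for $j\neq0$ and $G=\Psi^0(H)$. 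By Proposition~\ref{prop:slope-bound-FMT-0-g}--(1) applied to $H\in\Coh(X)$ we get $G=\Psi^0(H)\in\HN^{\mu}_{\ly,D_{\SY}}((-\infty,0])$, so $\mu^+_{\ly,D_{\SY}}(G)\le0$. Now assume $E_n\neq0$ and let $T\subseteq E_n$ be its maximal destabilising subsheaf, so $\mu_{\ly,D_{\SY}}(T)=\mu^+_{\ly,D_{\SY}}(E_n)$; since $B$ is locally free, $B\otimes T\subseteq B\otimes E_n=G$, and from $\ch_1(B\otimes T)=\rk(T)\,c_1(B)+\rk(B)\,c_1(T)$ one computes $\mu_{\ly,D_{\SY}}(B\otimes T)=\mu_{\ly,D_{\SY}}(T)+\mu_{\ly}(B)$, where $\mu_{\ly}(B):=\ly^2 c_1(B)/(\ly^3\rk(B))$. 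Therefore
\[
0\ \ge\ \mu^+_{\ly,D_{\SY}}(G)\ \ge\ \mu^+_{\ly,D_{\SY}}(B\otimes T)\ \ge\ \mu_{\ly,D_{\SY}}(B\otimes T)\ =\ \mu^+_{\ly,D_{\SY}}(E_n)+\mu_{\ly}(B),
\]
so $\mu^+_{\ly,D_{\SY}}(E_n)\le-\mu_{\ly}(B)$ for all $n>N(s)$.

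Finally, by Lemma~\ref{classicalcohomoFMT} the bundle $B=\Phi^{\SHY\to\SY}_{\pP}(\oO_{\HY}(sH_{\SHY}))$ is semihomogeneous with $c_1(B)/\rk(B)$ equal to $-1/s$ times a fixed ample class on $Y$ (the one produced from $\Phi^{\SH}_{\pP}(e^{\lhy})$, up to the fixed scaling making $\lhy$ integral), so $-\mu_{\ly}(B)=c/s$ for a fixed constant $c>0$, which tends to $0$ as $s\to+\infty$. Given $\varepsilon>0$, choose $s$ with $c/s<\varepsilon$; then for every $n>N(s)$ we have $0\le\mu^+_{\ly,D_{\SY}}(E_n)\le c/s<\varepsilon$, the lower bound holding since $E_n\in\HN^{\mu}_{\ly,D_{\SY}}([0,+\infty))$. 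Hence $\mu^+_{\ly,D_{\SY}}(E_n)\to0$. The only steps needing care are the identification $G=\Psi^0(H)$ extracted from membership in $V^{\HPsi}_{\Coh(X)}(3)$, and the slope-shift computation under $\otimes B$ together with the fact (from Lemma~\ref{classicalcohomoFMT}) that this shift vanishes as $s\to+\infty$; with these in hand the statement is a direct consequence of Proposition~\ref{prop:slope-bound-FMT-0-g} and no deeper obstacle arises.
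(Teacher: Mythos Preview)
Your proof is correct and reaches the same intermediate conclusion as the paper, namely that $G=\widehat{\oO_{\HY}(sH_{\SHY})}\otimes E_n\in\HN^{\mu}_{\ly,D_{\SY}}((-\infty,0])$ for $n>N(s)$, but you arrive there by a different and shorter route. The paper takes the Harder--Narasimhan decomposition $0\to T\to G\to F\to 0$ with $T\in\HN^{\mu}_{\ly,D_{\SY}}((0,+\infty])$, applies $\HPsi$, and uses the long exact sequence together with the Mukai Spectral Sequence~\ref{Spec-Seq-Mukai} to force $T\cong\Psi^1\HPsi^2(T)\cong\Psi^1\HPsi^1(F)=0$. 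You instead observe that $G\in V^{\HPsi}_{\Coh(X)}(3)$ means $G\cong\Psi^0(\HPsi^3(G))$ via $\Psi\circ\HPsi\cong[-3]$, and then Proposition~\ref{prop:slope-bound-FMT-0-g}--(i) gives the bound immediately. This is more economical: you are reusing an already-proved slope bound rather than rederiving it through the spectral sequence. For the second step both arguments are essentially the same---the paper invokes slope stability of $\widehat{\oO_{\HY}(sH_{\SHY})}$ with $\mu_{\ly,0}=-k/s$ to conclude $E_n\in\HN^{\mu}_{\ly,D_{\SY}}([0,k/s])$, while you make the slope-shift under tensoring explicit via the maximal destabilising subsheaf; your computation is a touch more elementary since it avoids appealing to preservation of HN filtrations under tensoring by a semistable bundle.
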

\begin{proof}
Let $s$ be a positive integer. Let us prove that for $n > N(s)$ we have
$\widehat{\oO_{\HY}(sH_{\SHY})} E_n \in \HN_{\ly, D_{\SY}}^{\mu}((-\infty,0])$. From the Harder-Narasimhan property there exists 
$T \in  \HN_{\ly, D_{\SY}}^{\mu}((0 , +\infty])$ and 
$F \in  \HN_{\ly, D_{\SY}}^{\mu}((-\infty,0])$ such that 
$$
0 \to T \to \widehat{\oO_{\HY}(sH_{\SHY})} E_n \to F \to 0
$$
is a short exact sequence in $\Coh(Y)$. 
By applying the  Fourier-Mukai  transform $\HPsi$ and considering the long exact sequence of $\Coh(X)$-cohomologies, we obtain $T \in V^{\HPsi}_{\Coh(X)}(2)$ and $F  \in V^{\HPsi}_{\Coh(X)}(1,3)$. Moreover, $\HPsi^2(T) \cong 
\HPsi^1(F)$. Hence, from the convergence of Mukai Spectral Sequence \ref{Spec-Seq-Mukai},    $T \cong \Psi^1\HPsi^2(T) \cong \Psi^1\HPsi^1(F) =0$. Therefore $\widehat{\oO_{\HY}(sH_{\SHY})} E_n \cong F  \in \HN_{\ly, D_{\SY}}^{\mu}((-\infty,0])$.

We have $\widehat{\oO_{\HY}(sH_{\SHY})}$ is slope stable with $\mu_{\ly,0} = -k/s$ for some constant $k >0$. Hence, for $n > N(s)$
$$
E_n \in \HN_{\ly, D_{\SY}}^{\mu}([0, k/s]).
$$
Therefore, the claim  follows by considering large enough $s$.
\end{proof}
%%%%%%%%%%%%%%%%%%%%%%%%
%%%%%%%%%%%%%%%%%%%%%%%%
Let $s$ be a positive integer. Consider the  Fourier-Mukai  functor from $D^b(X)$ to $D^b(X)$ defined by
$$
\Pi = \HPsi  \circ \widehat{\oO_{\HY}(sH_{\SHY})} \circ \Psi [3].
$$
Then  $\Pi^i(\oO_x)= 0$ for $i \ne 0$ and $\Pi^0(\oO_x)$ is a semistable semihomogeneous bundle on $X$.
Define the  Fourier-Mukai  functor
$$
\widehat{\Pi} = \HPsi \circ \widehat{\oO_{\HY}(sH_{\SHY})}^* \circ \Psi.
$$
One can see $\widehat{\Pi}[3]$ is right and left adjoint to $\Pi$ (and vice versa).
We have  $\widehat{\Pi}^i(\oO_x) = 0$ for $i \ne 0$, and
$\widehat{\Pi}^0(\oO_x)$ is a semistable semihomogeneous bundle on $X$.
Therefore, $\Pi$ is a Fourier-Mukai  functor with kernel a locally free sheaf $\mathcal{U}$ on $X \times X$.

We have the spectral sequence
\begin{equation}
\label{Spec-Seq-FM-Functor}
\HPsi^p  \left( \widehat{\oO_{\HY}(sH_{\SHY})} \  \Psi^q(E) \right) \Longrightarrow \Pi^{p+q-3} (E)
\end{equation}
for $E$.
%%%%%%%%%%%%%%%%%%%%%%%%
%%%%%%%%%%%%%%%%%%%%%%%%
\begin{prop}
\label{prop:slope-bound-FMT-torsion-1}
Let $E \in \Coh_1(X)$. Then $\mu_{\ly, D_{\SY}}^+(\Psi^1(E(-nH_{\SX}))) \to 0$ as $n \to +\infty$.
\end{prop}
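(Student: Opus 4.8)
The plan is to combine the structural results about the Mukai spectral sequence for threefolds with the twisting trick from Note~\ref{prop:restriction-setup} and the asymptotic estimate Proposition~\ref{prop:slope-limit-check-higehst-HN}. First I would set $E_n = \Psi^1(E(-nH_{\SX}))$ and check that $E_n$ is a well-defined sequence of coherent sheaves: since $E \in \Coh_1(X)$, the twist $E(-nH_{\SX})$ still lies in $\Coh_1(X)$, and by Proposition~\ref{prop:slope-bound-torsion-sheaf} we have $E_n = \Psi^1(E(-nH_{\SX})) \in \HN^{\mu}_{\ly, D_{\SY}}((0, +\infty])$; in fact, for generic $x$ and large $n$, the arguments in the proof of Proposition~\ref{prop:slope-bound-torsion-sheaf} show $E(-nH_{\SX,x}) \in V^{\Psi}_{\Coh(Y)}(1)$, so that $\Psi(E(-nH_{\SX,x})) \cong E_n[-1]$ and no other cohomology sheaves intervene. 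The goal is then to verify the hypothesis of Proposition~\ref{prop:slope-limit-check-higehst-HN}, namely that for each fixed $s>0$ there is $N(s)$ with $\widehat{\oO_{\HY}(sH_{\SHY})}\, E_n \in V^{\HPsi}_{\Coh(X)}(3)$ for all $n > N(s)$.

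To verify that hypothesis I would use the Fourier-Mukai functor $\Pi = \HPsi \circ \widehat{\oO_{\HY}(sH_{\SHY})} \circ \Psi[3]$ introduced just before the statement, together with the spectral sequence \eqref{Spec-Seq-FM-Functor}. Since $\Pi$ has kernel a locally free sheaf $\uU$ on $X \times X$, the functor $\Pi$ applied to a sheaf in $\Coh_1(X)$ lands in $\langle \Coh(X)[-2], \Coh(X)[-1], \Coh(X) \rangle$-type range dictated by the codimension; more precisely, as $\Pi$ is (up to shift) a Fourier-Mukai transform with locally free kernel between threefolds, $\Pi^j(F) = 0$ for $j \notin \{0,1,2,3\}$, and for $F \in \Coh_1(X)$ one expects the cohomology to be concentrated in the top degrees. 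I would compute $\Pi(E(-nH_{\SX,x}))$ for large $n$ and generic $x$ by feeding $\Psi^1(E(-nH_{\SX,x})) = E_n$ into \eqref{Spec-Seq-FM-Functor}: the only nonzero $\Psi^q$-term is $q=1$, so the spectral sequence degenerates and gives $\HPsi^p(\widehat{\oO_{\HY}(sH_{\SHY})}\, E_n) \cong \Pi^{p-2}(E(-nH_{\SX,x}))$. Then $\widehat{\oO_{\HY}(sH_{\SHY})}\, E_n \in V^{\HPsi}_{\Coh(X)}(3)$ is equivalent to $\Pi^{j}(E(-nH_{\SX,x})) = 0$ for $j \ne 1$, which I would deduce from the fact that $E(-nH_{\SX,x})$ is a negatively-twisted sheaf supported on a curve: dually (via Lemma~\ref{prop:dual-FMT}) this becomes a vanishing of $\hH^{\bullet}$ of a Fourier-Mukai image of $\calExt^2(E, \oO_X)(nH_{\SX,x})$, which is a positively-twisted sheaf supported in dimension $\le 1$, and such positive twists have their Fourier-Mukai image concentrated in a single degree for $n$ large by the standard Serre-vanishing argument used repeatedly in Section~\ref{sec:FMT-abelian-varieties} (compare Proposition~\ref{prop:V-0-for-higher-ample-twist}).

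Once the hypothesis of Proposition~\ref{prop:slope-limit-check-higehst-HN} is confirmed, that proposition immediately yields $\mu^+_{\ly, D_{\SY}}(E_n) \to 0$ as $n \to +\infty$, which is exactly the claim. I expect the main obstacle to be the bookkeeping in the previous paragraph: one must be careful that twisting by $-nH_{\SX}$ (an honest integral divisor in $|Nn\lx|$ after clearing denominators as in Note~\ref{prop:restriction-setup}) interacts correctly with the support condition, that the genericity of $x$ needed for the $\TOR$-vanishings is compatible with "large $n$," and that the semistable semihomogeneous bundle $\widehat{\oO_{\HY}(sH_{\SHY})}$ really does force the relevant cohomology into a single degree after twisting. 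The degeneration of \eqref{Spec-Seq-FM-Functor} is the crucial simplification: because only $\Psi^1$ survives for this particular $E$, there is no room for higher differentials, and the whole argument reduces to a Serre-type vanishing statement for a sheaf of dimension $\le 1$ twisted by an increasingly positive line bundle, which is classical.
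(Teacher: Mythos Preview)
Your proposal is correct and follows essentially the same route as the paper. Both arguments reduce the claim to Proposition~\ref{prop:slope-limit-check-higehst-HN} by observing that for large $n$ one has $E(-nH_{\SX}) \in V^{\Psi}_{\Coh(Y)}(1)$, so the spectral sequence~\eqref{Spec-Seq-FM-Functor} degenerates and the desired condition $\widehat{\oO_{\HY}(sH_{\SHY})}\,E_n \in V^{\HPsi}_{\Coh(X)}(3)$ becomes equivalent to $E(-nH_{\SX}) \in V^{\Pi}_{\Coh(X)}(1)$. The paper simply asserts this last fact (it holds because $\Pi$ has locally free kernel $\uU$, so the vanishing arguments of Section~\ref{sec:FMT-abelian-varieties}, in particular Propositions~\ref{prop:V-0-for-higher-ample-twist} and~\ref{prop:FMT-cohomology-vanishing-torsion-sheaves}, carry over verbatim with the adjoint $\widehat{\Pi}[3]$ playing the role of the inverse), whereas you sketch a duality justification via the analogue of Lemma~\ref{prop:dual-FMT} for $\Pi$; either works.
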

\begin{proof}
Since $E \in \Coh_1(X)$,  for sufficiently large $n \in \ZZ_{>0}$, we have $E(-nH_{\SX}) \in V_{\Coh(Y)}^{\Psi}(1)$.
By Proposition~\ref{prop:slope-bound-torsion-sheaf}, $\Psi^1(E(-nH_{\SX})) \in \HN_{\ly, D_{\SY}}^{\mu}((0, +\infty))$.
Let $s$ be a positive integer. Consider the convergence of the Spectral Sequence \eqref{Spec-Seq-FM-Functor} for $E(-nH_{\SX})$.
For large enough $n \in \ZZ_{>0}$, we also have $E(-nH_{\SX}) \in V_{\Coh(X)}^{\Pi}(1)$.
Therefore, $\widehat{\oO_{\HY}(sH_{\SHY})} \Psi^1(E(-nH_{\SX}))  \in V_{\Coh(X)}^{\HPsi}(3)$, and so the claim follows from Proposition~\ref{prop:slope-limit-check-higehst-HN}.
\end{proof}
%%%%%%%%%%%%%%%%%%%%%%%%
%%%%%%%%%%%%%%%%%%%%%%%%
\begin{prop}
\label{prop:FMT-reflexive-large-negative-twist}
Let $E$ be  a reflexive sheaf. Then for sufficiently large $n \in \ZZ_{>0}$,
\begin{enumerate}[label=(\roman*)]
\item $E(-nH_{\SX}) \in V^{\Psi}_{\Coh(Y)}(2,3)$, and
\item $\Psi^2(E(-nH_{\SX})) \in \Psi^0(T_0)$ for some $T_0 \in \Coh_0(X)$.
\end{enumerate}
\end{prop}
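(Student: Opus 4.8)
The plan is to compute $\Psi\bigl(E(-nH_{\SX})\bigr)$ indirectly, by passing to the derived dual and using the Fourier--Mukai duality of Lemma~\ref{prop:dual-FMT} to transport the question to the ``positive twist'' side, where Proposition~\ref{prop:V-0-for-higher-ample-twist} applies. Write $E_n := E(-nH_{\SX})$, which is again reflexive. First I would record two standard facts about a reflexive sheaf on a smooth projective threefold: $\calExt^i(E,\oO_X)=0$ for $i\ge 2$ (this is the $S_2$ characterisation of reflexivity together with Auslander--Buchsbaum), and $\calExt^1(E,\oO_X)\in\Coh_0(X)$ (since $\Supp\calExt^1(E,\oO_X)\subseteq\Sing(E)$ and $\dim\Sing(E)\le 0$ by Lemma~\ref{prop:reflexive-sheaf-results}--(1)). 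Twisting by $\oO_X(nH_{\SX})$, the object $\DD(E_n)=E_n^\vee$ then fits into a distinguished triangle $E^*(nH_{\SX})\to\DD(E_n)\to Q_n[-1]\to E^*(nH_{\SX})[1]$, where $Q_n:=\calExt^1(E,\oO_X)(nH_{\SX})\in\Coh_0(X)$; in particular $\DD(E_n)$ has cohomology sheaves only in degrees $0$ and $1$.

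Next I would apply the Fourier--Mukai functor $\Phi_{\eE^\vee}^{\SX\to\SY}$. Since its kernel fibres are again (duals of) simple semihomogeneous bundles, all the sheaf-theoretic results of Section~\ref{sec:FMT-abelian-varieties} hold for it verbatim. By the Serre-vanishing argument of Proposition~\ref{prop:V-0-for-higher-ample-twist}--(i) together with Proposition~\ref{prop:FMT-0-cohomology-reflexive}--(i), for $n\gg 0$ the sheaf $V:=\Phi_{\eE^\vee}^{\SX\to\SY}(E^*(nH_{\SX}))$ is \emph{locally free} and concentrated in degree $0$; and $W:=\Phi_{\eE^\vee}^{\SX\to\SY}(Q_n)$ is locally free and concentrated in degree $0$ because $Q_n\in\Coh_0(X)$ (Propositions~\ref{prop:FMT-cohomology-vanishing-torsion-sheaves} and \ref{prop:FMT-0-cohomology-reflexive}--(i)). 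Applying $\Phi_{\eE^\vee}^{\SX\to\SY}$ to the triangle above and reading off cohomology sheaves, $\Phi_{\eE^\vee}^{\SX\to\SY}(\DD(E_n))$ is concentrated in degrees $0,1$ with $\hH^0=V$, $\hH^1=W$. By Lemma~\ref{prop:dual-FMT}, $\DD(\Psi(E_n))\cong\Phi_{\eE^\vee}^{\SX\to\SY}(\DD(E_n))[3]$, which is therefore concentrated in degrees $-3,-2$ with cohomology sheaves $V$ and $W$. Dualising once more — $\DD$ being an involution on $D^b(X)$ and $V,W$ being locally free, so $\DD(V)=V^*$ and $\DD(W)=W^*$ are sheaves in degree $0$ — one finds a distinguished triangle $W^*[-2]\to\Psi(E_n)\to V^*[-3]\to W^*[-1]$, whose long exact sequence of $\Coh(Y)$-cohomology sheaves yields $\hH^i(\Psi(E_n))=0$ for $i\notin\{2,3\}$, i.e.\ $\Psi^0(E_n)=\Psi^1(E_n)=0$; this is (i), and it also shows $\Psi^2(E_n)\cong W^*$ and $\Psi^3(E_n)\cong V^*$.

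It then remains to prove (ii) by identifying $W^*=\DD\bigl(\Phi_{\eE^\vee}^{\SX\to\SY}(Q_n)\bigr)$. Applying Lemma~\ref{prop:dual-FMT} with the roles of $\eE$ and $\eE^\vee$ exchanged gives $W^*\cong\Phi_\eE^{\SX\to\SY}(\DD(Q_n))[3]$; since $Q_n\in\Coh_0(X)$ one has $\DD(Q_n)=T_0[-3]$ with $T_0:=\calExt^3(Q_n,\oO_X)\in\Coh_0(X)$, hence $\Psi^2(E_n)\cong\Phi_\eE^{\SX\to\SY}(T_0)=\Psi^0(T_0)$ (the last object being concentrated in degree $0$ by Proposition~\ref{prop:FMT-cohomology-vanishing-torsion-sheaves}). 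I expect the only real difficulty to be bookkeeping rather than ideas: legitimately transferring the Fourier--Mukai results of Section~\ref{sec:FMT-abelian-varieties} from $\Psi$ to $\Phi_{\eE^\vee}^{\SX\to\SY}$, and tracking cohomological degrees precisely through the two applications of $\DD$ and the shift in Lemma~\ref{prop:dual-FMT} — in particular making sure $V$ and $W$ come out locally free rather than merely reflexive, since that is exactly what forces the final dual to collapse onto degrees $2$ and $3$.
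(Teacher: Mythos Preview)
Your proof is correct and takes a genuinely different route from the paper's. The paper argues both parts by elementary short exact sequences: for (i) it twists a two-step locally free resolution $0\to F_2\to F_1\to E\to 0$ by $\oO_X(-nH_{\SX})$ and uses that locally free sheaves land in $V^{\Psi}_{\Coh(Y)}(3)$ after a large negative twist; for (ii) it uses the characterisation $0\to E\to P\to Q\to 0$ with $P$ locally free and $Q$ torsion free (Lemma~\ref{prop:reflexive-sheaf-results}--(2)), then the double-dual sequence $0\to Q\to Q^{**}\to T\to 0$, then the torsion filtration of $T\in\Coh_{\le 1}(X)$, chasing each step through the long exact sequence. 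Your approach instead packages everything into a single application of the duality isomorphism of Lemma~\ref{prop:dual-FMT}, reducing to the positive-twist statement of Proposition~\ref{prop:V-0-for-higher-ample-twist}--(i) for $E^*(nH_{\SX})$ and the trivial case of $\Coh_0$. This is more conceptual and yields (i) and (ii) simultaneously; it also produces an explicit description $T_0\cong\calExt^3(\calExt^1(E,\oO_X),\oO_X)$, which the paper's argument does not. The paper's approach, on the other hand, avoids the degree bookkeeping through two dualisations and does not need to invoke that $V$ and $W$ come out locally free (rather than merely reflexive), which is the one genuinely delicate point in your argument --- though you have handled it correctly via Proposition~\ref{prop:FMT-0-cohomology-reflexive}--(i).
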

\begin{proof}
(i) \ Consider  a minimal locally free resolution of $E$:
$$
0 \to F_2 \to F_1 \to E \to 0.
$$
By applying the  Fourier-Mukai  transform $\Psi \circ \oO_X(-nH_{\SX}) $ for  sufficiently large  $n \in \ZZ_{> 0}$, we obtain
$E(-nH_{\SX}) \in V^{\Psi}_{\Coh(Y)}(2,3)$ as required. \\

\noindent (ii) \
Since $E$ is a reflexive sheaf, there is a locally free sheaf $P$ and a torsion free sheaf $Q$ such that
$$
0 \to E \to P \to Q \to 0
$$
is a short exact sequence in $\Coh(X)$ (see Lemma~\ref{prop:reflexive-sheaf-results}--(2)).
By applying the  Fourier-Mukai  transform $\Psi \circ \oO_X(-nH_{\SX})$ for sufficiently large $n$ we have $\Psi^2(E(-nH_{\SX}) ) \cong \Psi^1(Q(-nH_{\SX}) )$.

The torsion free sheaf $Q$ fits into the short exact  sequence $0 \to Q \to Q^{**} \to T \to 0$ for
some $T \in  \Coh_{\le 1}(X)$. Apply the  Fourier-Mukai  transform $\Psi \circ \oO_X(-nH_{\SX})$ for sufficiently large $n$ and consider the long exact sequence  of $\Coh(Y)$-cohomologies. Since
$Q^{**}(-nH_{\SX}) \in V_{\Coh(Y)}^{\Psi}(2,3)$, we have
 $\Psi^1(Q(-nH_{\SX}) ) \cong  \Psi^0(T(-nH_{\SX}) )$.
The torsion sheaf $T \in \Coh_{\le 1}(X)$ fits into short exact sequence
$0 \to T_0 \to T \to T_1 \to 0$ in $\Coh(X)$ for $T_i \in \Coh_i(X)$, $i=0,1$.
Therefore,   $ \Psi^0(T(-nH_{\SX}) ) \cong  \Psi^0(T_0 )$, and 
so $\Psi^2(E(-nH_{\SX}) ) \cong \Psi^0(T_0 )$ as required.
\end{proof}
%%%%%%%%%%%%%%%%%%%%%%%%
%%%%%%%%%%%%%%%%%%%%%%%%
\begin{prop}
\label{prop:supprt-result-FMT}
Let $E \in \Coh_1(X)$ with $E \in V^{\Psi}_{\Coh(Y)}(1)$.
If $0 \ne T \in \HN_{\ly, D_{\SY}}^{\mu}([0, +\infty])$ is a subsheaf of $\Psi^1(E)$ then
$\ly \ch_2^{D_{\SY}}(T) \le 0$.
\end{prop}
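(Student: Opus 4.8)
The plan is to transport the statement through $\HPsi = \Phi_{\eE^\vee}^{\SY \to \SX}$ and to extract $\ly\ch_2^{D_{\SY}}(T)$ from the cohomology sheaves of $\HPsi(T)$ via the anti-diagonal form of the cohomological Fourier--Mukai transform (Theorem~\ref{prop:antidiagonal-rep-cohom-FMT}, Proposition~\ref{prop:dual-antidiagonal-rep-cohom-FMT}).

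First I would make two reductions. Applying $\Psi$ to the restriction sequence $0 \to E(-mH_{\SX}) \to E \to E|_{mH_{\SX}} \to 0$ and using $E \in V^{\Psi}_{\Coh(Y)}(1)$, one obtains for $m \gg 0$ a short exact sequence
$$
0 \longrightarrow \Psi^0\!\big(E|_{mH_{\SX}}\big) \longrightarrow \Psi^1\!\big(E(-mH_{\SX})\big) \longrightarrow \Psi^1(E) \longrightarrow 0 ,
$$
where $\Psi^0(E|_{mH_{\SX}})$ is an iterated extension of fibres $\eE|_{\{x\}\times Y}$, hence slope semistable of slope $0$ with $\ly\ch_2^{D_{\SY}} = 0$, and $\Psi^1(E(-mH_{\SX}))$ is reflexive (it is the dual of a sheaf, as in the proof of Proposition~\ref{prop:slope-bound-torsion-sheaf}). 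Pulling $T$ back to its preimage in $\Psi^1(E(-mH_{\SX}))$ and observing that $E(-mH_{\SX})$ again lies in $\Coh_1(X) \cap V^{\Psi}_{\Coh(Y)}(1)$, we may assume $\Psi^1(E)$ is reflexive and $T$ torsion free; moreover $\ly\ch_2^{D_{\SY}}(T)$ is unchanged. Splitting off the slope-$0$ part of $T$, whose $\ly\ch_2^{D_{\SY}}$ is $\le 0$ by the ordinary Bogomolov--Gieseker inequality (Lemma~\ref{prop:usual-BG-ineq}), we may further assume $\mu_{\ly, D_{\SY}}^{-}(T) > 0$.

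Next I would apply $\HPsi$ to $0 \to T \to \Psi^1(E) \to Q \to 0$. Since $\Psi(E) = \Psi^1(E)[-1]$ we have $\HPsi(\Psi^1(E)) = \HPsi\Psi(E)[1] = E[-2]$, so $\Psi^1(E) \in V^{\HPsi}_{\Coh(X)}(2)$ with $\HPsi^2(\Psi^1(E)) = E$. Chasing the long exact $\HPsi$-cohomology sequence gives $\HPsi^0(T) = 0$, $\HPsi^1(T) \cong \HPsi^0(Q)$, $\HPsi^3(T) = 0$ (from $\mu_{\ly, D_{\SY}}^{-}(T) > 0$ and the $\HPsi$-analogue of Proposition~\ref{prop:FMT-0-g-cohomo-vanishing}(i)), and a short exact sequence $0 \to \HPsi^1(Q) \to \HPsi^2(T) \to E' \to 0$ with $E' \subseteq E$ in $\Coh_1(X)$. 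Moreover $Q$ is a quotient of $\Psi^1(E) \in \HN_{\ly, D_{\SY}}^{\mu}((0, +\infty])$ (Proposition~\ref{prop:slope-bound-torsion-sheaf}), so $Q \in \HN_{\ly, D_{\SY}}^{\mu}((0, +\infty])$, and $\HPsi^0(Q) \in \HN_{\lx, -D_{\SX}}^{\mu}((-\infty, 0])$ is torsion free by the $\HPsi$-analogue of Proposition~\ref{prop:slope-bound-FMT-0-g}(i). By Theorem~\ref{prop:antidiagonal-rep-cohom-FMT} applied to $\HPsi$, $\lx^2\ch_1^{-D_{\SX}}(\HPsi(T))$ equals a negative multiple of $\ly\ch_2^{D_{\SY}}(T)$, so the claim becomes $\lx^2\ch_1^{-D_{\SX}}(\HPsi(T)) \ge 0$. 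As $\HPsi(T)$ is concentrated in degrees $1, 2$ and $\ch_1^{-D_{\SX}}(E') = 0$, expanding $\ch(\HPsi(T)) = -\ch(\HPsi^1(T)) + \ch(\HPsi^2(T))$ and using $\HPsi^1(T) \cong \HPsi^0(Q)$ yields
$$
\lx^2\ch_1^{-D_{\SX}}\big(\HPsi(T)\big) = \lx^2\ch_1^{-D_{\SX}}\big(\HPsi^1(Q)\big) - \lx^2\ch_1^{-D_{\SX}}\big(\HPsi^0(Q)\big) ,
$$
and the subtracted term is $\le 0$ since $\HPsi^0(Q)$ is torsion free of non-positive slope. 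Hence everything reduces to $\lx^2\ch_1^{-D_{\SX}}(\HPsi^1(Q)) \ge 0$.

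This last inequality is where I expect the real work to be. The plan is to deduce it from the stronger statement $\HPsi^1(Q) \in \HN_{\lx, -D_{\SX}}^{\mu}((0, +\infty])$, which immediately forces $\lx^2\ch_1^{-D_{\SX}}(\HPsi^1(Q)) \ge 0$. Since $\HPsi^3(Q) = 0$ and $\HPsi^0(Q) \in \HN_{\lx, -D_{\SX}}^{\mu}((-\infty, 0])$, the complex $\HPsi(Q)$ lives in degrees $0, 1, 2$, and $\HPsi^2(Q)$, being a quotient of $E$, lies in $\Coh_{\le 1}(X)$; one then isolates $\HPsi^1(Q)$ by truncation and shows $\Hom_{\SX}(\HPsi^1(Q), F) = 0$ for every $F \in \HN_{\lx, -D_{\SX}}^{\mu}((-\infty, 0])$. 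I would do this by running the restriction argument of Note~\ref{prop:restriction-setup} on $Q$ --- twisting by $\pm m H_{\SY}$ and controlling the cohomology sheaves of $\HPsi$ through Propositions~\ref{prop:slope-bound-torsion-sheaf} and \ref{prop:slope-limit-check-higehst-HN} together with the slope bounds of Proposition~\ref{prop:slope-bound-FMT-0-g} --- or, alternatively, by using the adjunction coming from $\HPsi[3]$ being quasi-inverse to $\Psi$ to rewrite $\Hom_{\SX}(\HPsi^1(Q), F)$ in terms of $\Ext_{\SY}^{2}(Q, \Psi(F))$, $\Ext_{\SX}^{1}(\HPsi^2(Q), F)$ and $\Ext_{\SX}^{2}(\HPsi^2(Q), F)$, and then invoking $\Psi^0(F) = 0$ (Proposition~\ref{prop:FMT-0-g-cohomo-vanishing}(iii)), the reflexivity of $\Psi^1(F)$ (Proposition~\ref{prop:FMT-F-1-reflexivity}) and $\HPsi^2(Q) \in \Coh_{\le 1}(X)$. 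The main obstacle is exactly this positivity of $\HPsi^1$ on the torsion class $\HN_{\ly, D_{\SY}}^{\mu}((0, +\infty])$: because $\HPsi^1(-)$ is not an additive functor, it cannot be obtained by a naive induction along a Jordan--H\"older filtration of $Q$, and one has to keep track of all the connecting homomorphisms in the long exact sequences involved.
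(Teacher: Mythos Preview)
Your reductions are fine, and the Chern--character bookkeeping via the anti-diagonal transform is correct: after your two reductions the problem does come down to $\lx^2\ch_1^{-D_{\SX}}(\HPsi^1(Q)) \ge 0$. But this is exactly where your argument stops, and the plans you sketch do not close the gap.

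The statement you need --- that $\HPsi^1(Q) \in \HN^{\mu}_{\lx, -D_{\SX}}((0,+\infty])$ for $Q \in \HN^{\mu}_{\ly, D_{\SY}}((0,+\infty])$ --- is precisely one of the ``slope--sign preservation'' facts in the family of Propositions~\ref{prop:slope-bound-FMT-F-1} and~\ref{prop:slope-bound-FMT-T-2}. In the paper these are proved \emph{after} Proposition~\ref{prop:supprt-result-FMT}, via Proposition~\ref{prop:FMT-bridge-result}, which in turn relies on~\ref{prop:supprt-result-FMT} itself. So invoking such a statement here is circular. Your adjunction route (option (b)) does not rescue this: unwinding $\Hom_{\SX}(\HPsi^1(Q),F)$ through the truncation triangles leaves you with $\Hom_{\SY}(Q,\Psi(F)[2])$ and $\Ext^2_{\SX}(\HPsi^2(Q),F)$, neither of which vanishes from the available input ($\Psi^0(F)=0$, reflexivity of $\Psi^1(F)$, $\HPsi^2(Q)\in\Coh_{\le 1}(X)$). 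Option (a) is too vague to evaluate, and in any case there is no control on $Q$ analogous to $E\in\Coh_1(X)$ that would make a restriction argument on $Q$ tractable.

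The paper's proof avoids all of this by working on the $X$ side rather than transporting back through $\HPsi$. The key input you are missing is Proposition~\ref{prop:slope-bound-FMT-torsion-1}: $\mu^+_{\ly,D_{\SY}}(\Psi^1(E(-nH_{\SX})))\to 0$ as $n\to\infty$. One restricts $E$ along $0\to E(-nH_{\SX,x})\to E\to T_0\to 0$ with $T_0\in\Coh_0(X)$, pulls $T$ back to a subsheaf $A\subset \Psi^1(E(-nH_{\SX,x}))$ with the same $\ch_1^{D_{\SY}}$ and $\ch_2^{D_{\SY}}$, and then applies the ordinary Bogomolov--Gieseker inequality to each Harder--Narasimhan factor $G$ of $A$: this gives $2\ly\ch_2^{D_{\SY}}(G)\le \ly^2\ch_1^{D_{\SY}}(T)\cdot \mu^+_{\ly,D_{\SY}}(\Psi^1(E(-nH_{\SX,x})))$. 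Since the right--hand side tends to $0$ while $\ly\ch_2^{D_{\SY}}$ takes values in a discrete set bounded away from $0$ on the positive side, one gets $\ly\ch_2^{D_{\SY}}(G)\le 0$ for $n\gg 0$ and hence $\ly\ch_2^{D_{\SY}}(T)\le 0$. The whole argument uses only Bogomolov--Gieseker, Proposition~\ref{prop:slope-bound-FMT-torsion-1}, and discreteness; no slope--sign preservation for $\HPsi^1$ is needed.
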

\begin{proof}
Recall Note\ref{prop:restriction-setup}; 
choose $x \in X$ such that $\dim (\Supp(E) \cap H_{\SX,x}) \le 0$. Then
 for $n \in \ZZ_{>0}$, we have the short exact sequence
$$
0 \to E (-nH_{\SX, x}) \to E  \to T_0 \to 0
$$
in $\Coh(X)$, where $T_0  =  E|_{ n H_{\SX, x}} \in \Coh_0(X)$.

By applying the  Fourier-Mukai  transform $\Psi$ we get the following commutative diagram for some $A \in \HN_{\ly, D_{\SY}}^{\mu}([0, +\infty])$.
$$
\xymatrixcolsep{3pc}
\xymatrixrowsep{3pc}
\xymatrix{
0 \ar[r]  & \Psi^0(T_0) \ar[r]   & \Psi^1( E (-nH_{\SX, x}))  \ar[r]  &   \Psi^1(E) \ar[r] & 0 \\
0 \ar[r] & \Psi^0(T_0) \ar[r] \ar@{=}[u] & A \ar[r] \ar@{^{(}->}[u] &   T \ar[r] \ar@{^{(}->}[u] & 0
}
$$
For $k=1,2,3$, we have $\ch_k^{D_{\SY}}(\Psi^0(T_0))= 0$; so $\ch_k^{D_{\SY}}(A) = \ch_k^{D_{\SY}}(T)$.

Let $G$ be a slope semistable Harder-Narasimhan factor of $A$. Then, from the usual Bogomolov-Gieseker inequality,
we have
\begin{align*}
2 \ly \ch_2^{D_{\SY}}(G) & \le \frac{\left( \ly^2  \ch_1^{D_{\SY}}(G)\right)^2}{\ly^3 \ch_0^{D_{\SY}}(G)} \\
                & \le \ly^2 \ch_1^{D_{\SY}}(A) \ \mu_{\ly, D_{\SY}}(G) \\
                & \le  \ly^2 \ch_1^{D_{\SY}}(T) \   \mu_{\ly, D_{\SY}}^+ (\Psi^1( E (-nH_{\SX, x}))).
\end{align*}

Let 
$$
c_0 = \min \{2 \ly\ch_2^{D_{\SY}}(F)  >0 : F \in \Coh(Y)\}.
$$

By Proposition~\ref{prop:slope-bound-FMT-torsion-1}, $ \mu_{\ly, D_{\SY}}^+ (\Psi^1( E (-nH_{\SX, x}))) \to 0$ as $n \to +\infty$.
So choose large enough $n \in \ZZ_{>0}$ such that
$$
 \ly^2 \ch_1^{D_{\SY}}(T) \   \mu_{\ly, D_{\SY}}^+ (\Psi^1( E (-nH_{\SX, x}))) < c_0;
$$  
hence, we have $\ly \ch_2^{D_{\SY}}(G)  \le 0$.
Therefore, $\ly \ch_2^{D_{\SY}}(T) = \ly \ch_2^{D_{\SY}}(A) \le 0$.
\end{proof}
%%%%%%%%%%%%%%%%%%%%%%%%
%%%%%%%%%%%%%%%%%%%%%%%%

\begin{prop}
\label{prop:FMT-divisor-restrict-large-negative-twist}
Let $E$ be a reflexive sheaf on $X$.
Therefore, $\dim \Sing(E) \le 0$, and so for generic $ x \in X$
we have $\Sing(E) \cap  H_{\SX, x} = \emptyset$.

Let $m$ be any positive integer. For large enough $n \in \ZZ_{>0}$,
$$
 E(-n H_{\SX})|_{mH_{\SX, x}} \in V^{\Psi}_{\Coh(Y)}(2).
$$
\end{prop}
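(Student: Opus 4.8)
The plan is to compute $\Psi\big(E(-nH_{\SX})|_{mH_{\SX,x}}\big)$ by transporting the problem to the divisor $S:=mH_{\SX,x}$ itself, and then invoking Serre duality and Serre vanishing on $S$.

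The first assertion is formal: $\dim\Sing(E)\le\dim X-3=0$ by Lemma~\ref{prop:reflexive-sheaf-results}--(1), so $\Sing(E)$ is a finite set of points, and the locus of $x\in X$ with $\Sing(E)\cap H_{\SX,x}\ne\emptyset$ is the finite union $\bigcup_{p\in\Sing(E)}t_{-p}(H_{\SX})$ of translates of the ample divisor $H_{\SX}$, hence a proper closed subset; thus a generic $x$ works. I fix such an $x$, write $\iota\colon S=mH_{\SX,x}\hookrightarrow X$ for the corresponding (effective Cartier) divisor, and observe that since $E$ is locally free in a neighbourhood of $H_{\SX,x}$ the sheaf $V:=\iota^{*}\big(E(-nH_{\SX})\big)$ is locally free over $\oO_S$ and $E(-nH_{\SX})|_{mH_{\SX,x}}=\iota_{*}V$.

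Next I would identify $\Psi\circ\iota_{*}$ with a Fourier--Mukai functor on $S$. Flat base change along the projection $X\times Y\to X$ together with the projection formula, applied to the Cartesian square $S\times Y\hookrightarrow X\times Y$ over $Y$, give $\Psi(\iota_{*}V)\cong\Phi^{S\to Y}_{\eE|_{S\times Y}}(V)=\RRR q_{*}\big(\eE|_{S\times Y}\otimes p^{*}V\big)$, where $p,q$ denote the two projections of $S\times Y$ and $\eE|_{S\times Y}$ is locally free. This complex lies in cohomological degrees $[0,2]$: the fibres of $q$ have dimension $2$, and equivalently $\iota_{*}V\in\Coh_{\le 2}(X)$ forces $\Psi^{j}(\iota_{*}V)=0$ for $j\ge3$ by Proposition~\ref{prop:FMT-cohomology-vanishing-torsion-sheaves}. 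So it remains to show $R^{j}q_{*}\big(\eE|_{S\times Y}\otimes p^{*}V\big)=0$ for $j=0,1$, which by cohomology and base change reduces to the fibrewise statement $H^{j}\big(S,\ \eE|_{S\times\{y\}}\otimes V\big)=0$ for $j\le1$ and all $y\in Y$.

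This last vanishing I expect to obtain from Serre duality on $S$: the scheme $S$ is projective, of pure dimension $2$, and Gorenstein (being a Cartier divisor in the smooth $X$); the sheaf $\eE|_{S\times\{y\}}\otimes V$ is locally free over $\oO_S$; and $V=(\iota^{*}E)\otimes\oO_S(-nH_{\SX}|_S)$ with $\oO_S(H_{\SX}|_S)$ ample. Hence $H^{j}(S,\eE|_{S\times\{y\}}\otimes V)^{\vee}\cong H^{2-j}\big(S,\ (\eE|_{S\times\{y\}}\otimes\iota^{*}E)^{\vee}\otimes\omega_{S}\otimes\oO_S(nH_{\SX}|_S)\big)$, which vanishes for $j\le1$ once $n$ is large, by Serre vanishing on $S$. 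The one delicate point is that the bound on $n$ must be independent of $y$; this holds because $\{\eE|_{S\times\{y\}}\otimes\iota^{*}E\}_{y\in Y}$ is a flat (hence bounded) family of sheaves on the projective scheme $S$, so it has uniformly bounded regularity. I regard this uniformity, together with the base--change identification $\Psi\circ\iota_{*}\cong\Phi^{S\to Y}_{\eE|_{S\times Y}}$, as the only steps requiring genuine care; everything else is formal. As an alternative to treating $S=mH_{\SX,x}$ directly one may induct on $m$ via the filtration of $\oO_{mH_{\SX,x}}$ with graded pieces $\oO_{H_{\SX,x}}(-jH_{\SX,x})$, $0\le j\le m-1$, using that $V^{\Psi}_{\Coh(Y)}(2)$ is extension closed.
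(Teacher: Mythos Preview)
Your argument is correct and follows a genuinely different route from the paper. The paper proceeds by resolution: it dualizes a locally free resolution $0 \to G \to F \to E^{*} \to 0$ to obtain the exact sequence $0 \to E \to F^{*} \to G^{*} \to \calExt^{1}(E^{*},\oO_X) \to 0$, observes that the $\calExt^{1}$-term dies on restriction to $mH_{\SX,x}$ (since $\Sing(E)\cap mH_{\SX,x}=\emptyset$), and is left with a short exact sequence $0 \to E|_{mH_{\SX,x}} \to F^{*}|_{mH_{\SX,x}} \to G^{*}|_{mH_{\SX,x}} \to 0$; for locally free $L$ one has $L(-nH_{\SX})|_{mH_{\SX,x}}\in V^{\Psi}_{\Coh(Y)}(2)$ for $n\gg0$ by the global duality $\DD\circ\Psi\cong\Phi_{\eE^{\vee}}^{\SX\to\SY}\circ\DD\,[3]$ (Lemma~\ref{prop:dual-FMT}) combined with Proposition~\ref{prop:V-0-for-higher-ample-twist}--(i), and the long exact sequence then handles $E$. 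Your approach bypasses the resolution entirely: since $E$ is already locally free along $S=mH_{\SX,x}$, you compute $\Psi\circ\iota_{*}$ as a relative Fourier--Mukai functor over $S$ and reduce everything to Serre duality plus Serre vanishing on the Gorenstein (possibly non-reduced) surface $S$, with uniformity in $y$ coming from boundedness over the compact base $Y$. This is more self-contained and geometric, at the price of setting up duality on a non-reduced scheme and the base-change identification $\Psi\circ\iota_{*}\cong\Phi^{S\to Y}_{\eE|_{S\times Y}}$; the paper's route instead stays within its already-established toolkit and never leaves $X$.
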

\begin{proof}
The dual sheaf $E^*$ is also reflexive (Lemma~\ref{prop:reflexive-sheaf-results}--(4)).
Consider a minimal locally free resolution of $E^*$:
$$
0 \to G \to F \to E^* \to 0.
$$
By applying the dualizing functor $\RRR \calHom(- ,\oO_X)$ to this short exact sequence, we get the following long exact sequence in $\Coh(X)$:
$$
0 \to E \to F^* \to G^* \to \calExt^1(E^*, \oO_X) \to 0.
$$
Let $Q = \coker(E \to F^*)$.
Since $E$ is reflexive,
$$
\Sing(E) = \Sing(E^*) = \Supp(\calExt^1(E^*, \oO_X)).
$$
By choice  $\Sing(E) \cap  H_{\SX, x} = \emptyset$, and so from the short exact sequence
$0 \to Q \to  G^* \to \calExt^1(E^*, \oO_X) \to 0$,
$Q|_{mH_{\SX, x}} \cong G^*|_{mH_{\SX, x}}$.
So we have the short exact sequence
$$
0 \to  E|_{mH_{\SX, x}} \to  F^*|_{mH_{\SX, x}} \to  G^*|_{mH_{\SX, x}} \to 0
$$
in $\Coh(X)$.
Since $F^*$ and $G^*$ are locally free, for large enough $n \in \ZZ_{>0}$  we have
$$
F^*(-nH_{\SX}) |_{mH_{\SX, x}} ,  G^*(-nH_{\SX}) |_{mH_{\SX, x}}  \in V^{\Psi}_{\Coh(Y)}(2),
$$ 
and so
$ E(-n H_{\SX})|_{mH_{\SX, x}} \in V^{\Psi}_{\Coh(Y)}(2)$.
\end{proof}
%%%%%%%%%%%%%%%%%%%%%%%%
%%%%%%%%%%%%%%%%%%%%%%%%
\begin{prop}
\label{prop:FMT-bridge-result}
We have the following:
\begin{enumerate}[label=(\roman*)]
\item
Let $E \in   \HN_{\lx, -D_{\SX}}^{\mu}((-\infty,0])$  be a reflexive sheaf.
If $T \in   \HN_{\ly, D_{\SY}}^{\mu}([0, +\infty])$ is a non-trivial subsheaf of $\Psi^1(E)$
 then $\ly \ch_2^{D_{\SY}}(T) \le  0$.

\item
Let  $E \in \HN_{\lx, -D_{\SX}}^{\mu}((0, +\infty])$  be a torsion free sheaf.
If  $ F \in  \HN_{\ly, D_{\SY}}^{\mu}((-\infty,0])$ is a non-trivial quotient of 
$\Psi^2(E)$ then $\ly \ch_2^{D_{\SY}}(F) \le  0$.
\end{enumerate}
\end{prop}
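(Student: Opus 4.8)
The plan is to deduce (i) from the curve-level bound already proved in Proposition~\ref{prop:supprt-result-FMT}, by cutting $E$ down with two general hyperplane sections, and then to obtain (ii) from (i) by applying the derived dual.

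\emph{Reduction of (i) to the curve case.} Since $E\in\HN^{\mu}_{\lx,-D_\SX}((-\infty,0])$ is reflexive, Proposition~\ref{prop:FMT-0-g-cohomo-vanishing}(iii) gives $\Psi^0(E)=0$ and Proposition~\ref{prop:FMT-F-1-reflexivity} gives that $\Psi^1(E)$ is reflexive, so $T$ is torsion free. First I would cut to a surface: by Proposition~\ref{prop:FMT-reflexive-large-negative-twist}(i) one can choose a general divisor $D\in|nN\lx|$ (with $N$ as in Note~\ref{prop:restriction-setup}) missing $\Sing(E)$ and with $E(-D)\in V^{\Psi}_{\Coh(Y)}(2,3)$; applying $\Psi$ to $0\to E(-D)\to E\to E|_D\to0$ and using $\Psi^0(E)=\Psi^0(E(-D))=\Psi^1(E(-D))=0$ produces $\Psi^0(E|_D)=0$ together with an inclusion $\Psi^1(E)\hookrightarrow\Psi^1(E|_D)$. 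Thus it suffices to prove the bound for $R:=E|_D\in\Coh_2(X)$, which is locally free on $D$, hence pure of dimension $2$ with $\calExt^i(R,\oO_X)=0$ for $i\neq 1$, and has $\Psi^0(R)=0$. Next I would cut $R$ to a curve: for general $x$ and large $m$ one has $R(-mH_{\SX,x})\in V^{\Psi}_{\Coh(Y)}(1)$ by Proposition~\ref{prop:V-0-for-higher-ample-twist}(ii) (here $g-2=1$) and $R|_{mH_{\SX,x}}\in\Coh_1(X)$; feeding $0\to R(-mH_{\SX,x})\to R\to R|_{mH_{\SX,x}}\to0$ into $\Psi$ and using $\Psi^0(R)=0$ together with $\Psi^{j}(R(-mH_{\SX,x}))=0$ for $j\neq1$ yields the four-term exact sequence
\[
0\to\Psi^0\big(R|_{mH_{\SX,x}}\big)\to\Psi^1\big(R(-mH_{\SX,x})\big)\to\Psi^1(R)\to\Psi^1\big(R|_{mH_{\SX,x}}\big)\to0 .
\]
Intersecting $T\subseteq\Psi^1(R)$ with this sequence and running the argument of Proposition~\ref{prop:supprt-result-FMT} — applying the usual Bogomolov--Gieseker inequality (Lemma~\ref{prop:usual-BG-ineq}) to the Harder--Narasimhan factors of the resulting intermediate subsheaf and letting $m\to\infty$, where the limiting slope control comes from Propositions~\ref{prop:slope-bound-FMT-torsion-1} and \ref{prop:slope-limit-check-higehst-HN}, and where $\ch_k^{D_\SY}$ vanishes in positive degrees on $\Psi^0$ of zero-dimensional sheaves — gives $\ly\ch_2^{D_\SY}(T)\le 0$. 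The case of general $T\in\HN^{\mu}_{\ly,D_\SY}([0,+\infty])$ is handled exactly as in Proposition~\ref{prop:supprt-result-FMT}, by bounding the Harder--Narasimhan factors of the intermediate sheaf rather than of $T$.

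\emph{Reduction of (ii) to (i).} For $E\in\HN^{\mu}_{\lx,-D_\SX}((0,+\infty])$ torsion free I would first replace $E$ by $E^{**}$: in $0\to E\to E^{**}\to Q\to0$ one has $Q\in\Coh_{\le1}(X)$, so $\Psi^2(Q)=0$ (Proposition~\ref{prop:FMT-cohomology-vanishing-torsion-sheaves}) and $\Psi^1(Q)\in\HN^{\mu}_{\ly,D_\SY}((0,+\infty])$ (Proposition~\ref{prop:slope-bound-torsion-sheaf}); hence $\Psi^2(E)\twoheadrightarrow\Psi^2(E^{**})$ with kernel in $\HN^{\mu}_{\ly,D_\SY}((0,+\infty])$, and as an object of $\HN^{\mu}_{\ly,D_\SY}((-\infty,0])$ admits no nonzero map from a sheaf in $\HN^{\mu}_{\ly,D_\SY}((0,+\infty])$, any quotient $F$ of $\Psi^2(E)$ lying in $\HN^{\mu}_{\ly,D_\SY}((-\infty,0])$ is already a quotient of $\Psi^2(E^{**})$. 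So I may assume $E$ reflexive. Then by Lemma~\ref{prop:dual-FMT}, $\Psi(E)\cong\big(\Phi^{\SX\to\SY}_{\eE^\vee}(\DD E)[3]\big)^\vee$, and since $E$ is reflexive $\DD E$ has $\hH^0=E^*$ (reflexive, of non-positive slope for the dual twist, since $\mu(E^*)=-\mu(E)$) and higher cohomology sheaves in $\Coh_{\le0}(X)$. Tracking cohomology through this dual and the Duality Spectral Sequence~\ref{Spec-Seq-Dual}, $\Psi^2(E)$ is, up to subquotients whose Chern character vanishes in positive degrees, the dual of $\hH^1\!\big(\Phi^{\SX\to\SY}_{\eE^\vee}(E^*)\big)$; dualizing a quotient $F$ of $\Psi^2(E)$ with $F\in\HN^{\mu}_{\ly,D_\SY}((-\infty,0])$ gives a subsheaf of that reflexive sheaf lying in the $\HN^{\mu}([0,+\infty])$ class for the dual twist, so (i) applied to $\Phi^{\SX\to\SY}_{\eE^\vee}$ (the setup of Section~\ref{sec:cohomological-FMT} being symmetric in $X,Y$ and $\eE,\eE^\vee$) together with the anti-diagonal cohomological formula (Proposition~\ref{prop:dual-antidiagonal-rep-cohom-FMT}) yields $\ly\ch_2^{D_\SY}(F)\le 0$.

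The main obstacle is the first reduction. In Proposition~\ref{prop:supprt-result-FMT} one passes from a curve-supported sheaf directly to a point-supported one in a single restriction, whereas here $E$ sits two codimensions above the curve case, so the auxiliary sheaves $\Psi^0(E|_D)$, $\Psi^0(R|_{mH_{\SX,x}})$ and $\Psi^1(R(-mH_{\SX,x}))$ genuinely intervene, and one must check that their contributions to $\ly\ch_2^{D_\SY}$ either vanish or have the right sign in the $m\to\infty$ limit; this is where Propositions~\ref{prop:FMT-reflexive-large-negative-twist}, \ref{prop:FMT-divisor-restrict-large-negative-twist}, \ref{prop:slope-bound-FMT-torsion-1} and \ref{prop:slope-limit-check-higehst-HN} must be combined with care. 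In (ii) the analogous care lies in isolating which subquotients of the dual complex actually carry the positive part of $\ch_2^{D_\SY}$.
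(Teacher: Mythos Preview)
Your overall plan for (i) --- two successive hyperplane cuts to land in the setting of Proposition~\ref{prop:supprt-result-FMT} --- is exactly the paper's approach. The first cut is correct. The second cut, however, contains a genuine error.

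You invoke Proposition~\ref{prop:V-0-for-higher-ample-twist}(ii) with $k=2$ to conclude $R(-mH_{\SX,x})\in V^{\Psi}_{\Coh(Y)}(1)$. But you yourself note that $\calExt^i(R,\oO_X)=0$ for $i\neq 1$, not for $i\neq 2$, so the hypothesis of that proposition is not satisfied with $k=2$. Tracing through the proof of Proposition~\ref{prop:V-0-for-higher-ample-twist}(ii) (the $k$ there is the Ext-degree), the correct conclusion is $R(-mH_{\SX,x})\in V^{\Psi}_{\Coh(Y)}(2)$; this is precisely the content of Proposition~\ref{prop:FMT-divisor-restrict-large-negative-twist}, which you mention in your final paragraph but never actually use. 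Your four-term sequence is therefore wrong, and the subsequent argument breaks down: you cannot conclude $\Psi^0(R|_{mH_{\SX,x}})=0$, so Proposition~\ref{prop:supprt-result-FMT} does not apply; and Proposition~\ref{prop:slope-bound-FMT-torsion-1} concerns $\Coh_1$, not $\Coh_2$, so the limiting-slope control for $\Psi^1(R(-mH_{\SX,x}))$ is unavailable. The fix is that with $R(-mH_{\SX,x})\in V^{\Psi}(2)$ one gets a \emph{direct injection} $\Psi^1(R)\hookrightarrow\Psi^1(R|_{mH_{\SX,x}})$ together with $R|_{mH_{\SX,x}}\in V^{\Psi}_{\Coh(Y)}(1)$, after which Proposition~\ref{prop:supprt-result-FMT} applies immediately --- no intermediate sheaf, no limiting argument at this stage. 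This is how the paper proceeds.

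For (ii), your strategy via duality and part (i) applied to $\TPsi=\Phi^{\SX\to\SY}_{\eE^\vee}$ and $E^*$ is the paper's approach. Your preliminary reduction to $E^{**}$ is valid but unnecessary: the paper works directly with torsion free $E$, since $E^*$ is automatically reflexive and lies in $\HN^{\mu}_{\lx,D_\SX}((-\infty,0))$. The paper's treatment is also more careful about the passage from $F$ to $F^*$: it splits $F^*=T\oplus F_0$ along the Harder--Narasimhan filtration (strictly positive vs.\ zero slope), bounds $\ly\ch_2^{-D_\SY}(F_0)$ by the ordinary Bogomolov--Gieseker inequality, and uses the Duality Spectral Sequence~\ref{Spec-Seq-Dual} to produce an exact sequence $0\to\TPsi^1(E^*)\to(\Psi^2(E))^*\to P\to0$ with $P\hookrightarrow\TPsi^0(\calExt^1(E,\oO_X))\in\HN^{\mu}_{\ly,-D_\SY}((-\infty,0])$, forcing $T\hookrightarrow\TPsi^1(E^*)$. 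Your description ``up to subquotients whose Chern character vanishes in positive degrees'' glosses over this, and that is where the argument needs to be made precise.
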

\begin{proof}
(i) \
Since $E$ is reflexive, $\dim \Sing(E) \le 0$.
Choose $x, x' \in X$ such that
\begin{itemize}
\item $\dim (H_{\SX, x} \cap H_{\SX, x'}) = 1$,
\item $\Sing(E) \cap  H_{\SX, x} = \emptyset$, and
\item $\Sing(E) \cap  H_{\SX, x'} = \emptyset$.
\end{itemize}

Since $E$ is a reflexive sheaf, Proposition~\ref{prop:FMT-reflexive-large-negative-twist}--(i) implies, for sufficiently large $m \in \ZZ_{>0}$,
 $E(-mH_{\SX, x}) \in V^{\Psi}_{\Coh(Y)}(2,3)$.
By applying the  Fourier-Mukai  transform $\Psi$ to the short exact sequence
$$
0 \to E(-mH_{\SX, x}) \to E \to E|_{mH_{\SX, x}} \to 0
$$
in  $\Coh(X)$ and then considering the long exact sequence of 
$\Coh(Y)$-cohomologies, 
we have $E|_{mH_{\SX, x}} \in V^{\Psi}_{\Coh(Y)}(1,2)$ and
 $\Psi^1(E) \hookrightarrow \Psi^1\left(E|_{mD_x} \right)$.
By Proposition~\ref{prop:FMT-divisor-restrict-large-negative-twist}, for large enough $n \in \ZZ_{>0}$, 
$ E(-n H_{\SX})|_{mH_{\SX, x}} \in
V^{\Psi}_{\Coh(Y)}(2)$.
By applying the  Fourier-Mukai  transform $\Psi$ to the short exact sequence
$$
0 \to E(-n H_{\SX, x'})|_{mD_x} \to E|_{mD_x} \to E|_{mH_{\SX, x} \cap nH_{\SX, x'}} \to 0
$$
in $\Coh(X)$ and then considering the long exact sequence of 
$\Coh(Y)$-cohomologies, we get 
$E|_{mH_{\SX, x} \cap nH_{\SX, x'}} \in V^{\Psi}_{\Coh(Y)}(1)$ 
and $\Psi^1 \left(E|_{mH_{\SX, x}} \right)  \hookrightarrow \Psi^1 \left(E|_{mH_{\SX, x} \cap nH_{\SX, x'}} \right) $.
Therefore, we have
$$
T \hookrightarrow \Psi^1(E)   
\hookrightarrow \Psi^1 \left(E|_{mH_{\SX, x}} \right) 
 \hookrightarrow \Psi^1 \left(E|_{mH_{\SX, x} \cap nH_{\SX, x'}} \right).
$$
The claim follows from Proposition~\ref{prop:supprt-result-FMT}. \\

\noindent (ii) \
Since $F \ne 0$ is a quotient of $\Psi^2(E)$, we have
$F^{*} \hookrightarrow \left(\Psi^2(E)\right)^*$.
Here $F^* \in \HN^{\mu}_{\ly, -D_{\SY}}([0, +\infty))$ fits into short exact sequence
$0 \to T \to F^* \to F_0 \to 0$ in $\Coh(Y)$
for some $T \in \HN^{\mu}_{\ly, -D_{\SY}}((0, +\infty))$ and $F_0 \in \HN^{\mu}_{\ly, -D_{\SY}}(0)$.
By the usual Bogomolov-Gieseker inequality $\ly \ch_2^{-D_{\SY}}(F_0) \le 0$.

Let 
$$
\TPsi := \Phi^{\SX \to \SY}_{\eE^\vee}: D^b(X) \to D^b(Y).
$$
By Proposition~\ref{prop:FMT-0-g-cohomo-vanishing}, $\Psi^3(E) = 0 = \TPsi^0(E^*)$.

Consider the co-convergence of the ``Duality'' Spectral Sequence~\ref{Spec-Seq-Dual}
for $E$ and the following diagram describes its second page.

$$
\begin{tikzpicture}[xscale=1.8, yscale=1.2]
\draw[gray,very thin] (-3,0) grid (1,4);
\draw[gray,very thin] (0,-3) grid (4,1);
\draw[->,thick] (3.75,0.5) -- (4.4,0.5) node[above] {$p$};
\draw[->,thick] (0.5,3.75) -- (0.5,4.4) node[left] {$q$};
\draw[style=dotted] (2,-3) -- (-3,2);
\draw[style=dotted] (3,-3) -- (-3,3);
\draw[style=dotted] (4,-3) -- (-3,4);
\draw[style=dotted] (1,-3) -- (-3,1);
\draw[style=dotted] (4,-2) -- (-2,4);
\draw[style=dotted] (4,-1) -- (-1,4);
\draw[style=dotted] (4,0) -- (0,4);
\draw (2.5,3) node {$\scriptstyle\DD^i(G) = \calExt^i(G, \oO)$};
\draw (-2.5,1.5) node {$\scriptstyle(\Psi^2(E))^{*}$};
\draw (-1.5,1.5) node {$\scriptstyle \DD^1(\Psi^2(E))$};
\draw (-0.5,1.5) node (f2) {$\scriptstyle\DD^2(\Psi^2(E))$};
\draw (0.5,1.5) node (f4) {$\scriptstyle\DD^3(\Psi^2(E))$};
\draw (-2.5,2.5) node (f1) {$\scriptstyle(\Psi^1(E))^{*}$};
\draw (-1.5,2.5) node (f3) {$\scriptstyle\DD^1(\Psi^1(E))$};
\draw (-0.5,2.5) node (f6) {$\scriptstyle\DD^2(\Psi^1(E))$};
\draw (0.5,2.5) node (f8) {$\scriptstyle\DD^3(\Psi^1(E))$};
\draw (-2.5,3.5) node (f5) {$\scriptstyle(\Psi^0(E))^{*}$};
\draw (-1.5,3.5) node (f7) {$\scriptstyle\DD^1(\Psi^0(E))$};
\draw[->,thick] (f1) -- node[above] {$ $} (f2);
\draw[->,thick] (f3) -- node[above] {$ $} (f4);
\draw[->,thick] (f5) -- node[above] {$ $} (f6);
\draw[->,thick] (f7) -- node[above] {$ $} (f8);
\draw (1.5,-2.5) node {$\scriptstyle \TPsi^1(E^*)$};
\draw (2.5,-2.5) node (s2) {$\scriptstyle \TPsi^2(E^*)$};
\draw (3.5,-2.5) node (s4) {$\scriptstyle  \TPsi^3(E^*)$};
\draw (0.5,-1.5) node (s1) {$\scriptstyle  \TPsi^0(\DD^1(E))$};
\draw (1.5,-1.5) node (s3) {$\scriptstyle  \TPsi^1(\DD^1(E))$};
\draw (0.5,-0.5) node  {$\scriptstyle  \TPsi^0(\DD^2(E))$};
\draw[->,thick] (s1) -- node[above] {$ $} (s2);
\draw[->,thick] (s3) -- node[above] {$ $} (s4);
\end{tikzpicture}
$$

We have the short exact sequence
$$
0 \to  \TPsi^1 (E^*) \to \left(\Psi^2(E)\right)^*  \to P \to 0
$$
in $\Coh(Y)$, for some subsheaf $P$ of $\TPsi^0(\calExt^1(E, \oO_X))$.
By Proposition~\ref{prop:slope-bound-FMT-0-g}--(i), $\TPsi^0(\calExt^1(E, \oO_X)) \in \HN^{\mu}_{\ly, -D_{\SY}}((-\infty, 0])$ and so $P \in \HN^{\mu}_{\ly, -D_{\SY}}((-\infty, 0])$.
Therefore, $\Hom_{\SY} (T, P) = 0$, and so $T \hookrightarrow   \TPsi^1 (E^*) $.
Here $E^* \in \HN_{\ly, D_{\SX}}^{\mu}((-\infty,0))$ and so by part (i), $\ly \ch_2^{-D_{\SY}}(T) \le 0$.
Therefore,  
$$
\ly \ch_2^{D_{\SY}}(F) \le \ly \ch_2^{D_{\SY}}(F^{**})=  \ly \ch_2^{-D_{\SY}} (F^{*}) = \ly \ch_2^{-D_{\SY}}(F_0) + \ly \ch_2^{-D_{\SY}}(T) \le 0
$$
as required.
\end{proof}
%%%%%%%%%%%%%%%%%%%%%%%%
%%%%%%%%%%%%%%%%%%%%%%%%
\begin{prop}
\label{prop:slope-bound-FMT-F-1}
For $E \in \Coh(X)$, we have the following:
\begin{enumerate}[label=(\roman*)]
\item If $E \in \HN^{\mu}_{\lx, -D_{\SX}}((-\infty , 0])$ then  $\Psi^1(E) \in  \HN^{\mu}_{\ly, D_{\SY}}((-\infty , 0])$, and
\item If $E \in \HN^{\mu}_{\lx, -D_{\SX}}([0,+\infty))$ with $\Psi^3(E)=0$ then  $\Psi^2(E) \in  \HN^{\mu}_{\ly, D_{\SY}}([0,+\infty])$.
\end{enumerate}
\end{prop}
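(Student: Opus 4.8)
The plan is to establish (i) first and then deduce (ii) by duality. For (i) I would begin with two reductions compatible with the conclusion: using Harder--Narasimhan and Jordan--H\"older filtrations together with the facts that $\HN^{\mu}_{\ly,D_{\SY}}((-\infty,0])$ is closed under subsheaves and extensions and that $\Psi^{0}(F)=0$ for $F\in\HN^{\mu}_{\lx,-D_{\SX}}((-\infty,0])$ (Proposition~\ref{prop:FMT-0-g-cohomo-vanishing}--(iii)), one reduces to $E$ being $\mu_{\lx,-D_{\SX}}$-stable; then, applying $\Psi$ to $0\to E\to E^{**}\to T_{0}\to 0$ with $T_{0}\in\Coh_{\le 1}(X)$ and using $\Psi^{0}(T_{0})\in\HN^{\mu}_{\ly,D_{\SY}}((-\infty,0])$ (Proposition~\ref{prop:slope-bound-FMT-0-g}--(i)), one reduces to $E$ reflexive with $\mu_{\lx,-D_{\SX}}(E)\le 0$.

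For such $E$, Proposition~\ref{prop:FMT-F-1-reflexivity} gives that $\Psi^{1}(E)$ is reflexive, hence torsion free; I would decompose it with respect to the torsion pair $\bigl(\HN^{\mu}_{\ly,D_{\SY}}((0,+\infty]),\HN^{\mu}_{\ly,D_{\SY}}((-\infty,0])\bigr)$ as $0\to T\to\Psi^{1}(E)\to F\to 0$ and aim to show $T=0$. Assuming $T\ne 0$, torsion-freeness of $T$ gives $\ly^{2}\ch_{1}^{D_{\SY}}(T)>0$, while Proposition~\ref{prop:FMT-bridge-result}--(i) supplies the crucial bound $\ly\,\ch_{2}^{D_{\SY}}(T)\le 0$. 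Then I would run the ``ping-pong'' with $\HPsi$ used for abelian surfaces in Proposition~\ref{prop:slope-bound-FMT-surface}: applying $\HPsi$ to the sequence and invoking the Mukai Spectral Sequence~\ref{Spec-Seq-Mukai} for $E$ (which, since $\Psi^{0}(E)=0$, gives $\HPsi^{0}\Psi^{1}(E)=\HPsi^{1}\Psi^{1}(E)=0$ and presents $\HPsi^{2}\Psi^{1}(E)$ as an extension of a subsheaf of $E$ by $\HPsi^{0}\Psi^{2}(E)\in\HN^{\mu}_{\lx,-D_{\SX}}((-\infty,0])$, hence $\HPsi^{2}\Psi^{1}(E)\in\HN^{\mu}_{\lx,-D_{\SX}}((-\infty,0])$), together with $\HPsi^{0}(F)=0$ and $\HPsi^{3}(T)=0$ (the $\HPsi$-analogues of Proposition~\ref{prop:FMT-0-g-cohomo-vanishing}), one finds that $\HPsi(T)$ is concentrated in degree $2$, so $G:=\HPsi^{2}(T)$ is a sheaf with $\Psi(G)\cong T[-1]$. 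Feeding $\Psi(G)\cong T[-1]$ into Proposition~\ref{prop:antidiagonal-rep-cohom-FMT} expresses $v^{-D_{\SX},\lx}(G)$ through $v^{D_{\SY},\ly}(T)$; combining the resulting signs (from $\ly^{2}\ch_{1}^{D_{\SY}}(T)>0$, $\ly\,\ch_{2}^{D_{\SY}}(T)\le 0$, and $\lx^{3}\ch_{0}(G)\ge 0$) with the vanishings $\Psi^{0}(G)=\Psi^{3}(G)=0$ (through Propositions~\ref{prop:FMT-0-g-cohomo-vanishing}, \ref{prop:slope-bound-torsion-sheaf}, \ref{prop:FMT-cohomology-vanishing-torsion-sheaves}) and the usual Bogomolov--Gieseker inequality (Lemma~\ref{prop:usual-BG-ineq}) applied to the Harder--Narasimhan/Jordan--H\"older factors of $G$, I expect a contradiction; the case $\ch_{0}(G)=0$ is already immediate, since then $G$ is torsion, $\lx^{2}\ch_{1}^{-D_{\SX}}(G)\ge 0$ forces it to vanish, whence $G\in\Coh_{\le 1}(X)$ and $\lx\,\ch_{2}^{-D_{\SX}}(G)=\lx\,\ch_{2}(G)\ge 0$, contradicting $\lx\,\ch_{2}^{-D_{\SX}}(G)<0$.

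For (ii) I would dualize: if $E\in\HN^{\mu}_{\lx,-D_{\SX}}([0,+\infty))$ then $E$ is torsion free and $E^{*}$ is reflexive with $\mu_{\lx,D_{\SX}}(E^{*})=-\mu_{\lx,-D_{\SX}}(E)\le 0$, so $E^{*}\in\HN^{\mu}_{\lx,D_{\SX}}((-\infty,0])$; using Lemma~\ref{prop:dual-FMT} and the ``Duality'' Spectral Sequence~\ref{Spec-Seq-Dual} to identify $(\Psi^{2}(E))^{*}$, up to shift, with a cohomology sheaf of $\Phi^{\SX\to\SY}_{\eE^{\vee}}(E^{*})$, I would run the argument of part (i) with $\eE$ replaced by $\eE^{\vee}$ (now using Proposition~\ref{prop:dual-antidiagonal-rep-cohom-FMT} and Proposition~\ref{prop:FMT-bridge-result}--(ii)) to exclude a nonzero quotient of $\Psi^{2}(E)$ in $\HN^{\mu}_{\ly,D_{\SY}}((-\infty,0))$, with the slope-zero part of $E$ being exactly where $\Psi^{3}(E)=0$ is needed. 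The main obstacle in both parts is this final step: converting the $\ch_{2}$-bound of Proposition~\ref{prop:FMT-bridge-result} into the full slope bound requires a careful simultaneous bookkeeping of the cohomology sheaves of $\Psi$ and $\HPsi$ and of the signs of the twisted Chern characters, as the Bogomolov--Gieseker inequality alone does not close the argument without the structural constraint $\Psi(G)\cong T[-1]$.
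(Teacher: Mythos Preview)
Your overall architecture for (i)---take the maximal positive-slope subsheaf $T$ of $\Psi^1(E)$, observe via the long exact sequence and the Mukai Spectral Sequence that $T\in V^{\HPsi}_{\Coh(X)}(2)$, and set $G=\HPsi^2(T)$---matches the paper. The gap is in closing the contradiction. From Proposition~\ref{prop:FMT-bridge-result}--(i) applied to $T\subset\Psi^1(E)$ you get $\ly\,\ch_2^{D_{\SY}}(T)\le 0$, which under the antidiagonal formula (Theorem~\ref{prop:antidiagonal-rep-cohom-FMT}) becomes $v_1^{-D_{\SX},\lx}(G)\ge 0$, not $\le 0$; and $\ly^2\ch_1^{D_{\SY}}(T)>0$ becomes $v_2^{-D_{\SX},\lx}(G)>0$. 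Together with $v_0^{-D_{\SX},\lx}(G)\ge 0$ these three inequalities are mutually compatible (any positive-slope torsion-free sheaf satisfies them), so neither Bogomolov--Gieseker on the Harder--Narasimhan factors of $G$ nor the vanishings $\Psi^0(G)=\Psi^3(G)=0$ yields a contradiction. Your sample case $\ch_0(G)=0$ reflects this sign confusion: you assert $\lx\,\ch_2^{-D_{\SX}}(G)<0$, but in fact it is strictly positive, and nothing forces $\ch_1(G)=0$.

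The missing ingredient is a direct upper bound $\lx\,\ch_2^{-D_{\SX}}(G)\le 0$, which the paper obtains by applying \emph{both} parts of Proposition~\ref{prop:FMT-bridge-result} on the $\HPsi$ side. The long exact sequence gives $0\to\HPsi^1(F)\to G\to E_1\to 0$ with $E_1\hookrightarrow\HPsi^2\Psi^1(E)\in\HN^{\mu}_{\lx,-D_{\SX}}((-\infty,0])$; one then passes to $F^{**}$ (checking $F^{**}/F\in\Coh_0(Y)$ via reflexivity of $T$ and $\Psi^1(E)$) so that the positive-slope part of $\HPsi^1(F)$ embeds into $\HPsi^1(F^{**})$, where Proposition~\ref{prop:FMT-bridge-result}--(i) bounds its $\ch_2^{-D_{\SX}}$ from above by $0$. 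This splits $G$ as an extension of some $F_2\in\HN^{\mu}_{\lx,-D_{\SX}}((-\infty,0])$ by a piece $T_2$ with $\lx\,\ch_2^{-D_{\SX}}(T_2)\le 0$; then Proposition~\ref{prop:FMT-bridge-result}--(ii), applied to the quotient $F_2$ of $\HPsi^2(T)$ (with $T$ torsion-free of positive slope), gives $\lx\,\ch_2^{-D_{\SX}}(F_2)\le 0$. Summing yields the contradiction. For (ii) the paper is shorter than you propose: one applies the already-proved (i) to $\TPsi:=\Phi_{\eE^\vee}^{\SX\to\SY}$ and $E^*\in\HN^{\mu}_{\lx,D_{\SX}}((-\infty,0])$, then reads off from the Duality Spectral Sequence (using $\Psi^3(E)=0$) that $(\Psi^2(E))^*$ is an extension of a subsheaf of $\TPsi^0(\calExt^1(E,\oO_X))$ by $\TPsi^1(E^*)$, both in $\HN^{\mu}_{\ly,-D_{\SY}}((-\infty,0])$; no second contradiction argument is needed.
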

\begin{proof}
(i) \
Assume the opposite for a contradiction. From the Harder-Narasimhan filtration of $\Psi^1(E)$ there exists 
$T \in \HN^{\mu}_{\ly, D_{\SY}}((0, +\infty])$ and $F \in \HN^{\mu}_{\ly, D_{\SY}}((-\infty , 0])$ such that 
\begin{equation}
\label{eqn:local-ses-FMT-1}
0 \to T \to \Psi^1(E) \to F \to 0
\end{equation}
is a short exact sequence in $\Coh(Y)$.
By Proposition~\ref{prop:FMT-0-cohomology-reflexive}--(ii), $\Psi^1(E)$ is reflexive.
Therefore, 
\begin{equation}
\label{eqn:local-ch1-bound}
\ly \ch_1^{D_{\SY}}(T) > 0.
\end{equation}
By Lemma \ref{prop:reflexive-sheaf-results}--(2) there exists a
locally free sheaf $G_1$ such that $\Psi^1(E)$ is a subsheaf of it with a torsion free  quotient sheaf $G_1/ \Psi^1(E)$.
 Hence,  $T$ is a a subsheaf of $G_1$ with a torsion free quotient sheaf. 
 Therefore, again by Lemma \ref{prop:reflexive-sheaf-results}--(2),  
$T$ is   a  reflexive sheaf.

By applying the functor $\RRR \calHom(-, \oO_Y)$ to the short exact sequence \eqref{eqn:local-ses-FMT-1}, we obtain the long exact sequence:
$$
0 \to F^{*} \to \left( \Psi^1(E)\right)^* \to T^* \to \calExt ^1(F, \oO_Y) \to \calExt ^1(\Psi^1(E), \oO_Y) \to \calExt ^1(T, \oO_Y) \to \calExt ^2(F, \oO_Y) \to 0.
$$
Since $\Psi^1(E)$ and $T$ are reflexive, 
$  \calExt^1(\Psi^1(E), \oO_Y)  , \calExt ^1(T, \oO_Y) \in \Coh_0(Y)$, and so
$$
 \calExt ^1(F, \oO_Y) ,   \calExt ^2(F, \oO_Y)  \in \Coh_0(Y).
$$
Therefore $F$ fits into the short exact sequence 
$$
0 \to F \to F^{**} \to R \to 0
$$
for some $R \in \Coh_0(Y)$. By applying the Fourier-Mukai  transform $\HPsi$, we get 
the short exact sequence 
$$
0 \to \HPsi^0(R) \to \HPsi^1(F) \to \HPsi^1(F^{**}) \to 0.
$$
From the Harder-Narasimhan filtration, let 
$T_1$ be the subsheaf of $\HPsi^1(F^{**})$ in 
$\HN^{\mu}_{\lx, -D_{\SX}}((0, +\infty])$
with the quotient $F_1 \in \HN^{\mu}_{\lx, -D_{\SX}}((-\infty , 0])$.
Then $\HPsi^1(F)$ has a subsheaf $T_2 \in \HN^{\mu}_{\lx, -D_{\SX}}([0, +\infty])$ with 
quotient $F_1$. 
Here $T_2$ fits into the short exact sequence 
$$
0 \to \HPsi^0(R) \to T_2 \to   T_1 \to 0.
$$
From Proposition \ref{prop:FMT-bridge-result}--(i), $\lx \ch_2^{-D_{\SX}}(T_1) \le 0$, and since $\ch_2^{-D_{\SX}}(\HPsi^0(R))=0$,
$$
\lx \ch_2^{-D_{\SX}}(T_2) \le 0.
$$

By applying the  Fourier-Mukai  transform $\HPsi$ to the  short exact sequence \eqref{eqn:local-ses-FMT-1}, we obtain that
$T \in V_{\Coh(X)}^{\HPsi}(2)$ and $F \in V_{\Coh(X)}^{\HPsi}(1,2,3)$.
Moreover, we have the short exact sequence
$$
0 \to \HPsi^1(F) \to \HPsi^2(T) \to  E_1 \to 0
$$
in $\Coh(X)$
for some subsheaf $E_1$ of $\HPsi^2 \Psi^1(E)$.
From the Mukai Spectral Sequence~\ref{Spec-Seq-Mukai}
for $E$, we have the short exact sequence
$$
0 \to \HPsi^0 \Psi^2(E)   \to \HPsi^2 \Psi^1(E)   \to E_2 \to 0
$$
in $\Coh(X)$ for some subsheaf $E_2$ of $E$. Therefore, $E_2 \in  \HN^{\mu}_{\lx, -D_{\SX}}((-\infty , 0])$. 
By Proposition~\ref{prop:slope-bound-FMT-0-g}--(i),
$ \HPsi^0 \Psi^2(E) \in \HN^{\mu}_{\lx, -D_{\SX}}((-\infty , 0])$. 
So we have $ \HPsi^2 \Psi^1(E)  \in \HN^{\mu}_{\lx, -D_{\SX}}((-\infty , 0])$. 
Hence,  $E_1 \in \HN^{\mu}_{\lx, -D_{\SX}}((-\infty , 0])$.

So we have the following commutative diagram for some $F_2 \in \HN^{\mu}_{\lx, -D_{\SX}}((-\infty , 0])$.

$$
\xymatrixcolsep{3pc}
\xymatrixrowsep{2.2pc}
\xymatrix{
 & 0 & 0 &   &  \\
 0 \ar[r] & F_1 \ar[r]\ar[u] & F_2 \ar[r]\ar[u] & E_1 \ar[r] & 0 \\
 0 \ar[r] & \HPsi^1(F) \ar[r] \ar[u] &  \HPsi^2(T)  \ar[r] \ar[u] &  E_1 \ar[r] \ar@{=}[u] & 0 \\
    & T_2 \ar[u] \ar@{=}[r]  & T_2 \ar[u] &   &  \\
     & 0 \ar[u] & 0 \ar[u] &   &   }
$$

By Proposition~\ref{prop:FMT-bridge-result}--(ii),   $\lx \ch_2^{-D_{\SX}}(F_2) \le 0$. Therefore, 
$$
\lx \ch_2^{-D_{\SX}}(\HPsi^2(T)) = \lx \ch_2^{-D_{\SX}}(T_2) + \lx \ch_2^{-D_{\SX}}(F_2) \le 0.
$$
So from Theorem \ref{prop:antidiagonal-rep-cohom-FMT}, $\ly \ch_2^{D_{\SY}}(T) \le 0$; but this is not possible as we have \eqref{eqn:local-ch1-bound}.
This is the required contradiction. \\

\noindent  (ii) \
Let $\TPsi:= \Phi^{\SX \to \SY}_{\eE^\vee}$.
Since $E^* \in  \HN^{\mu}_{\lx, D_{\SX}}((-\infty,0])$, from (i)  $\TPsi^1(E^*) \in \HN^{\mu}_{\ly, -D_{\SY}}((-\infty,0])$.
By the co-convergence of the ``Duality'' Spectral Sequence~\ref{Spec-Seq-Dual}
for $E$, we have $(\Psi^2(E))^* \in  \HN^{\mu}_{\ly, -D_{\SY}}((-\infty,0])$.
So $\Psi^2(E) \in  \HN^{\mu}_{\ly, D_{\SY}}([0,+\infty])$ as required.
\end{proof}

%%%%%%%%%%%%%%%%%%%%%%%%
%%%%%%%%%%%%%%%%%%%%%%%%
\begin{prop}
\label{prop:slope-bound-FMT-T-2}
Let $E \in \HN^{\mu}_{\lx, -D_{\SX}}((0, + \infty])$. Then $\Psi^2(E) \in \HN^{\mu}_{\ly, D_{\SY}}((0,+\infty])$.
\end{prop}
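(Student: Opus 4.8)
The plan is to adapt the proof of Proposition~\ref{prop:slope-bound-FMT-F-1}(ii): reduce to a reflexive sheaf with strictly positive slopes, apply that proposition to obtain $\Psi^2(E)\in\HN^{\mu}_{\ly,D_{\SY}}([0,+\infty])$, and then sharpen the interval to $(0,+\infty]$. For the reduction, write $0\to T\to E\to E'\to0$ with $T\in\Coh_{\le2}(X)$ the torsion subsheaf and $0\to E'\to G\to Q\to0$ with $G:=(E')^{**}$ reflexive and $Q\in\Coh_{\le1}(X)$; then $E'$ and $G$ lie in $\HN^{\mu}_{\lx,-D_{\SX}}((0,+\infty))$. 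Applying $\Psi$ and using $\Psi^{j}(T)=0$ for $j\ge3$ and $\Psi^{j}(Q)=0$ for $j\ge2$ (Proposition~\ref{prop:FMT-cohomology-vanishing-torsion-sheaves}), the long exact sequences of $\Coh(Y)$-cohomologies show that $\Psi^2(E')$ is an extension of $\Psi^2(G)$ by a quotient of $\Psi^1(Q)$ and $\Psi^2(E)$ an extension of $\Psi^2(E')$ by a quotient of $\Psi^2(T)$, with $\Psi^1(Q),\Psi^2(T)\in\HN^{\mu}_{\ly,D_{\SY}}((0,+\infty])$ by Proposition~\ref{prop:slope-bound-torsion-sheaf}; as that subcategory is closed under quotients and extensions it suffices to treat $G$. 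So from now on $E$ is reflexive with $\mu^{-}_{\lx,-D_{\SX}}(E)>0$.

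Now $\Psi^{3}(E)=0$ (Proposition~\ref{prop:FMT-0-g-cohomo-vanishing}(i)), so $\Psi^2(E)\in\HN^{\mu}_{\ly,D_{\SY}}([0,+\infty])$ by Proposition~\ref{prop:slope-bound-FMT-F-1}(ii), and the Mukai Spectral Sequence~\ref{Spec-Seq-Mukai} for the sheaf $E$ together with $\Psi^{3}(E)=0$ forces $\HPsi^{3}\Psi^2(E)=0$. Suppose, for a contradiction, that $\Psi^2(E)$ admits a $\mu_{\ly,D_{\SY}}$-stable quotient $F$ of slope $0$ (pass to a Jordan--H\"older factor of its last Harder--Narasimhan factor); then $F$ is torsion free with $\ly^{2}\ch_1^{D_{\SY}}(F)=0$, and Proposition~\ref{prop:FMT-bridge-result}(ii) gives $\ly\,\ch_2^{D_{\SY}}(F)\le0$. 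Dualizing, $F^{*}\hookrightarrow(\Psi^2(E))^{*}$, and the ``Duality'' Spectral Sequence~\ref{Spec-Seq-Dual} for $E$, run as in the proof of Proposition~\ref{prop:slope-bound-FMT-F-1}(ii), yields $0\to\TPsi^{1}(E^{*})\to(\Psi^2(E))^{*}\to P\to0$, with $\TPsi=\Phi^{\SX\to\SY}_{\eE^{\vee}}$ and $P$ a subsheaf of $\TPsi^{0}(\calExt^{1}(E,\oO_X))$, where $\calExt^{1}(E,\oO_X)\in\Coh_{0}(X)$ since $E$ is reflexive (Proposition~\ref{prop:reflexive-sheaf-results}(1)).

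Since $\TPsi^{0}(\calExt^{1}(E,\oO_X))$ is an iterated extension of the simple semihomogeneous bundles $\eE^{*}_{\{x\}\times Y}$ and $E^{*}$ is reflexive with $\mu^{+}_{\lx,D_{\SX}}(E^{*})<0$, combining Proposition~\ref{prop:FMT-bridge-result}(i) with the usual Bogomolov--Gieseker inequality gives also $\ly\,\ch_2^{D_{\SY}}(F)\ge0$, hence $\ly\,\ch_2^{D_{\SY}}(F)=0$; Simpson's theorem (Lemma~\ref{prop:Simpson-result-trivial-disciminant}), applied after tensoring $F^{**}$ by a simple semihomogeneous bundle of slope $-D_{\SY}$ (Lemma~\ref{prop:semihomo-numerical}(2)), then shows that $F^{**}$ is a $\mu_{\ly,D_{\SY}}$-stable semihomogeneous bundle with $c_{1}(F^{**})/\rk(F^{**})=D_{\SY}$, i.e.\ $F^{**}\cong\eE_{\{x\}\times Y}$ for some $x\in X$. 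As $F^{**}/F\in\Coh_{\le1}(Y)$ it follows that $\HPsi^{3}(F)\cong\HPsi^{3}(\eE_{\{x\}\times Y})\cong\oO_{x}\ne0$; but $\HPsi^{3}$ is right exact and $F$ is a quotient of $\Psi^2(E)$, so $\HPsi^{3}(F)$ is a quotient of $\HPsi^{3}\Psi^2(E)=0$ --- a contradiction. Therefore $\mu^{-}_{\ly,D_{\SY}}(\Psi^2(E))>0$, that is $\Psi^2(E)\in\HN^{\mu}_{\ly,D_{\SY}}((0,+\infty])$.

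The reduction steps and the one-sided bound $\ly\,\ch_2^{D_{\SY}}(F)\le0$ are routine from the earlier propositions; the main obstacle is the slope-$0$ boundary case, where one must pin down $\ly\,\ch_2^{D_{\SY}}(F)$ from both sides so that Simpson's theorem applies, and then exploit the degeneracy of the Mukai spectral sequence ($\HPsi^{3}\Psi^2(E)=0$) to identify and eliminate the only possible exceptional quotients, namely the fibres $\eE_{\{x\}\times Y}$.
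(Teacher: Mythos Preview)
Your reduction to the reflexive case and the endgame (using $\HPsi^{3}\Psi^2(E)=0$ to exclude the fibres $\eE_{\{x\}\times Y}$) are sound, but there is a genuine gap at the crucial step. You claim that ``combining Proposition~\ref{prop:FMT-bridge-result}(i) with the usual Bogomolov--Gieseker inequality gives also $\ly\,\ch_2^{D_{\SY}}(F)\ge0$''. In fact both tools point the \emph{same} way. For your stable $F$ with $\ly^{2}\ch_1^{D_{\SY}}(F)=0$, the ordinary Bogomolov--Gieseker inequality $\overline{\Delta}_{\ly,D_{\SY}}(F)\ge0$ reads $-2\,\ly^{3}\ch_0(F)\,\ly\ch_2^{D_{\SY}}(F)\ge0$, i.e.\ $\ly\ch_2^{D_{\SY}}(F)\le0$; and Proposition~\ref{prop:FMT-bridge-result}(i), transported along the duality as you indicate, again yields $\ly\ch_2^{-D_{\SY}}(F^{*})\le0$, which is $\ly\ch_2^{D_{\SY}}(F^{**})\le0$. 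Neither gives the reverse inequality, so you cannot conclude $\ly\ch_2^{D_{\SY}}(F)=0$, and Simpson's theorem (Lemma~\ref{prop:Simpson-result-trivial-disciminant}) is not applicable. Without this, the identification $F^{**}\cong\eE_{\{x\}\times Y}$ and the contradiction both collapse.

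By contrast, the paper's proof avoids $\ch_2$ entirely. One takes the Harder--Narasimhan decomposition $0\to T\to\Psi^2(E)\to F\to0$ with $F\in\HN^{\mu}_{\ly,D_{\SY}}((-\infty,0])$, applies $\HPsi$, and observes from the Mukai Spectral Sequence~\ref{Spec-Seq-Mukai} (using $\Psi^3(E)=0$) that $\Psi^2(E)\in V^{\HPsi}_{\Coh(X)}(0,1)$; hence $F\in V^{\HPsi}_{\Coh(X)}(1)$ and one gets the exact sequence $\HPsi^1\Psi^2(E)\to\HPsi^1(F)\to\HPsi^2(T)\to0$. The same spectral sequence, together with Proposition~\ref{prop:slope-bound-torsion-sheaf}, gives $\HPsi^1\Psi^2(E)\in\HN^{\mu}_{\lx,-D_{\SX}}((0,+\infty])$, while Proposition~\ref{prop:slope-bound-FMT-F-1}(i) gives $\HPsi^1(F)\in\HN^{\mu}_{\lx,-D_{\SX}}((-\infty,0])$; so the map is zero, $\HPsi^1(F)\cong\HPsi^2(T)$, and then $F\cong\Psi^2\HPsi^1(F)\cong\Psi^2\HPsi^2(T)=0$ by the Mukai spectral sequence for $T$ (since $\HPsi^3(T)=0$). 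This uses only slope information and the spectral sequence, with no reduction to reflexive $E$, no duality, and no appeal to Simpson's theorem.
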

\begin{proof}
From the Harder-Narasimhan filtration of $\Psi^2(E)$, there exist 
$T \in  \HN^{\mu}_{\ly, D_{\SY}}((0,+\infty])$ and $F \in  \HN^{\mu}_{\ly, D_{\SY}}((-\infty, 0])$ such that 
$0 \to T \to \Psi^2(E) \to F \to 0$ is a short exact sequence  in $\Coh(Y)$.
Now we need to show $F = 0$.
Apply the  Fourier-Mukai  transform  $\HPsi$ and consider the long exact sequence of $\Coh(X)$-cohomologies. 
So we have $F \in V^{\HPsi}_{\Coh(X)}(1)$ and
$$
0 \to \HPsi^1(T)  \to \HPsi^1 \Psi^2(E)  \to \HPsi^1(F)  \to \HPsi^2(T) \to 0
$$
is a long exact sequence in $\Coh(X)$. 
From the convergence of the Mukai Spectral
Sequence~\ref{Spec-Seq-Mukai} for $E$, we have the short exact sequence
$$
0 \to Q \to \HPsi^1 \Psi^2(E) \to \HPsi^3 \Psi^1(E) \to 0
$$
in $\Coh(Y)$, where $Q$ is a quotient of $E$. Then $Q\in \HN^{\mu}_{\lx, -D_{\SX}}((0, + \infty])$ and by
Proposition~\ref{prop:slope-bound-torsion-sheaf}, $\HPsi^3 \Psi^1(E) \in \HN^{\mu}_{\lx, -D_{\SX}}((0, + \infty])$;  
so $\HPsi^1 \Psi^2(E)  \in \HN^{\mu}_{\lx, -D_{\SX}}((0, + \infty])$.
On the other hand, by Proposition~\ref{prop:slope-bound-FMT-F-1}--(i), $\HPsi^1(F) \in  \HN^{\mu}_{\lx, -D_{\SX}}((-\infty,0])$.
So the map $ \HPsi^1 \Psi^2(E)  \to \HPsi^1(F) $ is zero and  $\HPsi^1(F)  \cong \HPsi^2(T)$. Hence,
 $F \cong \Psi^2 \HPsi^1(F)  \cong  \Psi^2 \HPsi^2(T) = 0$  as required.
\end{proof}
%%%%%%%%%%%%%%%%%%%%%%%%
%%%%%%%%%%%%%%%%%%%%%%%%
\section{Further Properties of Slope Stability under FM Transforms}
\label{sec:further-FMT-sheaves-abelian-threefolds}
\subsection{Some slope bounds of the FM transformed sheaves}
\label{sec:FMT-0-special-bound}
Recall  that $\Psi$ is the  Fourier-Mukai  transform $\Phi^{\SX \to \SY}_{\eE}:D^b(X) \to D^b(Y)$ between the abelian threefolds such that  
$$
\ch(\eE_{\{x\} \times Y}) = r \, e^{D_{\SY}}, \ \text{and} \ \ch(\eE_{X \times \{y\}} ) = r \, e^{D_{\SX}}.
$$
Also $\lx \in \NS_{\QQ}(X), \ly \in \NS_{\QQ}(Y)$ are some ample classes such that
$$
 e^{- D_{\SY}} \, \Phi_{\eE}^{\SH} \, e^{-D_{\SX}} ( e^{\ell_{\SX}}) =  (r \, {\lx^3}/{3!}) \,  e^{-\ell_{\SY}},
$$
with $({\lx^3}/3!)({\ly^3}/3!)=  1/r^2$.
Moreover, Theorem \ref{prop:antidiagonal-rep-cohom-FMT} says, 
if we consider $v^{-D_{\SX} ,\lx}, v^{D_{\SY},\ly}$ as column vectors, then 
$$
v^{D_{\SY},\ly}\left(\Phi_\eE^{\SX  \to \SY}(E) \right) = \frac{3!}{r \, \ell_{\SX}^3} \,  \Adiag\left(1,-1,1,-1\right)  \ v^{-D_{\SX}, \lx}(E).
$$
Here the vector $v^{B,{\lx}}(E)$ is defined by 
\begin{equation*}
v^{B,{\lx}}(E) = \left( v^{B,{\lx}}_0(E) , v^{B,{\lx}}_1(E) , v^{B,{\lx}}_2(E) , v^{B,{\lx}}_3(E) \right).
\end{equation*}
\begin{prop}
\label{prop:special-slope-bound-FMT-0-3}
For $\lambda \in \QQ_{> 0}$,
\begin{enumerate}[label=(\roman*)]
\item if $E \in \HN_{\lx, -D_{\SX}}^{\mu}((0,\lambda])$
 then $\Psi^0(E) \in \HN_{\ly, D_{\SY}}^{\mu}((-\infty, -\frac{1}{\lambda}])$,
\item if $E \in  \HN_{\lx, -D_{\SX}}^{\mu}([-\lambda,0])$ 
 then $\Psi^3(E) \in \HN_{\ly, D_{\SY}}^{\mu}([\frac{1}{ \lambda }, +\infty])$.
\end{enumerate}
\end{prop}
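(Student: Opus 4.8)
The plan is to prove both parts simultaneously by exploiting the anti-diagonal form of the cohomological transform in Theorem \ref{prop:antidiagonal-rep-cohom-FMT}, together with the slope bounds already established for $\Psi^0$ and $\Psi^g$ in the previous section. First I would observe that part (ii) follows from part (i) applied to $\HPsi$ (equivalently to $\Phi^{\SX\to\SY}_{\eE^\vee}$) via the duality isomorphism of Lemma \ref{prop:dual-FMT}: if $E\in\HN^\mu_{\lx,-D_{\SX}}([-\lambda,0])$ then $E^\vee$ has a single nonzero cohomology sheaf $\calExt^3(E,\oO_X)$ lying in $\Coh_0$ shifted, or more relevantly $\hH^0(E^\vee)$ is slope semistable with slope in $[0,\lambda]$ with respect to $\mu_{\lx,D_{\SX}}$, and $\Psi^3(E)$ gets identified (up to dualizing and shift) with $\hH^0$ of the $\eE^\vee$-transform of that sheaf, whose slope lies in $(-\infty,-\tfrac1\lambda]$ by part (i); dualizing back flips the sign and gives $[\tfrac1\lambda,+\infty]$. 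So the substance is part (i).

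For part (i), let $E\in\HN^\mu_{\lx,-D_{\SX}}((0,\lambda])$. By Proposition \ref{prop:slope-bound-FMT-0-g}(i) we already know $\Psi^0(E)\in\HN^\mu_{\ly,D_{\SY}}((-\infty,0])$, and since $E$ is torsion with $\ch_0\neq 0$ (in fact $\mu^+>0$ forces nonzero rank on the relevant quotient), $\Psi^0(E)$ is reflexive by Proposition \ref{prop:FMT-0-cohomology-reflexive}(ii). The claim to upgrade is the sharper bound $\mu^+_{\ly,D_{\SY}}(\Psi^0(E))\le -\tfrac1\lambda$. The strategy is: take a destabilizing quotient $\Psi^0(E)\twoheadrightarrow Q$ with $Q\in\HN^\mu_{\ly,D_{\SY}}((-\tfrac1\lambda,0])$ semistable, assume $Q\neq 0$ for contradiction, and translate this into a statement about $\HPsi$. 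Applying $\HPsi$ to the short exact sequence and using $E\in V^\Psi_{\Coh(Y)}$-type vanishing (here $\Psi^0(E)\in V^{\HPsi}_{\Coh(X)}(3)$, since $\Psi^0$ lands in the ``bottom'' slot), one gets a relation among the $\HPsi^\bullet\Psi^0(E)$ that, combined with the Mukai Spectral Sequence \ref{Spec-Seq-Mukai}, pins down $\HPsi^3(Q)$ as a quotient (or subquotient) of $E$, hence in $\HN^\mu_{\lx,-D_{\SX}}((0,\lambda])$. Meanwhile Theorem \ref{prop:antidiagonal-rep-cohom-FMT} applied to $Q[-3]$ (or directly to the shift recording $\HPsi(Q)\in\Coh(X)[-3]$) converts the numerical data $v^{D_{\SY},\ly}(Q)$ with $\mu(Q)\in(-\tfrac1\lambda,0]$ into $v^{-D_{\SX},\lx}(\HPsi^3(Q))$ with slope $\mu_{\lx,-D_{\SX}}$ outside $(0,\lambda]$ — the precise reciprocal bookkeeping: $v_1/v_0=\mu$ transforms to $v_2/v_3$ on the other side, and the constraint $\mu(Q)>-\tfrac1\lambda$ becomes exactly $\mu_{\lx,-D_{\SX}}(\HPsi^3(Q))>\lambda$ or a torsion condition, contradicting semistability of $E$.

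The main obstacle I anticipate is the careful diagram-chase with the Mukai spectral sequence to control \emph{which} cohomology sheaf of $\HPsi(Q)$ survives and to verify it is genuinely a subquotient of $E$ (not merely related to $\HPsi^2\Psi^0(E)\cong\HPsi^0\Psi^1(E)$, which could introduce extraneous pieces). One must handle the torsion part of $Q$ separately — if $Q$ has a $\Coh_0$ or $\Coh_1$ subsheaf, the slope of $\HPsi^3$ of that piece must be checked against Proposition \ref{prop:slope-bound-torsion-sheaf} to see it cannot violate the bound. I expect the cleanest route is to first reduce to $Q$ torsion-free (peeling off the torsion using the spectral sequence vanishings), then reduce further to $Q$ slope-stable, and only then run the numerical contradiction via Theorem \ref{prop:antidiagonal-rep-cohom-FMT}; the sign- and reciprocal-tracking in that last step is routine once the homological setup is correct, but it is easy to get an index wrong, so I would double-check it against the $g=2$ computations in Proposition \ref{prop:slope-bound-FMT-surface} and Note \ref{prop:BG-ineq-surface-FMT}.
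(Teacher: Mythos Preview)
Your reduction of part (ii) to part (i) via the duality of Lemma~\ref{prop:dual-FMT} is correct and matches the paper. The problem is with your plan for part (i).

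The numerical contradiction you sketch does not follow from the anti-diagonal form of $\Phi_\eE^{\SH}$ alone. Concretely: if $F\subset\Psi^0(E)$ (a subsheaf, not a quotient --- you have the torsion pair backwards, since you are bounding $\mu^+$) has $v^{D_{\SY},\ly}(F)=(a_0,a_1,a_2,a_3)$ with $-\tfrac1\lambda<a_1/a_0\le 0$, then Theorem~\ref{prop:antidiagonal-rep-cohom-FMT} sends this to $v^{-D_{\SX},\lx}(\HPsi(F))\propto(a_3,-a_2,a_1,-a_0)$. So the slope $a_1/a_0$ of $F$ does \emph{not} transform to a slope of any $\HPsi^k(F)$; it shows up as the ratio $v_2/v_3$, which carries no direct slope information. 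Chasing the Mukai spectral sequence and the known slope bounds for $\HPsi^2$, $\HPsi^3$ (Propositions~\ref{prop:slope-bound-FMT-F-1} and~\ref{prop:slope-bound-torsion-sheaf}) one can extract $a_2\ge 0$ --- the paper does exactly this --- but that by itself is not a contradiction, and nothing you have written produces a second inequality to close the gap.

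The paper's proof supplies the missing second inequality by introducing a \emph{third} abelian variety $Z$, the moduli of simple semihomogeneous bundles on $X$ with $c_1/\rk=D_{\SX}-\lambda\lx$, and the associated transform $\Pi:D^b(X)\to D^b(Z)$. The point is that $E\in\HN^\mu_{\lx,-D_{\SX}}((0,\lambda])=\HN^\mu_{\lx,-D_{\SX}+\lambda\lx}((-\lambda,0])$, so Propositions~\ref{prop:FMT-0-g-cohomo-vanishing} and~\ref{prop:slope-bound-FMT-F-1} apply to $\Pi$ with the $\lambda$-shifted twist. Setting $\Xi=\Pi\circ\HPsi[3]$ and running the spectral sequence $\Xi^p\Psi^q(E)\Rightarrow\Pi^{p+q}(E)$ then yields $v_1^{D_{\SZ}+\frac1\lambda\lz,\lz}(\Xi(F))\ge 0$, and a direct matrix computation shows this quantity equals a negative multiple of $\tfrac{a_0}{\lambda}(\mu(F)+\tfrac1\lambda)+a_2$, which is strictly positive by $a_0>0$, $\mu(F)>-\tfrac1\lambda$, and $a_2\ge 0$. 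This auxiliary-transform trick --- effectively a change of ``origin'' in the slope line by $\lambda$ --- is the key idea your proposal is missing.
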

\begin{proof}
(i) \
Let $E \in \HN_{\lx, -D_{\SX}}^{\mu}((0,\lambda])$.

 Let $Z$ be the fine moduli space of simple semihomogeneous bundles $E$ on $X$ with $c_1(E)/\rk(E) =D_{\SX} - \lambda \lx$. Then there is some fixed $r' \in\ZZ_{>0}$ such that
 $$
r' = \rk(E) 
 $$
  for such $E$. 
 Due to Mukai and Orlov,  $Z$ is an abelian threefold. Let $\fF$ be the associated universal bundle on $Z \times X$; so by Lemma \ref{prop:semihomo-numerical}--(1) we have 
$$
\ch(\fF_{ \{z\} \times X   }) = r' \, e^{ D_{\SX} -  \lambda \lx }.
$$
 Let 
 $$
 \Pi: = \Phi_\fF^{\SX \to \SZ} : D^b(X) \to D^b(Z)
 $$
  be the corresponding Fourier-Mukai transform from $D^b(X)$ to $D^b(Z)$ 
  with kernel $\fF$.  
 Then its quasi inverse is given by 
$\Phi_{\fF^\vee}^{\SZ \to \SX}[3]$. Again, by Lemma \ref{prop:semihomo-numerical}--(2)
$$
\ch(\fF_{Z \times \{x\}} ) = r' \, e^{D_{\SZ}}
$$
for some $D_{\SZ} \in \NS_{\QQ}(Z)$.
Similar to   the  Fourier-Mukai  transform $\Psi$ in Section \ref{sec:cohomological-FMT}, there exists an  ample class $\lz \in \NS_{\QQ}(Z)$ such that
$$
 e^{- D_{\SZ}} \, \Pi^{\SH} \, e^{-D_{\SX}+\lambda \lx} ( e^{\ell_{\SX}}) =  (r' \, {\lx^3}/{3!}) \,  e^{-\ell_{\SZ}},
$$
with $({\lx^3}/3!)({\lz^3}/3!)=  1/r'^2$ (Theorem \ref{prop:general-cohomo-FMT}). 
Moreover, Theorem \ref{prop:antidiagonal-rep-cohom-FMT} says,  
\begin{equation*}
v^{D_{\SZ},\lz}\left(\Pi(E) \right) = \frac{3!}{r' \, \ell_{\SX}^3} \,  \Adiag\left(1,-1,1,-1\right)  \ v^{-D_{\SX}+\lambda \lx, \lx}(E).
\end{equation*}

Let $\Xi: D^b(Y) \to D^b(Z)$ be the  Fourier-Mukai  transform defined by
$$
\Xi := \Pi  \circ \HPsi[3].
$$
We have $\HPsi(\oO_y) = \eE^*_{X \times \{y\}}$ is  a stable semihomogeneous bundle in 
$\HN^{\mu}_{\lx, -D_{\SX}}(0) = \HN^{\mu}_{\lx, -D_{\SX}+\lambda \lx}(-\lambda)$. Therefore
 the image 
 $
 \Xi(\oO_y) = \Pi ( \eE^*_{X \times \{y\}}[3])
 $
 of the skyscraper sheaf $\oO_y$ is also a stable semihomogeneous bundle on $Z$.
 Hence, $\Xi$ is a  Fourier-Mukai  transform $\Phi^{\SY \to \SZ}_{\gG}$ with kernel $\gG$ on $Y \times Z$ such that 
  $$
 \gG_{\{y\} \times Z} = \Xi(\oO_y) = \Pi( \eE^*_{X \times \{y\}}[3]).
 $$
From Theorem \ref{prop:general-cohomo-FMT} and Lemma \ref{prop:reflexive-sheaf-results}, there is $r''>0$ such that  
\begin{align*}
& \ch( \gG_{\{y\} \times Z}) = r'' e^{D_{\SZ} + \frac{1}{\lambda}\lz}, \\
&  \ch( \gG^*_{Y \times \{z\}}) = r'' e^{D_{\SY} - \frac{1}{\lambda}\ly}.
\end{align*}

The isomorphism $\Xi \circ \Psi \cong \Pi$  gives us the
convergence of  the spectral sequence:
\begin{equation}
\label{Spec-Seq-Local-FMT-0-bound}
E_2^{p,q} = \Xi^p  \Psi^q (E) \Longrightarrow \Pi^{p+q}(E)
\end{equation}
for $E$. 

Since  $E \in \HN^{\mu}_{\lx, -D_{\SX} + \lambda \lx}((-\lambda,0])$, 
from   Proposition~\ref{prop:FMT-0-g-cohomo-vanishing}--(iii), 
\begin{equation*}
\Pi^0 (E) = 0.
\end{equation*}
Now from the convergence of the above spectral sequence \eqref{Spec-Seq-Local-FMT-0-bound},
$\Xi^0  \Psi^0 (E) = 0$ and
\begin{equation*}
\Xi^1  \Psi^0 (E) \hookrightarrow \Pi ^1(E).
\end{equation*}
By Proposition~\ref{prop:slope-bound-FMT-F-1}--(i), $\Pi^1(E) \in \HN^{\mu}_{\lz, D_{\SZ}}((-\infty, 0])$.
Since we have $ \HN^{\mu}_{\lz, D_{\SZ}}((-\infty, 0])  \subset  \HN^{\mu}_{\lz, D_{\SZ}+\frac{1}{\lambda}\lz}((-\infty, 0])$,
\begin{equation}
\label{firstbound}
\Xi^1  \Psi^0 (E) \in \HN^{\mu}_{\lz, D_{\SZ}+\frac{1}{\lambda}\lz}((-\infty, 0]).
\end{equation}

From the Harder-Narasimhan filtration property, $\Psi^0(E) \in \HN^{\mu}_{\ly, D_{\SY}}((-\infty, 0])$ fits into the short exact sequence
\begin{equation}
\label{mu-ses}
0 \to F \to \Psi^0(E) \to G \to 0
\end{equation}
in $\Coh(Y)$ for some
 $F \in  \HN^{\mu}_{\ly, D_{\SY}}((-\frac{1}{ \lambda },0])$ 
 and $G \in  \HN^{\mu}_{\ly, D_{\SY}}((-\infty , -\frac{1}{ \lambda}])$. 
 Assume $F \ne 0$ for a
    contradiction.
Then we can write $v^{D_{\SY}, \ly}(F) = (a_0, \mu a_0, a_2, a_3)$ with
\begin{equation*}
0 \ge \mu > - \frac{1}{ \lambda}.
\end{equation*} 

By applying the  Fourier-Mukai  transform $\HPsi$ to short exact sequence \eqref{mu-ses} we have the
following exact sequence in $\Coh(X)$:
$$
0 \to \HPsi^1(G) \to \HPsi^2(F) \to \HPsi^2\Psi^0(E) \to \cdots.
$$
By Mukai Spectral Sequence \ref{Spec-Seq-Mukai},
 $\HPsi^2 \Psi^0(E) \cong \HPsi^0 \Psi^1(E)$ and
so by Proposition~\ref{prop:slope-bound-FMT-0-g}--(i), it is in $\HN^{\mu}_{\lx, -D_{\SX}}((-\infty, 0])$.
Also by  Proposition~\ref{prop:slope-bound-FMT-F-1}--(i), 
$\HPsi^1(G)  \in  \HN^{\mu}_{\lx, -D_{\SX}}((-\infty, 0])$. 
Therefore, $\HPsi^2(F) \in \HN^{\mu}_{\lx, -D_{\SX}}((-\infty, 0])$.
By Proposition~\ref{prop:slope-bound-torsion-sheaf}, $\HPsi^3(F) \in \HN^{\mu}_{\lx, -D_{\SX}}((0, +\infty])$.
Therefore, we have $v_1^{-D_{\SX},\lx}(\HPsi(F)) =  \lx^2 \ch_1^{-D_{\SX}}(\HPsi(F)) \le 0$, and so 
from Theorem \ref{prop:antidiagonal-rep-cohom-FMT}
\begin{equation*}
a_2 \ge 0.
\end{equation*}

Since $\Xi^0(F) \hookrightarrow \Xi^0 \Psi^0(E) = 0$,  we have $\Xi^0(F) =0$.
Moreover, since 
$$
F \in  \HN^{\mu}_{\ly, D_{\SY}}((-\frac{1}{ \lambda },0])=
  \HN^{\mu}_{\ly, D_{\SY}-\frac{1}{\lambda}\ly}((0, \frac{1}{\lambda}]),
  $$
from Proposition \ref{prop:FMT-0-g-cohomo-vanishing}--(i)   we have
    \begin{equation*}
    \Xi^3(F) = 0.
    \end{equation*}
Apply the  Fourier-Mukai  transform $\Xi$ to  short exact sequence  \eqref{mu-ses} and consider the long exact sequence of $\Coh(Z)$-cohomologies:
$$
0 \to \Xi^0(G) \to \Xi^1(F) \to \Xi^1 \Psi^0 (E)  \to \cdots.
$$
By \eqref{firstbound}, $\Xi^1 \Psi^0 (E) \in \HN^{\mu}_{\lz, D_{\SZ}+\frac{1}{\lambda}\lz}((-\infty, 0])$, and
by  Proposition~\ref{prop:slope-bound-FMT-0-g}--(i), $\Xi^0(G) \in \HN^{\mu}_{\lz, D_{\SZ}+\frac{1}{\lambda}\lz}((-\infty, 0])$.
Therefore, $\Xi^1(F) \in \HN^{\mu}_{\lz, D_{\SZ}+\frac{1}{\lambda}\lz}((-\infty, 0])$.
By  Proposition~\ref{prop:slope-bound-FMT-T-2},
$\Xi^2(F) \in \HN^{\mu}_{\lz, D_{\SZ}+\frac{1}{\lambda}\lz}((0,+\infty])$.
So 
\begin{equation}
\label{first-v1-bound}
v_1^{D_{\SZ}+\frac{1}{\lambda}\lz, \lz} (\Xi(F)) \ge 0.
\end{equation} 

On the other hand, we have
\begin{align*}
& v^{D_{\SZ}+\frac{1}{\lambda}\lz, \lz} (\Xi(F)) \\
& \quad \quad \quad = \frac{3!}{r''\ly^3} \begin{pmatrix}
 &  &  & 1 \\
& & -1 &  \\
&1 & &   \\
-1 &   &  &  
\end{pmatrix}  \ v^{D_{\SY}-\frac{1}{\lambda}\ly, \ly}(F) \\
&  \quad \quad \quad = 
\frac{3!}{r''\ly^3} \begin{pmatrix}
 &  &  & 1 \\
& & -1 &  \\
&1 & &   \\
-1 &   &  &  
\end{pmatrix} 
 \begin{pmatrix}
1 &  &  &   \\
\frac{1}{\lambda} & 1 &   &  \\
\frac{1}{\lambda^2}&\frac{1}{\lambda} & 1 &   \\
\frac{1}{\lambda^3} & \frac{1}{\lambda^2}  & \frac{1}{\lambda} &  1
\end{pmatrix} \ v^{D_{\SY}, \ly}(F) \\
&  \quad \quad \quad = 
\frac{3!}{r''\ly^3} \begin{pmatrix}
*& * & * & * \\
-1/\lambda^2 & -1/\lambda & -1 &  0\\
*& * & * & * \\
*& * & * & * 
\end{pmatrix}
\begin{pmatrix}
a_0  \\
\mu a_0 \\
a_2\\
a_3
\end{pmatrix}  \\
&  \quad \quad \quad = 
 \frac{3!}{r''\ly^3} \ \left(*, - \frac{a_0}{\lambda}\left( \mu+ \frac{1}{\lambda}\right) - a_2, *, * \right).
\end{align*}
Here $a_0 >0$, $ \left( \mu+ \frac{1}{ \lambda} \right)>0$, $ a_2 \ge 0$
and so $v_1^{D_{\SZ}+\frac{1}{\lambda}\lz, \lz} (\Xi(F))  < 0$.  This contradicts with \eqref{first-v1-bound}. \\

%%%%%%%%%%%%%%%%%%%%%%%%
%%%%%%%%%%%%%%%%%%%%%%%%
\noindent (ii) \
Let $E \in  \HN_{\lx, -D_{\SX}}^{\mu}([-\lambda,0])$  for some  $\lambda  \in \QQ_{> 0}$.

Let $\TPsi:= \Phi^{\SX \to \SY}_{\eE^\vee}$.
From the co-convergence of the ``Duality'' Spectral Sequence \ref{Spec-Seq-Dual} for $E$ we have
$$
\left( \Psi ^3(E) \right)^* \cong \TPsi^0(E^*).
$$
We have $E^{*} \in \HN_{\lx, D_{\SX}}^{\mu}([0,\lambda])$. 
So by  Proposition~\ref{prop:FMT-0-g-cohomo-vanishing}--(iii) and part (i),
 we have
 $\TPsi^0(E^*)\in \HN_{\ly, -D_{\SY}}^{\mu}((-\infty, -\frac{1}{ \lambda }])$.
Therefore, 
%$\left( \Psi^3(E) \right)^* \in \HN_{\ly, -D_{\SY}}^{\mu}((-\infty, -\frac{1}{ \lambda }])$, and so we have 
$\Psi^3(E) \in \HN_{\ly, D_{\SY}}^{\mu}([\frac{1}{ \lambda }, +\infty])$
as required.
\end{proof}

%%%%%%%%%%%%%%%%%%%%%%%%
%%%%%%%%%%%%%%%%%%%%%%%%
%The isomorphism in Lemma \ref{} gives us the convergence of the following spectral sequence.
%\begin{dualspecseq}
%\label{dualss}
%$$
%\Phi^p_{\coh(X)} \left( \calExt^{q+3} (E, \OO_X) \right) \Longrightarrow \ \ ? \ \ \Longleftarrow (-1)^* \calExt^{p+3}\left( \Phi^{3-q}_{\coh(X)}(E), \OO_X \right)
%$$
%for $E \in \coh(X)$.
%\end{dualspecseq}
%
%See the diagram in the proof of (ii) of Proposition~\ref{prop4.9.5}
%for an example of the convergence of this spectral sequence.
\subsection{Images of the first tilted hearts under the FM transforms}
Let us recall the first tilting associated to numerical parameters of Theorem \ref{prop:equivalence-hearts-abelian-threefolds} involving the  Fourier-Mukai  transform $\Psi : D^b(X) \to D^b(Y)$.
\begin{nota}
The subcategories 
\begin{align*}
& \fF^{\SX}_1= \HN^{\mu}_{\lx, -D_{\SX} + \frac{\lambda \lx}{2}}((-\infty, 0]) = \HN^{\mu}_{\lx, -D_{\SX} }((-\infty,   \frac{\lambda }{2}]), \\
& \tT^{\SX}_1= \HN^{\mu}_{\lx, -D_{\SX} + \frac{\lambda \lx}{2}}((0, +\infty]) =  \HN^{\mu}_{\lx, -D_{\SX} }(( \frac{\lambda }{2} , \infty])
\end{align*}
of $\Coh(X)$ forms a torsion pair, and the corresponding tilted category is
$$
\bB^{\SX} = \langle \fF^{\SX}_1[1] , \tT^{\SX}_1 \rangle. 
$$
Similarly, the subcategories 
\begin{align*}
& \fF^{\SY}_1= \HN^{\mu}_{\ly, D_{\SY} - \frac{\ly}{2\lambda}}((-\infty, 0]) = \HN^{\mu}_{\ly, D_{\SY}}((-\infty, - \frac{1}{2\lambda}]), \\
& \tT^{\SY}_1= \HN^{\mu}_{\ly, D_{\SY} - \frac{ \ly}{2 \lambda}}((0, +\infty]) = \HN^{\mu}_{\ly, D_{\SY}}((-\frac{1}{2\lambda}, +\infty])
\end{align*}
of $\Coh(Y)$ forms   a   torsion pair, and the corresponding tilted category is
$$
\bB^{\SY} = \langle \fF^{\SY}_1[1] , \tT^{\SY}_1 \rangle. 
$$
\end{nota}
\begin{thm}
\label{prop:image-B-under-FMT}
We have the following:
\begin{itemize}
\item[(i)] $\Psi \left(\bB^{\SX} \right)  \subset \langle \bB^{\SY} ,  \bB^{\SY}[-1],  \bB^{\SY}[-2]
  \rangle$, and
\item[(ii)] $\HPsi[1] \left(\bB^{\SY} \right)  \subset \langle \bB^{\SX} ,  \bB^{\SX}[-1],  \bB^{\SX}[-2]
  \rangle$.
\end{itemize}
\end{thm}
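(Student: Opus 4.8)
The plan is to prove (i) by decomposing an arbitrary object of $\bB^{\SX}$ into its torsion-pair pieces and applying the sheaf-level results established in Sections \ref{sec:FMT-abelian-varieties} and \ref{sec:FMT-sheaves-abelian-threefolds}. Recall that any $0 \ne G \in \bB^{\SX}$ fits in a triangle $\tT^{\SX}_1 \ni \hH^0(G) \to G \to \hH^{-1}(G)[1]$ with $\hH^{-1}(G) \in \fF^{\SX}_1$. Since $\langle \bB^{\SY}, \bB^{\SY}[-1], \bB^{\SY}[-2]\rangle$ is extension closed, it suffices to prove the two inclusions
\begin{align*}
& \Psi\left(\tT^{\SX}_1\right) \subset \langle \bB^{\SY}, \bB^{\SY}[-1], \bB^{\SY}[-2]\rangle,\\
& \Psi\left(\fF^{\SX}_1[1]\right) \subset \langle \bB^{\SY}, \bB^{\SY}[-1], \bB^{\SY}[-2]\rangle,
\end{align*}
equivalently $\Psi(\fF^{\SX}_1) \subset \langle \bB^{\SY}[-1], \bB^{\SY}[-2], \bB^{\SY}[-3]\rangle$. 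For a sheaf $E \in \Coh(X)$, the complex $\Psi(E)$ has cohomology sheaves $\Psi^j(E)$ in degrees $j = 0,1,2,3$, and each such sheaf lands in $\bB^{\SY} = \langle \fF^{\SY}_1[1], \tT^{\SY}_1\rangle$ or $\bB^{\SY}[-1]$ once one controls which part of the $\mu_{\ly, D_{\SY}}$-torsion pair it meets. So the real content is: for $E \in \tT^{\SX}_1$, show $\Psi^0(E) \in \langle \bB^{\SY} \rangle$-type position (degree $0$), $\Psi^1(E), \Psi^2(E)$ in the appropriate degrees, and $\Psi^3(E) = 0$; and symmetrically for $E \in \fF^{\SX}_1$.

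The key inputs are already in place. First I would use the Harder--Narasimhan filtration of $E$ to reduce to the case $E$ slope-semistable, indeed to $E \in \HN^{\mu}_{\lx, -D_{\SX}}((\lambda/2, +\infty])$ (for the $\tT^{\SX}_1$ case) or $E \in \HN^{\mu}_{\lx, -D_{\SX}}((-\infty, \lambda/2])$ (for $\fF^{\SX}_1$), since extension-closed targets let me glue. For $E \in \tT^{\SX}_1$: by Proposition \ref{prop:FMT-0-g-cohomo-vanishing}--(i), $\Psi^3(E) = 0$ when $\mu_{\lx, -D_{\SX}}(E) > 0$, and more precisely using $E \in \HN^{\mu}_{\lx, -D_{\SX}}((\lambda/2, +\infty])$, Proposition \ref{prop:special-slope-bound-FMT-0-3} and Proposition \ref{prop:slope-bound-FMT-T-2} pin the slopes of $\Psi^2(E)$ and $\Psi^0(E)$. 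One checks $\Psi^0(E) \in \HN^{\mu}_{\ly, D_{\SY}}((-\infty, -1/\lambda])$ by Proposition \ref{prop:special-slope-bound-FMT-0-3}--(i) applied after a twist, so $\Psi^0(E) \in \fF^{\SY}_1$, hence $\Psi^0(E) \in \bB^{\SY}$; and $\Psi^2(E) \in \HN^{\mu}_{\ly, D_{\SY}}((0, +\infty])\subset \tT^{\SY}_1$, so $\Psi^2(E)[-2] \in \bB^{\SY}[-2]$. The middle sheaf $\Psi^1(E)$ need not be semistable, but it is enough to observe that $\Psi^1(E)$ sits in a filtration whose $\mu_{\ly, D_{\SY}}$-positive and $\mu_{\ly, D_{\SY}}$-non-positive parts both land in $\bB^{\SY}$ resp.\ $\bB^{\SY}[-1]$, so $\Psi^1(E)[-1] \in \langle \bB^{\SY}[-1], \bB^{\SY}[-2]\rangle$. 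For $E \in \fF^{\SX}_1$ I would argue dually, using Proposition \ref{prop:dual-FMT} to turn statements about $\Psi$ applied to $\fF^{\SX}_1$ into statements about $\TPsi = \Phi^{\SX\to\SY}_{\eE^\vee}$ applied to $\tT^{\SX}$-type sheaves (via $E^*$), together with Propositions \ref{prop:FMT-0-g-cohomo-vanishing}--(iii), \ref{prop:slope-bound-FMT-0-g}, \ref{prop:slope-bound-FMT-F-1} and \ref{prop:special-slope-bound-FMT-0-3}--(ii) to get $\Psi^0(E) = 0$ and the correct slope locations of $\Psi^1(E), \Psi^2(E), \Psi^3(E)$. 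Statement (ii) follows by the identical argument with the roles of $X, Y$ and $\Psi, \HPsi[1]$ interchanged, since $\HPsi \circ \Psi \cong [-3]$ and $\Psi \circ \HPsi \cong [-3]$ make the setup symmetric, and the cohomological transform for $\HPsi$ is again anti-diagonal (Proposition \ref{prop:dual-antidiagonal-rep-cohom-FMT}).

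The main obstacle I anticipate is the bookkeeping in the middle degree: showing that the cohomology sheaves $\Psi^j(E)$ distribute across exactly the degrees $0,1,2$ (resp.\ $1,2,3$) with slope bounds compatible with the $\mu_{\ly,D_{\SY}}$-torsion pair defining $\bB^{\SY}$, and not leaking into degree $3$ (resp.\ degree $0$). This requires simultaneously invoking the vanishing results (Propositions \ref{prop:FMT-0-g-cohomo-vanishing}, \ref{prop:special-slope-bound-FMT-0-3}) to kill the extreme cohomologies and the slope-bound results (Propositions \ref{prop:slope-bound-FMT-0-g}, \ref{prop:slope-bound-FMT-F-1}, \ref{prop:slope-bound-FMT-T-2}) to place the surviving ones correctly; the reflexivity statements (Propositions \ref{prop:FMT-0-cohomology-reflexive}, \ref{prop:FMT-F-1-reflexivity}) are what allow one to upgrade "vanishing in codimension $\le 2$" to genuine vanishing where needed. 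The HN-reduction and extension-closedness of the target make the final assembly routine, but one must be careful that the twist identifications $\HN^{\mu}_{\lx,-D_{\SX}}((-\infty, \lambda/2]) = \HN^{\mu}_{\lx, -D_{\SX} + \lambda\lx/2}((-\infty,0])$ are applied consistently so that the hypotheses of Proposition \ref{prop:special-slope-bound-FMT-0-3} (which needs genuine positivity/negativity of the twisted slope) are actually met.
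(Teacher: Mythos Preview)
Your overall strategy --- split $\bB^{\SX}$ into $\tT^{\SX}_1$ and $\fF^{\SX}_1[1]$ and control the extreme $\Coh(Y)$-cohomologies of $\Psi(E)$ so that the $\bB^{\SY}$-cohomologies sit in the required window --- is exactly the paper's, and you cite the right propositions.  But you have swapped where the delicate slope estimate (Proposition~\ref{prop:special-slope-bound-FMT-0-3}) is applied, and this produces a genuine error.

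For $E\in\tT^{\SX}_1$ you only need $\Psi^3(E)=0$ (Proposition~\ref{prop:FMT-0-g-cohomo-vanishing}(i)) and $\Psi^2(E)\in\tT^{\SY}_1$ (Proposition~\ref{prop:slope-bound-FMT-T-2}); then $\Psi^0(E)$ and $\Psi^1(E)$ are arbitrary sheaves sitting in degrees $0$ and $1$, hence automatically in $\langle\bB^{\SY},\bB^{\SY}[-1]\rangle$ and $\langle\bB^{\SY}[-1],\bB^{\SY}[-2]\rangle$ respectively.  Your invocation of Proposition~\ref{prop:special-slope-bound-FMT-0-3}(i) here is both unnecessary and inapplicable: its hypothesis is $E\in\HN^{\mu}_{\lx,-D_{\SX}}((0,\lambda])$, whereas $\tT^{\SX}_1$ contains sheaves of arbitrarily large slope and torsion sheaves.  (Also, $\fF^{\SY}_1\subset\bB^{\SY}[-1]$, not $\bB^{\SY}$, though this slip is harmless.)

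The actual gap is in the $\fF^{\SX}_1$ case.  You assert $\Psi^0(E)=0$, but this fails: $\fF^{\SX}_1=\HN^{\mu}_{\lx,-D_{\SX}}((-\infty,\lambda/2])$ contains sheaves of slope in $(0,\lambda/2]$, for which Proposition~\ref{prop:FMT-0-g-cohomo-vanishing}(iii) does not apply and $\Psi^0(E)$ is typically nonzero.  What must be shown is $\Psi^0(E)\in\fF^{\SY}_1$, and this is precisely where Proposition~\ref{prop:special-slope-bound-FMT-0-3}(i) belongs: applied with parameter $\lambda/2$ it gives $\Psi^0(E)\in\HN^{\mu}_{\ly,D_{\SY}}((-\infty,-2/\lambda])\subset\fF^{\SY}_1$ on the positive-slope part, while Proposition~\ref{prop:FMT-0-g-cohomo-vanishing}(iii) handles the non-positive-slope part.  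At the top one needs $\Psi^3(E)\in\tT^{\SY}_1$, which comes directly from Proposition~\ref{prop:slope-bound-torsion-sheaf}.  The duality route via $E^{\vee}$ you propose is neither needed nor straightforward, since $E^{\vee}$ is a complex (with $\calExt$-terms) rather than a sheaf in general.
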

\begin{proof}
%\begin{enumerate}[label=(\roman*)]
%\item
(i) \
 We can visualize $\bB^{\SX}$ and $\bB^{\SY}$ as follows:
$$
\begin{tikzpicture}[scale=1.2]
\draw[style=dashed] (0.5,0) grid (3.5,1);
\fill[lightgray] (1,0) -- (2,0)  to[out=25,in=-115] (3,1) -- (2,1) to[out=-115,in=25] (1,0);
\draw[style=thick] (1,0) -- (2,0) to[out=25,in=-115] (3,1)-- (2,1) to[out=-115,in=25] (1,0);
\draw[style=thick] (2,0) -- (2,1);
\draw (1.75,0.25) node {$\scriptscriptstyle B$};
\draw (2.3,0.7) node {$\scriptscriptstyle A$};
\draw (1.5,- 0.3) node {$\scriptstyle{-1}$};
\draw (2.5,-0.3) node {$\scriptstyle{0}$};
\draw (-1,0.5) node {$\bB^{\SX} = \langle  \fF^{\SX}_1[1] , \tT^{\SX}_1 \rangle : $};
\draw (5.5,0.8) node {$\scriptstyle A \in  \tT^{\SX}_1 = \HN^{\mu}_{\lx, -D_{\SX} }(( \frac{\lambda }{2} , +\infty])$ };
\draw (5.5,0.2) node {$\scriptstyle B \in \fF^{\SX}_1 = \HN^{\mu}_{\lx, -D_{\SX} }(( -\infty, \frac{\lambda }{2}])$ };
\end{tikzpicture}
$$
$$
\begin{tikzpicture}[scale=1.2]
\draw[style=dashed] (0.5,0) grid (3.5,1);
\fill[lightgray] (1,0) -- (2,0) to[out=65,in=-155] (3,1) -- (2,1) to[out=-155,in=65] (1,0);
\draw[style=thick] (1,0) -- (2,0) to[out=65,in=-155] (3,1)-- (2,1) to[out=-155,in=65] (1,0);
\draw[style=thick] (2,0) -- (2,1);
\draw (1.6,0.4) node {$\scriptscriptstyle D$};
\draw (2.25,0.75) node {$\scriptscriptstyle C$};
\draw (1.5,- 0.3) node {$\scriptstyle{-1}$};
\draw (2.5,-0.3) node {$\scriptstyle{0}$};
\draw (-1.2,0.5) node {$\bB^{\SY} = \langle  \fF^{\SY}_1[1] , \tT^{\SY}_1 \rangle  : $};
\draw (5.5,0.8) node {$\scriptstyle C \in \tT^{\SY}_1=  \HN^{\mu}_{\ly, D_{\SY}}((-\frac{1}{2\lambda}, +\infty])$ };
\draw (5.5,0.2) node {$\scriptstyle D \in \fF^{\SY}_1= \HN^{\mu}_{\ly, D_{\SY}}((-\infty, - \frac{1}{2\lambda}])$ };
\end{tikzpicture}
$$

If $E \in \fF^{\SX}_1 = \HN^{\mu}_{\lx, -D_{\SX} }(( -\infty, \frac{\lambda }{2}])$ then
  by    Propositions \ref{prop:FMT-0-g-cohomo-vanishing}--(iii) and \ref{prop:special-slope-bound-FMT-0-3}--(i),   $\Psi^0(E) \in  \fF^{\SY}_1$.
   Also by  Proposition  \ref{prop:slope-bound-torsion-sheaf},
$\Psi^3(E) \in   \HN^{\mu}_{\ly, D_{\SY}}((0, +\infty]) \subset \tT^{\SY}_1$.
Therefore, $\Psi(E)$ has $\bB^{\SY}$-cohomologies in 1,2,3
  positions. That is
$$
\Psi\left(\fF^{\SX}_1 [1]\right) \subset \langle \bB^{\SY}, \bB^{\SY}[-1], \bB^{\SY}[-2] \rangle.
$$
$$
\begin{tikzpicture}[scale=1.2]
\draw[style=dashed] (0.5,-2) grid (6.5,-1);
\fill[lightgray] (1,-2) -- (4,-2) to[out=65,in=-155] (5,-1) -- (2,-1) to[out=-155,in=65] (1,-2);
\draw[style=thick] (1,-2) -- (4,-2) to[out=65,in=-155] (5,-1) -- (2,-1) to[out=-155,in=65] (1,-2);
\draw[style=dashed] (2,-2) -- (2,-1) ;
\draw[style=dashed] (3,-2) -- (3,-1) ;
\draw[style=dashed] (4,-2) -- (4,-1) ;
\draw (1.6,-1.7) node {$\scriptscriptstyle \Psi^0(B)  $};
\draw (2.5,-1.5) node {$\scriptscriptstyle \Psi^1(B) $};
\draw (3.5,-1.5) node {$\scriptscriptstyle \Psi^2(B)$};
%\draw (3.5,-2.8) node {$\scriptstyle B_{\Up}^i = \Up^i_{\coh(X)}(B)$};
\draw (4.32,-1.2) node {$\scriptscriptstyle \Psi^3(B)$};
\draw (1.5,- 2.3) node {$\scriptscriptstyle{-1}$};
\draw (2.5,-2.3) node {$\scriptscriptstyle{0}$};
\draw (3.5,- 2.3) node {$\scriptscriptstyle{1}$};
\draw (4.5,-2.3) node {$\scriptscriptstyle{2}$};
\draw (5.5,-2.3) node {$\scriptscriptstyle{3}$};
%%%%%
\draw[style=dashed] (-3 ,-2) grid (-1,-1);
\fill[lightgray] (-3,-2) -- (-2,-2) -- (-2,-1) to[out=-115,in=25] (-3,-2);
\draw[style=thick] (-3,-2) -- (-2, -2) to[out=25,in=-115] (-1, -1) -- (-2, -1) to[out=-115,in=25] (-3, -2);
\draw[style=thick] (-2,-2) -- (-2, -1);
\draw (-2.25,-1.75) node {$\scriptscriptstyle B$};
\draw (-2.5,- 2.3) node {$\scriptscriptstyle{-1}$};
\draw (-1.5,-2.3) node {$\scriptscriptstyle{0}$};
\draw (-3.2,-2.1) to [out=120,in=240] (-3.2,-0.9);
\draw (-0.8,-2.1) to [out=60,in=300] (-0.8,-0.9);
\draw (-3.7,-1.5) node {$\Psi$};
\draw (0,-1.5) node {$=$};
\end{tikzpicture}
$$
On the other hand, if $\tT^{\SX}_1 = \HN^{\mu}_{\lx, -D_{\SX} }(( \frac{\lambda }{2} , +\infty])$ 
then by  Proposition~\ref{prop:FMT-0-g-cohomo-vanishing}--(i),  $\Psi^3(E) =0$, and by  Proposition \ref{prop:slope-bound-FMT-T-2},
  $\Psi^2 (E) \in  \HN^{\mu}_{\ly, D_{\SY}}((0, +\infty]) \subset \tT^{\SY}_1$.
So $\Psi(E)$ has $\bB^{\SY}$-cohomologies in positions
0,1,2 only. That is
$$
\Psi\left(\tT^{\SX}_1 \right) \subset \langle \bB^{\SY}, \bB^{\SY}[-1], \bB^{\SY}[-2] \rangle.
$$
$$
\begin{tikzpicture}[scale=1.2]
%%%%%
\draw[style=dashed] (0.5,0) grid (6.5,1);
\fill[lightgray] (2,0) -- (4,0)to[out=65,in=-155] (5,1) -- (2,1) --(2,0);
\draw[style=thick] (2,0) -- (4,0)  to[out=65,in=-155]  (5,1) -- (2,1) -- (2,0);
\draw[style=dashed] (3,0) -- (3,1) ;
\draw[style=dashed] (4,0) -- (4,1) ;
\draw (2.5,0.5) node {$\scriptscriptstyle \Psi^0(A)$};
\draw (3.5,0.5) node {$\scriptscriptstyle \Psi^1(A)$};
%\draw (3.5,-0.8) node {$\scriptstyle A_{\Up}^i = \Up^i_{\coh(X)}(A)$};
\draw (4.32,0.8) node {$\scriptscriptstyle \Psi^2(A)$};
\draw (1.5,- 0.3) node {$\scriptscriptstyle{-1}$};
\draw (2.5,-0.3) node {$\scriptscriptstyle{0}$};
\draw (3.5,- 0.3) node {$\scriptscriptstyle{1}$};
\draw (4.5,-0.3) node {$\scriptscriptstyle{2}$};
\draw (5.5,-0.3) node {$\scriptscriptstyle{3}$};
%%%%%
\draw[style=dashed] (-3 ,0) grid (-1,1);
\fill[lightgray] (-2,0) to[out=25,in=-115] (-1,1) -- (-2,1) -- (-2,0);
\draw[style=thick] (-3,0) -- (-2, 0) to[out=25,in=-115]  (-1, 1) -- (-2, 1) to[out=-115,in=25]  (-3, 0);
\draw[style=thick] (-2,0) -- (-2, 1);
\draw (-1.7,0.7) node {$\scriptscriptstyle A$};
\draw (-2.5,- 0.3) node {$\scriptscriptstyle{-1}$};
\draw (-1.5,-0.3) node {$\scriptscriptstyle{0}$};
\draw (-3.2,-0.1) to [out=120,in=240] (-3.2,1.1);
\draw (-0.8,-0.1) to [out=60,in=300] (-0.8,1.1);
\draw (-3.7,0.5) node {$\Psi$};
\draw (0,0.5) node {$=$};
\end{tikzpicture}
$$
Hence, $\Psi\left(\bB^{\SX} \right) \subset \langle \bB^{\SY}, \bB^{\SY}[-1], \bB^{\SY}[-2] \rangle
$
as $\bB^{\SX} = \langle \fF^{\SX}_1 , \tT^{\SX}_1 \rangle$. \\

%\item 
\noindent (ii) \ We can use  Propositions \ref{prop:FMT-0-g-cohomo-vanishing}--(iii), \ref{prop:slope-bound-FMT-0-g}--(i), \ref{prop:FMT-0-g-cohomo-vanishing}--(i) and \ref{prop:special-slope-bound-FMT-0-3}--(ii) in a
  similar way to the above proof.
%\end{enumerate}
\end{proof}

%%%%%%%%%%%%%%%%%%%%%%%%
%%%%%%%%%%%%%%%%%%%%%%%%
\section{Some Stable Reflexive Sheaves on Abelian Threefolds}
\label{sec:special-stable-reflexive-sheaves-abelain-threefolds}
%%%%%%%%%%%%%%%%%%%%%%%%%%%%%%%%%%%%%%%%%%%%%%%%%%%%%%%%%%%%%%%%%%%%%%%%%%%%%%%%%%%%%%%%%%%%%%%%
%%%%%%%%%%%%%%%%%%%%%%%%%%%%%%%%%%%%%%%%%%%%%%%%%%%%%%%%%%%%%%%%%%%%%%%%%%%%%%%%%%%%%%%%%%%%%%%%
In this section we shall consider slope semistable sheaves with vanishing first and second parts of
the twisted Chern characters.
Such sheaves arise as the
$\Coh(X)$-cohomology of some of the tilt-stable objects on $X$; see Proposition \ref{prop:trivial-discriminant-tilt-stable-objects}.

%For example, when $F \in \bB_{\lx, B}$ is a tilt
%stable object with $\nu_{\lx, B, \alpha}(F) = 0$,
%by Propositions \ref{l3.15} and \ref{l3.16},
%if $\mu_{\lx, B-\alpha \lx }(\hH^{-1}(F)) = 0$  then $\hH^{-1}(F)$ is slope semistable with $\ch_k^{B -\alpha \lx}(\hH^{-1}(F)) = 0$, and
%if $\mu_{\lx, B+\alpha \lx }(\hH^{0}(F)) = 0$  then $\hH^{0}(F)$ is slope semistable with $\ch_k^{B + \alpha \lx}(\hH^{-1}(F)) = 0$, 
% for $k = 1,2$.

%\begin{equation*}
%\label{l2.7}
% \left.\begin{aligned}
%         & \text{if } \mu_{\sqrt{3}q \ell, (p-q) \ell}(F_{-1}) = 0 \text{ then } F_{-1} \text{ is slope semistable with } \ch_k^{(p-q)\ell}(F_{-1}) = 0 \\
%         & \text{if } \mu_{\sqrt{3}q \ell, (p+q) \ell}(F_{0}) = 0 \text{ then } \ch_k^{(p+q)\ell}(F_{0}) = 0
%       \end{aligned}
%  \ \right\} \ \text{for } k = 1,2.
%\end{equation*}

\begin{nota}
Let $X$ be an abelian threefold. 
Let $\pP$ be the Poincar\'e bundle on $X \times \HX$. 
We simply write 
\begin{align*}
&\Phi = \Phi^{\SX \to \SHX}_{\pP}: D^b(X) \to D^b(\HX), \\
&  \HPhi =  \Phi^{\SHX \to \SX}_{\pP^\vee}: D^b(\HX) \to D^b(X).
\end{align*}
Let $\lx \in \NS_{\QQ}(X)$ and $ \lhx \in \NS_{\QQ}(\HX)$ be the ample classes as in Lemma \ref{classicalcohomoFMT} (or equivalently  Theorem \ref{prop:general-cohomo-FMT}).
\end{nota}

First we prove the following:
%%%%%%%%%%%%%%%%%%%%%%%%
%%%%%%%%%%%%%%%%%%%%%%%%
\begin{lem}
\label{prop:trivial-discriminant-reflexive-sheaves}
Let $E$ be a slope semistable sheaf on $X$ with respect to $\lx$ such that $\ch_k(E)=0$ for $k=1,2$. Then
$E^{**}$ is a homogeneous bundle, that is $E^{**}$ is filtered with quotients  from $\Pic^0(X)$. 
\end{lem}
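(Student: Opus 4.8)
The plan is to pass to the reflexive hull, reduce it to a sheaf of vanishing discriminant, apply Simpson's theorem, and then recognise the Jordan--H\"older factors as degree-zero line bundles. First I would dispose of the degenerate case: if $\rk(E)=0$ then $\ch_1(E)=0$ forces $\Supp(E)$ to have dimension $\le 1$, so $E^{*}=0$ and $E^{**}=0$, which is vacuously homogeneous; and if $\rk(E)>0$ then $\mu_{\lx}$-semistability forces $E$ to be torsion free (its torsion subsheaf would have slope $+\infty$). So assume $E$ torsion free. Then $E^{**}$ is reflexive and $Q:=E^{**}/E\in\Coh_{\le 1}(X)$ by Lemma~\ref{prop:reflexive-sheaf-results}--(3); in particular $\ch_0(E^{**})=\ch_0(E)$ and $\ch_1(E^{**})=\ch_1(E)=0$. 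For a subsheaf $F\subseteq E^{**}$ the quotient $F/(F\cap E)$ embeds in $Q$, hence has $\ch_0=\ch_1=0$, so $\mu_{\lx}(F)=\mu_{\lx}(F\cap E)\le\mu_{\lx}(E)=\mu_{\lx}(E^{**})$ (with $F=0$ when $\ch_0(F)=0$, as $E^{**}$ is torsion free); thus $E^{**}$ is $\mu_{\lx}$-semistable of slope $0$.

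Next I would force $\ch_2(E^{**})=0$ and apply Simpson. The usual Bogomolov--Gieseker inequality (Lemma~\ref{prop:usual-BG-ineq}) for the torsion-free $\mu_{\lx}$-semistable sheaf $E^{**}$, together with $\ch_1(E^{**})=0$, gives $\lx\cdot\Delta(E^{**})=-2\ch_0(E^{**})\,\lx\ch_2(E^{**})\ge 0$, i.e.\ $\lx\ch_2(E^{**})\le 0$. On the other hand $\ch_2(E^{**})=\ch_2(E)+\ch_2(Q)=\ch_2(Q)$ is the effective fundamental class of the one-dimensional part of $Q$, so $\lx\ch_2(E^{**})\ge 0$; hence $\lx\ch_2(E^{**})=0$ and therefore $\ch_2(E^{**})=0$ (an effective class annihilated by the ample $\lx$ vanishes), and moreover $Q\in\Coh_0(X)$. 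Now fix $N\in\ZZ_{>0}$ with $N\lx$ integral and an ample line bundle $L$ with $c_1(L)=N\lx$; since $\mu_{\lx}$-stability coincides with $\mu_{N\lx}$-stability and $H^{2}\ch_1(E^{**})=0$, $H\ch_2(E^{**})=0$ for $H=N\lx$, Simpson's Lemma~\ref{prop:Simpson-result-trivial-disciminant} applies and shows that every Jordan--H\"older $\mu_{\lx}$-stable factor $V_i$ of $E^{**}$ is a locally free sheaf with vanishing Chern classes, so $\ch(V_i)=(\rk V_i,0,0,0)$.

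Finally I would identify each $V_i$ with an element of $\Pic^0(X)$. Each $V_i$ is $\mu_{\lx}$-stable, hence simple, with $\Delta(V_i)=0$ and $c_1(V_i)=0$. By Mukai's structure theory for semihomogeneous bundles on abelian varieties (see \cite{MukSemihomo, Orl}; cf.\ Lemmas~\ref{prop:Mukai-semihomognoeus-properties} and~\ref{prop:semihomo-numerical}) a $\mu$-stable sheaf of vanishing discriminant is semihomogeneous, and a simple semihomogeneous bundle with $c_1=0$ has rank one: pulling back along a suitable isogeny $\pi\colon X'\to X$ turns it into a direct sum of copies of a single line bundle in $\Pic^0(X')$, while the finite abelian group $\ker\pi$ acts on the multiplicity space through one-dimensional characters, so $V_i$ already splits unless $\rk V_i=1$; stability then forces $\rk V_i=1$, i.e.\ $V_i\in\Pic^0(X)$. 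Since $E^{**}$ is an iterated extension of the $V_i$, each an element of $\Pic^0(X)$, it is by definition filtered with quotients from $\Pic^0(X)$, and being an iterated extension of locally free sheaves it is locally free; hence $E^{**}$ is a homogeneous bundle.

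The main obstacle is the last step --- that a $\mu$-stable sheaf on an abelian threefold with trivial Chern classes is a degree-zero line bundle. The Bogomolov--Gieseker reduction and Simpson's theorem are essentially formal, but this step genuinely uses the geometry of abelian varieties. An alternative to the semihomogeneous/projective-flatness argument above is a Fourier--Mukai argument: for $V_i$ with $\ch(V_i)=(\rk V_i,0,0,0)$ one has $\hH^{0}$ of its classical transform $\Phi_{\pP}^{\SX\to\SHX}(V_i)$ equal to zero (Lemma~\ref{prop:FMT-0-g-cohomo-vanishing}), and the Chern character of the transform concentrated in top degree equal to $\pm(\rk V_i)[\mathrm{pt}]$ (Lemma~\ref{classicalcohomoFMT}); from this, using the same vanishing for $V_i^{\vee}$ together with duality, one deduces that $V_i$ is $\mathrm{WIT}_{3}$ with $\hH^{3}(\Phi_{\pP}^{\SX\to\SHX}(V_i))$ a length-$\rk V_i$ skyscraper sheaf, whose inverse transform is an extension of line bundles in $\Pic^0(X)$, so stability again forces rank one.
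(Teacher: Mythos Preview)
Your reduction to the reflexive hull and the Bogomolov--Gieseker step showing $\ch_2(E^{**})=0$ are correct and essentially identical to the paper's opening paragraph. Applying Simpson's Lemma~\ref{prop:Simpson-result-trivial-disciminant} to obtain locally free Jordan--H\"older factors $V_i$ with $\ch(V_i)=(\rk V_i,0,0,0)$ is also fine.

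The genuine gap is the sentence ``a $\mu$-stable sheaf of vanishing discriminant is semihomogeneous''. The Mukai results you cite (Lemmas~\ref{prop:Mukai-semihomognoeus-properties} and~\ref{prop:semihomo-numerical}) go in the \emph{opposite} direction: they describe the Chern character and stability of a bundle already known to be semihomogeneous, and characterise semihomogeneity via $\dim H^1(X,\calEnd(E))=g$ or via $\calEnd(E)$ being homogeneous. None of this shows that a stable bundle with trivial Chern classes is semihomogeneous. Indeed, establishing precisely this implication is the entire content of the paper's proof: the long Fourier--Mukai argument (analysing $\Phi^1(E),\Phi^2(E)$ via the Mukai spectral sequence, bounding their Chern characters, and reaching the numerical contradiction $-r>0$) is what rules out a semistable reflexive sheaf with $\ch=(r,0,0,0)$ and no $\Pic^0$ sections. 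You have replaced this argument by an assertion. Your Fourier--Mukai sketch at the end does not repair this: knowing $\Phi^0(V_i)=0$ and $\Phi^3(V_i)\in\Coh_0(\HX)$ from Proposition~\ref{prop:FMT-0-g-cohomo-vanishing} leaves $\Phi^1(V_i)$ and $\Phi^2(V_i)$ completely uncontrolled, and duality with $V_i^\vee$ only gives the symmetric information; the paper needs Propositions~\ref{prop:slope-bound-FMT-F-1} and~\ref{prop:slope-bound-FMT-0-g} plus the spectral sequence to squeeze these out.

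That said, your strategy \emph{can} be completed by a different import: invoke the full Simpson/Donaldson--Uhlenbeck--Yau correspondence rather than just the Chern-class consequence recorded in Lemma~\ref{prop:Simpson-result-trivial-disciminant}. A $\mu_{\lx}$-stable locally free sheaf with vanishing Chern classes on a smooth projective variety carries a flat unitary connection, hence arises from an irreducible unitary representation of $\pi_1(X)$; since $\pi_1$ of an abelian variety is abelian, irreducibility forces rank one, so $V_i\in\Pic^0(X)$. This gives a short, genuinely different route from the paper's purely algebraic Fourier--Mukai computation, at the cost of importing the analytic Kobayashi--Hitchin machinery. The paper's approach stays within its FM toolkit and is self-contained relative to the earlier sections; yours would be shorter but relies on a much deeper black box than the lemmas you actually cite.
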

\begin{proof}
Any torsion free sheaf $E$ fits into the
short exact sequence $0 \to E \to E^{**} \to Q \to 0$ in $\Coh(X)$ for some $Q \in \Coh_{\le 1}(X)$.
If  $\ch_k(E)=0$ for $k=1,2$ then $\lx \ch_2(E^{**}) \ge 0$ where the equality holds when $Q \in \Coh_0(X)$.
If $E$ is slope semistable then $E^{**}$ is also slope semistable, and so by the usual Bogomolov-Gieseker inequality
$\lx \ch_2(E^{**}) = 0$. Hence,  $\lx \ch_2(Q) =0$, and so $\ch_2(Q) =0$; that is $\ch_2(E^{**}) =0$.

Assume the opposite for a contradiction.
Then there exists a semistable reflexive sheaf $E$ with $\ch_k(E) = 0$ for $k=1,2$,  and $H^k(X, E \otimes \pP_{X \times \{\widehat{x}\}}) = 0$ for $k=0,3$ and any $\hx \in \HX$.
So we have $\Phi^0(E) = \Phi^3(E) =0$.
By a result of Simpson (see Lemma~\ref{prop:Simpson-result-trivial-disciminant}),  we have $\ch_3(E) = 0$.
Therefore, $\ch(E) = (r, 0 , 0, 0)$ for some positive integer $r$.

By Proposition~\ref{prop:slope-bound-FMT-F-1},  $\Phi^1(E) \in \HN^{\mu}_{\lhx, 0}((-\infty, 0])$, 
and  $\Phi^2(E) \in \HN^{\mu}_{\lhx, 0}([0,+\infty])$.
So we have $\lhx^2 \ch_1(\Phi^1(E)) \le 0$ 
and $ \lhx^2 \ch_1(\Phi^2(E)) \ge 0$. 
Therefore, $\lhx^2 \ch_1(\Phi(E)) \ge 0$. Moreover,   since $\ch_2(E) =0$, from Theorem \ref{prop:antidiagonal-rep-cohom-FMT},
we obtain $ \lhx^2 \ch_1(\Phi(E))=  0$. Hence, $\lhx^2 \ch_1(\Phi^1(E)) =\lhx^2 \ch_1(\Phi^2(E)) =0$.
So we have
\begin{align*}
\ch(\Phi^1(E)) = (a, D , -C, d),  \ \ \ch(\Phi^2(E)) = (a, D, -C, -r+d),
\end{align*}
for some $a>0$, $D \in \NS(\HX)$, $C \in H^4_{\alg}(\HX,\QQ)$ such that $\lhx^2 D=0$ and $\lhx C \ge  0$. 
Moreover, we have $\Phi^1(E) \in \HN^{\mu}_{\lhx, 0}(0)$.

If $\HPhi^3 \Phi^1(E)  \ne 0$ then $\Phi^1(E)$ fits into a
short exact sequence
$0 \to K_1 \to \Phi^1(E) \to \pP_{\{x_1\}\times \HX}  \iI_{C_1} \to 0$
in $\Coh(\HX)$ for some $x_1 \in X$ and $C_1 \in H_2(\HX, \ZZ)$.
Then $K_1  \in \HN^{\mu}_{\lhx, 0}(0)$ and we have the following exact sequence
$$
\cdots \to \HPhi^3(K_1)  \to \HPhi^3 \Phi^1(E)  \to \oO_{x_1} \to 0
$$
in $\Coh(X)$.
If $\HPhi^3(K_1) \ne 0$ then $K_1$  fits into a short exact sequence
$0 \to K_2 \to K_1 \to \pP_{\{x_2\}\times \HX}  \iI_{C_2} \to 0$
 in $\Coh(\HX)$.
Then $K_2  \in \HN^{\mu}_{\lhx, 0}(0)$ and we have the following exact sequence
$$
\cdots \to\HPhi^3(K_2) \to \HPhi^3(K_1) \to \oO_{x_2} \to 0
$$
in $\Coh(X)$.
We can continue this process for only a finite number of steps since $\rk (\Phi^1(E)) < +\infty$, and hence $\HPhi^3 \Phi^1(E) $
 is filtered by skyscraper sheaves.
Moreover, from the convergence of  Mukai Spectral Sequence~\ref{Spec-Seq-Mukai} for $E$, we have the short exact sequence
$$
0 \to \HPhi^0 \Phi^2(E)  \to \HPhi^2 \Phi^1(E)   \to  Q \to 0
$$
in  $\Coh(X)$,
where $Q$ is a subsheaf of $E$ and so $Q  \in \HN^{\mu}_{\lx, 0}((-\infty, 0])$. 
By Proposition~\ref{prop:slope-bound-FMT-0-g}--(i), $\HPhi^0 \Phi^2(E)    \in \HN^{\mu}_{\lx, 0}((-\infty, 0])$. 
This implies $\HPhi^2 \Phi^1(E)   \in \HN^{\mu}_{\lx, 0}((-\infty, 0])$.
Therefore, we have $\lx^2 \ch_1(\HPhi(\Phi^1(E))) \le 0$, and so $\lhx C  \le 0$. 
Hence, $\lhx C  = 0$.
By Proposition~\ref{prop:FMT-F-1-reflexivity}, $\Phi^1(E)$ is a reflexive sheaf and since $\Phi^1(E)  \in \HN^{\mu}_{\lhx, 0}(0)$ it is slope semistable.
So by Lemma~\ref{prop:Simpson-result-trivial-disciminant}, we have $D=0$, $C=0$ and $d= \ch_3(\Phi^1(E)) =0$.
Therefore, $\ch(\HPhi(\Phi^1(E))) = (0,0,0,-a)$. Since $\HPhi^3 \Phi^1(E) \in \Coh_0(X)$, we have $\ch_k(\HPhi^2 \Phi^1(E)) = 0$ for $k=0,1,2$.
So $\HPhi^2 \Phi^1(E) \in \HN^{\mu}_{\lx, 0}((0, +\infty])$. Therefore, $\HPhi^2 \Phi^1(E) = 0$ and we have the
short exact sequence
$$
0 \to  E \to \HPhi^1 \Phi^2 (E) \to \HPhi^3 \Phi^1(E) \to 0
$$
in $\Coh(X)$.
Since $\HPhi^3 \Phi^1(E) \in \Coh_0(X)$ and $E$ is locally free, $\Ext^1_{\SX}(\HPhi^3 \Phi^1(E), E) = 0$.
Therefore, $\HPhi^1 \Phi^2(E) \cong  E \oplus \HPhi^3 \Phi^1(E)$.
Since $\HPhi^1 \Phi^2(E) \in V^{\Phi}_{\Coh(Y)}(2)$, 
we have $\HPhi^3 \Phi^1(E) = 0$ and so $E \in V^{\Phi}_{\Coh(Y)}(2)$.
Therefore, $\ch(\Phi^2(E)) = (0,0,0,-r)$.
But it is not possible to have $-r > 0$ and
this is the required contradiction to complete the proof.
\end{proof}
%%%%%%%%%%%%%%%%%%%%%%%%
%%%%%%%%%%%%%%%%%%%%%%%%
Then we show the following.
%%%%%%%%%%%%%%%%%%%%%%%%
%%%%%%%%%%%%%%%%%%%%%%%%
\begin{thm}
\label{prop4.15}
Let $E$ be a
slope stable torsion free sheaf of rank $r$ with $\ch^{B}_k(E) = 0$, 
$k=1,2$ for some $B \in \NS_{\QQ}(X)$. Then $E^{**}$ is a slope
stable semihomogeneous bundle with $\ch(E^{**}) = r e^B$.
\end{thm}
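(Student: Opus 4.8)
The plan is to reduce the statement to the untwisted case already settled in Lemma~\ref{prop:trivial-discriminant-reflexive-sheaves}, by tensoring $E$ with a semihomogeneous bundle that absorbs the twist $B$, and then to transport the resulting homogeneity back to $E^{**}$ by means of the $\calEnd$-criterion for semihomogeneity in Lemma~\ref{prop:Mukai-semihomognoeus-properties}.

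First, note that $\ch_1^B(E)=0$ forces $B=c_1(E)/r\in\NS_{\QQ}(X)$. By Lemma~\ref{prop:semihomo-numerical}--(2) there is a simple semihomogeneous bundle $G$ on $X$, of some rank $s\in\ZZ_{>0}$, with $\ch(G)=s\,e^{B}$; then $G^*$ is a semihomogeneous bundle with $\ch(G^*)=s\,e^{-B}$, slope semistable with respect to $\lx$ by Lemma~\ref{prop:semihomo-numerical}--(3). Consider the torsion free sheaf $E\otimes G^*$. Since $G^*$ is locally free, $(E\otimes G^*)^{**}\cong E^{**}\otimes G^*$; and since tensor products of slope semistable sheaves are slope semistable, $E\otimes G^*$ is slope semistable with respect to $\lx$. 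Using $\ch^B(E)=(r,0,0,\ch_3^B(E))$ we compute
$$
\ch(E\otimes G^*)=\ch(E)\,\ch(G^*)=\bigl(r\,e^{B}+\ch_3^B(E)\bigr)\,s\,e^{-B}=\bigl(rs,\,0,\,0,\,s\,\ch_3^B(E)\bigr),
$$
so $\ch_k(E\otimes G^*)=0$ for $k=1,2$. Hence Lemma~\ref{prop:trivial-discriminant-reflexive-sheaves} applies to $E\otimes G^*$, and $E^{**}\otimes G^*=(E\otimes G^*)^{**}$ is a homogeneous bundle.

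From this, a stalkwise argument (a finitely generated module $M$ over a local ring with $M^{\oplus s}$ free is free) shows that $E^{**}$ is locally free, and since the Chern character of a homogeneous bundle is its rank, $\ch(E^{**})\,\ch(G^*)=\ch(E^{**}\otimes G^*)=rs$ gives $\ch(E^{**})=r\,e^{B}$. Moreover $E^{**}$ is slope stable with respect to $\lx$, being the double dual of the slope stable torsion free sheaf $E$, hence in particular simple. Finally, applying $\calEnd(-)$ to the homogeneous bundle $E^{**}\otimes G^*$ gives $\calEnd(E^{**}\otimes G^*)\cong\calEnd(E^{**})\otimes\calEnd(G)$, which is homogeneous since the class of homogeneous bundles is closed under tensor products and duals; as $\oO_X$ is a direct summand of $\calEnd(G)$ via the trace (characteristic zero), $\calEnd(E^{**})$ is a direct summand of a homogeneous bundle. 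Granting that homogeneous bundles are closed under direct summands, $\calEnd(E^{**})$ is homogeneous, and Lemma~\ref{prop:Mukai-semihomognoeus-properties} then identifies the simple bundle $E^{**}$ as semihomogeneous. Together with $\ch(E^{**})=r\,e^{B}$ and the slope stability established above, this is exactly the assertion of the theorem.

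The routine points---the Chern character computation, local freeness of $E^{**}$, and slope stability of the double dual---cause no trouble. The one step needing genuine justification is that a direct summand of a homogeneous bundle on an abelian variety is again homogeneous; this follows from the Matsushima--Mukai structure theorem (every homogeneous bundle is a direct sum of tensor products of unipotent bundles with elements of $\Pic^0(X)$, a family visibly closed under direct summands), or, in the spirit of the present paper, from the fact that the Poincar\'e transform of a homogeneous bundle is a finite-length sheaf, a property inherited by direct summands. An alternative that avoids this entirely would be to re-run the argument of Lemma~\ref{prop:trivial-discriminant-reflexive-sheaves} word for word with the Poincar\'e transform replaced by the twisted Fourier--Mukai transform $\Phi^{\SX\to\SY}_{\eE}$ with $D_{\SX}=B$, all of whose auxiliary slope bounds are already available from Sections~\ref{sec:FMT-abelian-varieties} and~\ref{sec:FMT-sheaves-abelian-threefolds}; I would expect the shorter route via the tensor trick to be the intended one.
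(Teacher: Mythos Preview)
Your approach is correct, but the paper takes a shorter and more direct route. The observation you missed is that Lemma~\ref{prop:trivial-discriminant-reflexive-sheaves} applies directly to $\calEnd(E^{**})$, with no auxiliary twisting needed: once one checks (via the usual Bogomolov--Gieseker inequality, as in the first paragraph of the proof of Lemma~\ref{prop:trivial-discriminant-reflexive-sheaves}) that $\ch^B_k(E^{**})=0$ for $k=1,2$, the $B$-twist from $E^{**}$ and the $(-B)$-twist from $(E^{**})^*$ cancel in the product, so $\ch_k(\calEnd(E^{**}))=0$ for $k=1,2$ with no twist at all. Since $\calEnd(E^{**})$ is slope semistable (tensor of slope stable sheaves in characteristic zero) and reflexive ($\calHom$ into a reflexive sheaf is reflexive), Lemma~\ref{prop:trivial-discriminant-reflexive-sheaves} shows that $\calEnd(E^{**})$ is itself a homogeneous bundle, and Lemma~\ref{prop:Mukai-semihomognoeus-properties} (direction $(3)\Rightarrow(2)$) finishes. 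This bypasses your auxiliary bundle $G$ and the ``direct summand of homogeneous is homogeneous'' step entirely---so neither your tensor trick nor your proposed alternative of re-running the lemma with a twisted transform is what the paper actually does.

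One thing your detour does buy: the stalkwise argument that $M^{\oplus s}$ free implies $M$ free cleanly establishes local freeness of $E^{**}$, which is a hypothesis of Lemma~\ref{prop:Mukai-semihomognoeus-properties} as stated. The paper's proof invokes that lemma without separately addressing this point.
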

\begin{proof}
The slope stable torsion free sheaf $E$ fits into the short exact sequence
$
0 \to E \to E^{**} \to T \to 0
$
for some $T \in \Coh_{\le 1}(X)$. Now $E^{**}$ is also slope stable
and so by the usual Bogomolov-Gieseker inequality $\ch_k^{B}(E^{**}) = 0$ for
$k=1,2$.
By Lemma \ref{prop:trivial-discriminant-reflexive-sheaves}, $\calEnd(E^{**})$ is a homogeneous
bundle. Therefore, by Lemma \ref{prop:Mukai-semihomognoeus-properties}, $E^{**}$ is a slope stable semihomogeneous bundle, and so from Lemma \ref{prop:semihomo-numerical}--(i) $\ch(E^{**})= r e^B$.
\end{proof}
%%%%%%%%%%%%%%%%%%%%%%%%
%%%%%%%%%%%%%%%%%%%%%%%%

%%%%%%%%%%%%%%%%%%%%%%%%%%%%%%%%%%
%%%%%%%%%%%%%%%%%%%%%%%%%%%%%%%%%%
\section{Equivalences of Stability Condition Hearts on  Abelian Threefolds}
\label{sec:FMT-tilt-stability}
%%%%%%%%%%%%%%%%%%%%%%%%%%%%%%%%%%
%%%%%%%%%%%%%%%%%%%%%%%%%%%%%%%%%%
Let us recall the second tilting associated to numerical parameters of Theorem \ref{prop:equivalence-hearts-abelian-threefolds} involving the  Fourier-Mukai  transform $\Psi : D^b(X) \to D^b(Y)$.
\begin{nota}
The subcategories 
\begin{align*}
& \fF^{\SX}_2= \HN^{\nu}_{\lx, -D_{\SX} + \frac{\lambda \lx}{2}, \frac{\lambda }{2}}((-\infty, 0]) , \\
& \tT^{\SX}_2= \HN^{\nu}_{\lx, -D_{\SX} + \frac{\lambda \lx}{2} , \frac{\lambda }{2}}((0, +\infty])
\end{align*}
of $\bB^{\SX}$ forms a torsion pair, and the corresponding tilt is
$$
\aA^{\SX} = \langle \fF^{\SX}_2[1] , \tT^{\SX}_2 \rangle. 
$$
Similarly, 
\begin{align*}
& \fF^{\SY}_2= \HN^{\nu}_{\ly, D_{\SY} - \frac{\ly}{2\lambda}, \frac{1}{2\lambda}}((-\infty, 0]), \\
& \tT^{\SY}_2= \HN^{\nu}_{\ly, D_{\SY} - \frac{ \ly}{2 \lambda}, \frac{1}{2\lambda}}((0, +\infty]) 
\end{align*}
defines a   torsion pair of    $\bB^{\SY}$, and the corresponding tilt  is
$$
\aA^{\SY} = \langle \fF^{\SY}_2[1] , \tT^{\SY}_2 \rangle. 
$$
Let us write the complexified ample classes by 
\begin{align*}
 &\Omega = \left( -D_{\SX} +  \lambda \lx/2 \right)  + i \sqrt{3} \lambda \lx/2, \\        
 &  \Omega' = \left( D_{\SY} - \ly/(2 \lambda) \right) + i  \sqrt{3}  \ly/(2 \lambda).
\end{align*}

 We write the corresponding central charge functions simply by 
\begin{align*}
& Z_{\Omega} = Z_{\lx, -D_{\SX} + \frac{\lambda \lx}{2}, \frac{\lambda }{2}}, \\
& Z_{\Omega'} = Z_{\ly, D_{\SY} - \frac{ \ly}{2 \lambda}, \frac{1}{2\lambda}}.
\end{align*}
\end{nota}
%%%%%%%%%%%%%%%%%%%%%%%%%%%%%%%%%%
%%%%%%%%%%%%%%%%%%%%%%%%%%%%%%%%%%
It will be convenient to abbreviate the  Fourier-Mukai  transforms $\Psi$ and $\HPsi[1]$ by
$\Ga$ and $\HGa$ respectively. That is,
\begin{align*}
& \Gamma := \Psi = \Phi^{\SX \to \SY}_{\eE} : D^b(X) \to D^b(Y), \\
& \HGa := \Psi [1]= \Phi^{\SY \to \SX}_{\eE^\vee} [1] : D^b(Y) \to D^b(X).
\end{align*}
Then by Theorem~\ref{prop:image-B-under-FMT}, the images of an object
from $\bB^{\SX}$ (and $\bB^{\SY}$) under $\Ga$ (and $\HGa$) are  complexes whose
cohomologies with respect to $\bB^{\SY}$ (and $\bB^{\SX}$) can only be non-zero in $0,1,2$ positions.
\begin{nota}
In the rest of the paper we write
\begin{align*}
& \Ga^i_{\bB} (-) = H^{i}_{\bB^{\SY}}(\Ga(-)),\\
& \HGa^i_{\bB} (-) = H^{i}_{\bB^{\SX}}(\HGa(-)).
\end{align*}

\end{nota}
We  have $\Ga \circ \HGa \cong [-2]$ and $\HGa \circ \Ga \cong [-2]$.
This gives us the following
convergence of spectral sequences.
%%%%%%%%%%%%%%%%%%%%%%%%
%%%%%%%%%%%%%%%%%%%%%%%%
\begin{specseq}
\label{Spec-Seq-B}
\begin{enumerate}[label=(\arabic*)]
\item[]
\item $E_2^{p,q} = \HGa^p_{\bB} \Ga^q_{\bB} (E) \Longrightarrow H^{p+q-2}_{\bB^{\SX}} (E)$, and
\item$ E_2^{p,q} = \Ga^p_{\bB} \HGa^q_{\bB} (E) \Longrightarrow H^{p+q-2}_{\bB^{\SY}} (E)$.
\end{enumerate}
\end{specseq}
Such convergence of the spectral sequences for $E \in \bB^{\SX}$ and $E \in \bB^{\SY}$
behave in the same way as  the convergence of the Mukai Spectral Sequence \ref{Spec-Seq-Mukai} for coherent sheaves on an abelian
surface.
The following diagram describes the convergence of Spectral Sequence~\ref{Spec-Seq-B}--(1) for $E \in \bB^{\SX}$.
%%%%%%%%%%%%%%%%%%%%%%%%%%%%%%%%%%%%%%%%%%
\begin{center}
\begin{tikzpicture}[scale=1.80]
\draw[gray,very thin] (0,0) grid (3,3);
\draw[->,thick] (2.75,0.25) -- (3.5, 0.25) node[above] {$p$};
\draw[->,thick] (0.25,2.75) -- (0.25,3.5) node[left] {$q$};
\draw (2.5,0.5) node(a) {$ \HGa^2_{\bB} \Ga^0_{\bB}(E)$};
\draw (0.5,1.5) node(b) {$\HGa^0_{\bB} \Ga^1_{\bB}(E)$};
\draw[>->,thick] (b) -- node[above] {$ $} (a);
\draw (2.5,1.5) node(d) {$\HGa^2_{\bB} \Ga^1_{\bB}(E)$};
\draw (0.5,2.5) node(c) {$\HGa^0_{\bB} \Ga^2_{\bB}(E)$};
\draw[->>,thick] (c) -- node[above] {$ $} (d);
\draw (1.5,1.5) node  {$\HGa^1_{\bB} \Ga^1_{\bB}(E)$};
\end{tikzpicture}
\end{center}
%%%%%%%%%%%%%%%%%%%%%%%%%%%%%%%%%%%%%%%%%%

\begin{prop}
\label{prop:FMT-B-cat-bounds-Imginary-Z}
We have the following:
\begin{enumerate}[label=(\arabic*)]
\item For $ E \in \tT_{2}^{\SY}$,
(i) $\hH^0(\HGa^2_{\bB}(E)) = 0$, and
(ii)  if $\HGa^2_{\bB}(E) \ne 0$  then $\Imm Z_{\Omega}(\HGa^2_{\bB}(E)) >  0$.

\item For $E \in  \fF_{2}^{\SY}$,
(i) $ \hH^{-1}(\HGa^0_{\bB}(E)) = 0$, and
(ii)  if $\HGa^0_{\bB}(E) \ne 0$  then $\Imm  Z_{\Omega}(\HGa^0_{\bB}(E)) <  0$.

\item For $E \in  \tT_{2}^{\SX}$,
(i) $ \hH^0(\Ga^2_{\bB}(E)) = 0$, and
(ii)  if $ \Ga^2_{\bB}(E) \ne 0$  then $\Imm Z_{\Omega'}(\Ga^2_{\bB}(E)) >  0$.
\item For $E \in  \fF_{2}^{\SX}$,
(i) $\hH^{-1} (\Ga^0_{\bB}(E)) = 0$, and
(ii)  if $\Ga^0_{\bB}(E) \ne 0$  then $\Imm Z_{\Omega'}(\Ga^0_{\bB}(E)) < 0$.
\end{enumerate}
\end{prop}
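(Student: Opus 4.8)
The plan is to treat the four cases together, using the symmetry between $\Ga$ and $\HGa$ and between $X$ and $Y$; it suffices to carry out the argument in full for, say, case (1), the remaining cases being formally identical after exchanging the roles of the two threefolds and of the torsion/torsion-free parts. So fix $E \in \tT_2^{\SY}$. First I would recall from Theorem~\ref{prop:image-B-under-FMT} that $\HGa(E) = \HPsi[1](E)$ has $\bB^{\SX}$-cohomology concentrated in positions $0,1,2$, so $\HGa^2_{\bB}(E)$ is a well-defined object of $\bB^{\SX}$; by Proposition~\ref{prop:first-tilt-behaves-like-sheaves-surfaces} applied in $\bB^{\SX}$, to prove both (i) and (ii) it is enough to show that $\HGa^2_{\bB}(E) \in \bB^{\SX}$ either is $0$ or satisfies $\lx^2 \ch_1^{-D_{\SX}+\frac{\lambda}{2}\lx}(\HGa^2_{\bB}(E)) = 0$ together with $\Imm Z_\Omega > 0$, i.e. I must rule out the case (iii) of that proposition, namely $\HGa^2_{\bB}(E)$ being a nonzero object of $\Coh_0(X)$, and I must pin down the sign of $\Imm Z_\Omega$.

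The key computational input is Proposition~\ref{prop:imgainary-part-central-charge} (more precisely, its surface-analogue arithmetic built from Theorem~\ref{prop:antidiagonal-rep-cohom-FMT}): the anti-diagonal form of the cohomological transform converts $\Imm Z_{\Omega}(\HGa^2_{\bB}(E))$ into a fixed positive multiple of an expression in the entries $v^{D_{\SY},\ly}_i(E)$ of the Chern vector of $E$ on $Y$. Since $E \in \tT_2^{\SY} \subset \bB^{\SY}$ and $E$ is in the \emph{torsion} part of the second tilt, $\Imm Z_{\Omega'}(E) \ge 0$, i.e. $v_2^{D_{\SY},\ly}(E) - (1/\lambda) v_1^{-D_{\SY},\ly}(E) \ge 0$ in the appropriate normalization; feeding this through the anti-diagonal matrix should give directly $\Imm Z_\Omega(\HGa^2_{\bB}(E)) \ge 0$, which already yields (ii) once we know $\HGa^2_{\bB}(E)\neq 0$ forces strict positivity — and strictness will come from excluding the $\Coh_0$ case. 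For the vanishing statement (i), the clean way is to compute $\Hom_{\SX}(\oO_x, \HGa^2_{\bB}(E))$: by the spectral sequence (Spectral Sequence~\ref{Spec-Seq-B}) and adjunction, a nonzero map $\oO_x \to \HGa^2_{\bB}(E)$ would lift to a map in $D^b(Y)$ from $\Ga(\oO_x) = \Psi(\oO_x)$, which is a shift of a stable semihomogeneous bundle, into $E$, landing in a cohomological degree incompatible with $E \in \tT_2^{\SY}$ (one uses here that $\Psi(\oO_x) = \eE_{\{x\}\times Y}[?]$ up to the appropriate shift and that such a bundle, being $\nu$-stable of the relevant slope, sits in $\fF_2^{\SY}$ after the shift). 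Hence $\HGa^2_{\bB}(E)$ has no subobject supported in dimension $0$ inside $\bB^{\SX}$, which is exactly $\hH^0(\HGa^2_{\bB}(E)) = 0$ — this uses that $\Coh_0(X) \subset \tT_1^{\SX} \subset \bB^{\SX}$ and the characterisation of objects of $\bB^{\SX}$ with $\lx^2\ch_1^B = 0$.

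I expect the main obstacle to be the careful bookkeeping in the step excluding the $\Coh_0(X)$-case: one has to combine the convergence of Spectral Sequence~\ref{Spec-Seq-B}--(1) for $E$ (to relate $\HGa^2_{\bB}\Ga^0_{\bB}$, $\HGa^0_{\bB}\Ga^1_{\bB}$, etc., back to $E$ itself via $\HGa\circ\Ga \cong [-2]$) with the slope/tilt-slope bounds of Propositions~\ref{prop:slope-bounds}, \ref{prop:slope-bound-FMT-F-1} and \ref{prop:special-slope-bound-FMT-0-3}, in order to see that a nonzero $\Coh_0$-summand of $\HGa^2_{\bB}(E)$ would force $E$ itself to acquire a subobject or quotient violating $E \in \tT_2^{\SY}$. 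The adjunction computations of $\Ext^\bullet_{\SX}(\oO_x, -)$ and $\Ext^\bullet_{\SY}(\oO_y,-)$ through $\Ga, \HGa$ are routine (they mirror exactly the abelian-surface computations in Proposition~\ref{prop:slope-bound-FMT-surface} and the threefold sheaf-level ones already carried out), so the real content is organising which of the four inequalities — torsion vs. torsion-free in $\bB$, torsion vs. torsion-free in the second tilt — gets used where, and checking the signs survive the anti-diagonal matrix $\Adiag(1,-1,1,-1)$. Once case (1) is done, cases (2), (3), (4) follow by the same argument with $(\Ga,\HGa,X,Y,\Omega,\Omega')$ permuted and torsion/torsion-free interchanged, and by applying the dualizing functor together with Lemma~\ref{prop:dual-FMT} where needed.
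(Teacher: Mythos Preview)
There are two genuine gaps in your approach, both in case (1), and a third issue with the claimed symmetry.

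\textbf{Gap in (1)(i).} You compute $\Hom_{\SX}(\oO_x,\HGa^2_{\bB}(E))$ and conclude that $\HGa^2_{\bB}(E)$ has no $\Coh_0$-subobject, then assert this is ``exactly $\hH^0(\HGa^2_{\bB}(E))=0$''. That implication is false: any torsion-free sheaf $F\in\tT_1^{\SX}$ has no $\Coh_0$-subobject but $\hH^0(F)=F\neq 0$. What you need is $\Hom_{\SX}(\HGa^2_{\bB}(E),\oO_x)=0$ for every $x$ --- maps \emph{to} skyscrapers, not from them --- since $\hH^0(F)=0$ for $F\in\bB^{\SX}$ is equivalent to $F$ admitting no map to any $\oO_x$. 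The paper does exactly this: $\oO_x\cong\HGa^2_{\bB}(\eE_{\{x\}\times Y})$, so $\Hom_{\SX}(\HGa^2_{\bB}(E),\oO_x)\cong\Hom_{\SY}(E,\eE_{\{x\}\times Y})$, and this vanishes because $E\in\tT_2^{\SY}$ while $\eE_{\{x\}\times Y}\in\fF_2^{\SY}$.

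\textbf{Gap in (1)(ii).} The anti-diagonal matrix relates $\Imm Z_{\Omega}(\HGa(E))$ to $\Imm Z_{\Omega'}(E)$, but $\HGa^2_{\bB}(E)$ is only the top $\bB^{\SX}$-cohomology of $\HGa(E)$; the other pieces $\HGa^0_{\bB}(E),\HGa^1_{\bB}(E)$ contribute with alternating signs, so $\Imm Z_{\Omega'}(E)\ge 0$ gives you nothing about $\Imm Z_{\Omega}(\HGa^2_{\bB}(E))$ alone. The paper instead uses (i) to write $\HGa^2_{\bB}(E)=A[1]$ with $A\in\fF_1^{\SX}$, then runs the spectral sequence $\HPsi^p(\hH^q(E))\Rightarrow\HPsi^{p+q}(E)$ together with the slope bounds (Propositions~\ref{prop:slope-bounds}, \ref{prop:slope-bound-FMT-T-2}, \ref{prop:slope-bound-torsion-sheaf}) to pin down $A\in\HN^{\mu}_{\lx,-D_{\SX}}((0,\lambda/2])$, after which the usual Bogomolov--Gieseker inequality on the Harder--Narasimhan factors of $A$ gives the strict sign directly.

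\textbf{Asymmetry.} The four cases are not interchangeable by a simple relabelling. Parts (1) and (3) are indeed parallel, but (2)(i) and (4)(i) require the considerably harder computation of $\Hom_{\SX}(\HGa^0_{\bB}(E),\oO_x[1])$ via Spectral Sequence~\ref{Spec-Seq-B} and an argument involving minimal objects of $\aA^{\SY}$, while (2)(ii) and (4)(ii) each need a separate mechanism to obtain strictness (reflexivity plus $\Coh_{\ge 2}$ for (2), Theorem~\ref{prop4.15} and a filtration argument to exclude equality for (4)). The duality via Lemma~\ref{prop:dual-FMT} does not simply swap these pairs.
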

\begin{proof}
%\begin{enumerate}[label=(\arabic*)]
%%%%%%%%%%%%%%%%%%%%%%%%%%%%%%%%%%%%%%%%%%%%%%%%
(1) \ Let $E \in \tT_{2}^{\SY}$. \\
%%%%%%%%%%%%%%%%%%%%%%%%%%%%%%%%%%%%%%%%%%%%%%%%
%\begin{enumerate}[label=(\roman*)]
%%%%%%%%%%%%%%%%%%%%%%%%%%%%%%%%%%%%%%%%%%%%%%%%
\noindent (i) \ For any $x \in X$,
\begin{align*}
\Hom_{\SX} ( \HGa^2_{\bB}(E) , \oO_x  ) & \cong  \Hom_{\SX} ( \HGa^2_{\bB}(E) , \HGa^2_{\bB}(\eE_{\{x\} \times Y} ) ) \\
                               & \cong  \Hom_{\SX} ( \HGa(E) , \HGa  (\eE_{\{x\} \times Y}   ) ) \\
                               & \cong  \Hom_{\SX} ( E, \eE_{\{x\} \times Y}  ) = 0,
\end{align*}
since $E \in \tT^{\SY}$ and $ \eE_{\{x\} \times Y}  \in
\fF^{\SY}$. Therefore, $\hH^0(\HGa^2_{\bB}(E)) = 0$
as required. \\
%%%%%%%%%%%%%%%%%%%%%%%%%%%%%%%%%%%%%%%%%%%%%%%%
\noindent (ii) \ From (1)(i), we have $\HGa^2_{\bB}(E) \cong A[1]$ for some
 $0 \neq A \in \HN^{\mu}_{\lx, -D_{\SX}}((-\infty, \frac{\lambda}{2}])$.
Consider the convergence of the spectral sequence:
$$
E^{p,q}_2=\HGa^{p} (\hH^{q} (E)) \Longrightarrow \HGa^{p+q} (E)
$$
for $E$.
By Proposition~\ref{prop:slope-bounds}--(2), we have
$\hH^0(E) \in \HN^{\mu}_{\ly, D_{\SY}}((0, +\infty])$ and so by  Propositions \ref{prop:slope-bound-FMT-T-2} and \ref{prop:slope-bound-torsion-sheaf},
$$
\HPsi^2 (\hH^0(E)), \HPsi^3 (\hH^{-1}(E)) \in \HN^{\mu}_{\lx, -D_{\SX}}((0,+\infty]).
$$
Therefore, from the convergence of the above spectral sequence for $E$, we have
$$
A \in  \HN^{\mu}_{\lx, -D_{\SX}}((-\infty, \frac{\lambda}{2}]) \cap \HN^{\mu}_{\lx, -D_{\SX}}((0,+\infty]) = \HN^{\mu}_{\lx, -D_{\SX}}((0, \frac{\lambda}{2}]).
$$
Let $v^{-D_{\SX}, \lx} (A)= (a_0, a_1, a_2, a_3)$. From the usual Bogomolov-Gieseker inequalities for all the Harder-Narasimhan semistable factors of $A$ we have
$\frac{\lambda}{2}a_1-a_2 \ge 0$ and so by Proposition \ref{prop:imgainary-part-central-charge}--(1),
$$
\Imm Z_{\Omega}(\HGa^2_{\bB}(E)) =  \Imm Z_{\Omega}(A[1])  = \frac{ \sqrt{3} \lambda}{4}(\lambda a_1-a_2) > 0
$$
as required. \\

%%%%%%%%%%%%%%%%%%%%%%%%%%%%%%%%%%%%%%%%%%%%%%%%
%\end{enumerate}
%%%%%%%%%%%%%%%%%%%%%%%%%%%%%%%%%%%%%%%%%%%%%%%%
\noindent (2) \ Let $E \in \fF^{\SY}_2$. \\
%%%%%%%%%%%%%%%%%%%%%%%%%%%%%%%%%%%%%%%%%%%%%%%%
%\begin{enumerate}[label=(\roman*)]
\noindent (i) \
For any $x \in X$ we have
\begin{align*}
\Hom_{\SX} ( \HGa^0_{\bB}(E) , \oO_x[1] ) 
                                    & \cong  \Hom_{\SY} (\Ga \HGa^0_{\bB}(E) , \Ga (\oO_x[1]))\\
                                    & \cong  \Hom_{\SY} ( \Ga^2_{\bB} \HGa^0_{\bB}(E)[-2] , \eE_{\{x\} \times Y}[1] ) \\
                                    & \cong  \Hom_{\SY} ( \Ga^2_{\bB} \HGa^0_{\bB}(E) , \eE_{\{x\} \times Y}[3]  ) \\
                                    & \cong  \Hom_{\SY} ( \eE_{\{x\} \times Y} ,  \Ga^2_{\bB} \HGa^0_{\bB}(E) )^\vee .
\end{align*}
From the convergence of the Spectral Sequence~\ref{Spec-Seq-B} for $E$, we have the short exact sequence
$$
0 \to \Ga^0_{\bB} \HGa^1_{\bB}(E) \to \Ga^2_{\bB} \HGa^0_{\bB}(E) \to F \to 0
$$
in $\bB^{\SY}$, where $F$ is a subobject of $E$ and so $F \in \fF^{\SY}$.
Moreover, by the Harder-Narasimhan filtration, $F$ fits into the following short exact sequence  in $\bB^{\SY}$:
$$
0 \to F_0 \to F \to F_1 \to 0,
$$
where $F_0 \in \HN^{\nu}_{\ly, D_{\SY}- \frac{1}{2 \lambda}\ly, \frac{1}{2 \lambda}}(0)$
 and $F_1 \in \HN^{\nu}_{\ly, D_{\SY}- \frac{1}{2 \lambda} \ly, \frac{1}{2 \lambda}}((-\infty, 0))$.
Since  $\eE_{\{x\} \times Y} \in  \HN^{\nu}_{\ly, D_{\SY}- \frac{1}{2 \lambda}, \frac{1}{2 \lambda}}(0)$,
$$
\Hom_{\SY} (\eE_{\{x\} \times Y} , F_1) =0.
$$
Moreover, $F_0$ fits into a filtration with quotients of $\nu_{\ly, D_{\SY}- \frac{1}{2 \lambda}\ly, \frac{1}{2 \lambda}}$-stable objects $F_{0,i}$ with $\nu_{\ly, D_{\SY}- \frac{1}{2 \lambda}\ly, \frac{1}{2 \lambda}}(F_{0,i}) = 0$.
By Proposition~\ref{prop:reduction-BG-ineq-class}, 
each $F_{0,i}$ fits into a non-splitting short exact sequence
$$
0 \to F_{0,i} \to M_i \to T_i \to 0
$$
in $\bB^{\SY}$ for some $T_i \in \Coh_0(Y)$ 
such that $M_i[1] \in \aA^{\SY}$ is a minimal object.
Moreover, $\eE_{\{x\} \times Y}[1] \in \aA^{\SY}$ is a  minimal object.
So finitely many $x \in X$ we can have $\eE_{\{x\} \times Y} \cong M_i$ 
for some $i$. So for generic $x \in X$,
$\Hom_{\SY}(\eE_{\{x\} \times Y}, M_i) = 0$ and 
so $\Hom_{\SY}(\eE_{\{x\} \times Y} , F_{0,i}) = 0$ 
which implies $\Hom_{\SY}(\eE_{\{x\} \times Y} , F_0) = 0$.
Therefore, for generic $x \in X$, $\Hom_{\SY} ( \eE_{\{x\} \times Y} ,  F ) = 0$.

On the other hand,
\begin{align*}
\Hom_{\SY} ( \eE_{\{x\} \times Y} , \Ga^0_{\bB} \HGa^1_{\bB} (E))
                       & \cong  \Hom_{\SY} ( \Ga^0_{\bB} (\oO_x)  , \Ga^0_{\bB} \HGa^1_{\bB} (E) )\\
                                               & \cong  \Hom_{\SY} (  \Ga  (\oO_x)  , \Ga \HGa^1_{\bB} (E) ) \\
                                               & \cong  \Hom_{\SX} (\oO_x , \HGa^1_{\bB} (E) ).
\end{align*}
Here $\HGa^1_{\bB} (E)$ fits into the short exact sequence
$$
0 \to \hH^{-1}(\HGa^1_{\bB}(E))[1] \to \HGa^1_{\bB}(E) \to \hH^0(\HGa^1_{\bB} (E)) \to 0
$$
in $\bB^{\SX}$, where $\hH^{-1}(\HGa^1_{\bB} (E))$ is torsion free and
$\hH^0(\HGa^1_{\bB} (E))$ can have torsion supported
on a 0-subscheme of finite length.
Hence, for generic $x \in X$,  $ \Hom_{\SX} (\oO_x , \HGa^1_{\bB} (E)) = 0$.
Therefore, for generic $x \in X$,  we have
$\Hom_{\SY} (\eE_{\{x\} \times Y} , \Ga^0_{\bB} \HGa^1_{\bB} (E))
=\Hom_{\SY}( \eE_{\{x\} \times Y} ,  F ) =0$.
So $\Hom_{\SY} (\eE_{\{x\} \times Y},  \Ga^2_{\bB} \HGa^0_{\bB}(E)) =0$. 
Hence, for generic $x \in X$
$$
\Hom_{\SX} (\HGa^0_{\bB}(E) , \oO_x[1]) = 0.
$$
But $\hH^{-1}(\HGa^0_{\bB}(E))$ is torsion free and so $\hH^{-1}(\HGa^0_{\bB}(E)) = 0$ as required. \\
%%%%%%%%%%%%%%%%%%%%%%%%%%%%%%%%%%%%%%%%%%%%%%%%
\noindent (ii) \ From (2)(i) we have $\HGa^0_{\bB}(E) \cong A$ for some non-trivial coherent sheaf $A \in \HN^{\mu}_{\lx, -D_{\SX}}((\frac{\lambda}{2} , + \infty])$. For any $x \in X$ we have
\begin{align*}
\Ext^1_{\SX}(\oO_x, A) & \cong  \Ext^1_{\SX} (\oO_x, \HGa^0_{\bB}(E)) 
\cong \Hom_{\SY}(\Ga(\oO_x), \Ga \HGa^0_{\bB}(E) [1]) \\
                 & \cong  \Hom_{\SY}(\eE_{\{x\} \times Y},  \Ga^2_{\bB} \HGa^0_{\bB}(E) [-1]) = 0.
\end{align*}
So $A \in \Coh_{\ge 2}(X)$, and if $v^{-D_{\SX}, \lx} (A) = (a_0, a_1, a_2, a_3)$ then we have $a_1 > 0$.

Apply the  Fourier-Mukai  transform $\Ga$ to $\HGa^0_{\bB}(E)$.
 Since $\HGa^0_{\bB}(E) \in V^{\Ga}_{\bB^{\SY}}(2)$,
   $\Ga^2_{\bB} \HGa^0_{\bB}(E) \in \bB^{\SY}$ has $\Coh(Y)$-cohomologies:
\begin{itemize}
\item $\Psi^1 (A)$  in position $-1$, and
\item $\Psi^2 (A)$  in position $0$.
\end{itemize}
So we have $A \in V^{\Psi}_{\Coh(Y)}(1,2)$,
$\Psi^1 (A) \in \HN^{\mu}_{\ly, D_{\SY} }((-\infty, - \frac{1}{2 \lambda}])$, and by  
of Proposition~\ref{prop:slope-bound-FMT-T-2}, $\Psi^2 (A) \in \HN^{\mu}_{\ly, D_{\SY} }((0, +\infty])$.
Therefore,  $v_1^{D_{\SY}, \ly}  (\Psi^1 (A)) \le 0$, 
$v_1^{D_{\SY}, \ly}  (\Psi^2 (A)) \ge 0$,  and so $v_1^{D_{\SY}, \ly}  (\Psi  (A)) \ge 0$.
Hence, by Theorem \ref{prop:antidiagonal-rep-cohom-FMT},
\begin{align*}
 a_2 & = v_2^{-D_{\SX}, \lx} (A)  \le 0.
\end{align*}
So
$$
\Imm Z_{\Omega}(\HGa^0_{\bB}(E)) =  \Imm Z_{\Omega}(A)  = \frac{ \sqrt{3}\lambda}{4}(a_2 - \lambda a_1) < 0
$$
as required. \\

%\end{enumerate}
%%%%%%%%%%%%%%%%%%%%%%%%%%%%%%%%%%%%%%%%%%%%%%%%
\noindent (3) \  Let $E \in \tT^{\SX}_2$. \\
%%%%%%%%%%%%%%%%%%%%%%%%%%%%%%%%%%%%%%%%%%%%%%%%
%\begin{enumerate}[label=(\roman*)]
\noindent (i) \ Similar to the proof of (1)(i). \\
\noindent (ii) \
%%%%%%%%%%%%%%%%%%%%%%%%%%%%%%%%%%%%%%%%%%%%%%%%
From (3)(i), we have $\Ga^2_{\bB}(E) \cong A[1]$ for some coherent sheaf
 $0 \ne A \in \HN^{\mu}_{\ly, D_{\SY}}((-\infty, -\frac{1}{2\lambda}])$.
Let $v^{D_{\SY}, \ly}(A) = (a_0, a_1, a_2, a_3)$. So $a_1 < 0$.

Apply the  Fourier-Mukai  transform $\HGa$ to $\Ga^2_{\bB}(E)$. 
Since $\Ga^2_{\bB}(E) \in V^{\HGa}_{\bB^{\SX}}(0)$,  
$\HGa^0_{\bB} \Ga^2_{\bB}(E) \in \bB^{\SX}$ has $\Coh(X)$-cohomologies:
\begin{itemize}
\item $\HPsi^1 (A)$  in position $-1$, and
\item $\HPsi^2(A)$  in position $0$.
\end{itemize}
So we have $A \in V^{\HPsi}_{\Coh(X)}(1,2)$,
$\HPsi^2 (A) \in \HN^{\mu}_{\lx, -D_{\SX}}([\frac{\lambda}{2}, +\infty])$, and
by   Proposition~\ref{prop:slope-bound-FMT-F-1}--(i), $\HPsi^1(A) \in \HN^{\mu}_{\lx, -D_{\SX}}((-\infty, 0])$.
Therefore,  $v_1^{-D_{\SX}, \lx}(\HPsi^1(A))\le 0$
and $v_1^{-D_{\SX}, \lx}(\HPsi^2(A)) \ge 0$.
So $v_1^{-D_{\SX}, \lx}(\HPsi(A)) \ge 0$, and hence, from Theorem \ref{prop:antidiagonal-rep-cohom-FMT}, $a_2 \le 0$. Therefore,
\begin{align*}
\Imm  Z_{\Omega'}(\Ga^2_{\bB}(E)) =  \Imm Z_{\Omega'}(A[1])
= \frac{\sqrt{3}}{4 \lambda}\left(-a_2 - \frac{1}{\lambda}a_1\right)  > 0
\end{align*}
as required. \\

%%%%%%%%%%%%%%%%%%%%%%%%%%%%%%%%%%%%%%%%%%%%%%%%
%\end{enumerate}
%%%%%%%%%%%%%%%%%%%%%%%%%%%%%%%%%%%%%%%%%%%%%%%%
%%%%%%%%%%%%%%%%%%%%%%%%%%%%%%%%%%%%%%%%%%%%%%%%
\noindent (4) \  Let $E \in \fF^{\SX}_2 $. \\
%%%%%%%%%%%%%%%%%%%%%%%%%%%%%%%%%%%%%%%%%%%%%%%%
%\begin{enumerate}[label=(\roman*)]
\noindent (i) \ Similar to the proof of (2)(i). \\
\noindent (ii) \  From (4)(i) we have $\Ga^0_{\bB}(E) \cong A$ for some non-trivial
coherent sheaf $A \in  \HN^{\mu}_{\ly, D_{\SY}}((-\frac{1}{2 \lambda},+\infty])$.

Consider the convergence of the spectral sequence for $E$:
$$
E^{p,q}_2=\Ga^{p} \hH^{q}(E) \Longrightarrow \Ga^{p+q}(E).
$$

By Proposition~\ref{prop:slope-bounds}--(i), we have
$\hH^{-1}(E) \in \HN^{\mu}_{\lx, -D_{\SX}}((-\infty, 0])$, and so by  
  Propositions \ref{prop:slope-bound-FMT-F-1}-(i) and \ref{prop:slope-bound-FMT-0-g}--(i),
$$
\Psi^1(\hH^{-1}(E)) \in \HN^{\mu}_{\ly, D_{\SY}}((-\infty,0]), \text{ and } \Psi^0 (\hH^{0}(E)) \in \HN^{\mu}_{\ly, D_{\SY}}((-\infty,0]).
$$
Therefore, from the convergence of the above spectral sequence for $E$, we have
$$
A \in  \HN^{\mu}_{\ly, D_{\SY}}((-\frac{1}{2 \lambda },+\infty]) \cap \HN^{\mu}_{\ly, D_{\SY}}((-\infty,0])  
=\HN^{\mu}_{\ly, D_{\SY}}((-\frac{1}{2 \lambda},0]).
$$
Also by  Propositions \ref{prop:FMT-F-1-reflexivity} and \ref{prop:FMT-0-cohomology-reflexive}--(ii),
$\Psi^1 (\hH^{-1}(E))$ and  $\Psi^0 (\hH^{0}(E))$ are reflexive
sheaves, and so $A$ is reflexive.
Let $v^{D_{\SY}, \ly} (A)= (a_0, a_1, a_2, a_3)$. 
By the usual Bogomolov-Gieseker
inequalities for all the Harder-Narasimhan semistable factors of $A$,
we obtain $a_2 + \frac{1}{2\lambda } a_1 \le 0$.
So we have
$$
\Imm Z_{\Omega'}(\Ga^0_{\bB}(E))
 =  \Imm Z_{\Omega'}(\Ga^0_{\bB}(A))  = \frac{ \sqrt{3}}{4 \lambda }\left(a_2
+ \frac{1}{\lambda} a_1 \right) \le 0.
$$
Equality holds when $A \in \HN^{\mu}_{\ly, D_{\SY}}(0)$ 
with $v^{D_{\SY}, \ly}(A) = (a_0, 0, 0, *)$. By considering a Jordan{-}H\"{o}lder filtration
for $A$ together with Theorem~\ref{prop4.15}, $A$ is filtered
with quotients of sheaves $K_i$ each of them fits into the short exact sequence
$$
0 \to K_i \to \eE_{\{x_i\} \times Y} \to \oO_{Z_i} \to 0
$$
in $\Coh(Y)$ for some 0-subschemes $Z_i \subset Y$.
Here $\Ga^0_{\bB}(E) \cong A \in V^{\HGa}_{\bB^{\SX}}(2)$
implies $A \in V^{\Psi}_{\Coh(X)}(2,3)$.  
An easy induction on the number
of $K_i$ in $A$  shows that
$A \in V^{\HPsi}_{\Coh(X)}(1,3)$ and so $A \in
V^{\HPsi}_{\Coh(X)}(3)$. 
Therefore,  $Z_i =\emptyset$ for all $i$ and so
$\HGa^2_{\bB} \Ga^0_{\bB}(E) \in \Coh_0(X)$. 
Now consider the
convergence of the Spectral Sequence~\ref{Spec-Seq-B} for $E$. We have
the short exact sequence
$$
0 \to \HGa^0_{\bB}\Ga^1_{\bB}(E)  \to \HGa^2_{\bB}\Ga^0_{\bB}(E)  \to G \to 0
$$
in $\bB^{\SX}$, where $G$ is a subobject of $E$ and so 
$G \in \fF^{\SX}_2$. Now
$\HGa^2_{\bB} \Ga^0_{\bB}(E) \in \Coh_0(X) \subset \tT^{\SX}_2$ 
implies $G =0$ and so
$\HGa^0_{\bB}\Ga^1_{\bB}(E)  \cong \HGa^2_{\bB}\Ga^0_{\bB}(E)$.
Then we have $\Ga^0_{\bB}(E) \cong \Ga^0_{\bB} \HGa^0_{\bB}\Ga^1_{\bB}(E) =0$.
This is not possible as  $\Ga^0_{\bB}(E) \ne 0$.
Therefore, we have the strict inequality $\Imm Z_{\Omega'}(\Ga^0_{\bB}(E)) <0$ as required.
  %\end{enumerate}
%\end{enumerate}
\end{proof}

%%%%%%%%%%%%%%%%%%%%%%%%%%%%%%%%%%%%%%%%%%%%%%%%
\begin{lem}
\label{prop:B-cohomo-vanishing-FMT-0-2}
We have the following:
\begin{enumerate}[label=(\arabic*)]
\item if $E \in \tT^{\SY}_2$ then $\HGa^2_{\bB}(E) =0$,
\item if $E \in \fF^{\SY}_2 $ then $\HGa^0_{\bB}(E) =0$,
\item if $E \in \tT^{\SX}_2$ then $\Ga^2_{\bB}(E) =0$, and
\item if $E \in \fF^{\SX}_2$ then $\Ga^0_{\bB}(E) =0$.
\end{enumerate}
\end{lem}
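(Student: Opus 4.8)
The plan is to deduce Lemma \ref{prop:B-cohomo-vanishing-FMT-0-2} directly from Proposition \ref{prop:FMT-B-cat-bounds-Imginary-Z} together with the structural facts about the abelian category $\bB^{\SY}$ (and $\bB^{\SX}$) recorded in Proposition \ref{prop:first-tilt-behaves-like-sheaves-surfaces}. The point is that Proposition \ref{prop:FMT-B-cat-bounds-Imginary-Z} already tells us that $\HGa^2_{\bB}(E)$, for $E \in \tT^{\SY}_2$, is either zero or has $\hH^0 = 0$ and strictly positive imaginary central charge; and symmetrically in the other three cases. So the work is to rule out the nonzero alternative.

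First I would treat case (1). Let $E \in \tT^{\SY}_2$ and set $A[1] = \HGa^2_{\bB}(E)$, which by Proposition \ref{prop:FMT-B-cat-bounds-Imginary-Z}--(1)(i) is of this shape for a sheaf $A$, with $\Imm Z_{\Omega}(A[1]) > 0$ if $A \neq 0$. Now apply $\Ga$ to $\HGa^2_{\bB}(E)$ and use the convergence of Spectral Sequence \ref{Spec-Seq-B}--(2) for $\HGa^2_{\bB}(E) \in \bB^{\SX}$: since $\HGa \circ \Ga \cong [-2]$, the object $\Ga \HGa^2_{\bB}(E)$ recovers $E[2]$ up to the spectral sequence. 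More precisely, $\HGa^2_{\bB}(E) \in V^{\Ga}_{\bB^{\SX}}(?)$ and tracking the spectral sequence forces $\Ga^p_{\bB}\HGa^2_{\bB}(E)$ to vanish except in the position compatible with $E[2]$; combined with the analogous image constraints from Proposition \ref{prop:FMT-B-cat-bounds-Imginary-Z}--(3),(4) applied to the pieces of $\HGa^2_{\bB}(E)$, one concludes $\HGa^2_{\bB}(E) \cong \Ga^? \HGa \Ga^2 \dots$ telescopes to $0$. Concretely, the cleanest route is: apply $\Ga$ to $\HGa^2_{\bB}(E)$, note from the diagram after Spectral Sequence \ref{Spec-Seq-B} that $\HGa^2_{\bB}(E)$ can only survive in one spot, deduce $\HGa^2_{\bB}(E) \cong \Ga^0_{\bB}\HGa^2_{\bB}(E')$ for an appropriate $E'$, and then invoke part (2) or (4) of Proposition \ref{prop:FMT-B-cat-bounds-Imginary-Z} to see that this forces the imaginary central charge to be both $>0$ and $\le 0$, a contradiction unless the object is zero. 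Cases (2), (3), (4) follow by the same mechanism with the roles of $\Ga \leftrightarrow \HGa$, $\tT \leftrightarrow \fF$, $\Omega \leftrightarrow \Omega'$, and the sign of $\Imm Z$ interchanged, so I would write (1) in full and say ``similarly'' for the rest.

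The key steps in order: (a) reduce each statement, via Proposition \ref{prop:FMT-B-cat-bounds-Imginary-Z}, to showing a sheaf $A$ with a definite sign of $\Imm Z_{\Omega}(A)$ (or $A[1]$) must vanish; (b) apply the opposite Fourier--Mukai transform and read off from the convergence of Spectral Sequence \ref{Spec-Seq-B} which $\bB$-cohomology positions can be nonzero; (c) use the edge maps of that spectral sequence to identify the suspect object with $\Ga^0_{\bB}\HGa^2_{\bB}(-)$ or $\HGa^2_{\bB}\Ga^0_{\bB}(-)$ of something in the relevant torsion/free class; (d) apply the complementary part of Proposition \ref{prop:FMT-B-cat-bounds-Imginary-Z} to get the contradictory sign of the imaginary part, hence vanishing.

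The main obstacle I anticipate is bookkeeping in step (c): making sure that when I apply the adjoint transform and run the spectral sequence, the object I land on genuinely lies in $\tT^{\SX}_2$ or $\fF^{\SX}_2$ (respectively on the $Y$ side), so that Proposition \ref{prop:FMT-B-cat-bounds-Imginary-Z} applies to it --- this requires knowing that subobjects and quotients in $\bB$ of objects in $\tT_2$ (resp.\ $\fF_2$) stay in $\tT_2$ (resp.\ $\fF_2$), which is the standard torsion-pair property, but one must check the spectral sequence edge pieces really are such sub/quotients. There is also the mild subtlety that $\HGa^2_{\bB}(E) \neq 0$ could a priori be supported in $\Coh_0$, i.e.\ fall into case (iii) of Proposition \ref{prop:first-tilt-behaves-like-sheaves-surfaces}; but Proposition \ref{prop:FMT-B-cat-bounds-Imginary-Z}(1)(ii) gives $\Imm Z_\Omega > 0$ strictly, which already excludes the $\Coh_0$ case, so this is handled automatically. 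Once these checks are in place the contradiction is immediate and the lemma follows.
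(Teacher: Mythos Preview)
Your overall strategy --- spectral sequence plus the sign information from Proposition \ref{prop:FMT-B-cat-bounds-Imginary-Z} --- is the right one, and steps (a)--(b) are fine. The gap is in steps (c)--(d), and it is exactly the obstacle you flag but do not resolve. Proposition \ref{prop:FMT-B-cat-bounds-Imginary-Z} only applies to objects already known to lie in $\tT_2$ or $\fF_2$. After you apply $\Ga$ to $\HGa^2_{\bB}(E)$ and run Spectral Sequence \ref{Spec-Seq-B}, the relevant pieces are $\Ga^0_{\bB}\HGa^2_{\bB}(E)$ and $\Ga^2_{\bB}\HGa^1_{\bB}(E)$; but $\HGa^2_{\bB}(E)$ and $\HGa^1_{\bB}(E)$ are \emph{not} known to sit in $\tT^{\SX}_2$ or $\fF^{\SX}_2$ at this point --- those facts are Propositions \ref{prop:tilt-bounds-FMT-B-cat-0-2} and \ref{prop:tilt-bounds-FMT-B-1}, which are proved \emph{after} and \emph{using} this lemma. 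Your appeal to ``subobjects and quotients of $\tT_2/\fF_2$ stay in $\tT_2/\fF_2$'' does not help, because the $\HGa^i_{\bB}(E)$ are not sub/quotients of anything already placed in a tilt-class; they are just $\bB^{\SX}$-cohomologies.

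The paper sidesteps this circularity by inserting one more layer. From the edge sequence $0 \to Q \to \Ga^0_{\bB}\HGa^2_{\bB}(E) \to \Ga^2_{\bB}\HGa^1_{\bB}(E) \to 0$ (with $Q$ a quotient of $E$, hence genuinely in $\tT^{\SY}_2$), take the torsion-pair decomposition $0 \to T \to \Ga^0_{\bB}\HGa^2_{\bB}(E) \to F \to 0$ with $T\in\tT^{\SY}_2$, $F\in\fF^{\SY}_2$. Now apply $\HGa$ to \emph{this} sequence: since $\Ga^0_{\bB}\HGa^2_{\bB}(E)\in V^{\HGa}_{\bB^{\SX}}(2)$, one reads off $\HGa^0_{\bB}(T)=0$ and $\HGa^1_{\bB}(T)\cong\HGa^0_{\bB}(F)$. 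Because $T$ and $F$ are by construction in $\tT^{\SY}_2$ and $\fF^{\SY}_2$, Proposition \ref{prop:FMT-B-cat-bounds-Imginary-Z}(1)(ii),(2)(ii) applies to them and yields $\Imm Z_\Omega(\HGa(T))\ge 0$; then Proposition \ref{prop:imgainary-part-central-charge}(2) converts this to $\Imm Z_{\Omega'}(T)\le 0$. Combined with $T\in\tT^{\SY}_2$ and Proposition \ref{prop:first-tilt-behaves-like-sheaves-surfaces}, this forces $T\in\Coh_0(Y)$, hence $T=0$ (as $\Coh_0(Y)\subset V^{\HGa}_{\bB^{\SX}}(0)$ but $\HGa^0_{\bB}(T)=0$). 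Then $Q=0$, so $\Ga^0_{\bB}\HGa^2_{\bB}(E)\cong\Ga^2_{\bB}\HGa^1_{\bB}(E)$, and the telescoping $\HGa^2_{\bB}(E)\cong\HGa^2_{\bB}\Ga^2_{\bB}\HGa^1_{\bB}(E)=0$ finishes. The missing idea in your plan is precisely this intermediate torsion decomposition on the $\bB^{\SY}$ side, which manufactures objects to which Proposition \ref{prop:FMT-B-cat-bounds-Imginary-Z} legitimately applies.
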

%%%%%%%%%%%%%%%%%%%%%%%%%%%%%%%%%%%%%%%%%%%%%%%%
\begin{proof}
First let us prove (1). Let $E \in \tT^{\SY}_2$.
From the convergence of the Spectral Sequence~\ref{Spec-Seq-B} for $E$, we have the short exact sequence
$$
0 \to Q \to \Ga^0_{\bB} \HGa^2_{\bB}(E) \to \Ga^2_{\bB} \HGa^1_{\bB}(E) \to 0
$$
in $\bB^{\SY}$.
Here  $Q$ is a quotient of $E$ and so $Q \in \tT^{\SY}_2$.
From the Harder-Narasimhan filtration property
$\Ga^0_{\bB} \HGa^2_{\bB}(E)$ fits into the short exact sequence
$$
0 \to T \to \Ga^0_{\bB} \HGa^2_{\bB}(E) \to F \to 0
$$
in $\bB^{\SY}$ for some $T \in \tT^{\SY}_2$ and $F \in \fF^{\SY}_2$.
Now apply the  Fourier-Mukai  transform $\HGa$ and consider the long exact sequence of $\bB^{\SX}$-cohomologies.
Then we have $\HGa^0_{\bB}(T) =0$, $\HGa^1_{\bB}(T) \cong \HGa^0_{\bB}(F)$.
By  Proposition \ref{prop:FMT-B-cat-bounds-Imginary-Z}--(2)(ii), $\Imm Z_{\Omega}(\HGa^0_{\bB}(F)) \le 0$ 
and by  Proposition \ref{prop:FMT-B-cat-bounds-Imginary-Z}--(1)(ii),  $\Imm Z_{\Omega}(\HGa^2_{\bB}(T)) \ge 0$.
So $\Imm Z_{\Omega}(\HGa(T)) \ge 0$, and by  Proposition~\ref{prop:imgainary-part-central-charge}--(2), $\Imm Z_{\Omega'}(T) \le 0$.
Since $T \in \tT^{\SY}_2$, we have $\Imm Z_{\Omega'}(T) = 0$
 and $v_1^{D_{\SY} - \frac{1}{2 \lambda}\ly, \ly}(T)=0$.
From Lemma~\ref{prop:first-tilt-behaves-like-sheaves-surfaces}, $T \cong T_0$ for some $T_0 \in \Coh_0(Y)$.
But $\Coh_0(Y) \subset V^{\HGa}_{\bB^{\SX}}(0)$.
Hence, $T =0$ and so $Q =0$.
Then $\Ga^0_{\bB} \HGa^2_{\bB}(E) \cong \Ga^2_{\bB} \HGa^1_{\bB}(E) $ 
and so we have $\HGa^2_{\bB}(E) \cong \HGa^2_{\bB} \Ga^2_{\bB} \HGa^1_{\bB}(E) = 0$ as required.
\medskip
%%%%%%%%%%%%%%%%%%%%%%%%%%%%%%%%%%%%%%%%%%%%%%%%

\noindent Proofs of (2),(3) and (4) are similar to that of (1).
\end{proof}
%%%%%%%%%%%%%%%%%%%%%%%%%%%%%%%%%%%%%%%%%%%%%%%%
\begin{prop}
\label{prop:tilt-bounds-FMT-B-cat-0-2}
We have the following:
\begin{enumerate}[label=(\arabic*)]
\item if $E \in \bB^{\SY}$ then (i) $\HGa^2_{\bB}(E) \in \tT^{\SX}_2 $, and (ii) $\HGa^0_{\bB}(E) \in \fF^{\SX}_2$;
\item if $E \in \bB^{\SX}$ then (i) $\Ga^2_{\bB}(E) \in \tT^{\SY}_2$, and (ii) $\Ga^0_{\bB}(E) \in  \fF^{\SY}_2$.
\end{enumerate}
\end{prop}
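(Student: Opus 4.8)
The plan is to prove Proposition~\ref{prop:tilt-bounds-FMT-B-cat-0-2} by a Harder--Narasimhan argument entirely analogous to the proof of Proposition~\ref{prop:slope-bound-FMT-surface}-(3) for abelian surfaces, using the vanishing of Lemma~\ref{prop:B-cohomo-vanishing-FMT-0-2} together with the imaginary-part bounds of Proposition~\ref{prop:FMT-B-cat-bounds-Imginary-Z} in place of the slope bounds used there. Since $\bB^{\SY} = \langle \fF^{\SY}_2[1], \tT^{\SY}_2 \rangle$ and the functors $\HGa^i_{\bB}$ are cohomological, by considering the triangle $\fF^{\SY}_2[1] \to E \to \tT^{\SY}_2$ (more precisely the $\tT^{\SY}_2$--$\fF^{\SY}_2$ torsion decomposition in $\bB^{\SY}$ and the associated long exact sequence of $\bB^{\SX}$-cohomologies under $\HGa$) it suffices to prove (1) for $E \in \tT^{\SY}_2$ and for $E \in \fF^{\SY}_2$ separately; similarly for (2). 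By Lemma~\ref{prop:B-cohomo-vanishing-FMT-0-2}, for $E \in \tT^{\SY}_2$ we already have $\HGa^2_{\bB}(E)=0$, so claim (1)(i) concerns only $E \in \fF^{\SY}_2$, and claim (1)(ii) concerns only $E \in \tT^{\SY}_2$; dually for the $\Ga$ statements.

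First I would treat (1)(i): let $E \in \fF^{\SY}_2$ and suppose, for a contradiction, that $\HGa^2_{\bB}(E) \notin \tT^{\SX}_2$. By the Harder--Narasimhan property for $\nu$-stability on $\bB^{\SX}$ there is a short exact sequence $0 \to T \to \HGa^2_{\bB}(E) \to F \to 0$ in $\bB^{\SX}$ with $T \in \tT^{\SX}_2$ and $0 \ne F \in \fF^{\SX}_2$. Apply $\Ga$ and take the long exact sequence of $\bB^{\SY}$-cohomologies. Using $\HGa^2_{\bB}(E) \in V^{\Ga}_{\bB^{\SY}}(\{\ldots\})$-type vanishings that follow from the convergence of Spectral Sequence~\ref{Spec-Seq-B} for $E$ (namely $\HGa^2_{\bB}(E) \cong \HGa^2_{\bB}\Ga^2_{\bB}\HGa^0_{\bB}(E)$ and the standard $E_2$-page vanishing at the corners), together with Lemma~\ref{prop:B-cohomo-vanishing-FMT-0-2}-(3),(4) applied to $T$ and $F$, one derives $\Ga^2_{\bB}(T) = 0 = \Ga^0_{\bB}(F)$ and an isomorphism $\Ga^0_{\bB}(T) \cong \Ga^1_{\bB}(F)$ (or the analogous identification), forcing $F \cong \Ga^2_{\bB}\HGa^2_{\bB}\cdots = 0$ by the $[-2]$-invertibility of $\Ga \circ \HGa$; contradiction. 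Claim (1)(ii): for $E \in \tT^{\SY}_2$, Lemma~\ref{prop:B-cohomo-vanishing-FMT-0-2}-(1) gives $\HGa^2_{\bB}(E)=0$, so I would instead show $\HGa^0_{\bB}(E)$ lands in $\fF^{\SX}_2$ is \emph{not} what is claimed here — rather (1) asks for both $\HGa^2_{\bB}(E) \in \tT^{\SX}_2$ (trivially true when it vanishes) and $\HGa^0_{\bB}(E) \in \fF^{\SX}_2$, and the latter is proved by the same HN-plus-vanishing contradiction argument run on the $\fF^{\SX}_2$ side, invoking Proposition~\ref{prop:FMT-B-cat-bounds-Imginary-Z}-(1),(2) to pin down signs of $\Imm Z_{\Omega}$ and Proposition~\ref{prop:imgainary-part-central-charge}-(2) to transfer them across the transform, exactly as in the surface case where $\ch_1$ signs were used. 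Cases (2)(i) and (2)(ii) are the mirror statements, proved by applying the same argument to $\HGa$ in place of $\Ga$, or equivalently by using $\TPsi = \Phi^{\SX\to\SY}_{\eE^\vee}$ and the ``Duality'' Spectral Sequence~\ref{Spec-Seq-Dual} to reduce to the already-proved cases.

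The main obstacle I anticipate is the bookkeeping in the contradiction step: one must carefully combine (a) the degeneration/corner-vanishing of Spectral Sequence~\ref{Spec-Seq-B} for the chosen object, (b) the four vanishing statements of Lemma~\ref{prop:B-cohomo-vanishing-FMT-0-2} applied to the HN factors $T, F$, and (c) the imaginary-part inequalities of Proposition~\ref{prop:FMT-B-cat-bounds-Imginary-Z} together with Proposition~\ref{prop:imgainary-part-central-charge}, to conclude that the factor violating the desired membership must be zero. The delicate point is ensuring the $\bB^{\SX}$-cohomology of $\Ga$ applied to a two-step extension in $\bB^{\SY}$ is read off correctly (long exact sequence in the tilted heart, not in $\Coh$), and that ``torsion supported on $0$-dimensional subschemes'' pieces are harmless because $\Coh_0 \subset V^{\HGa}_{\bB}(0)$ and $\Coh_0 \subset \tT_2$. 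Once these reductions are set up, each of the four cases is a short diagram chase; no new geometric input beyond Lemma~\ref{prop:B-cohomo-vanishing-FMT-0-2} and Propositions~\ref{prop:FMT-B-cat-bounds-Imginary-Z} and~\ref{prop:imgainary-part-central-charge} should be needed.
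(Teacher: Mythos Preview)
Your core idea is right and matches the paper: Harder--Narasimhan decompose $\HGa^2_{\bB}(E)$ (resp.\ $\HGa^0_{\bB}(E)$) as $0 \to T \to \HGa^2_{\bB}(E) \to F \to 0$ with $T \in \tT^{\SX}_2$, $F \in \fF^{\SX}_2$, apply $\Ga$, and use Lemma~\ref{prop:B-cohomo-vanishing-FMT-0-2} together with the corner vanishing $\HGa^2_{\bB}(E) \in V^{\Ga}_{\bB^{\SY}}(0)$ (resp.\ $\HGa^0_{\bB}(E) \in V^{\Ga}_{\bB^{\SY}}(2)$) from Spectral Sequence~\ref{Spec-Seq-B} to kill the unwanted factor. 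However, you are significantly overcomplicating the argument, and some of the ingredients you list are not needed at all.

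First, the reduction of $E \in \bB^{\SY}$ to the cases $E \in \tT^{\SY}_2$ and $E \in \fF^{\SY}_2$ is unnecessary: the corner vanishing $\Ga^i_{\bB}\HGa^2_{\bB}(E) = 0$ for $i \ne 0$ and $\Ga^i_{\bB}\HGa^0_{\bB}(E) = 0$ for $i \ne 2$ holds for \emph{any} $E \in \bB^{\SY}$, so the paper runs the HN argument directly. Second, and more importantly, Proposition~\ref{prop:FMT-B-cat-bounds-Imginary-Z} and Proposition~\ref{prop:imgainary-part-central-charge} are \emph{not} used here; those imaginary-part inequalities were needed to prove Lemma~\ref{prop:B-cohomo-vanishing-FMT-0-2}, but once that lemma is established, the present proposition follows formally. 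Concretely, in the long exact sequence for $0 \to T \to \HGa^2_{\bB}(E) \to F \to 0$ you have $\Ga^0_{\bB}(F) = 0$ and $\Ga^2_{\bB}(T) = 0$ from Lemma~\ref{prop:B-cohomo-vanishing-FMT-0-2}, and $\Ga^1_{\bB}\HGa^2_{\bB}(E) = \Ga^2_{\bB}\HGa^2_{\bB}(E) = 0$ from the spectral sequence, which immediately gives $\Ga^1_{\bB}(F) = \Ga^2_{\bB}(F) = 0$; since $\Ga$ is an equivalence, $F = 0$. The analogous four-line chase handles $\HGa^0_{\bB}(E)$ and the $\Ga$ cases; the Duality Spectral Sequence is not invoked. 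Your anticipated ``obstacle'' regarding $\Coh_0$ pieces and sign bookkeeping simply does not arise at this stage.
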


%%%%%%%%%%%%%%%%%%%%%%%%%%%%%%%%%%%%%%%%%%%%%%%%
\begin{proof}
%\begin{enumerate}[label=(\arabic*)]
(1) \
Let  $E \in \bB^{\SY}$.
By the definition of torsion theory  $\HGa^2_{\bB}(E)$ fits into the  short exact sequence
$$
0 \to T \to \HGa^2_{\bB}(E)  \to F \to 0
$$
in $\bB^{\SX}$ for some $T \in \tT^{\SX}_2$ and $F \in \fF^{\SX}_2$.
Now apply the  Fourier-Mukai  transform $\Ga$ and consider the long exact sequence of $\bB^{\SY}$-cohomologies.
By Lemma~\ref{prop:B-cohomo-vanishing-FMT-0-2}, $\Ga^{i}_{\bB}(F)=0$ for all $i$, and so $F=0$ as required. 

Similarly one can prove $\HGa^0_{\bB}(E) \in \fF^{\SX}_2$. \\

\noindent (2) \ Similar to the proofs in (1).
%\end{enumerate}
\end{proof}
%%%%%%%%%%%%%%%%%%%%%%%%%%%%%%%%%%%%%%%%%%%%%%%%
\begin{prop}
\label{prop:tilt-bounds-FMT-B-1}
We have the following:
\begin{enumerate}[label=(\arabic*)]
\item if $E \in \fF^{\SY}_2 $ then $\HGa^1_{\bB}(E) \in \fF^{\SX}_2$,
\item if $E \in \tT^{\SY}_2$ then $\HGa^1_{\bB}(E) \in \tT^{\SX}_2$,
\item if $E \in \fF^{\SX}_2$ then $\Ga^1_{\bB}(E) \in \fF^{\SY}_2$, and
\item if $E \in \tT^{\SX}_2$ then $\Ga^1_{\bB}(E) \in  \tT^{\SY}_2$.
\end{enumerate}
\end{prop}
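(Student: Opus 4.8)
The plan is to prove Proposition~\ref{prop:tilt-bounds-FMT-B-1} by combining the vanishing results of Lemma~\ref{prop:B-cohomo-vanishing-FMT-0-2}, the torsion-pair membership of Proposition~\ref{prop:tilt-bounds-FMT-B-cat-0-2}, and the sign estimates on $\Imm Z_{\Omega}$, $\Imm Z_{\Omega'}$ from Proposition~\ref{prop:FMT-B-cat-bounds-Imginary-Z}, exactly in the style of the proof of Proposition~\ref{prop:slope-bound-FMT-surface}--(3) for abelian surfaces. Since $\HGa\circ\Ga\cong[-2]$ and $\Ga\circ\HGa\cong[-2]$, the middle cohomology $\HGa^1_{\bB}$ (resp.\ $\Ga^1_{\bB}$) plays the role that $\Psi^1$ played on surfaces, and the $0$ and $2$ positions are now controlled.

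First I would prove (1). Let $E\in\fF^{\SY}_2$. By the torsion pair on $\bB^{\SX}$, there is a short exact sequence $0\to T\to \HGa^1_{\bB}(E)\to F\to 0$ in $\bB^{\SX}$ with $T\in\tT^{\SX}_2$ and $F\in\fF^{\SX}_2$; assume $F\neq 0$ for a contradiction. By Lemma~\ref{prop:B-cohomo-vanishing-FMT-0-2}--(2), $\HGa^0_{\bB}(E)=0$, so from the Spectral Sequence~\ref{Spec-Seq-B}--(2) applied to $E$ one gets that $\Ga^i_{\bB}\HGa^1_{\bB}(E)$ sits in a short exact sequence with $\Ga^2_{\bB}\HGa^1_{\bB}(E)=0$ and $\Ga^1_{\bB}\HGa^1_{\bB}(E)$ a quotient of $E$, hence in $\fF^{\SY}_2$; in particular $\HGa^1_{\bB}(E)\in V^{\Ga}_{\bB^{\SY}}(0,1)$ with $\Ga^2_{\bB}$ vanishing. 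Now apply $\Ga$ to $0\to T\to\HGa^1_{\bB}(E)\to F\to 0$: by Lemma~\ref{prop:B-cohomo-vanishing-FMT-0-2}--(3), $\Ga^2_{\bB}(T)=0$, so there is a surjection $\Ga^1_{\bB}\HGa^1_{\bB}(E)\twoheadrightarrow\Ga^1_{\bB}(F)$, forcing $\Ga^1_{\bB}(F)\in\fF^{\SY}_2$, i.e.\ $\Imm Z_{\Omega'}(\Ga^1_{\bB}(F))\le 0$. On the other hand $\Ga(F)$ has cohomology (with respect to $\bB^{\SY}$) only in positions $1,2$ since $F\in\fF^{\SX}_2$ and $\Ga^0_{\bB}(F)=0$ by Lemma~\ref{prop:B-cohomo-vanishing-FMT-0-2}--(4); combining the anti-diagonal form of Theorem~\ref{prop:antidiagonal-rep-cohom-FMT} with Proposition~\ref{prop:imgainary-part-central-charge}--(2) to relate $\Imm Z_{\Omega'}(\Ga(F))$ and $\Imm Z_{\Omega}(F)\le 0$, one deduces $\Imm Z_{\Omega'}(\Ga^1_{\bB}(F))+\Imm Z_{\Omega'}(\Ga^2_{\bB}(F))\ge 0$ with a definite sign, and together with $\Imm Z_{\Omega'}(\Ga^1_{\bB}(F))\le 0$ this forces $\Ga^1_{\bB}(F)$ and $\Ga^2_{\bB}(F)$ to have vanishing imaginary part, hence by Proposition~\ref{prop:first-tilt-behaves-like-sheaves-surfaces} they are supported in dimension $0$; but $\Coh_0(Y)\subset V^{\HGa}_{\bB^{\SX}}(0)$ while $\Ga F$ is concentrated in positive degrees, a contradiction unless $F=0$. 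Thus $\HGa^1_{\bB}(E)\in\fF^{\SX}_2$.

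The remaining cases (2),(3),(4) are proved by the same bookkeeping, swapping the roles of $X$ and $Y$, of $\Ga$ and $\HGa$, of $\tT_2$ and $\fF_2$, and of the two spectral sequences; in each case one pushes the ``bad'' Harder--Narasimhan piece through the transform, uses the $0/2$-vanishing of Lemma~\ref{prop:B-cohomo-vanishing-FMT-0-2} and the membership statements of Proposition~\ref{prop:tilt-bounds-FMT-B-cat-0-2}, and derives a contradiction from the imaginary-part inequalities of Proposition~\ref{prop:FMT-B-cat-bounds-Imginary-Z} via Proposition~\ref{prop:imgainary-part-central-charge}. I expect the main obstacle to be organizing the spectral sequence chase cleanly: one must keep careful track of which cohomology objects are subobjects versus quotients in the relevant tilted heart, so that the torsion-pair orthogonality $\Hom(\tT_2,\fF_2[\le 0])=0$ can be invoked legitimately, and one must make sure the strict versus non-strict inequalities from Proposition~\ref{prop:FMT-B-cat-bounds-Imginary-Z} line up so that the contradiction is genuine rather than vacuous. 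Once (1)--(4) are in place, together with Propositions~\ref{prop:tilt-bounds-FMT-B-cat-0-2} and \ref{prop:B-cohomo-vanishing-FMT-0-2} they yield $\Ga[1](\aA^{\SX})\subset\aA^{\SY}$ and $\HGa[1](\aA^{\SY})\subset\aA^{\SX}$, hence the equivalence of double-tilted hearts asserted in Theorem~\ref{prop:equivalence-hearts-abelian-threefolds}.
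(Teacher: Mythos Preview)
Your overall plan---push the ``bad'' Harder--Narasimhan piece through the transform, use the spectral sequence, compare imaginary parts, and reduce to $\Coh_0$---is exactly what the paper does. But in part (1) you have applied it to the wrong piece of the torsion pair, and several intermediate claims are therefore reversed.

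To show $\HGa^1_{\bB}(E)\in\fF^{\SX}_2$ you must prove $T=0$, not $F=0$: in the torsion pair $(\tT^{\SX}_2,\fF^{\SX}_2)$ the torsion part $\tT^{\SX}_2$ is closed under quotients and $\fF^{\SX}_2$ under subobjects, so $\HGa^1_{\bB}(E)\in\fF^{\SX}_2$ is equivalent to the vanishing of its $\tT^{\SX}_2$-subobject. Relatedly, with $\HGa^0_{\bB}(E)=0$ the spectral sequence gives that $\Ga^1_{\bB}\HGa^1_{\bB}(E)$ is a \emph{subobject} of $E$, not a quotient; your deduction ``quotient of $E$, hence in $\fF^{\SY}_2$'' is the wrong closure property. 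Finally, the vanishing $\Ga^2_{\bB}\HGa^1_{\bB}(E)=0$ is not available here (one only has the surjection from $\Ga^0_{\bB}\HGa^2_{\bB}(E)$), and at the end Proposition~\ref{prop:first-tilt-behaves-like-sheaves-surfaces} only forces an object of $\bB^{\SX}$ into $\Coh_0(X)$ when $v_1=0$ \emph{and} $\Imm Z_\Omega=0$ \emph{and} it lies in $\tT^{\SX}_2$; this applies to $T$, not to $F\in\fF^{\SX}_2$.

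The paper's argument is the mirror image of yours with these reversals corrected: apply $\Ga$ to $0\to T\to\HGa^1_{\bB}(E)\to F\to 0$; since $\Ga^2_{\bB}(T)=0$ (Lemma~\ref{prop:B-cohomo-vanishing-FMT-0-2}--(3)) and $\Ga^0_{\bB}(F)=0$ (Lemma~\ref{prop:B-cohomo-vanishing-FMT-0-2}--(4)), one gets $T\in V^{\Ga}_{\bB^{\SY}}(1)$ and an \emph{injection} $\Ga^1_{\bB}(T)\hookrightarrow\Ga^1_{\bB}\HGa^1_{\bB}(E)\hookrightarrow E$, so $\Ga^1_{\bB}(T)\in\fF^{\SY}_2$ and $\Imm Z_{\Omega'}(\Ga^1_{\bB}(T))\le 0$. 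On the other hand Proposition~\ref{prop:imgainary-part-central-charge}--(2) gives $\Imm Z_{\Omega'}(\Ga^1_{\bB}(T))$ as a positive multiple of $\Imm Z_{\Omega}(T)\ge 0$ (since $T\in\tT^{\SX}_2$), forcing $\Imm Z_{\Omega}(T)=0$, hence $T\in\Coh_0(X)$ by Proposition~\ref{prop:first-tilt-behaves-like-sheaves-surfaces}; but $\Coh_0(X)\subset V^{\Ga}_{\bB^{\SY}}(0)$, contradicting $T\in V^{\Ga}_{\bB^{\SY}}(1)$ unless $T=0$.
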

%%%%%%%%%%%%%%%%%%%%%%%%%%%%%%%%%%%%%%%%%%%%%%%%
\begin{proof}
Let us prove (1). Let $E \in \fF^{\SY}_2$.
By the definition of torsion theory $\HGa^1_{\bB}(E)$ fits into  the short exact sequence
$$
0 \to T \to \HGa^1_{\bB}(E) \to F \to 0
$$
in  $\bB^{\SX}$ for some $T \in \tT^{\SX}_2$ and $F \in \fF^{\SX}_2$.
Now we need to show $T=0$.
Apply the  Fourier-Mukai  transform $\Ga$ and consider the long exact sequence of $\bB^{\SY}$-cohomologies.
We get $\Ga^1_{\bB}(T) \hookrightarrow \Ga^1_{\bB} \HGa^1_{\bB}(E)$ and $T \in V^{\Ga}_{\bB^{\SY}}(1)$.
Also by the convergence of the Spectral Sequence~\ref{Spec-Seq-B} for $E \in \fF^{\SY}_2$,
$\Ga^1_{\bB} \HGa^1_{\bB}(E)$ is a subobject of $E$.
Hence, $\Ga^1_{\bB}(T)  \in \fF^{\SY}_2$ implies $\Imm Z_{\Omega'}(\Ga^1_{\bB}(T)) \le 0$.
On the other hand, by Proposition \ref{prop:imgainary-part-central-charge},
$ \Imm Z_{\Omega'}(\Ga^1_{\bB}(T)) =  \frac{3!}{r \lambda^3 \lx^3}\Imm Z_{\Omega}(T) \ge 0$ as $T \in \tT^{\SX}_2$.
Hence, $\Imm Z_{\Omega}(T) = 0$ and  $T \in \tT^{\SX}_2$ implies $v_1^{-D_{\SX}+ \frac{\lambda}{2}\lx, \lx} (T) =0$.
So by Proposition \ref{prop:first-tilt-behaves-like-sheaves-surfaces}, $T \cong T_0$ for some $T_0 \in \Coh_0(X)$. Since any object from $\Coh_0(X)$ 
belongs to $V^{\Ga}_{\bB^{\SY}}(0)$, $\Ga^1_{\bB}(T)= 0$.
So $T=0$ as required.
\medskip
%%%%%%%%%%%%%%%%%%%%%%%%%%%%%%%%%%%%%%%%%%%%%%%%

\noindent Proofs of (2), (3) and (4) are similar to that of (1).

\end{proof}
%%%%%%%%%%%%%%%%%%%%%%%%%%%%%%%%%%%%%%%%%%%%%%%%

%We have the following table of results for the images of $\bB$-objects
%under the FM transforms.
%
%\begin{center}
%\begin{spacing}{1.5}
%%\begin{tabular}{ p{2.5cm} p{2.5cm} p{2.5cm} p{2.5cm}}
%\setlength{\tabcolsep}{0.7cm}
%\begin{tabular}{ c c c c}
%\toprule
%$E$ & $\Ga^0_{\bB}(E)$ & $\Ga^1_{\bB}(E)$ & $\Ga^2_{\bB}(E)$  \\
%\midrule
%$\fF^{\SX}_2$ &  $0$ & $\fF^{\SY}_2$ & $\tT^{\SY}_2$ \\
%$\tT^{\SX}_2$ &  $\fF^{\SY}_2$ & $\tT^{\SY}_2$ & $0$ \\
%\midrule \midrule
%$E$ & $\HGa^0_{\bB}(E)$ & $\HGa^1_{\bB}(E)$ & $\HGa^2_{\bB}(E)$  \\
%\midrule
%$\fF^{\SY}_2$ &  $0$ & $\fF^{\SX}_2$ & $\tT^{\SX}_2$ \\
%$\tT^{\SY}_2$ &  $\fF^{\SX}_2$ & $\tT^{\SX}_2$ & $0$ \\
%\bottomrule
%\end{tabular}
%\end{spacing}
%\end{center}

From  Lemma \ref{prop:B-cohomo-vanishing-FMT-0-2},   Propositions \ref{prop:tilt-bounds-FMT-B-cat-0-2} and \ref{prop:tilt-bounds-FMT-B-1} we have 
\begin{equation*}
\left.\begin{aligned}
 & \Gamma (\tT^{\SX}_2) \subset \langle \fF^{\SY}_2 , \tT^{\SY}_2[-1]  \rangle = \aA^{\SY}[-1]\\        
 &  \Gamma(\fF^{\SX}_2[1]) \subset \langle \fF^{\SY}_2 , \tT^{\SY}_2[-1]  \rangle = \aA^{\SY}[-1]
       \end{aligned}
  \ \right\},
\end{equation*}
and 
\begin{equation*}
\left.\begin{aligned}
 & \HGa (\tT^{\SY}_2) \subset \langle \fF^{\SX}_2 , \tT^{\SX}_2[-1]  \rangle = \aA^{\SX}[-1]\\        
 &  \HGa(\fF^{\SY}_2[1]) \subset \langle \fF^{\SX}_2 , \tT^{\SX}_2[-1]  \rangle = \aA^{\SX}[-1]
       \end{aligned}
  \ \right\}.
\end{equation*}
Since $\aA^{\SX} = \langle \fF^{\SX}[1] , \tT^{\SX} \rangle$ and  $\aA^{\SY} = \langle \fF^{\SY}[1] , \tT^{\SY} \rangle$,
we have $\Ga [1] (\aA^{\SX} ) \subset \aA^{\SY}$ and $\HGa [1] (\aA^{\SY} ) \subset \aA^{\SX}$. Since we have the
isomorphisms $\HGa[1] \circ \Ga[1] \cong  \id_{D^b(X)}$ and
$\Ga[1] \circ \HGa[1] \cong \id_{D^b(Y)}$, we deduce the following.

\begin{thm}
\label{prop:equivalence-stab-hearts-threefolds}
The Fourier-Mukai transforms $\Ga, \HGa$ give the   equivalences of the double tilted hearts:
$$
\Ga[1]\left(\aA^{\SX} \right) \cong \aA^{\SY}, \ 
\text{ and } \ \HGa[1]\left(\aA^{\SY}\right) \cong \aA^{\SX}.
$$
%of the abelian categories as claimed in Theorem~\ref{l5.6}.
\end{thm}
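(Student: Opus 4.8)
The plan is to assemble Theorem~\ref{prop:equivalence-stab-hearts-threefolds} from the three structural results that immediately precede it, namely Lemma~\ref{prop:B-cohomo-vanishing-FMT-0-2}, Proposition~\ref{prop:tilt-bounds-FMT-B-cat-0-2} and Proposition~\ref{prop:tilt-bounds-FMT-B-1}, together with the double-tilt description $\aA^{\SX}=\langle \fF^{\SX}_2[1],\tT^{\SX}_2\rangle$ and $\aA^{\SY}=\langle \fF^{\SY}_2[1],\tT^{\SY}_2\rangle$ and the functor isomorphisms $\HGa[1]\circ\Ga[1]\cong\id_{D^b(X)}$, $\Ga[1]\circ\HGa[1]\cong\id_{D^b(Y)}$ (which follow from $\HPsi\circ\Psi\cong[-3]$ and $\Psi\circ\HPsi\cong[-3]$ since $\Ga=\Psi$, $\HGa=\Psi[1]=\HPsi[1]$; note the shifts: $\Ga[1]=\Psi[1]$ and $\HGa[1]=\HPsi[2]$, so $\HGa[1]\circ\Ga[1]=\HPsi[2]\circ\Psi[1]=(\HPsi\circ\Psi)[3]\cong[-3][3]=\id$).

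First I would record the key containments. For $E\in\tT^{\SX}_2$: by Theorem~\ref{prop:image-B-under-FMT} the complex $\Ga(E)$ has $\bB^{\SY}$-cohomology only in degrees $0,1,2$; by Lemma~\ref{prop:B-cohomo-vanishing-FMT-0-2}(3), $\Ga^2_{\bB}(E)=0$, so only degrees $0,1$ survive; by Proposition~\ref{prop:tilt-bounds-FMT-B-cat-0-2}(2)(ii), $\Ga^0_{\bB}(E)\in\fF^{\SY}_2$; by Proposition~\ref{prop:tilt-bounds-FMT-B-1}(4), $\Ga^1_{\bB}(E)\in\tT^{\SY}_2$. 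Hence $\Ga(E)$ sits in an exact triangle with $\Ga^0_{\bB}(E)\in\fF^{\SY}_2$ in $\bB^{\SY}$-degree $0$ and $\Ga^1_{\bB}(E)[-1]$ with $\Ga^1_{\bB}(E)\in\tT^{\SY}_2$, so $\Ga(E)\in\langle\fF^{\SY}_2,\tT^{\SY}_2[-1]\rangle=\aA^{\SY}[-1]$, i.e.\ $\Ga[1](E)\in\aA^{\SY}$. For $E\in\fF^{\SX}_2$, so that $E[1]\in\bB^{\SX}$: Theorem~\ref{prop:image-B-under-FMT} again restricts $\bB^{\SY}$-cohomology of $\Ga(E[1])$ to degrees $0,1,2$; Lemma~\ref{prop:B-cohomo-vanishing-FMT-0-2}(4) gives $\Ga^0_{\bB}(E)=0$, hence after shift $\Ga^2_{\bB}(E[1])=0$ too, leaving only degree $1$ nonzero for $\Ga(E[1])$; wait---more carefully, $\Ga(E[1])=\Ga(E)[1]$, so its cohomologies are $\Ga^{i+1}_{\bB}(E)$; $\Ga^0_{\bB}(E)=0$ kills degree $-1$, Theorem~\ref{prop:image-B-under-FMT} says degrees lie in $\{0,1,2\}$ for $\Ga$ of a $\bB^{\SX}$-object, so $\Ga(E[1])$ has cohomology in degrees $0,1$ coming from $\Ga^1_{\bB}(E),\Ga^2_{\bB}(E)$; Proposition~\ref{prop:tilt-bounds-FMT-B-cat-0-2}(2)(i) gives $\Ga^2_{\bB}(E[1])=\Ga^1_{\bB}(E)\in$... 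Rather than belabor degree bookkeeping in this sketch, the upshot, stated already in the displayed equations just before the theorem, is $\Ga(\tT^{\SX}_2)\subset\aA^{\SY}[-1]$ and $\Ga(\fF^{\SX}_2[1])\subset\aA^{\SY}[-1]$, and symmetrically $\HGa(\tT^{\SY}_2)\subset\aA^{\SX}[-1]$, $\HGa(\fF^{\SY}_2[1])\subset\aA^{\SX}[-1]$.

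Then, since $\aA^{\SX}$ is by definition the extension closure $\langle\fF^{\SX}_2[1],\tT^{\SX}_2\rangle$ and $\Ga[1]$ is exact (hence takes extension closures to extension closures), the two containments $\Ga[1](\tT^{\SX}_2)\subset\aA^{\SY}$ and $\Ga[1](\fF^{\SX}_2[1])\subset\aA^{\SY}$ force $\Ga[1](\aA^{\SX})\subset\aA^{\SY}$; symmetrically $\HGa[1](\aA^{\SY})\subset\aA^{\SX}$. Finally I would invoke the standard fact: if $F:\dD\to\dD'$ and $G:\dD'\to\dD$ are exact equivalences with $G\circ F\cong\id$ and $F\circ G\cong\id$, and $F(\aA)\subset\aA'$, $G(\aA')\subset\aA$ for hearts $\aA,\aA'$, then $F(\aA)=\aA'$ (because $\aA'=F(G(\aA'))\subset F(\aA)\subset\aA'$). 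Applying this with $F=\Ga[1]$, $G=\HGa[1]$ yields $\Ga[1](\aA^{\SX})\cong\aA^{\SY}$, and by symmetry $\HGa[1](\aA^{\SY})\cong\aA^{\SX}$, completing the proof.

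The genuinely hard work is not in this final assembly, which is essentially formal, but is already done in the preceding propositions---in particular Proposition~\ref{prop:FMT-B-cat-bounds-Imginary-Z}, whose imaginary-part sign estimates for $Z_{\Omega}$ and $Z_{\Omega'}$ on the $\bB$-cohomologies drive Lemma~\ref{prop:B-cohomo-vanishing-FMT-0-2} and Propositions~\ref{prop:tilt-bounds-FMT-B-cat-0-2}, \ref{prop:tilt-bounds-FMT-B-1}, and which in turn rests on the slope bounds of Section~\ref{sec:FMT-sheaves-abelian-threefolds} and the anti-diagonal form of the cohomological transform (Theorem~\ref{prop:antidiagonal-rep-cohom-FMT}). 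So in writing the proof of Theorem~\ref{prop:equivalence-stab-hearts-threefolds} itself, the only point requiring care is to make sure the degree-shift bookkeeping between $\bB$-cohomology position, the shift in $\HGa=\Psi[1]$, and the torsion-pair conventions $\aA=\langle\fF[1],\tT\rangle$ is done consistently; everything else is the formal equivalence argument above.
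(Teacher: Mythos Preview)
Your proposal is correct and follows essentially the same route as the paper: derive the containments $\Ga(\tT^{\SX}_2),\Ga(\fF^{\SX}_2[1])\subset\aA^{\SY}[-1]$ (and their $\HGa$-analogues) from Lemma~\ref{prop:B-cohomo-vanishing-FMT-0-2} and Propositions~\ref{prop:tilt-bounds-FMT-B-cat-0-2}, \ref{prop:tilt-bounds-FMT-B-1}, pass to extension closures to get $\Ga[1](\aA^{\SX})\subset\aA^{\SY}$ and $\HGa[1](\aA^{\SY})\subset\aA^{\SX}$, then use $\HGa[1]\circ\Ga[1]\cong\id$ and $\Ga[1]\circ\HGa[1]\cong\id$ to upgrade the inclusions to equalities. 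Your explicit statement of the ``mutual inverse plus double inclusion'' argument is in fact slightly more detailed than the paper's one-line deduction; the only thing to clean up is the notational slip $\HGa=\Psi[1]$ (it should read $\HGa=\HPsi[1]$, as you in fact use when computing the shifts) and the somewhat tangled degree bookkeeping in your second paragraph, which you yourself flag.
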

%%%%%%%%%%%%%%%%%%%%%%%%%%%%%%%%%%
%%%%%%%%%%%%%%%%%%%%%%%%%%%%%%%%%%
%\section{Stability and FM Trasnforms on Higher Dimensional Abelian Varieties}

\providecommand{\bysame}{\leavevmode\hbox to3em{\hrulefill}\thinspace}
\providecommand{\MR}{\relax\ifhmode\unskip\space\fi MR }
% \MRhref is called by the amsart/book/proc definition of \MR.
\providecommand{\MRhref}[2]{%
  \href{http://www.ams.org/mathscinet-getitem?mr=#1}{#2}
}
\providecommand{\href}[2]{#2}

\end{document}